\numberwithin{equation}{section}
\theoremstyle{plain}
\newtheorem{thm}{Theorem}[section]
\newtheorem{cor}[thm]{Corollary}
\newtheorem{lem}[thm]{Lemma}
\newtheorem{prop}[thm]{Proposition}
\theoremstyle{definition}
\newtheorem{defn}[thm]{Definition}
\theoremstyle{remark}
\newtheorem{rem}[thm]{Remark}
\newcommand{\N}{\mathbb N}
\newcommand{\Z}{\mathbb Z}
\newcommand{\R}{\mathbb R}
\newcommand{\e}{\varepsilon}
\newcommand{\dist}{\mathrm{dist}}
\newcommand{\diam}{\mathrm{diam}}
\newcommand{\ls}{\lesssim}
\newcommand{\gs}{\gtrsim}
\newcommand{\Mod}{\mathrm{Mod}}
\newcommand{\loc}{\mathrm{loc}}
\newcommand{\supp}{\mathrm{supp}}
\newcommand{\ch}{\check}
\renewcommand{\t}{\tilde}
\newcommand{\la}{\lambda}
\newcommand{\p}{\partial}
\def\Xint#1{\mathchoice
{\XXint\displaystyle\textstyle{#1}}%
{\XXint\textstyle\scriptstyle{#1}}%
{\XXint\scriptstyle\scriptscriptstyle{#1}}%
{\XXint\scriptscriptstyle\scriptscriptstyle{#1}}%
\!\int}
\def\XXint#1#2#3{{\setbox0=\hbox{$#1{#2#3}{\int}$ }
\vcenter{\hbox{$#2#3$ }}\kern-.6\wd0}}
\def\dashint{\Xint-}
\title{Extension and trace theorems for noncompact doubling spaces}
\author{Clark Butler}
\begin{document}
\begin{abstract}
We generalize the extension and trace results of Bj\"orn-Bj\"orn-Shanmugalingam \cite{BBS21} to the setting of complete noncompact doubling metric measure spaces and their uniformized hyperbolic fillings. This is done through a uniformization procedure introduced by the author that uniformizes a Gromov hyperbolic space using a Busemann function instead of the distance functions considered in the work of Bonk-Heinonen-Koskela \cite{BHK}. We deduce several corollaries for the Besov spaces that arise as trace spaces in this fashion, including the existence of representatives that are quasicontinuous with respect to the Besov capacity, the existence of $L^p$-Lebesgue points quasieverywhere with respect to the Besov capacity, embeddings into H\"older spaces for appropriate exponents, and a stronger Lebesgue point result under an additional reverse doubling hypothesis on the measure. We also obtain several Poincar\'e-type inequalities relating integrals of Besov functions over balls to integrals of upper gradients of extension of these functions to a uniformized hyperbolic filling of the space. 
\end{abstract}

\maketitle

\section{Introduction}

In recent breakthrough work of Bj\"orn-Bj\"orn-Shanmugalingam \cite{BBS21} an extensive array of properties of Besov spaces on a compact doubling metric measure space $Z$ were established by exhibiting these spaces as the trace space of a Newton-Sobolev space on an associated incomplete metric graph $X_{\rho}$ having $Z$ as its boundary. These properties include density of Lipschitz functions, the existence of quasicontinuous representatives of Besov functions, embeddings into H\"older spaces under appropriate hypotheses on the regularity exponents, and improved integrability as well as the existence of Lebesgue points quasieverywhere under an additional reverse-doubling hypothesis on the measure \cite[Section 13]{BBS21}. In this work we extend these results to the setting of complete doubling metric measure spaces by removing the compactness hypothesis on $Z$. This is done through the use of a new uniformization construction for Gromov hyperbolic spaces introduced by the author in previous work \cite{Bu20} to similarly exhibit $Z$ as the boundary of an incomplete metric graph $X_{\rho}$, even in the case that $Z$ is not compact. For the compact case this uniformization is done in \cite{BBS21} using the procedure of Bonk-Heinonen-Koskela \cite{BHK}; our construction generalizes this procedure. 

To state our main theorem we need to introduce some important additional concepts, which are similar in nature to those considered in \cite{BBS21}. For precise definitions of the notions that follow we refer to Sections \ref{sec:capacities}, \ref{sec:fillings}, and \ref{sec:lift}. For the purposes of this paper a \emph{metric measure space} $(Z,d,\nu)$ is a metric space $(Z,d)$ equipped with a Borel regular measure $\nu$ such that for every ball $B \subset Z$ we have $0 < \nu(B) < \infty$. Given a complete doubling metric measure space $(Z,d,\nu)$, in Section \ref{sec:fillings} we construct a metric graph $X$ that is a proper geodesic Gromov hyperbolic space such that $Z$ can be canonically identified with the complement of a distinguished point $\omega$ in the Gromov boundary $\p X$ of $X$. Such a graph is known as a \emph{hyperbolic filling} of $Z$; these constructions were first considered by Bonk-Kleiner \cite{BK02} and Bourdon-Pajot \cite{BP03} for compact $Z$ (see also Bonk-Schramm \cite{BS00} for an alternative construction known as the \emph{hyperbolic cone}). Our construction for noncompact $Z$ is inspired by a construction due to Buyalo-Schroeder \cite[Chapter 6]{BS07}.

We then \emph{uniformize} $X$ to produce a uniform metric space $X_{\rho}$ such that the metric boundary $\p X_{\rho}$ of $X_{\rho}$ has a canonical biLipschitz identification with $Z$. After a biLipschitz change of coordinates on $Z$ we can then assume that $Z$ is isometrically identified with $\p X_{\rho}$. In Section \ref{sec:lift} we lift $\nu$ to a \emph{uniformly locally doubling} measure $\mu$ on the hyperbolic graph $X$. We uniformize this measure for each parameter $\beta > 0$ to obtain a new measure $\mu_{\beta}$ on $X_{\rho}$ such that the resulting metric measure spaces $(X_{\rho},d_{\rho},\mu_{\beta})$ and $(\bar{X}_{\rho},d_{\rho},\mu_{\beta})$ are each doubling and satisfy a $1$-Poincar\'e inequality, where $\mu_{\beta}$ is extended to $\bar{X}_{\rho}$ by setting $\mu_{\beta}(\p X_{\rho}) = 0$. Newtonian functions on these metric measure spaces will serve as counterparts to Besov functions on $(Z,d,\nu)$. This uniformization procedure for measures is based on a generalization by the author \cite{Bu22} of several results of Bj\"orn-Bj\"orn-Shanmugalingam \cite{BBS20} concerning the transformation of doubling measures and Poincar\'e inequalities under the Bonk-Heinonen-Koskela uniformization to the setting of our uniformization. 

For a given $p \geq 1$, $\theta > 0$, and $u \in L^{p}_{\loc}(Z)$, the \emph{Besov norm} on $Z$ is defined by 
\begin{equation}\label{Besov norm}
\|u\|_{B^{\theta}_{p}(Z)}^{p} = \int_{Z}\int_{Z}\frac{|u(x)-u(y)|^{p}}{d(x,y)^{p \theta}}\frac{d\nu(x)d\nu(y)}{\nu(B(x,d(x,y)))}.
\end{equation}
We write $\t{B}^{\theta}_{p}(Z) \subset L^{p}_{\loc}(Z)$ for the subspace of all functions $u$ such that $\|u\|_{B^{\theta}_{p}(Z)} < \infty$. We note that, properly speaking, \eqref{Besov norm} is actually a seminorm on $L^{p}_{\loc}(Z)$ since any constant function has Besov norm $0$. If one defines $B^{\theta}_{p}(Z) = \t{B}^{\theta}_{p}(Z)/\sim$ to be the quotient of $\t{B}^{\theta}_{p}(Z)$ by those measurable functions $u$ with $\|u\|_{B^{\theta}_{p}(Z)} = 0$ then the resulting space is a Banach space, see for instance \cite[Theorem 2.9]{S16}. We provide an alternative proof of this fact in Corollary \ref{Besov Banach}.

We define $\ch{B}^{\theta}_{p}(Z) = L^{p}(Z) \cap \t{B}^{\theta}_{p}(Z)$ to be the subspace of $\t{B}^{\theta}_{p}(Z)$ consisting of those functions that are $p$-integrable over $Z$. We equip this subspace with the norm 
\begin{equation}\label{check Besov norm}
\|u\|_{\ch{B}^{\theta}_{p}(Z)} = \|u\|_{L^{p}(Z)} + \|u\|_{B^{\theta}_{p}(Z)},
\end{equation}
With this norm $\ch{B}^{\theta}_{p}(Z)$ is a Banach space by the arguments in \cite[Remark 9.8]{BBS21}. We always have $\ch{B}^{\theta}_{p}(Z) = \t{B}^{\theta}_{p}(Z)$ whenever $Z$ is bounded. For bounded $Z$ these Besov spaces were introduced by Bourdon-Pajot in a more restrictive setting in \cite{BP03}. In \cite{GKS10} it was shown by Gogatishvili-Koskela-Shanmugalingam that this definition coincides with a more classical formulation for Besov spaces on metric spaces. We will use the notation of Bourdon-Pajot in this paper. 

On $X_{\rho}$ and its completion $\bar{X}_{\rho} = X_{\rho} \cup Z$, we will consider the \emph{Newtonian} spaces defined through upper gradients. These spaces are based on a notion of gradient in the metric space setting introduced by Heinonen-Koskela \cite{HK98}, and their formal study was initiated by Shanmugalingam \cite{S04}. We give an abbreviated description of these spaces here; a more detailed description can be found in Section \ref{sec:capacities}. Beginning with a metric measure space $(Y,d,\mu)$, a Borel function $g:Y \rightarrow [0,\infty]$ is an \emph{upper gradient} of a function $u: Y \rightarrow [-\infty,\infty]$ if for each nonconstant compact rectifiable curve $\gamma$ in $Y$ joining two points $x,y \in Y$ we have that
\begin{equation}\label{upper gradient inequality}
|u(x)-u(y)| \leq \int_{\gamma} g \, ds. 
\end{equation}
By convention we require that $\int_{\gamma} g\,ds = \infty$ if $u(x) = \pm \infty$ or $u(y)  = \pm \infty$. The \emph{Newtonian norm} of $u$ for a given $p \geq 1$ is then defined to be
\begin{equation}\label{first Newton norm}
\|u\|_{N^{1,p}(Y)} = \|u\|_{L^{p}(Y)} + \inf_{g}\|g\|_{L^{p}(Y)},
\end{equation}
where the infimum is taken over all upper gradients $g$ of $u$. We write $\t{N}^{1,p}(Y)$ for the collection of functions such that $\|u\|_{N^{1,p}(Y)} < \infty$. Similarly the \emph{Dirichlet norm} of $u$ for a given $p \geq 1$ is defined to be the Dirichlet $p$-energy, 
\begin{equation}\label{first Dirichlet norm}
\|u\|_{D^{1,p}(Y)} = \inf_{g}\|g\|_{L^{p}(Y)},
\end{equation}
with the infimum taken over all upper gradients $g$ of $u$. We then write $\t{D}^{1,p}(Y)$ for the collection of functions such that $\|u\|_{D^{1,p}(Y)} < \infty$, which can equivalently be thought of as the collection of functions $u$ with a $p$-integrable upper gradient. We again note that \eqref{first Newton norm} and \eqref{first Dirichlet norm} actually define seminorms on $\t{N}^{1,p}(Y)$ and $\t{D}^{1,p}(Y)$ and one must pass to a quotient to obtain a true norm. This issue will not be relevant to any of the statements we make in this first section. 

We can now state our main theorem. Given $p \geq 1$ and $0 < \theta < 1$, the uniformized hyperbolic filling $\bar{X}_{\rho}$ is considered to be equipped with the uniformized measure $\mu_{\beta}$ for the specific parameter value $\beta = p(1-\theta)$. We refer to Section \ref{sec:lift} for a precise description of the measure $\mu_{\beta}$ for each $\beta > 0$. We warn the reader that our definition of $\beta$ is slightly different than the one used in \cite{BBS21}: our $\beta$ corresponds to $\beta/\epsilon$ in their work. 

The linear operators below are maps between seminormed spaces, and should be understood as being bounded with respect to these seminorms; to simplify terminology we will generally refer to the seminorms \eqref{Besov norm}, \eqref{first Newton norm}, and \eqref{first Dirichlet norm} as norms on $\t{B}^{\theta}_{p}(Z)$, $\t{N}^{1,p}(Y)$, and $\t{D}^{1,p}(Y)$ respectively, even though one must pass to a quotient of these spaces to actually obtain a norm. The expression \eqref{check Besov norm} does define a genuine norm on $\ch{B}^{\theta}_{p}(Z)$, however. These matters are treated in greater detail in Section \ref{sec:capacities}. 

\begin{thm}\label{thm:extendtrace}
There are bounded linear operators $T: \t{D}^{1,p}(X_{\rho}) \rightarrow \t{B}^{\theta}_{p}(Z)$ and $P: \t{B}^{\theta}_{p}(Z) \rightarrow \t{D}^{1,p}(\bar{X}_{\rho})$ such that for each $f \in \t{B}^{\theta}_{p}(Z)$ we have $T(Pf) = f$ $\nu$-a.e., i.e., $T \circ P$ is the identity on $\t{B}^{\theta}_{p}(Z)$.

Furthermore $T$ restricts to a bounded linear operator $T: \t{N}^{1,p}(X_{\rho}) \rightarrow \ch{B}^{\theta}_{p}(Z)$ and there is a truncation $P_{0}$ of $P$ that defines a bounded linear operator $P_{0}: \ch{B}^{\theta}_{p}(Z) \rightarrow \t{N}^{1,p}(\bar{X}_{\rho})$ with $T \circ P_{0}$ being the identity on $\ch{B}^{\theta}_{p}(Z)$. 
\end{thm}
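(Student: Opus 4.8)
The plan is to follow the template of Bj\"orn-Bj\"orn-Shanmugalingam \cite{BBS21} but adapted to the Busemann-uniformized hyperbolic filling. The key structural input we may rely on is that $(\bar X_{\rho}, d_{\rho}, \mu_{\beta})$ is doubling and supports a $1$-Poincar\'e inequality, and that $Z = \p X_{\rho}$ is isometrically embedded with $\mu_{\beta}(\p X_{\rho}) = 0$; the noncompactness of $Z$ forces us to work with $\ch{B}^{\theta}_{p}$ in place of $\t{B}^{\theta}_{p}$ for the Newtonian statements, which is exactly the dichotomy reflected in the theorem.

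First I would construct the trace operator $T$. For $u \in \t D^{1,p}(X_{\rho})$, define $Tu(\zeta)$ at $\zeta \in Z$ as the limit of averages $\ddashint_{B_{\rho}(\zeta, r) \cap X_{\rho}} u \, d\mu_{\beta}$ as $r \to 0$, along the shadow of balls shrinking toward the boundary point; the doubling/Poincar\'e structure guarantees this limit exists $\nu$-a.e. and defines a measurable function. The core estimate is the trace inequality $\|Tu\|_{B^{\theta}_{p}(Z)}^{p} \lesssim \|g_u\|_{L^p(X_{\rho})}^p$, proved by a telescoping/chaining argument over a Whitney-type decomposition of $X_{\rho}$ near the boundary: one compares $Tu(x)$ and $Tu(y)$ through a chain of balls connecting the two shadows, bounds each increment by a Poincar\'e inequality, and then sums using the precise exponent relation $\beta = p(1-\theta)$ together with the comparison between $\mu_{\beta}$-measures of uniformized balls and $\nu$-measures of boundary balls (this is where the $\rho$-uniformization geometry — $d_\rho \asymp$ exponential of negative distance in $X$ — enters, analogously to \cite[Sections 9--10]{BBS21}). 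That $T$ maps $\t N^{1,p}(X_{\rho})$ into $\ch B^{\theta}_{p}(Z)$ additionally requires the bound $\|Tu\|_{L^p(Z)} \lesssim \|u\|_{N^{1,p}(X_{\rho})}$, which follows from the same averaging together with a boundary trace/maximal function estimate for Newtonian functions on the doubling Poincar\'e space $\bar X_{\rho}$.

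Next I would build the extension operator $P$. Given $f \in \t B^{\theta}_{p}(Z)$, define $Pf$ on $X_{\rho}$ by a discrete convolution against a Whitney partition of unity: $Pf = \sum_j \phi_j \, f_{B_j}$, where $\{B_j\}$ is a Whitney covering of $X_{\rho}$ by balls whose radius is comparable to their distance to $Z$, $\{\phi_j\}$ a subordinate Lipschitz partition of unity, and $f_{B_j}$ the $\nu$-average of $f$ over the boundary shadow of $B_j$; set $Pf = f$ on $Z$. Standard Whitney-extension bookkeeping shows $Pf$ is locally Lipschitz on $X_{\rho}$ with an explicit upper gradient supported on overlaps, and the key energy estimate $\|g_{Pf}\|_{L^p(X_{\rho})}^p \lesssim \|f\|_{B^{\theta}_{p}(Z)}^p$ is obtained by expanding the pointwise oscillation of $Pf$ on each Whitney ball in terms of differences $|f_{B_j} - f_{B_k}|$ over neighboring cells, rewriting these as double integrals of $|f(x)-f(y)|$, and reassembling into the Besov double integral — once more using $\beta = p(1-\theta)$ to match the powers of the radii. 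To land in $\t D^{1,p}(\bar X_{\rho})$ rather than just $\t D^{1,p}(X_{\rho})$ one checks that $Pf$ has the right upper gradient across $\p X_{\rho}$, which is automatic because $\mu_{\beta}(\p X_{\rho})=0$ and curves hitting the boundary have their behavior controlled by the Whitney estimates; the truncation $P_0$, obtained by replacing $f$ by $\max(\min(f,N),-N)$ adapted cellwise (or rather by the standard truncation of the extension), gives an $L^p$ bound $\|P_0 f\|_{L^p(\bar X_{\rho})} \lesssim \|f\|_{\ch B^{\theta}_{p}(Z)}$ and hence maps $\ch B^{\theta}_{p}(Z)$ into $\t N^{1,p}(\bar X_{\rho})$.

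Finally, the identity $T \circ P = \mathrm{id}$ (and $T \circ P_0 = \mathrm{id}$) follows from a Lebesgue-point argument: for $\nu$-a.e.\ $\zeta \in Z$, $\zeta$ is a Lebesgue point of $f$ with respect to the $B^{\theta}_p$-structure, and since $Pf$ near $\zeta$ is a convex combination of averages $f_{B_j}$ over shadows all converging to $\zeta$, the boundary averages defining $T(Pf)(\zeta)$ converge to $f(\zeta)$. The main obstacle I anticipate is the first energy/trace estimate in the noncompact setting: in \cite{BBS21} compactness is used both to get a clean global Whitney decomposition and to control the measure ratios uniformly, whereas here one must work with the Busemann-uniformization of \cite{Bu20} and the uniformly-locally-doubling lift $\mu$ of \cite{Bu22}, so the comparison $\mu_\beta(\text{shadow ball}) \asymp r^{\beta}\, \nu(\text{boundary ball})$ must be established with constants that do not degenerate as one moves toward the distinguished boundary point $\omega$ (equivalently, out to infinity in $Z$). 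Verifying that the constants in the Poincar\'e chaining are genuinely uniform over all scales and locations — using properness of $X$ and the global geometry of the filling rather than compactness — is the technical heart of the argument.
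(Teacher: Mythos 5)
The overall architecture you propose — Whitney-type discrete-convolution extension $P$, trace $T$ by boundary limits, and $T\circ P=\mathrm{id}$ by a Lebesgue-point argument — is the same as the paper's, and the crucial measure comparison $\mu_\beta(\hat B)\asymp r^\beta\nu(B)$ you flag as the technical heart is indeed established (Lemma \ref{hull measure}). But there is one genuine error and one significant gap.

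The error is in the construction of the truncation $P_0$. You propose to "replace $f$ by $\max(\min(f,N),-N)$" or "truncate the extension." Neither works. The obstruction to $Pf$ lying in $\t N^{1,p}(\bar X_\rho)$ is not that $|Pf|$ is too large pointwise but that $\mu_\beta(X_\rho)=\infty$ (Proposition \ref{infinite measure}): for a constant function $f\equiv 1$ we get $Pf\equiv 1$, untouched by any value truncation, yet $\|Pf\|_{L^p(\bar X_\rho)}=\infty$. The infinite mass sits at heights $h\to-\infty$, i.e.\ deep in the interior far from $\partial X_\rho$. The correct $P_0$ is therefore a \emph{spatial} cutoff: $P_0f=\xi_0\,Pf$ where $\xi_0$ vanishes on $X_{\le 0}$, equals $1$ on $X_{\ge 1}\cup Z$, and interpolates linearly on the crossing edges. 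The upper gradient of $\xi_0$ is a bounded multiple of $\chi_{X_{\ge 0}}$, and the resulting estimate (Proposition \ref{extension Besov}) uses the geometric decay $\|Pf\|_{L^p(X_{\ge n})}\ls \alpha^{-\beta n/p}\|f\|_{L^p(Z)}$ to control the new term from the product rule. Without this specific form of truncation the bounded map $P_0:\ch B^\theta_p(Z)\to\t N^{1,p}(\bar X_\rho)$ does not exist.

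The gap is in your definition of $T$. You define $Tu(\zeta)$ as the limit of averages $\dashint_{B_\rho(\zeta,r)\cap X_\rho}u\,d\mu_\beta$, asserting the limit exists $\nu$-a.e.\ by "doubling/Poincar\'e structure." But $u\in\t D^{1,p}(X_\rho)$ is defined only on $X_\rho$, and the existence of such boundary limits for $\nu$-a.e.\ $\zeta$ is not immediate: it requires first extending $u$ to $\bar X_\rho$ (the paper relies on J.\ Bj\"orn--Shanmugalingam \cite{BJS07} for the $N^{1,p}$ case and then a cutoff-exhaustion argument for the $D^{1,p}$ case, Propositions \ref{trace characterization} and \ref{Dirichlet characterization}) and then invoking the capacity-Lebesgue-point theorem for Newtonian functions on $\bar X_\rho$. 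The paper instead \emph{defines} $T$ by the discrete sums $T_nu=\sum_{v\in V_n}u(v)\psi_v$ and proves directly that $\{T_nu\}$ is Cauchy in $L^p(B)$ via the telescoping edge estimate (Proposition \ref{Lp ball trace}); that this $Tu$ agrees $\nu$-a.e.\ with the boundary average limit is then a \emph{theorem}, not a definition. Your route would work once the extension-to-$\bar X_\rho$ step is explicitly inserted, but as written the existence of the defining limit is unjustified.
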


The operators $T$ and $P$ are known as \emph{trace} and \emph{extension} operators respectively. We refer to Sections \ref{sec:trace} and \ref{sec:extend} for a description of these operators, as well as a description of the parameters on which the norms of these linear operators depend. The truncation $P_{0}$ of $P$ is defined in Section \ref{sec:extend}; we remark that $P$ itself does not define a bounded linear operator from $\ch{B}^{\theta}_{p}(Z)$ to $\t{N}^{1,p}(\bar{X}_{\rho})$, as an adaptation of the arguments of Proposition \ref{infinite measure} shows.  We emphasize that the domain of $T$ consists of functions defined on $X_{\rho}$, not on $\bar{X}_{\rho}$; even though $\p X_{\rho}$ has measure zero with respect to $\mu_{\beta}$, it is not at all obvious that functions in $\t{D}^{1,p}(X_{\rho})$ and $\t{N}^{1,p}(X_{\rho})$ can naturally be extended to functions in $\t{D}^{1,p}(\bar{X}_{\rho})$ and $\t{N}^{1,p}(\bar{X}_{\rho})$. For this we rely on work of J. Bj\"orn and Shanmugalingam \cite{BJS07}. Theorem \ref{thm:extendtrace} can be interpreted as saying that the trace space of $\t{D}^{1,p}(X_{\rho})$ on $Z$ is $\t{B}^{\theta}_{p}(Z)$, and the trace space of $\t{N}^{1,p}(X_{\rho})$ on $Z$ is $\ch{B}^{\theta}_{p}(Z) = L^{p}(Z) \cap \t{B}^{\theta}_{p}(Z)$. Results of this type in a closely related setting were obtained previously for Triebel-Lizorkin spaces by Bonk-Saksman-Soto \cite{BSS18} (see \cite{S16} for an adaptation of these results to Besov spaces). 

The proof of Theorem \ref{thm:extendtrace} can be modified to derive yet another nontrivial characterization of Besov spaces, this time in terms of a concept that we will refer to as \emph{hyperbolic upper gradients} (see Definition \ref{define hyp upper gradient}). This characterization, given in Proposition \ref{characterize hyp upper}, passes through the pointwise characterization of Besov spaces using  fractional Haj\l asz gradients introduced by Koskela-Yang-Zhou \cite{KYZ11}. We also derive several Poincar\'e-type inequalities controlling integrals of functions over balls in $Z$ using integrals of upper gradients of extensions of these functions over corresponding balls in $X_{\rho}$, see Propositions \ref{Poincare type inequality}, \ref{extension hyperbolic Poincare}, and \ref{hyperbolic Poincare}.

Theorem \ref{thm:extendtrace} yields numerous corollaries for the Besov space $\t{B}^{\theta}_{p}(Z)$, most of which follow from this theorem in essentially the same way that the corollaries of \cite[Corollary 1.2]{BBS21} follow from \cite[Theorem 1.1]{BBS21}. We split these consequences into three corollaries. We refer to Section \ref{subsec:Besov} for a precise definition of the Besov capacity referenced in Corollaries \ref{cor:quasicontinuous} and \ref{cor:Lebesgue points}, as well as a precise definition of quasicontinuity with respect to the Besov capacity, which can be considered to be a strong refinement of Lusin's theorem since the Besov capacity is always absolutely continuous with respect to the measure $\nu$ on $Z$ (see Propositions \ref{absolutely continuous capacity} and \ref{compute single capacity}). The notion of $L^{p}(Z)$-Lebesgue points for a given $p \geq 1$ is standard and is defined in \eqref{Lebesgue point}. For an extensive discussion of predecessors to these corollaries we refer to the discussion after \cite[Corollary 1.2]{BBS21}. With the exception of the Lebesgue point assertion in Corollary \ref{cor:quasicontinuous}, all of these corollaries were established in the case of compact $Z$ in \cite{BBS21}; however the generalization from compact $Z$ to noncompact $Z$ is highly nontrivial due to the nonlocal nature of the Besov norm. For each of the corollaries below we always assume that $p \geq 1$ and $0 < \theta < 1$. 


\begin{cor}\label{cor:quasicontinuous}
Each function in $\t{B}^{\theta}_{p}(Z)$ has a representative which is quasicontinuous with respect to the Besov capacity. This representative has $L^{p}(Z)$-Lebesgue points quasieverywhere with respect to the Besov capacity. 
\end{cor}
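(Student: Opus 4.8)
The plan is to transfer the corresponding quasicontinuity and Lebesgue point results for Newtonian functions on the uniformized hyperbolic filling $\bar{X}_\rho$ across the trace and extension operators of Theorem \ref{thm:extendtrace}. First I would recall the general machinery: since $(\bar{X}_\rho, d_\rho, \mu_\beta)$ is a complete doubling metric measure space supporting a $1$-Poincar\'e inequality (as established in Section \ref{sec:lift}), every function in $\t{N}^{1,p}(\bar{X}_\rho)$, and more generally in $\t{D}^{1,p}(\bar{X}_\rho)$, is quasicontinuous with respect to the Sobolev $p$-capacity on $\bar{X}_\rho$ and has $L^p$-Lebesgue points quasieverywhere with respect to that capacity — these are standard consequences of the Poincar\'e inequality and doubling property. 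Given $f \in \t{B}^\theta_p(Z)$, take its extension $Pf \in \t{D}^{1,p}(\bar{X}_\rho)$; after modifying $Pf$ on a set of $p$-capacity zero we may assume $Pf$ is quasicontinuous and has $L^p$-Lebesgue points off a set $E \subset \bar{X}_\rho$ of $p$-capacity zero.

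The key step is then a comparison between the Sobolev $p$-capacity on $\bar{X}_\rho$, restricted to the boundary $Z = \p X_\rho$, and the Besov capacity on $Z$. I would prove — or cite from the Besov capacity discussion promised in Section \ref{subsec:Besov} — that the trace of the $p$-capacity on $\bar X_\rho$ dominates (up to a multiplicative constant) the Besov capacity of $Z$: if $A \subset Z$ has $\mathrm{cap}_p^{\bar X_\rho}(A) = 0$ then $A$ has Besov capacity zero. This should follow from the boundedness of the extension operator $P$: given a candidate Newtonian function $v$ on $\bar X_\rho$ with $v \geq 1$ near $A$ and small Dirichlet energy, its restriction $v|_Z = T(v|_{X_\rho})$ is a Besov function bounded below by $1$ near $A$ with Besov norm controlled by $\|v\|_{\t{D}^{1,p}(X_\rho)}$, by the boundedness of $T$; conversely, a Besov test function for $A$ extends via $P$ to a Newtonian test function for $A \subset \bar X_\rho$, so the two capacities are in fact comparable on subsets of $Z$. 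Either direction of this comparison that is needed can be extracted from Theorem \ref{thm:extendtrace} together with the Banach space structure.

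With this comparison in hand, set $u = T(Pf|_{X_\rho}) = f$ $\nu$-a.e. on $Z$. I would take $u$ itself (i.e., the restriction of the quasicontinuous representative of $Pf$ to $Z$, adjusted on the $p$-capacity-null set $E \cap Z$) to be the sought representative of $f$. Its continuity outside open sets of arbitrarily small Besov capacity follows because $Pf$ is continuous outside open sets of arbitrarily small $p$-capacity in $\bar X_\rho$, and the trace of such a set on $Z$ has small Besov capacity by the comparison; one checks that restriction of a continuous function to $Z$ is continuous in the subspace topology. For the Lebesgue point assertion, a point $z \in Z$ at which $Pf$ has an $L^p(\bar X_\rho)$-Lebesgue point — true for all $z \in Z \setminus E$, hence for $\mathrm{cap}_p^{\bar X_\rho}$-a.e., hence Besov-capacity-a.e. $z$ — is an $L^p(Z)$-Lebesgue point of $u$, using that $\mu_\beta$ restricted to boundary balls $B(z,r) \cap X_\rho$ is comparable to $r^\beta \nu(B(z,r))$ up to the doubling and uniformization estimates of Section \ref{sec:lift}, so that averages of $|Pf - u(z)|^p$ over $\bar X_\rho$-balls centered at $z$ control averages of $|u - u(z)|^p$ over the corresponding $Z$-balls by a Fubini-type argument over the "Whitney" decomposition of $\bar X_\rho$ near $z$.

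I expect the main obstacle to be the capacity comparison on $Z$, specifically showing that $p$-capacity-null subsets of $\p X_\rho$ are Besov-capacity-null: this requires knowing that an extension $P\varphi$ of a good Besov test function $\varphi$ for $A \subset Z$ serves as a legitimate test function for $A$ viewed inside $\bar X_\rho$, which in turn needs the trace $T(P\varphi) = \varphi$ to be compared pointwise (not merely $\nu$-a.e.) near $A$ with the boundary values of the quasicontinuous representative of $P\varphi$ — a subtlety the noncompactness of $Z$ makes delicate, since the relevant estimates in \cite{BBS21} were global. I would handle this by exhausting $Z$ by balls and using the locality of the trace operator together with the uniform local doubling of $\mu$ to reduce to the compact-boundary estimates, and by invoking the quasicontinuity of $P\varphi$ itself (from the first part of the argument) to identify its boundary trace with $\varphi$ up to Besov capacity.
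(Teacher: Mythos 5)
Your plan for the quasicontinuity half is essentially the paper's: extend $f$ to $Pf \in \t{D}^{1,p}(\bar X_\rho)$, invoke quasicontinuity of Dirichlet functions via Theorem \ref{outer capacity}, restrict the small exceptional open sets to $Z$, and pull the estimate back to Besov capacity via the two-sided comparison $C_p^{\bar X_\rho}(G) \asymp C_{\check B^\theta_p}^Z(G)$ for $G \subset Z$, which is exactly Proposition \ref{compute single capacity}. The ``obstacle'' you flag is real but the paper resolves it precisely as you sketch (truncate the Besov test function, note that Lebesgue points of a function constantly equal to $1$ near $G$ recover pointwise control of $P_0\varphi$ via Proposition \ref{q Lebesgue points}, then use boundedness of $P_0$ in one direction and the trace estimate plus outer capacity in the other).

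There is a genuine gap in the Lebesgue point half. You claim that averages of $|Pf - u(z)|^p$ with respect to $\mu_\beta$ over $\bar X_\rho$-balls ``control averages of $|u - u(z)|^p$ over the corresponding $Z$-balls by a Fubini-type argument over the Whitney decomposition.'' But $\mu_\beta(Z)=0$, so the interior $\mu_\beta$-averages of $Pf$ carry no direct information about the boundary oscillation of $f$: the Fubini/Whitney comparison (which is inequality \eqref{ball integral estimate} in the paper) goes in the opposite direction — it bounds interior $\mu_\beta$-averages of $Pf-a$ by boundary $\nu$-averages of $f-a$, because $Pf$ is \emph{defined} by averaging $f$. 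There is no elementary reverse implication; $f$ could in principle oscillate wildly while $Pf$ remains tame. What actually closes the argument is an additional ingredient you do not use: the quasieverywhere vanishing of scaled averages of the upper gradient, $\lim_{r\to 0^+} r^p\dashint_{B_\rho(z,r)} g^p\,d\mu_\beta = 0$ for $C_p^{\bar X_\rho}$-q.e.\ $z\in Z$ (from \cite[Lemma 9.2.4]{HKST}), combined with the Poincar\'e-type inequality of Proposition \ref{extension hyperbolic Poincare}, which gives $\dashint_B |f - Pf_{\hat B}|^p\,d\nu \ls r^p\dashint_{C\hat B} g^p\,d\mu_\beta$. One then writes $|\check f - \check f(z)| \le |\check f - Pf_{\hat B}| + |Pf_{\hat B} - \check f(z)|$, controls the first summand by the Poincar\'e-type inequality and the vanishing of the scaled gradient averages, and the second by the interior $L^p$-Lebesgue point condition for $Pf$. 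Without the upper gradient ingredient and the Poincar\'e-type inequality, your sketch does not yield the boundary Lebesgue point.
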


The existence of Lebesgue points quasieverywhere and the quasicontinuity assertion of Corollary \ref{cor:quasicontinuous} were previously obtained by Heikkinen-Koskela-Tuominen \cite[Theorem 8.1]{HKT17}. Our result shows for the Besov spaces $\t{B}^{\theta}_{p}(Z)$ that we consider that one can obtain a stronger $L^p$-Lebesgue point result compared to the $L^1$-Lebesgue point result obtained in \cite{HKT17}. The Lebesgue point result we obtain here is new even in the setting of compact $Z$ considered in \cite{BBS21}, since their Lebesgue point result  in \cite[Corollary 1.2]{BBS21} is only obtained under an additional reverse-doubling hypothesis on the measure $\nu$. 

Our second corollary concerns embeddings of Besov spaces into H\"older spaces. For this we will use the notion of \emph{relative lower volume decay of order $Q$} for a doubling measure $\nu$, defined by the inequality \eqref{lower volume} for a given exponent $Q > 0$. The exponent $Q$ is also known as a \emph{doubling dimension} for $\nu$; we remark that the relative volume decay estimate always holds with $Q = \log_{2} C_{\nu}$, where $C_{\nu}$ is the constant in the doubling inequality for $\nu$, but it can also hold for smaller $Q$. The notation $Q_{\beta}$ below is motivated by the equality $\beta = p(1-\theta)$ that we assumed in Theorem \ref{thm:extendtrace}. See also Lemma \ref{doubling dimension}.

\begin{cor}\label{cor:holder embed}
Suppose that $\nu$ has relative lower volume decay of order $Q > 0$. Set $Q_{\beta} = \max\{1,Q+p(1-\theta)\}$ and assume that $p > Q_{\beta}$. Then every function in $\t{B}^{\theta}_{p}(Z)$ has a representative that is locally $(1-Q_{\beta}/p)$-H\"older continuous. 
\end{cor}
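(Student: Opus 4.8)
The plan is to deduce the H\"older embedding from Theorem \ref{thm:extendtrace} together with standard properties of Newtonian functions on the doubling, $1$-Poincar\'e space $(\bar X_\rho, d_\rho, \mu_\beta)$. The core heuristic is that on a $Q'$-regular-type doubling space supporting a $p$-Poincar\'e inequality with $p$ exceeding the doubling dimension, Newtonian functions satisfy a Morrey--Sobolev estimate and hence have $(1-Q'/p)$-H\"older representatives; we then pull this regularity back to $Z = \p X_\rho$ through the extension operator $P$. Concretely, given $f \in \t B^\theta_p(Z)$, set $u = Pf \in \t D^{1,p}(\bar X_\rho)$, which by Theorem \ref{thm:extendtrace} extends $f$ in the sense that $Tu = f$ $\nu$-a.e. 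The first step is to pin down the doubling dimension of $\mu_\beta$ on $\bar X_\rho$: by the construction in Section \ref{sec:lift} and the volume-growth computation referenced in Lemma \ref{doubling dimension}, if $\nu$ has relative lower volume decay of order $Q$ then $\mu_\beta$ has relative lower volume decay of order $Q_\beta = \max\{1, Q + \beta\} = \max\{1, Q + p(1-\theta)\}$ near the boundary, the extra $\beta = p(1-\theta)$ coming from the weight $\rho^\beta$ in the definition of the uniformized measure. This is where I expect to invoke the precise form of Lemma \ref{doubling dimension}.

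The second step is the interior/boundary regularity statement. Since $(\bar X_\rho, d_\rho, \mu_\beta)$ is doubling and supports a $1$-Poincar\'e inequality (hence a $p$-Poincar\'e inequality), and $p > Q_\beta$ with $Q_\beta$ a doubling dimension, the standard Morrey-type embedding for Newtonian--Dirichlet functions (see e.g. the Sobolev embedding theory in the monograph literature on analysis in metric spaces, or the local estimates used in \cite{BBS21}) gives that every $u \in \t D^{1,p}_{\loc}(\bar X_\rho)$ has a representative $\tilde u$ satisfying, for balls $B = B(\xi, r)$ with $\xi \in \bar X_\rho$,
\begin{equation}\label{morrey-est}
|\tilde u(\zeta) - \tilde u(\eta)| \ls r^{\,1 - Q_\beta/p}\left(\int_{2B} g_u^{\,p}\, d\mu_\beta\right)^{1/p}
\end{equation}
for $\zeta, \eta \in B$, where $g_u$ is a $p$-integrable upper gradient of $u$; in particular $\tilde u$ is locally $(1-Q_\beta/p)$-H\"older continuous on $\bar X_\rho$, and this H\"older continuity persists up to and including the boundary $\p X_\rho = Z$ since the estimate is uniform over all of $\bar X_\rho$. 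The third step is to identify the boundary trace of this continuous representative with the Besov function $f$: because $\tilde u$ is continuous on $\bar X_\rho$ and $\tilde u|_Z$ is a genuine pointwise restriction, while simultaneously $Tu = f$ $\nu$-a.e.\ and the trace operator $T$ (as constructed in Section \ref{sec:trace}, via Lebesgue-point/averaging limits along the filling) must agree with the continuous boundary values of a continuous Newtonian function, we conclude $\tilde u|_Z = f$ $\nu$-a.e. Thus $\tilde u|_Z$ is the desired locally $(1-Q_\beta/p)$-H\"older representative of $f$, with the $d_\rho$-metric on $Z$ being biLipschitz to the original metric $d$ after the change of coordinates, so the H\"older exponent is preserved.

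The main obstacle is the third step: matching the continuous boundary restriction of $\tilde u$ with the trace $Tf$ defined analytically. In the compact setting of \cite{BBS21} this is handled because the trace is characterized as a limit of averages over shadows of balls, and one must check that for a continuous Newtonian function these averages converge to the pointwise boundary value; the non-compactness here means one must ensure the relevant estimates are uniform and that no mass escapes to the special boundary point $\omega$. A secondary technical point is that the Morrey estimate \eqref{morrey-est} is a local statement, so to get local H\"older continuity on $Z$ one only needs it on balls of bounded radius, which is fine; but one should be careful that the implied constant in \eqref{morrey-est} does not degenerate as $\xi \to \p X_\rho$, which is guaranteed by the fact that $(\bar X_\rho, d_\rho, \mu_\beta)$ is globally doubling and globally $1$-Poincar\'e (a consequence of the uniformization results of \cite{Bu22} quoted in Section \ref{sec:lift}). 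Finally, one records that the exponent $1 - Q_\beta/p$ is positive precisely under the hypothesis $p > Q_\beta$, and that when $Q + p(1-\theta) < 1$ the truncation to $Q_\beta = 1$ is forced because a metric graph has topological (and Hausdorff) dimension at least $1$, so no better H\"older exponent than $1 - 1/p$ can come out of this method; this explains the form of $Q_\beta$.
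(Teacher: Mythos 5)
Your proposal follows essentially the same route as the paper's proof of Proposition \ref{prop:holder embed}: use Lemma \ref{doubling dimension} to pin down the relative lower volume decay order $Q_{\beta}$ of $\mu_{\beta}$, invoke the Morrey embedding theorem on the geodesic doubling $1$-Poincar\'e space $(\bar{X}_{\rho},d_{\rho},\mu_{\beta})$ for $u = Pf$, then restrict the resulting oscillation estimate to pairs of boundary points $x,y \in Z$. The one place where your account is slightly off-target is in treating the identification of boundary values with $f$ as the ``main obstacle'': in the paper's framework this is already resolved \emph{before} the Morrey step, since $Pf$ is a pointwise-defined function on all of $\bar{X}_{\rho}$ (constructed by linear interpolation on edges of $X_{\rho}$ and by $\limsup$-averages on $Z$), and Propositions \ref{extend to trace} and \ref{q Lebesgue points} already show $Pf|_{Z} = f$ $\nu$-a.e.\ and that Lebesgue points of $f$ on $Z$ are Lebesgue points of $Pf$ on $\bar{X}_{\rho}$. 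So there is no separate ``continuous representative'' $\tilde{u}$ to reconcile with the trace; one applies \eqref{proto holder} directly to $u = Pf$ on $B' = \hat{B}$ and specializes to $x,y \in Z = \p X_{\rho}$. With that correction the argument is complete and matches the paper's.
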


Lastly we show that Corollary \ref{cor:quasicontinuous} can be improved when $\nu$ additionally satisfies a \emph{reverse-doubling} property \eqref{reverse doubling}, which holds in particular when $Z$ is uniformly perfect \cite[Lemma 4.1]{MO19}.

\begin{cor}\label{cor:Lebesgue points}
Suppose that $\nu$ has relative lower volume decay of order $Q > 0$ and that $\nu$ satisfies the reverse-doubling property \eqref{reverse doubling}. We assume further that $p \theta < Q$ and set $Q_{*} = Qp/(Q-p\theta)$. Then functions in $\t{B}^{\theta}_{p}(Z)$ belong to $L^{Q_{*}}_{\loc}(Z)$ and have representatives which have $L^{Q_{*}}(Z)$-Lebesgue points quasieverywhere with respect to the Besov capacity. 
\end{cor}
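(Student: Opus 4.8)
The plan is to derive the corollary from Theorem \ref{thm:extendtrace} and the Sobolev-type theory on the uniformized filling, in the spirit of \cite[Section 13]{BBS21} but paying attention to the noncompactness of $Z$. Both assertions are local on $Z$: a set of Besov capacity zero is contained in a countable union of balls, each contributing a capacity-zero piece, and $L^{Q_{*}}_{\loc}(Z)$ is by definition a local class. I would therefore first multiply $f \in \tilde B^{\theta}_{p}(Z)$ by a compactly supported Lipschitz cutoff $\eta$; since $\tilde B^{\theta}_{p}(Z) \subset L^{p}_{\loc}(Z)$, a direct estimate on \eqref{Besov norm} shows $\eta f \in \ch B^{\theta}_{p}(Z)$, so it suffices to treat $f \in \ch B^{\theta}_{p}(Z)$ with $\supp f$ contained in a fixed ball $B_{0} \subset Z$. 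By Theorem \ref{thm:extendtrace} the function $u := P_{0} f$ lies in $\tilde N^{1,p}(\bar X_{\rho})$ and satisfies $Tu = f$ $\nu$-a.e., and $(\bar X_{\rho},d_{\rho},\mu_{\beta})$ with $\beta = p(1-\theta)$ is a complete doubling metric measure space supporting a $1$-Poincar\'e inequality, with $Z = \p X_{\rho}$ carrying $\nu$ and $\mu_{\beta}(Z)=0$.

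The second ingredient is the dimensional bookkeeping. By Lemma \ref{doubling dimension}, the hypothesis that $\nu$ has relative lower volume decay of order $Q$ gives that $\mu_{\beta}$ has relative lower volume decay of order $s := Q + \beta = Q + p(1-\theta)$ on $\bar X_{\rho}$, and the assumption $p\theta < Q$ is precisely $s > p$, so we are in the subcritical regime for the Sobolev embedding. The reverse-doubling hypothesis on $\nu$, combined with the comparison $\mu_{\beta}(B(\xi,r)) \approx r^{\beta}\,\nu(B(\xi,r)\cap Z)$ for $\xi \in Z$ and small $r$ coming from the construction in Section \ref{sec:lift}, shows that $Z$ is a closed subset of $\bar X_{\rho}$ of codimension $\beta$ relative to $\mu_{\beta}$, with $\nu$ comparable to $Q$-dimensional content on $Z$. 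Feeding this into the trace Sobolev embedding for Newtonian functions on a doubling $p$-PI space of dimension $s$ onto a closed subset of codimension $\beta$ (cf. the embeddings used in \cite[Section 13]{BBS21}) yields a bounded map $\tilde N^{1,p}(\bar X_{\rho}) \to L^{q}(Z,\nu)$ with $q = (s-\beta)p/(s-p) = Qp/(Q-p\theta) = Q_{*}$; applied to $u = P_{0}f$ and using $Tu = f$ $\nu$-a.e. this gives $f \in L^{Q_{*}}(Z)$, and patching over a countable cover of $Z$ by balls gives $f \in L^{Q_{*}}_{\loc}(Z)$ in general.

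For the Lebesgue point assertion, recall that on a complete doubling $p$-PI space a Newtonian function has Lebesgue points quasieverywhere with respect to the Sobolev $p$-capacity, and in fact, by the self-improvement of the Sobolev inequality, quasi-every point $x$ satisfies $\lim_{r\to 0}\frac{1}{\mu_{\beta}(B(x,r))}\int_{B(x,r)} |u - u(x)|^{s^{*}}\,d\mu_{\beta} = 0$ with $s^{*} = sp/(s-p)$ (cf. \cite[Section 13]{BBS21} and the references therein). I would apply this to $u = P_{0}f$ on $\bar X_{\rho}$ and then carry out two transfers. First, "quasi-every $x \in \bar X_{\rho}$" passes to "quasieverywhere on $Z$ with respect to the Besov capacity" via the comparison between the Newtonian $p$-capacity on $\bar X_{\rho}$ and the Besov capacity on $Z$ coming out of $T$ and $P_{0}$ in Section \ref{subsec:Besov} (a Besov-capacity-null subset of $Z$ has Newtonian $p$-capacity zero in $\bar X_{\rho}$). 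Second, for such $\xi \in Z$ the interior averages over $B(\xi,r)\cap X_{\rho}$ must be converted into boundary averages over $B(\xi,r)\cap Z$: using the codimension comparison from Section \ref{sec:lift} and a layer-cake/Fubini argument on thin collars of $Z$ — the same mechanism that identifies $Tu(\xi)$ with $\lim_{r\to 0}\frac{1}{\mu_{\beta}(B(\xi,r))}\int_{B(\xi,r)\cap X_{\rho}} u\, d\mu_{\beta}$ — one obtains $\lim_{r \to 0}\frac{1}{\nu(B(\xi,r)\cap Z)}\int_{B(\xi,r)\cap Z}|f - Tu(\xi)|^{Q_{*}}\,d\nu = 0$, the drop from the interior exponent $s^{*}$ to $Q_{*}$ matching the codimension-$\beta$ trace conjugate computed above. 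Together with $Tu = f$ $\nu$-a.e. and the localization, this exhibits the required representative (which agrees with the quasicontinuous representative from Corollary \ref{cor:quasicontinuous}) with $L^{Q_{*}}(Z)$-Lebesgue points quasieverywhere.

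The main obstacle is the second transfer above: matching the interior $L^{s^{*}}$-averages on $\bar X_{\rho}$ with the boundary $L^{Q_{*}}$-averages on $Z$, uniformly down to scale zero and — crucially — uniformly in the proximity of $\xi$ to the distinguished boundary point $\omega$, where the uniformizing density degenerates. Because the Besov norm is nonlocal one cannot reduce to a compact piece of $Z$ and quote \cite{BBS21} directly; the needed volume and collar estimates must instead be extracted from the noncompact constructions of Sections \ref{sec:fillings} and \ref{sec:lift}. Obtaining exactly the exponent $Q_{*}$, rather than a weaker one, further requires the sharp form of the trace Sobolev embedding with codimension $\beta = p(1-\theta)$, and it is here that the reverse-doubling hypothesis on $\nu$ is genuinely used, since it supplies the lower mass bounds on which that sharp embedding depends. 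The remaining points — the localization estimate for $\ch B^{\theta}_{p}$ and the capacity comparison between $\bar X_{\rho}$ and $Z$ — are routine given the machinery of Sections \ref{sec:extend} and \ref{subsec:Besov}.
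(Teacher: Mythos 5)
The proposal has the right overall shape — extend $f$ to the filling, establish quasi-everywhere Lebesgue-point-type facts upstairs, transfer the exceptional set downstairs via the capacity comparison, then convert interior averages to boundary averages — but it stalls exactly where you flag the ``main obstacle,'' and that is not a deferrable technicality: it is the heart of the proof. The missing tool is the two-weighted Poincar\'e inequality of Bj\"orn--Ka\l amajska (invoked in the paper as Proposition \ref{two-weighted}), which gives
\[
\left(\int_{B}|u-u_{\hat{B}}|^{Q_{*}}\,d\nu\right)^{1/Q_{*}} \ls \Theta_{Q_{*}}(r)\left(\int_{2\hat{B}}g^{p}\,d\mu_{\beta}\right)^{1/p}
\]
directly, linking the boundary $L^{Q_{*}}$-oscillation of $u$ over $B\subset Z$ (with respect to $\nu$) to the interior $L^{p}$-gradient integral over $2\hat{B}$ (with respect to $\mu_{\beta}$), with the sharp exponent $Q_{*}$ emerging from the explicit computation $\Theta_{Q_{*}}(r)\,\mu_{\beta}(2\hat{B})^{1/p}/\nu(B)^{1/Q_{*}}\ls r$. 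Your proposed ``layer-cake/Fubini on thin collars'' is a gesture at a mechanism, not a worked argument; it is not evident that it produces the right exponent or that it is uniform as $z$ ranges over a noncompact $Z$, and you candidly say so yourself.

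Two further deviations are worth noting. First, the paper does not need the strong $L^{s^{*}}$-Lebesgue-point statement on $\bar{X}_{\rho}$ you reach for: it uses only the weaker facts that for $C_{p}^{\bar{X}_{\rho}}$-q.e.\ $z\in Z$ the $L^{1}$-averages of $u$ converge to $\ch{f}(z)$ (Proposition \ref{Dirichlet characterization} plus H\"older) and that $r^{p}\dashint_{B_{\rho}(z,r)}g^{p}\,d\mu_{\beta}\to 0$ q.e.\ (from \cite[Lemma 9.2.4]{HKST}), and feeds both into the two-weighted inequality. Second, your opening localization is unnecessary and introduces risk: the paper works with $u=Pf\in\t{D}^{1,p}(\bar{X}_{\rho})$ rather than $P_{0}f\in\t{N}^{1,p}(\bar{X}_{\rho})$, so there is no reduction to compactly supported $f\in\ch{B}^{\theta}_{p}(Z)$, and indeed the claim that $\eta f\in\ch{B}^{\theta}_{p}(Z)$ for a Lipschitz cutoff $\eta$ is not as immediate as you suggest precisely because the Besov energy is nonlocal. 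The dimensional bookkeeping ($Q_{\beta}=Q+\beta$, $p>Q_{\beta}$ vs.\ $p\theta<Q$, $Q_{*}=Qp/(Q-p\theta)$) and the first transfer (capacity comparison via Proposition \ref{compute single capacity}) you do identify correctly, and the $L^{Q_{*}}_{\loc}$ claim falls out of the same Poincar\'e estimate once it is in hand.
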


The H\"older embedding result in Corollary \ref{cor:holder embed} and the higher (local) integrability result in Corollary \ref{cor:Lebesgue points} have previously been obtained in the work of Mal\'y using the method of Haj\l asz gradients \cite[Corollary 3.18]{M17}. However the embedding results we obtain are actually sharper, as they control the local H\"older norm and local $L^{Q_{*}}$-norm by local integrals of upper gradients of the extension of the function to the uniformized hyperbolic filling $\bar{X}_{\rho}$ (which can be regarded as a localized version of the Besov energy) as opposed to the total Besov energy considered in \cite[Corollary 3.18]{M17}. See in particular Propositions \ref{prop:holder embed} and  \ref{hyperbolic Poincare}. When $Z$ is unbounded Mal\'y also obtains a global embedding result of $\t{B}^{\theta}_{p}(Z)$ into $L^{Q_{*}}(Z)$ under an additional lower mass bound on $\nu$, see \cite[(3.2), Corollary 3.18(i)]{M17}. The work of Mart\'in-Ortiz \cite{MO19} expands on these Sobolev-type embedding theorems of $\ch{B}^{\theta}_{p}(Z)$ into $L^{q}(Z)$ for appropriate $q \geq 1$ when $Z$ is unbounded, and shows that the reverse doubling condition is not sufficient on its own for such an embedding theorem to hold. 

Lastly we remark that \cite[Corollary 1.2]{BBS21} also shows that Lipschitz functions are dense in $\t{B}^{\theta}_{p}(Z)$ when $Z$ is compact. This is derived as an immediate consequence of \cite[Theorem 1.1]{BBS21} and the fact that Lipschitz functions are dense in $N^{1,p}(X_{\rho})$. Adapting their argument to our setting using Theorem \ref{thm:extendtrace} shows that Lipschitz functions with compact support are dense in $\ch{B}^{\theta}_{p}(Z) = L^{p}(Z) \cap \t{B}^{\theta}_{p}(Z)$. We avoid treating this topic here as the density of Lipschitz functions with compact support in both $\t{B}^{\theta}_{p}(Z)$ and $\ch{B}^{\theta}_{p}(Z)$ has already been obtained by Soto \cite[Corollary 3.3]{S16} even in the case of noncompact $Z$, using an adaptation of the proof of the same result for Triebel-Lizorkin spaces by Bonk-Saksman-Soto \cite[Corollary 3.4]{BSS18}. We note that Proposition \ref{converge Besov} gives a direct proof that locally Lipschitz functions are dense in both $\t{B}^{\theta}_{p}(Z)$ and $\ch{B}^{\theta}_{p}(Z)$. 

As we mentioned previously, this work is a direct descendant of a number of recent works connecting function spaces on doubling metric spaces with function spaces on hyperbolic fillings associated to the space. In addition to all of the works we mentioned previously, other notable works include Bonk-Saksman \cite{BS18} (from whom we've taken a significant amount of direct inspiration), Bonk-Saksman-Soto \cite{BSS18} (who consider a wider class of function spaces on $Z$ including the Triebel-Lizorkin spaces), and Saksman-Soto \cite{SS17} (who consider traces onto Ahlfors-regular subsets of $Z$). We also highlight the work of Mal\'y \cite{M17}, who obtains many similar results via different methods in the setting of John domains with compact closure (as well as several more general settings) in metric measure spaces. Lastly we note that the idea of considering Besov spaces as trace spaces of Newtonian spaces on a uniformized Gromov hyperbolic space previously appeared in work of Bj\"orn-Bj\"orn-Gill-Shanmugalingam \cite{BBGS17} on geometric analysis on trees, their uniformizations, and their Cantor set boundaries. 

The reader will find that the structure of our paper is broadly similar to that of \cite{BBS21}. This is because the bulk of the theory in \cite{BBS21} relies only on the fact that the uniformized hyperbolic filling $X_{\rho}$ equipped with the lifted measure $\mu_{\beta}$ for a given $\beta > 0$ is a doubling metric measure space satisfying a $1$-Poincar\'e inequality. We show that these claims carry over to the setting of noncompact $Z$. Nevertheless the boundedness of $Z$ (and consequently of $X_{\rho}$ in their setting) enters into the proofs at several critical junctures, requiring us to modify their arguments at many steps. In particular there is no distinction between the spaces $\t{D}^{1,p}(X_{\rho})$ and $\t{N}^{1,p}(X_{\rho})$ when $X_{\rho}$ is bounded, as is noted in Section \ref{sec:capacities}. We are also able to give independent (and in some cases, simpler) proofs of several of their claims due to the more restrictive nature of our hyperbolic filling construction; our hyperbolic fillings in general contain more edges than the hyperbolic fillings considered in \cite{BBS21}. A drawback of our approach is that while we are able to recover their results on Besov spaces in \cite[Section 13]{BBS21} via specialization of our corollaries to the case of compact $Z$, we are not able to recover their results on trace and extension theorems for trees (see \cite[Section 7]{BBS21}) due to the fact that our hyperbolic filling construction never yields a tree unless $Z$ is a single point (see \cite[Remark 7.2]{BBS21}; the same reasoning applies in our setting due to our parameter restriction \eqref{tau requirement}). 

We now give an overview of the structure of the paper. Section \ref{sec:capacities} describes much of the notation we will use throughout the paper and gives a brief overview of the aspects of the theory of Dirichlet and Newtonian spaces for doubling metric spaces satisfying a Poincar\'e inequality that we will need. We then define the Besov capacity and note some useful lemmas regarding Besov spaces. In Section \ref{sec:fillings} we construct a hyperbolic filling $X$ of a complete doubling metric space $Z$ and then construct a uniformization $X_{\rho}$ of $X$ whose boundary can be biLipschitz identified with $Z$. This primarily relies on our previous results in \cite{Bu20}. In Section \ref{sec:lift} we construct the lifts $\mu_{\beta}$ of the measure $\nu$ on $Z$ to $X_{\rho}$ for each $\beta > 0$ and establish important properties of the resulting metric measure spaces $(X_{\rho},d_{\rho},\mu_{\beta})$ and $(\bar{X}_{\rho},d_{\rho},\mu_{\beta})$. Section \ref{sec:trace} is devoted to trace results for Newtonian and Dirichlet functions defined on the uniformized filling $X_{\rho}$, while Section \ref{sec:extend} concerns results on extending Besov functions from $Z$ to $\bar{X}_{\rho}$.  Theorem \ref{thm:extendtrace} is then proved in Proposition \ref{extend to trace} by combining the results of Sections \ref{sec:trace} and \ref{sec:extend}. Finally in Section \ref{sec:properties} we prove Corollaries \ref{cor:quasicontinuous}, \ref{cor:holder embed}, and \ref{cor:Lebesgue points}.

We offer extensive thanks to Nageswari Shanmugalingam, who provided us with several early drafts of the work \cite{BBS21} that inspired both our work here and our previous papers \cite{Bu20}, \cite{Bu21}.

\section{Function spaces and capacities}\label{sec:capacities}

In this section we review some material from \cite[Section 9]{BBS21} and make some adjustments to deal with the fact that the spaces we will be considering are unbounded. The claims of this section are not new and are for the most part well known to experts; we have provided our own proofs wherever we were not able to find an adequate reference.

\subsection{Notation}\label{subsec:defn} We begin by fixing some notation for the rest of the paper. Let $X$ be a set and let $f$, $g$ be real-valued functions defined on $X$. For $c \geq 0$ we will write $f \doteq_{c} g$ if
\[
|f(x)-g(x)| \leq c,
\] 
for all $x \in X$. If the exact value of the constant $c$ is not important or implied by context we will often just write $f \doteq g$. The relation $f \doteq g$ will sometimes be referred to as a \emph{rough equality} between $f$ and $g$. Similarly for $C \geq 1$ and functions $f,g:X \rightarrow (0,\infty)$, we will write $f \asymp_{C} g$ if for all $x \in X$, 
\[
C^{-1}g(x) \leq f(x) \leq C g(x).
\]
We will write $f \asymp g$ if the value of $C$ is implied by context. We will write $f \ls_{C} g$ if $f(x) \leq Cg(x)$ for all $x \in X$ and $f \gs_{C} g$ if $f(x) \geq C^{-1}g(x)$ for $x \in X$. Thus $f \asymp_{C} g$ if and only if $f \ls_{C} g$ and $f \gs_{C} g$. As with the other notation, we will drop the constant $C$ and just write $f \ls g$ or $f \gs g$ if the value of $C$ is implied by context. We will generally stick to the convention of using $c \geq 0$ for additive constants and $C \geq 1$ for multiplicative constants. To indicate on what parameters -- such as $\delta$ -- the constants depend on we will write $c = c(\delta)$, etc. At the beginning of each section we will indicate on what parameters the implied constants of the inequalities $\ls$ and $\gs$, the comparisons $\asymp$, and the rough equalities $\doteq$ are allowed to depend. We will often reiterate these conditions for emphasis. 

For a metric space $(X,d)$ we will write $B_{X}(x,r) = \{y \in X:d(x,y) < r\}$ for the open ball of radius $r > 0$ centered at a point $x \in X$. We write $\bar{B}_{X}(x,r) = \{y \in X:d(x,y) \leq r\}$ for the closed ball of radius $r > 0$ centered at $x$. We note that the inclusion $\overline{B_{X}(x,r)} \subset \bar{B}_{X}(x,r)$ of the closure of the open ball into the closed ball can be strict in general. By convention all balls $B \subset X$ are considered to have a fixed center and radius, even though it may be the case that we have $B_{X}(x,r) = B_{X}(x',r')$ as sets for some $x \neq x'$, $r \neq r'$. All balls $B \subset X$ are also considered to be open balls unless otherwise specified.  We will write $r(B)$ for the radius of a ball $B$. For a ball $B = B_{X}(x,r)$ in $X$ and a constant $c > 0$ we write $cB = B_{X}(x,cr)$ for the corresponding ball with radius scaled by $c$. A metric space $(X,d)$ is \emph{proper} if its closed balls are compact.

For a subset $Y \subset  X$ we write $\diam(Y) = \sup\{d(x,y):x,y \in Y\}$ for the diameter of $Y$. For a point $x \in X$ we write $\dist(x,Y) = \inf\{d(x,y):y \in Y\}$ for the minimum distance of $x$ to $Y$. We write $\chi_{Y}: X \rightarrow \{0,1\}$ for the characteristic function of $Y$, which is defined by $\chi_{Y}(x) = 1$ if $x \in Y$ and $\chi_{Y}(x) = 0$ if $x \notin Y$. 

Let $f:(X,d) \rightarrow (X',d')$ be a map between metric spaces. The map $f$ is \emph{isometric} if $d'(f(x),f(y)) = d(x,y)$ for $x$, $y\in X$. We recall that a curve $\gamma: I \rightarrow X$ is a \emph{geodesic} if it is an isometric mapping of the interval $I \subset \R$ into $X$.  The space $X$ is \emph{geodesic} if any two points in $X$ can be joined by a geodesic.

\subsection{Newtonian spaces} For this next part we will be expanding on the discussion from the introduction; we refer to \cite{HKST} for more details. We start with a metric measure space $(Y,d,\mu)$. For $p \geq 1$ and a measurable function $u:Y \rightarrow [-\infty,\infty]$ we will use the notation $\|u\|_{L^{p}(Y)} = \left(\int_{Y}|u|^{p}\,d\mu\right)^{1/p}$ for the $L^p$ norm of $u$, and write $L^{p}(Y)$ for the associated $L^p$ space on $Y$. We will also say that functions $u \in L^{p}(Y)$ are \emph{$p$-integrable}. We write $L^{p}_{\loc}(Y)$ for the local $L^{p}$ space on $Y$, defined to be the set of all measurable functions $u:Y \rightarrow [-\infty,\infty]$ such that $u|_{B} \in L^{p}(B)$ for all balls $B \subset Y$,  where we consider $B$ as being equipped with the restricted measure $\mu|_{B}$. For a subset $G \subset Y$ satisfying $0 < \mu(G) < \infty$ and a function $u$ that is $\mu$-integrable over $G$ we write 
\begin{equation}\label{average notation}
u_{G} = \dashint_{G} u \, d\mu = \frac{1}{\mu(G)}\int_{G} u \, d\mu
\end{equation}
for the mean value of $u$ over $G$.

\begin{rem}\label{condense notation}
In order to condense notation throughout the paper we will be omitting the measure from the notation for the function spaces that we consider. Thus we write $L^{p}(Y) = L^{p}(Y,\mu)$, etc. We will always assume that $p \geq 1$ wherever it appears as an exponent, and we will always exclude the case $p = \infty$. We will always be using a fixed choice of measure on each metric space that we consider. Balls $B \subset Y$ in a metric measure space $(Y,d,\mu)$ will always be considered as metric measure spaces $(B,d,\mu|_{B})$ with the restriction of the distance and measure on $Y$ to $B$. Similarly we will sometimes write ``a.e."~ for ``$\mu$-a.e."~ when the measure $\mu$ is understood.  
\end{rem}

We next define the $p$-modulus for $p \geq 1$. Let $\Gamma$ be a family of curves in $Y$. A Borel function $\rho: Y \rightarrow [0,\infty]$ is \emph{admissible} for $\Gamma$ if for each curve $\gamma \in \Gamma$ we have $\int_{\gamma} \rho \, ds \geq 1$. The \emph{$p$-modulus} of $\Gamma$ is then defined as 
\[
\mathrm{Mod}_{p}(\Gamma) = \inf_{\rho} \int_{Y} \rho^{p} \, d\mu,
\]
with the infimum taken over all admissible Borel functions $\rho$ for $\Gamma$. We say that $\Gamma$ is \emph{$p$-exceptional} if $\Mod_{p}(\Gamma) = 0$.  A property $P$ is said to hold for \emph{$p$-a.e.~ curve} if the collection of curves for which $P$ fails is $p$-exceptional. A subset $G \subset Y$ is \emph{$p$-exceptional} if the family of all nonconstant curves meeting $G$ is $p$-exceptional.

We recall the definition \eqref{upper gradient inequality} of upper gradients in the introduction. It is natural to relax this definition by allowing an exceptional set of curves on which the inequality \eqref{upper gradient inequality} can potentially fail. For a function $u:Y \rightarrow [-\infty,\infty]$ and an exponent $p \geq 1$ we say that a Borel function $g:Y \rightarrow [0,\infty]$ is a \emph{$p$-weak upper gradient} for $u$ if the upper gradient inequality \eqref{upper gradient inequality} holds for $p$-a.e.~ curve in $Y$. The following standard lemma shows that $p$-weak upper gradients of $u$ are not far from being true upper gradients of $u$ from the perspective of the $L^p$ norm.

\begin{lem}\label{weak upper approximate} \cite[Lemma 6.2.2]{HKST} Let $u: Y \rightarrow [-\infty,\infty]$ be a function and suppose that $g$ is a $p$-weak upper gradient of $u$. Then there is a monotone decreasing sequence of upper gradients $\{g_{n}\}$ of $u$, with $g_{n} \geq g$ for each $n$, such that $\|g_{n}-g\|_{L^{p}(Y)} \rightarrow 0$. 
\end{lem}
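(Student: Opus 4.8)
The plan is to locate the exceptional curve family and correct $g$ by adding to it a small $L^p$ tail assembled from admissible functions for that family. Let $\Gamma$ denote the collection of nonconstant compact rectifiable curves $\gamma$ in $Y$ along which the upper gradient inequality \eqref{upper gradient inequality} fails for the pair $(u,g)$; by hypothesis $\Mod_p(\Gamma) = 0$. Using the definition of the $p$-modulus, for each $j \in \N$ choose a Borel function $\rho_j : Y \to [0,\infty]$ that is admissible for $\Gamma$, i.e. $\int_{\gamma} \rho_j \, ds \geq 1$ for every $\gamma \in \Gamma$, and that satisfies $\int_Y \rho_j^p \, d\mu \leq 2^{-j}$. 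Then set
\[
h_n = \sum_{j=n}^{\infty} \rho_j, \qquad g_n = g + h_n .
\]
The substance of the proof is to check that each $g_n$ is a genuine upper gradient of $u$ (not merely a $p$-weak one), while the metric and monotonicity claims are essentially bookkeeping.

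First I would record the elementary properties of $\{h_n\}$. Each $h_n$ is Borel, being a pointwise limit of partial sums of nonnegative Borel functions; the sequence $\{h_n\}_n$ is nonnegative and monotone decreasing in $n$; and by Minkowski's inequality
\[
\|h_n\|_{L^p(Y)} \leq \sum_{j=n}^{\infty} \|\rho_j\|_{L^p(Y)} \leq \sum_{j=n}^{\infty} 2^{-j/p} \xrightarrow{\ n\to\infty\ } 0 .
\]
In particular $h_1 \in L^p(Y)$, so each $h_n$ is finite $\mu$-a.e. (its possible taking the value $\infty$ on a null set is harmless, since upper gradients are allowed to be $[0,\infty]$-valued). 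Consequently $\{g_n\}$ is a monotone decreasing sequence of Borel functions with $g_n = g + h_n \geq g$ for every $n$, and $\|g_n - g\|_{L^p(Y)} = \|h_n\|_{L^p(Y)} \to 0$, which gives all the asserted quantitative conclusions.

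It remains to verify that each $g_n$ is an upper gradient of $u$, and here is the only point requiring any thought. Let $\gamma$ be a nonconstant compact rectifiable curve with endpoints $x,y$. If $\gamma \notin \Gamma$, then \eqref{upper gradient inequality} holds for $(u,g)$ along $\gamma$, and since $0 \leq g \leq g_n$ we obtain $|u(x)-u(y)| \leq \int_{\gamma} g \, ds \leq \int_{\gamma} g_n \, ds$. If instead $\gamma \in \Gamma$, then by monotone convergence and admissibility
\[
\int_{\gamma} g_n \, ds \ \geq\ \int_{\gamma} h_n \, ds \ =\ \sum_{j=n}^{\infty} \int_{\gamma} \rho_j \, ds \ =\ \infty ,
\]
so \eqref{upper gradient inequality} holds trivially along $\gamma$ because its right-hand side is infinite; this also subsumes the convention that $\int_{\gamma} g_n \, ds = \infty$ when $u(x)$ or $u(y)$ is $\pm\infty$. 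Hence \eqref{upper gradient inequality} holds for $(u,g_n)$ along every nonconstant compact rectifiable curve in $Y$, so $g_n$ is an upper gradient of $u$, finishing the argument. I do not expect a genuine obstacle in this proof; the one place to stay alert is exactly this dichotomy together with the measurability and $[0,\infty]$-valuedness of the $h_n$, all of which are routine.
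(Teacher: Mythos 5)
Your proof is correct and is essentially the standard construction given in the cited reference [HKST, Lemma~6.2.2]: correct $g$ by a vanishing $L^p$ tail built from admissible functions for the exceptional family, so that the added tail integrates to $\infty$ along every exceptional curve. The paper does not supply its own proof of this lemma (it defers to HKST), and your argument — including the careful handling of the two cases $\gamma\in\Gamma$ vs.\ $\gamma\notin\Gamma$ and the convention when $u$ is infinite at an endpoint — is a faithful and complete version of that argument.
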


In particular if $u$ has a $p$-integrable $p$-weak upper gradient then it has a $p$-integrable upper gradient. We will be using Lemma \ref{weak upper approximate} implicitly in many of the statements that follow. We will also use the product rule for upper gradients frequently in the rest of the paper without further citation. We refer the reader to \cite[Proposition 6.3.28]{HKST} for the precise hypotheses under which the product rule holds; for our purposes it suffices to note that the product rule holds for products of functions $u$ and $v$ that each have locally $p$-integrable $p$-weak upper gradients $f$ and $g$ respectively, in which case any Borel representative of $g|u| + f|v|$ defines a $p$-weak upper gradient for the product $uv$ (note that the function $g|u| + f|v|$ need not itself be $p$-integrable). The reference requires that $u$ and $v$ are absolutely continuous along $p$-a.e.~ compact curve in $Y$, which follows immediately from the local $p$-integrability of $f$ and $g$ and \cite[Proposition 6.3.2]{HKST} since each compact curve in $Y$ is contained in some ball $B \subset Y$ on which $f$ and $g$ are $p$-integrable.

If $u$ has a $p$-integrable upper gradient $g$ then it has a \emph{minimal $p$-weak upper gradient} $g_{u}$ that satisfies $g_{u} \leq g$ a.e.~ for all $p$-integrable $p$-weak upper gradients $g$ of $u$. This minimal $p$-weak upper gradient is unique up to a set of measure zero. The \emph{Newtonian space} $\t{N}^{1,p}(Y)$ is the space of all measurable functions $u: Y \rightarrow [-\infty,\infty]$ such that $\|u\|_{L^{p}(Y)} < \infty$ and such that $u$ has a $p$-weak upper gradient $g \in L^{p}(Y)$. This space comes equipped with the seminorm
\begin{equation}\label{Newton norm}
\|u\|_{N^{1,p}(Y)} = \|u\|_{L^{p}(Y)} + \|g_{u}\|_{L^{p}(Y)},
\end{equation}
where $g_{u}$ is a minimal $p$-weak upper gradient for $u$. Lemma \ref{weak upper approximate} shows that this definition is equivalent to the previous definition \eqref{first Newton norm} that we gave for the norm $\|u\|_{N^{1,p}(Y)}$. We emphasize that functions in $\t{N}^{1,p}(Y)$ are required to be defined pointwise everywhere, in constrast to what is typically required in the standard Sobolev space theory. We write $N^{1,p}(Y) = \t{N}^{1,p}(Y)/\sim$ be the quotient by the equivalence relation $u \sim v$ if $\|u-v\|_{N^{1,p}} = 0$. The space $N^{1,p}(Y)$ will also be referred to as the Newtonian space, and we will engage in the standard practice from the theory of $L^p$ spaces of not distinguishing the notation between a function $u \in \t{N}^{1,p}(Y)$ and its corresponding equivalence class $[u] \in N^{1,p}(Y)$. Equipped with the norm \eqref{Newton norm} the space $N^{1,p}(Y)$ is a Banach space \cite{S04}. As with $L^p$ and Sobolev spaces, we also define a local version $\t{N}^{1,p}_{\loc}(Y)$ consisting of those functions $u: Y \rightarrow [-\infty,\infty]$ such that for each ball $B \subset Y$ we have $u|_{B} \in \t{N}^{1,p}(B)$.

The \emph{$C^{Y}_{p}$-capacity} of a set $G \subset Y$ is defined as 
\begin{equation}\label{define capacity}
C_{p}^{Y}(G) = \inf_{u} \|u\|^{p}_{N^{1,p}(Y)},
\end{equation}
with the infimum taken over all functions $u \in \t{N}^{1,p}(Y)$ satisfying $u \geq 1$ on $G$.  A property $P$ is said to hold \emph{quasieverywhere} (q.e.) if the set $G$ of points at which it fails has zero $p$-capacity, i.e., it satisfies $C_{p}^{Y}(G) = 0$. By \cite[Proposition 7.2.8]{HKST} a set $G$ has zero $p$-capacity if and only if $\mu(G) = 0$ and $G$ is $p$-exceptional. Two functions $u,v \in \t{N}^{1,p}(Y)$ satisfy $u \sim v$ if and only if $u = v$ quasieverywhere \cite[Proposition 7.1.31]{HKST}. Furthermore if $u = v$ a.e.~ then $u = v$ q.e. Thus the $C_{p}^{Y}$-capacity captures the degree of ambiguity one is allowed in the definition of Newtonian functions in $N^{1,p}(Y)$. The $C_{p}^{Y}$-capacity is countably subadditive \cite[Lemma 7.2.4]{HKST} and therefore defines an outer measure on $Y$. 


The \emph{Dirichlet space} $\t{D}^{1,p}(Y)$ consists of all measurable functions $u:Y \rightarrow [-\infty,\infty]$ such that $u$ has an upper gradient $g \in L^{p}(Y)$, or equivalently, such that $u$ has a $p$-weak upper gradient $g \in L^{p}(Y)$. We equip $\t{D}^{1,p}(Y)$ with the seminorm
\[
\|u\|_{D^{1,p}(Y)} = \|g_{u}\|_{p} = \inf_{g} \|g\|_{p},
\]
with $g_{u}$ denoting a minimal $p$-weak upper gradient for $u$ and the infimum being taken over all $p$-integrable upper gradients of $u$, or equivalently, over all $p$-integrable $p$-weak upper gradients of $u$. This matches our first definition \eqref{first Dirichlet norm} by Lemma \ref{weak upper approximate}. Similarly to $N^{1,p}(Y)$, we have for $u,v \in \t{D}^{1,p}(Y)$ that $u = v$ a.e.~ if and only if $u = v$ q.e.~\cite[Lemma 7.1.6]{HKST}. As with $N^{1,p}(Y)$, in order to obtain a norm we set $D^{1,p}(Y) = \t{D}^{1,p}(Y)/\sim$, with $u \sim v$ if $\|u-v\|_{D^{1,p}(Y)} = 0$. We define a local version $\t{D}^{1,p}_{\loc}(Y)$ of the Dirichlet space exactly as we did for the Newtonian space, writing $u \in \t{D}^{1,p}_{\loc}(Y)$ for a measurable function $u: Y \rightarrow [-\infty,\infty]$ if $u|_{B} \in \t{D}^{1,p}(B)$ for each ball $B \subset Y$. 

\begin{rem}\label{local discrepancy} When $Y$ is proper the spaces $\t{N}^{1,p}_{\loc}(Y)$ and $\t{D}^{1,p}_{\loc}(Y)$ can equivalently be described as the set of measurable functions $u: Y \rightarrow [-\infty,\infty]$ such that each $x \in Y$ has an open neighborhood $U_{x}$ on which $u|_{U_{x}} \in \t{N}^{1,p}(U_{x})$ or $u|_{U_{x}} \in \t{D}^{1,p}(U_{x})$ respectively.  This is the definition of the local spaces given in \cite[Chapter 7]{HKST}. The definition of the local spaces that we use here is more restrictive when $Y$ is not proper. In Section \ref{sec:trace} we will be applying this definition in the particular case that $Y$ is incomplete.
\end{rem}

In order to obtain further properties of these function spaces we need to make some additional assumptions on $Y$. The metric measure space $(Y,d,\mu)$ is \emph{doubling} if there is a constant $C_{\mu} \geq 1$ such that for each ball $B \subset Y$ we have
\begin{equation}\label{doubling inequality}
\mu(2B) \leq C_{\mu}\mu(B).
\end{equation} 
The metric measure space $(Y,d,\mu)$ \emph{supports a $p$-Poincar\'e inequality} for a given $p \geq 1$ if for all measurable functions $u: Y \rightarrow [-\infty,\infty]$ that are integrable on balls, all $p$-integrable upper gradients $g: Y \rightarrow [0,\infty]$ for $u$, and all balls $B \subset Y$, 
\begin{equation}\label{weak Poincare}
\dashint_{B}|u-u_{B}|\,d\mu \leq C_{\mathrm{PI}} \diam(B)\left(\dashint_{B}g^{p}\,d\mu\right)^{1/p},
\end{equation}  
for a constant $C_{\mathrm{PI}} > 0$. We note that if $(Y,d,\mu)$ supports a $p$-Poincar\'e inequality then $(Y,d,\mu)$ supports a $q$-Poincar\'e inequality for all $q \geq p$ by H\"older's inequality, with new constants depending only on $p$, $q$, and $C_{\mathrm{PI}}$. We will be assuming for the rest of this subsection that $Y$ is a geodesic metric space that contains at least two points and that $(Y,d,\mu)$ is a doubling metric measure space that supports a $p$-Poincar\'e inequality for a given $p \geq 1$.

Under these assumptions $Y$ supports the following stronger form of the Poincar\'e inequality for any ball $B \subset Y$, any integrable function $u: B \rightarrow \R$, and any $p$-integrable upper gradient $g$ of $u$ in $B$, 
\begin{equation}\label{pre Poincare}
\left(\dashint_{B}|u-u_{B}|^{p}\,d\mu\right)^{1/p} \leq C_{0} \diam(B)\left(\dashint_{B}g^{p}\,d\mu\right)^{1/p},
\end{equation} 
with the constant $C_{0} > 0$ depending only on the constants of the $p$-Poincar\'e inequality for $Y$ and the doubling constant for $\mu$ \cite[Remark 9.1.19]{HKST}. This follows by applying H\"older's inequality to a class of Sobolev-Poincar\'e inequalities for $Y$ \cite[Theorem 9.1.15]{HKST} (see also \cite{HK00}). This same application of H\"older's inequality also shows that \eqref{weak Poincare} similarly holds when $u: B \rightarrow [-\infty,\infty]$ is defined only on $B$ and $g: B \rightarrow [0,\infty]$ is a $p$-integrable upper gradient of $u$ on $B$. We will usually use the inequality \eqref{pre Poincare} in the reformulated form,
\begin{equation}\label{stronger Poincare}
\int_{B}|u-u_{B}|^{p}\,d\mu \leq C_{0}^{p} \diam(B)^{p}\int_{B}g^{p}\,d\mu.
\end{equation} 
In keeping with standard conventions, we will refer to both \eqref{pre Poincare} and \eqref{stronger Poincare} as \emph{$(p,p)$-Poincar\'e inequalities}. We will use a generic constant $C > 0$ in place of the specific constants $C_{\mathrm{PI}}$ and $C_{0}$ in \eqref{weak Poincare} and \eqref{stronger Poincare} when applying these inequalities.

\begin{rem}\label{constant quotient}The $p$-Poincar\'e inequality \eqref{weak Poincare} implies that any measurable function $u:Y \rightarrow [-\infty,\infty]$ with $\|u\|_{D^{1,p}(Y)} = 0$ is constant q.e., see \cite[Proposition 7.5.2, Theorem 9.3.4]{HKST}. Hence $D^{1,p}(Y)$ can equivalently be viewed as the quotient of $\t{D}^{1,p}(Y)$ by all measurable functions $u: Y \rightarrow [-\infty,\infty]$ that are constant q.e.
\end{rem}

 The $(p,p)$-Poincar\'e inequality \eqref{stronger Poincare} implies by a straightforward truncation argument that all functions $u \in \t{D}^{1,p}_{\loc}(Y)$ are $p$-integrable over balls. The proof is very similar to the proof of \cite[Lemma 8.1.5]{HKST}. 

\begin{prop}\label{integrable over balls}
Let $B \subset Y$ be any ball and let $u: B \rightarrow \R$ be a measurable function such that $u$ has an upper gradient $g: B \rightarrow [0,\infty]$ with $g \in L^{p}(B)$. Then $u \in L^{p}(B)$. Consequently $\t{D}^{1,p}(B) = \t{N}^{1,p}(B)$ (as sets). Therefore $\t{D}^{1,p}_{\loc}(Y) = \t{N}^{1,p}_{\loc}(Y)$ and $\t{D}^{1,p}(Y) \subset \t{N}^{1,p}_{\loc}(Y)$.
\end{prop}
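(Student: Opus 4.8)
The plan is to reduce everything to a single local claim: if $B\subset Y$ is a ball and $u\colon B\to\mathbb R$ has an upper gradient $g\in L^p(B)$, then $u\in L^p(B)$. Granting this, the remaining assertions are formal. Indeed, $\t D^{1,p}(B)\subset\t N^{1,p}(B)$ follows immediately, and the reverse inclusion $\t N^{1,p}(B)\subset\t D^{1,p}(B)$ is trivial from the definitions (an $L^p$ function with a $p$-integrable upper gradient is by definition in $\t D^{1,p}$), so $\t D^{1,p}(B)=\t N^{1,p}(B)$ as sets. Applying this with $B$ ranging over all balls of $Y$ gives $\t D^{1,p}_{\loc}(Y)=\t N^{1,p}_{\loc}(Y)$ directly from the definitions of the local spaces in Section~\ref{sec:capacities}. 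Finally, if $u\in\t D^{1,p}(Y)$ then $u$ has a global $p$-integrable upper gradient $g$, hence $u|_B$ has the $p$-integrable upper gradient $g|_B$ on every ball $B$, so $u|_B\in\t D^{1,p}(B)=\t N^{1,p}(B)$ for every $B$, i.e.\ $u\in\t N^{1,p}_{\loc}(Y)$; this gives $\t D^{1,p}(Y)\subset\t N^{1,p}_{\loc}(Y)$.

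For the local claim itself I would run the standard truncation argument, following the proof of \cite[Lemma 8.1.5]{HKST} as indicated. Fix a ball $B=B_Y(x_0,r)$ and, for $k\in\mathbb N$, let $u_k$ be the truncation of $u$ at level $k$, i.e.\ $u_k=\max\{-k,\min\{k,u\}\}$. Each $u_k$ is bounded, hence in $L^p(B)$, and $g$ remains an upper gradient of $u_k$ (truncation does not increase the upper gradient). Apply the $(p,p)$-Poincar\'e inequality \eqref{stronger Poincare} to $u_k$ on $B$:
\[
\int_B |u_k-(u_k)_B|^p\,d\mu \le C_0^p\,\diam(B)^p\int_B g^p\,d\mu.
\]
The right-hand side is a finite bound independent of $k$. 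By the triangle inequality in $L^p(B)$,
\[
\|u_k\|_{L^p(B)} \le \|u_k-(u_k)_B\|_{L^p(B)} + |(u_k)_B|\,\mu(B)^{1/p},
\]
so it remains to bound $|(u_k)_B|$ uniformly in $k$. Here I would use that $u\in L^1_{\loc}$ — which holds because $u$ is measurable and the hypothesis that $u$ has a $p$-integrable upper gradient forces $u$ to be real-valued $\mu$-a.e.\ and finite along $p$-a.e.\ curve (alternatively one simply assumes $u$ is integrable on $B$, as in the ambient running hypotheses on such functions) — so that $(u_k)_B\to u_B$ is bounded, say $|(u_k)_B|\le M$ for all $k$. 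Combining, $\sup_k\|u_k\|_{L^p(B)}\le C<\infty$. Since $|u_k|\uparrow|u|$ pointwise, the monotone convergence theorem yields $\|u\|_{L^p(B)}\le C<\infty$, i.e.\ $u\in L^p(B)$.

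The main obstacle, and the only genuinely delicate point, is controlling the averages $(u_k)_B$ uniformly in $k$: the Poincar\'e inequality only controls oscillation, not size, so one needs an a priori integrability input to pin down the constant. The clean way around this is to invoke that functions under consideration are assumed integrable on balls (which is built into the hypotheses for the Poincar\'e inequality as stated in \eqref{weak Poincare}, and is part of the definition of $L^p_{\loc}$ that the statement implicitly uses), making $u_B$ a well-defined finite number and forcing $|(u_k)_B|\le |u|_B=:M$ by monotonicity of the truncations and the dominated convergence theorem. Everything else is a routine application of \eqref{stronger Poincare} together with monotone convergence, exactly paralleling \cite[Lemma 8.1.5]{HKST}.
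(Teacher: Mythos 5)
Your reduction of the remaining assertions to the local claim $u\in L^p(B)$ is fine, and your decision to truncate $u$ at level $k$ and apply the $(p,p)$-Poincar\'e inequality \eqref{stronger Poincare} to the truncations is exactly the paper's starting point. But you correctly identify the real obstacle --- controlling the means $(u_k)_B$ uniformly in $k$ --- and then fail to resolve it. Your proposed resolution is circular: you invoke that $u$ ``is assumed integrable on balls,'' but that assumption is not in the hypotheses of the proposition, and proving exactly that $u$ is integrable (indeed $p$-integrable) on $B$ is the entire content of the local claim. The fallback justification that having a $p$-integrable upper gradient ``forces $u$ to be real-valued $\mu$-a.e.\ and finite along $p$-a.e.\ curve'' is true but does not yield $u\in L^1_{\loc}$; a.e.\ finiteness is much weaker than local integrability, so there is no a priori bound on $(u_k)_B$ to appeal to.

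The paper avoids this difficulty by never estimating the means at all. Having applied the ($1$-exponent) Poincar\'e inequality \eqref{weak Poincare} to $u_n$, it passes to the symmetrized double average
\[
\dashint_B\dashint_B |u_n(x)-u_n(y)|\,d\mu(x)\,d\mu(y)\le 2\dashint_B|u_n-(u_n)_B|\,d\mu,
\]
in which $(u_n)_B$ has cancelled, and then applies monotone convergence to the pointwise increasing sequence $|u_n(x)-u_n(y)|\uparrow|u(x)-u(y)|$. This gives $\dashint_B\dashint_B|u(x)-u(y)|\,d\mu(x)\,d\mu(y)<\infty$, so by Fubini $x\mapsto|u(x)-u(y)|$ is $\mu$-integrable on $B$ for a.e.\ fixed $y$; choosing one such $y$ with $|u(y)|<\infty$ forces $u\in L^1(B)$. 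Only then, with $u_B$ legitimately defined, can one apply \eqref{stronger Poincare} to $u$ itself to upgrade to $u\in L^p(B)$. The double-average trick is the key idea you are missing; without it, the step ``$\sup_k |(u_k)_B|<\infty$'' has no justification.
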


\begin{proof}
For each $n \in \N$ we let $u_{n} = \max\{-n,\min\{u,n\}\}$. Then $|u_{n}| \leq n$ and consequently $u_{n}$ is integrable over $B$. Furthermore $g$ is also an upper gradient for $u_{n}$ on $B$ \cite[Proposition 6.3.23]{HKST}. Thus by the $p$-Poincar\'e inequality \eqref{weak Poincare} we have
\[
\dashint_{B}|u_{n}-(u_{n})_{B}|\,d\mu \leq C \diam(B)\left(\dashint_{B}g^{p}\,d\mu\right)^{1/p}. 
\]
Since 
\[
\dashint_{B} \dashint_{B} |u_{n}(x)-u_{n}(y)|\,d\mu(x)d\mu(y) \leq 2\dashint_{B}|u_{n}-(u_{n})_{B}|\,d\mu,
\]
we can apply the monotone convergence theorem to the sequence of functions $\varphi_{n}: B \times B \rightarrow \R$ given by $\varphi_{n}(x,y) = |u_{n}(x)-u_{n}(y)|$ to obtain that
\[
\dashint_{B} \dashint_{B} |u(x)-u(y)|\,d\mu(x)d\mu(y) \leq 2C \diam(B)\left(\dashint_{B}g^{p}\,d\mu\right)^{1/p} < \infty.
\]
It follows that the function $x \rightarrow |u(x)-u(y)|$ is integrable over $B$ for a.e.~ $y \in B$, which immediately implies that $u$ is integrable over $B$. Applying the $(p,p)$-Poincar\'e inequality \eqref{stronger Poincare} then gives $u \in L^{p}(B)$. We conclude that $\t{D}^{1,p}(B) = \t{N}^{1,p}(B)$. The equality $\t{D}^{1,p}_{\loc}(Y) = \t{N}^{1,p}_{\loc}(Y)$ then follows from the definitions. The inclusion $\t{D}^{1,p}(Y) \subset \t{N}^{1,p}_{\loc}(Y)$ then follows from the inclusion $\t{D}^{1,p}(Y) \subset \t{D}^{1,p}_{\loc}(Y)$. 
\end{proof}

The $(p,p)$-Poincar\'e inequality \eqref{stronger Poincare} can be used to show that $D^{1,p}(Y)$ is a Banach space. In fact one obtains that $D^{1,p}(B)$ is a Banach space for any ball $B \subset Y$ as well.

\begin{prop}\label{Dirichlet Banach}
The normed space $D^{1,p}(Y)$ is a Banach space. The same is true of $D^{1,p}(B)$ for any ball $B \subset Y$.
\end{prop}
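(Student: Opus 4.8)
The plan is to prove both assertions by the same device — normalize representatives of a Cauchy sequence by subtracting averages, then feed the $(p,p)$-Poincar\'e inequality \eqref{pre Poincare} and the completeness of Newtonian spaces — with the statement for $D^{1,p}(B)$ being immediate and the statement for $D^{1,p}(Y)$ requiring one extra chaining step to handle the possible unboundedness of $Y$.

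\emph{For $D^{1,p}(B)$.} By Proposition \ref{integrable over balls} we have $\t{D}^{1,p}(B) = \t{N}^{1,p}(B)$ as sets, and $N^{1,p}(B)$ is a Banach space. Given a $D^{1,p}(B)$-Cauchy sequence represented by functions $u_n$, put $v_n = u_n - (u_n)_B$; by Remark \ref{constant quotient} these represent the same classes in $D^{1,p}(B)$, and $g_{v_n} = g_{u_n}$, $(v_n)_B = 0$. Applying \eqref{pre Poincare} to $v_n - v_m$ on $B$ gives $\|v_n - v_m\|_{L^p(B)} \ls \diam(B)\,\|g_{u_n-u_m}\|_{L^p(B)} = \diam(B)\,\|u_n-u_m\|_{D^{1,p}(B)}$, so $\{v_n\}$ is Cauchy in $N^{1,p}(B)$ and converges there to some $v$; since the $D^{1,p}(B)$-seminorm is dominated by the $N^{1,p}(B)$-norm, $v_n \to v$, hence $u_n \to v$, in $D^{1,p}(B)$.

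\emph{For $D^{1,p}(Y)$ with $Y$ possibly unbounded.} Fix a ball $B_0 \subset Y$, take a $D^{1,p}(Y)$-Cauchy sequence represented by $u_n$, and normalize to $v_n = u_n - (u_n)_{B_0}$, so that $(v_n)_{B_0} = 0$ and $g_{v_n} = g_{u_n}$. The key chaining estimate is that for every ball $B \subset Y$ there is a constant $C_B < \infty$, independent of $n$ and $m$, with $\|v_n - v_m\|_{L^p(B)} \le C_B\,\|u_n - u_m\|_{D^{1,p}(Y)}$: choosing a single ball $B' \supset B \cup B_0$ and applying \eqref{pre Poincare} on $B'$ to $w = v_n - v_m$ bounds $\|w - w_{B'}\|_{L^p(B')}$ by $C_0\diam(B')\,\|g_w\|_{L^p(Y)}$, while $|w_{B'}| = |w_{B_0} - w_{B'}| \le \mu(B_0)^{-1}\int_{B'}|w - w_{B'}|\,d\mu$ is Hölder-controlled by the same quantity; combining and restricting to $B$ gives the claim. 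This is exactly the point at which the boundedness assumption of \cite{BBS21} enters their analogous result and must be worked around. It follows that $\{v_n\}$ is Cauchy in $L^p(B)$, in fact in $N^{1,p}(B)$ since $\|v_n - v_m\|_{N^{1,p}(B)} \le (1+C_B)\|u_n-u_m\|_{D^{1,p}(Y)}$, for every ball $B$; taking an exhaustion $B_1 \subset B_2 \subset \cdots$ of $Y$ by balls and using completeness of $N^{1,p}(B_k)$ produces a function $v$, defined q.e.\ on $Y$, with $v|_{B_k}$ the $N^{1,p}(B_k)$-limit of $v_n|_{B_k}$ for each $k$. Separately, subadditivity of minimal $p$-weak upper gradients ($|g_{u_n} - g_{u_m}| \le g_{u_n-u_m}$ a.e.) makes $(g_{u_n}) = (g_{v_n})$ Cauchy in $L^p(Y)$, with some limit $g \in L^p(Y)$.

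To finish I use the elementary patching principle: if $h \in L^p(Y)$ restricts to a $p$-weak upper gradient of a function $f$ on each $B_k$, then $h$ is a $p$-weak upper gradient of $f$ on $Y$ — every nonconstant compact curve in $Y$ has image inside some $B_k$, a curve family that is $p$-exceptional in $B_k$ is $p$-exceptional in $Y$ (extend admissible functions by $0$), and the $p$-modulus is countably subadditive. Now $N^{1,p}(B_k)$-convergence gives $g_{v|_{B_k}} = \lim_n g_{v_n|_{B_k}}$ in $L^p(B_k)$, and together with $g_{v_n|_{B_k}} \le g_{v_n}|_{B_k} = g_{u_n}|_{B_k} \to g|_{B_k}$ this forces $g_{v|_{B_k}} \le g|_{B_k}$ a.e.; so $g|_{B_k}$ is a $p$-weak upper gradient of $v$ on $B_k$, whence $g$ is one on $Y$ and $v \in \t{D}^{1,p}(Y)$. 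For the norm convergence, fix $n$; as $m \to \infty$ the gradients $g_{u_n - u_m}$ are Cauchy in $L^p(Y)$ (from $|g_{u_n-u_m} - g_{u_n-u_{m'}}| \le g_{u_m - u_{m'}}$ a.e.) with some limit $h_n \in L^p(Y)$, and repeating the argument with $v_n - v_m \to v_n - v$ in $N^{1,p}(B_k)$ shows $h_n$ is a $p$-weak upper gradient of $v_n - v$ on $Y$, so $\|g_{v_n-v}\|_{L^p(Y)} \le \|h_n\|_{L^p(Y)} = \lim_m \|u_n - u_m\|_{D^{1,p}(Y)}$, which tends to $0$ as $n \to \infty$ by the Cauchy property. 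Since $v_n$ represents the same class as $u_n$, this gives $u_n \to v$ in $D^{1,p}(Y)$. I expect the chaining estimate of the third paragraph to be the main obstacle, together with the attendant care needed to realize the merely local limit $v$ as a genuine element of $\t{D}^{1,p}(Y)$ carrying a globally $p$-integrable weak upper gradient.
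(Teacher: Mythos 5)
Your argument is correct, and for $D^{1,p}(B)$ it coincides with the paper's. For $D^{1,p}(Y)$ you take a genuinely different route. The paper normalizes separately on each ball $B_k$ of an exhaustion (setting $\t{u}_{n,k} = \t{u}_n - (\t{u}_n)_{B_k}$), applies the first part to get a local limit $v_k$ on each $B_k$, and then must carefully reconcile these by tracking the constants $c_k = \lim_n (\t{u}_n)_{B_k}$; the global weak upper gradient is assembled as a pointwise supremum of the local minimal ones, with $L^p$-integrability from monotone convergence. You instead normalize once at a fixed $B_0$ and prove a chaining estimate: enclosing $B \cup B_0$ in a single ball $B'$, using Poincar\'e on $B'$, and using $(v_n-v_m)_{B_0}=0$ to control the average $(v_n-v_m)_{B'}$ shows $\|v_n - v_m\|_{L^p(B)} \leq C_B\|u_n - u_m\|_{D^{1,p}(Y)}$ for every ball $B$, so the local Cauchyness follows without any re-normalization or $c_k$ bookkeeping. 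You also get the global gradient more directly: the pointwise inequality $|g_{u_n} - g_{u_m}| \leq g_{u_n - u_m}$ makes $\{g_{u_n}\}$ Cauchy in $L^p(Y)$, and the $L^p$-limit $g$ is shown to be a weak upper gradient by a local-to-global patching argument (every compact curve lies in some $B_k$). The two approaches have the same skeleton (local limits on an exhaustion, then glue), but your chaining step and your $L^p$-limit construction of the gradient are cleaner than the paper's per-ball renormalization and supremum construction. The one place you are slightly terse is in asserting that the local $N^{1,p}(B_k)$-limits cohere into a single function $v$ on $Y$; this does hold (consistency of a.e.\ limits on overlaps), but it deserves a sentence, just as the paper devotes care to it via the $c_k$.
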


\begin{proof}
We prove the second claim first. By Proposition \ref{integrable over balls} we have $\t{D}^{1,p}(B) = \t{N}^{1,p}(B)$ as sets. Since $\|u\|_{D^{1,p}(B)} \leq \|u\|_{N^{1,p}(B)}$ for any $u \in \t{D}^{1,p}(B)$, it follows that the quotient projection $N^{1,p}(B) \rightarrow D^{1,p}(B)$ is continuous. Let $\{u_{n}\}$ be a Cauchy sequence in $D^{1,p}(B)$, and choose a sequence of representatives $\{\t{u}_{n}\} \subset \t{D}^{1,p}(B) = \t{N}^{1,p}(B)$. Since constant functions have norm $0$ in $\t{D}^{1,p}(B)$, by adding an appropriate constant to $\t{u}_{n}$ for each $n$ we can assume that $(\t{u}_{n})_{B} = 0$ for all $n$.  

For each $m,n \in \N$ we let $g_{m,n}$ be a minimal $p$-weak upper gradient of $\t{u}_{m}-\t{u}_{n}$ in $B$. Then inequality \eqref{stronger Poincare} implies for $m,n \in \N$ that
\[
\int_{B}|\t{u}_{m}-\t{u}_{n}|^{p}\,d\mu \leq C\diam(B)^{p}\int_{B}g_{m,n}^{p}\,d\mu.
\]
By hypothesis the right side converges to $0$ as $m,n\rightarrow \infty$. Thus $\{\t{u}_{n}\}$ defines a Cauchy sequence in $L^{p}(B)$, which implies that $\{\t{u}_{n}\}$ defines a Cauchy sequence in $\t{N}^{1,p}(B)$. Since $N^{1,p}(B)$ is a Banach space, it follows that there is some function $\t{u} \in N^{1,p}(B)$ such that $\t{u}_{n} \rightarrow \t{u}$ in $N^{1,p}(B)$. Letting $u$ denote the projection of $\t{u}$ to $D^{1,p}(B)$, this implies that $u_{n} \rightarrow u$ in $D^{1,p}(B)$. It follows that $D^{1,p}(B)$ is a Banach space. 

Now let $\{u_{n}\}$ be a Cauchy sequence in $D^{1,p}(Y)$ and choose a sequence of representatives $\{\t{u}_{n}\} \subset \t{D}^{1,p}(Y)$. Fix a point $x \in Y$ and for each $k \in \N$ let $B_{k} = B_{Y}(x,k)$ be the ball of radius $k$ centered at $x$. By adding appropriate constants to $\t{u}_{n}$ we can arrange that $(\t{u}_{n})_{B_{1}} = 0$ for each $n$. For each $n,k \in \N$ we then set $\t{u}_{n,k} = \t{u}_{n}-(\t{u}_{n})_{B_{k}}$. Since $\{\t{u}_{n,k}\}$ also defines a Cauchy sequence in $D^{1,p}(B_{k})$, the argument in the first part of the proof then shows that for each $k \in \N$ there is a function $v_{k} \in \t{N}^{1,p}(B_{k})$ such that $\|(\t{u}_{n,k} - v_{k})|_{B_{k}}\|_{N^{1,p}(B_{k})} \rightarrow 0$ as $n \rightarrow \infty$. 

Restricting $\t{u}_{n,k}$ to $B_{j}$ for some $j < k$ shows that 
\[
\|(\t{u}_{n,k}-v_{k})|_{B_{j}}\|_{N^{1,p}(B_{j})} = \|(\t{u}_{n}- (\t{u}_{n})_{B_{k}}-v_{k})|_{B_{j}}\|_{N^{1,p}(B_{j})}\rightarrow 0.
\]
Since $\|(\t{u}_{n,j} - v_{j})|_{B_{j}}\|_{N^{1,p}(B_{j})} \rightarrow 0$ as well, it follows by the triangle inequality that 
\begin{equation}\label{j and k}
\lim_{n \rightarrow \infty} \|((\t{u}_{n})_{B_{k}}-(\t{u}_{n})_{B_{j}}-v_{k}+v_{j})|_{B_{j}}\|_{N^{1,p}(B_{j})} = 0.
\end{equation}
Applying this to the special case $j = 1$, we obtain that 
\[
\lim_{n \rightarrow \infty} \|((\t{u}_{n})_{B_{k}}-v_{k}+v_{1})|_{B_{1}}\|_{N^{1,p}(B_{1})} = 0.
\]
It follows that for each $k \in \N$ there is a constant $c_{k} \in \R$ such that $v_{k}-v_{1} = c_{k}$ a.e.~ on $B_{1}$ and $c_{k} = \lim_{n \rightarrow \infty} (\t{u}_{n})_{B_{k}}$. Note that $c_{1} = 0$. Applying this to \eqref{j and k}, we conclude that $v_{k}-v_{j} = c_{k}-c_{j}$ a.e.~ on $B_{j}$. We define $\t{u}: Y \rightarrow \R$ by setting $\t{u} = v_{k} - c_{k}$ on $B_{k}$; since for $j \leq k$ we have $v_{k}-c_{k} = v_{j}-c_{j}$ a.e.~ on $B_{j} \subset B_{k}$ it follows that $\t{u}$ is a well-defined measurable function on $Y$. 

It remains to show that $\t{u} \in \t{D}^{1,p}(Y)$ and that, denoting its projection to $D^{1,p}(Y)$ by $u$, we have $u_{n} \rightarrow u$ in $D^{1,p}(Y)$. By construction we have that $u_{n} \rightarrow u$ in $D^{1,p}(B_{k})$ for each $k \in \N$ since $u_{n}|_{B_{k}} \sim \t{u}_{n,k}$ and $\t{u}|_{B_{k}} \sim v_{k}$ in $D^{1,p}(B_{k})$ for each $k$. Letting $g_{k}$ denote a minimal $p$-weak upper gradient of $\t{u}|_{B_{k}}$ in $D^{1,p}(B_{k})$, it follows that we will have 
\[
\|g_{k}\|_{L^{p}(B_{k})} \leq \limsup_{n \rightarrow \infty}\|u_{n}\|_{D^{1,p}(B_{k})} \leq \limsup_{n \rightarrow \infty}\|u_{n}\|_{D^{1,p}(Y)} < \infty,
\]
with the final inequality following from the assumption that $\{u_{n}\}$ is a Cauchy sequence in $D^{1,p}(Y)$. By uniqueness of minimal $p$-weak upper gradients we have that $g_{j} = g_{k}$ a.e.~ on $B_{j}$ for each $j < k$. We then extend $g_{k}$ to $Y$ by setting $g_{k}(x) = 0$ for $x \notin B_{k}$ and define a Borel function $g: Y \rightarrow [0,\infty]$ by $g(x) = \sup_{k \in \N} g_{k}(x)$. Then $g$ defines a $p$-weak upper gradient for $\t{u}$ on $Y$. Furthermore we have $g = g_{k}$ a.e.~ on $B_{k}$ for each $k$. By the monotone convergence theorem applied to the sequence of functions $g_{(k)}(x) = \sup_{1 \leq j \leq k} g_{j}(x)$, we then conclude that
\[
\|g\|_{L^{p}(Y)} \leq \limsup_{n \rightarrow \infty}\|u_{n}\|_{D^{1,p}(Y)} < \infty
\] 
Thus $\t{u} \in \t{D}^{1,p}(Y)$. 

We thus conclude that $\t{u}-\t{u}_{n} \in \t{D}^{1,p}(Y)$ for each $n \in \N$. We let $f_{n}$ be a minimal $p$-integrable $p$-weak upper gradient of $\t{u}-\t{u}_{n}$ on $Y$ for each $n$. To show that $u_{n}  \rightarrow u$ in $D^{1,p}(Y)$ it suffices to show that $\|f_{n}\|_{L^{p}(Y)} \rightarrow 0$. For each $k \in \N$ we have that $f_{n}|_{B_{k}}$ is a minimal $p$-weak upper gradient of $(\t{u}_{n,k}-v_{k})|_{B_{k}}$.  It follows that 
\begin{align*}
\|f_{n}\|_{L^{p}(B_{k})} &= \|(\t{u}_{n,k}-v_{k})|_{B_{k}}\|_{D^{1,p}(B_{k})} \\
&\leq \limsup_{m \rightarrow \infty} \|(\t{u}_{n,k}-\t{u}_{m,k})|_{B_{k}}\|_{D^{1,p}(B_{k})} \\
&= \limsup_{m \rightarrow \infty} \|(\t{u}_{n}-\t{u}_{m})|_{B_{k}}\|_{D^{1,p}(B_{k})} \\
&\leq \limsup_{m \rightarrow \infty} \|\t{u}_{n}-\t{u}_{m}\|_{D^{1,p}(Y)}.
\end{align*}
By the monotone convergence theorem we conclude that
\[
\|f_{n}\|_{L^{p}(Y)} \leq \limsup_{m \rightarrow \infty} \|\t{u}_{n}-\t{u}_{m}\|_{D^{1,p}(Y)}.
\]
The right side converges to $0$ as $n \rightarrow \infty$ since $\{u_{n}\}$ is a Cauchy sequence in $D^{1,p}(Y)$. We conclude that $\|f_{n}\|_{L^{p}(Y)} \rightarrow 0$ as $n \rightarrow \infty$, which implies that $u_{n} \rightarrow u$ in $D^{1,p}(Y)$. 
\end{proof}

We next discuss quasicontinuity.  A function $u$ on $Y$ is \emph{$C_{p}^{Y}$-quasicontinuous} if for each $\eta > 0$ there is an open set $U \subset Y$ such that $C_{p}^{Y}(U) < \eta$ and $u|_{Y \backslash U}$ is continuous. We will later make use of the following theorem of Bj\"orn-Bj\"orn-Shanmugalingam.

\begin{thm}\label{outer capacity}\cite{BBS08}
Let $(Y,d,\mu)$ be a metric measure space that is complete, doubling, and supports a $p$-Poincar\'e inequality. Then every $u \in \t{N}^{1,p}_{\loc}(Y)$ is $C_{p}^{Y}$-quasicontinuous.  Moreover $C_{p}^{Y}$ is an outer capacity, i.e., for any subset $G \subset Y$, 
\[
C_{p}^{Y}(G) = \inf_{U} C_{p}^{Y}(U),
\]
with the infimum being taken over all open subsets $U$ with $G \subset U$. 
\end{thm}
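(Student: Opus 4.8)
The plan is to establish the two assertions — quasicontinuity of every $u\in\t{N}^{1,p}_{\loc}(Y)$ and outer regularity of $C_p^Y$ — with the quasicontinuity statement as the substantive one, the outer-regularity statement then being a short consequence. Since $\mu$ is doubling, $(Y,d)$ is a doubling metric space, hence separable (so $Y$ is Lindel\"of) and, being complete, it is proper; I would use these facts freely. The workhorse throughout is the elementary \emph{capacitary weak-type estimate}: for $v\in\t{N}^{1,p}(Y)$ and $\lambda>0$ the truncation $\min\{|v|/\lambda,1\}$ lies in $\t{N}^{1,p}(Y)$, equals $1$ on $\{|v|>\lambda\}$, and has Newtonian norm at most $\lambda^{-1}\|v\|_{N^{1,p}(Y)}$; hence
\[
C_p^Y(\{x\in Y:\,|v(x)|>\lambda\})\ \lesssim\ \lambda^{-p}\,\|v\|_{N^{1,p}(Y)}^{p}.
\]

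For quasicontinuity I would argue through a Lebesgue-point analysis of Newtonian functions. The relevant auxiliary objects are the restricted maximal functions $\mathcal{M}_{\delta}(g^{p})$ and $\mathcal{M}_{\delta}(|u|^{p})$ of the $p$-th powers of a minimal $p$-weak upper gradient $g=g_{u}$ and of $u$, taken over balls of radius at most $\delta$; these are lower semicontinuous, so their superlevel sets are \emph{open}. Using the doubling property and a telescoping of the $(p,p)$-Poincar\'e inequality \eqref{stronger Poincare} over the balls $B(x,2^{-k}r)$ one bounds $|u_{B(x,r)}-u_{B(x,r/2)}|$ by a constant times $r\,(\dashint_{B(x,r)}g^{p}\,d\mu)^{1/p}$; summing this geometric series shows that for $x$ outside the open set $V_{\lambda}:=\{\mathcal{M}_{\delta}(g^{p})^{1/p}>\lambda\}\cup\{\mathcal{M}_{\delta}(|u|^{p})^{1/p}>\lambda\}$ the averages $u_{B(x,r)}$ converge as $r\to 0$ to a limit $u^{\ast}(x)$, the convergence being locally uniform on $Y\setminus V_{\lambda}$, so that $u^{\ast}$ is continuous there. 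Using that functions in $\t{N}^{1,p}$ are absolutely continuous along $p$-a.e.\ curve (see the discussion after Lemma \ref{weak upper approximate}) together with a pointwise Poincar\'e/Haj\l asz estimate, one shows that the set where $u(x)\neq u^{\ast}(x)$ is contained in $\bigcap_{\lambda>0}V_{\lambda}$, whence $u=u^{\ast}$ on every $Y\setminus V_{\lambda}$. Since the weak-type estimate gives $C_p^Y(V_{\lambda})\lesssim \lambda^{-p}(\|g\|_{L^{p}(Y)}^{p}+\|u\|_{L^{p}(Y)}^{p})\to 0$ as $\lambda\to\infty$, choosing $\lambda$ large yields, for any prescribed $\eta>0$, an open set of capacity $<\eta$ off which $u$ is continuous. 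For $u\in\t{N}^{1,p}_{\loc}(Y)$ I would run this on a countable cover of $Y$ by balls (using properness and our definition of the local spaces) and take the union of the resulting small-capacity open sets, using countable subadditivity of $C_p^Y$. An equivalent route would replace the maximal-function machinery by density of Lipschitz functions in $N^{1,p}$ (available under completeness, doubling and the Poincar\'e inequality) combined with a Borel--Cantelli argument applied to the weak-type estimate; in that approach the lower-semicontinuous telescoping majorant $\sum_{j}2^{j}|\varphi_{j+1}-\varphi_{j}|$ of a rapidly converging sequence of Lipschitz approximants is what produces genuinely open exceptional sets.

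Granting quasicontinuity, I would deduce the outer-regularity statement. Only the inequality $\inf\{C_p^Y(U):U\supseteq G\text{ open}\}\le C_p^Y(G)$ requires proof. Assume $C_p^Y(G)<\infty$, fix $\varepsilon>0$, and choose $u\in\t{N}^{1,p}(Y)$ with $u\ge 1$ on $G$, $0\le u\le 1$ (truncating a near-optimal test function, which does not increase its Newtonian norm), and $\|u\|_{N^{1,p}(Y)}^{p}<C_p^Y(G)+\varepsilon$. By quasicontinuity pick an open set $V$ with $C_p^Y(V)<\varepsilon$ and $u|_{Y\setminus V}$ continuous, and set
\[
U:=V\cup\{x\in Y\setminus V:\,u(x)>1-\varepsilon\}.
\]
Continuity of $u$ on $Y\setminus V$ together with openness of $V$ shows that $U$ is open, and $G\subseteq U$ because $u\ge 1$ on $G$. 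Finally, by subadditivity and the weak-type estimate,
\[
C_p^Y(U)\le C_p^Y(V)+C_p^Y(\{u>1-\varepsilon\})\le \varepsilon+(1-\varepsilon)^{-p}\,\|u\|_{N^{1,p}(Y)}^{p}<\varepsilon+(1-\varepsilon)^{-p}\bigl(C_p^Y(G)+\varepsilon\bigr),
\]
and letting $\varepsilon\to 0$ completes the argument.

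The principal obstacle is the quasicontinuity step — specifically, the claim that $u=u^{\ast}$ \emph{off the open sets} $V_{\lambda}$, not merely $\mu$-almost everywhere: one must verify that the exceptional set of a given pointwise representative is not just $\mu$-null and $p$-exceptional but is actually trapped inside the nested open sets produced by the maximal functions (equivalently, in the Lipschitz-density route, inside superlevel sets of a lower-semicontinuous majorant). This is exactly where completeness, doubling, and the Poincar\'e inequality are all used, and it is the point at which I would follow the arguments of \cite{HKST} and the original work of Bj\"orn--Bj\"orn--Shanmugalingam \cite{BBS08} in detail. The remaining ingredients — the weak-type estimate, the lower semicontinuity of the maximal functions, the telescoping Poincar\'e estimate, the superlevel-set construction, and the patching over a countable cover — are routine.
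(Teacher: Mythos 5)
The paper does not prove Theorem~\ref{outer capacity}; it is imported verbatim from the cited reference \cite{BBS08}, so there is no internal proof to compare against. Your sketch correctly identifies the two halves (quasicontinuity and outer regularity), and the reduction of outer regularity to quasicontinuity is carried out correctly: the identity $U = V \cup W = V \cup O$ for an open $O$ with $W = O \cap (Y\setminus V)$ shows $U$ is open, $G \subset U$ follows from $u \ge 1$ on $G$, and the capacity estimate via subadditivity and the truncation $\min\{u/(1-\e),1\}$ is standard and clean. The quasicontinuity step you honestly flag as the substantive one, and the route through restricted maximal functions of $|u|^p$ and $g_u^p$, telescoping the $(p,p)$-Poincar\'e inequality to get locally uniform convergence of averages off lower-semicontinuous superlevel sets, is indeed the mechanism used in \cite{HKST} and in the original source. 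Two technical points are glossed over and would need care if this were written out: (i) for $u \in \t{N}^{1,p}_{\loc}(Y)$ one localizes via a countable cover, and the open exceptional sets one produces on a ball $B_i$ have small $C_p^{B_i}$-capacity rather than small $C_p^Y$-capacity, so one must compare these (e.g.\ by multiplying by a Lipschitz cutoff to upgrade to a global test function); (ii) the identification $u = u^{\ast}$ off the open sets $V_\la$, not merely $\mu$-a.e., is exactly where absolute continuity along $p$-a.e.\ curve together with the pointwise Poincar\'e inequality enter, and this is the genuinely delicate part, which you correctly defer to the references. As an outline the proposal is sound and matches the approach in the cited works.
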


We remark that, since $\t{D}^{1,p}(Y) \subset N^{1,p}_{\loc}(Y)$ under our hypotheses on $Y$ by Proposition \ref{integrable over balls}, we can freely apply Theorem \ref{outer capacity} to any function $u \in \t{D}^{1,p}(Y)$ if we further assume that $Y$ is complete. In particular all functions in $\t{D}^{1,p}(Y)$ are $C_{p}^{Y}$-quasicontinuous.


\subsection{Besov spaces}\label{subsec:Besov} In this section we will be dropping the requirement that our space supports a Poincar\'e inequality, so we will be switching to different symbols $Z$ for the metric space and $\nu$ for the measure to reflect this. We will assume that $(Z,d,\nu)$ is a doubling metric measure space. Similarly to before we write $C_{\nu}$ for the doubling constant of the measure $\nu$ appearing in the doubling inequality \ref{doubling inequality}.  For a given $p \geq 1$ and $\theta > 0$ we recall  for a function $u \in L^{p}_{\loc}(Z)$ the definitions \eqref{Besov norm} and \eqref{check Besov norm} of the Besov norms $\|u\|_{B^{\theta}_{p}(Z)}$ and $\|u\|_{\ch{B}^{\theta}_{p}(Z)}$ of $u$. The space $\t{B}^{\theta}_{p}(Z)$ is the subspace of $L^{p}_{\loc}(Z)$ for which we have $\|u\|_{B^{\theta}_{p}(Z)} < \infty$ and the space $\ch{B}^{\theta}_{p}(Z) = L^{p}(Z) \cap \t{B}^{\theta}_{p}(Z)$ is characterized by the finiteness of the norm  $\|u\|_{\ch{B}^{\theta}_{p}(Z)}$. 

We let $B^{\theta}_{p}(Z) = \t{B}_{p}^{\theta}(Z)/\sim$ be the quotient of $\t{B}_{p}^{\theta}(Z)$ by the equivalence relation $u \sim v$ if $\|u-v\|_{B^{\theta}_{p}(Z)} = 0$. It is easy to see from the definition \eqref{Besov norm} that $\|u\|_{B^{\theta}_{p}(Z)} = 0$ if and only if there is a constant $c \in \R$ such that $u \equiv c$ a.e.; hence we can equivalently characterize $B^{\theta}_{p}(Z)$ as being the quotient of $\t{B}_{p}^{\theta}(Z)/\sim$ by the equivalence relation $u \sim v$ if $u-v$ is constant a.e.~ on $Z$. The space $B^{\theta}_{p}(Z)$ is a Banach space as well; we will deduce this from Proposition \ref{Dirichlet Banach} in Proposition \ref{Besov Banach} toward the end of the paper. This fact has been obtained previously in other treatments of the topic, see e.g. \cite{S16}.

We define the Besov capacity $C_{\ch{B}^{\theta}_{p}}^{Z}$ analogously to the $C_{p}^{Y}$-capacity for a subset $G\subset Z$,
\[
C_{\ch{B}^{\theta}_{p}}^{Z}(G) = \inf_{u} \|u\|_{\ch{B}^{\theta}_{p}(Z)}^{p},
\]
with the infimum being taken over all $u \in \ch{B}^{\theta}_{p}(Z)$ such that $u \geq 1$ $\nu$-a.e.~ on a neighborhood of $G$. We have to take a neighborhood of $G$ due to the lack of pointwise control over functions $u \in \ch{B}^{\theta}_{p}(Z)$. A function $u \in \t{B}^{\theta}_{p}(Z)$ is \emph{$C_{\ch{B}^{\theta}_{p}}^{Z}$-quasicontinuous} if for each $\eta > 0$ there is an open set $U \subset Z$ such that $C_{\ch{B}^{\theta}_{p}}^{Z}(U) < \eta$ and $u|_{Z \backslash U}$ is continuous. 

In connection with the Besov capacity the following truncation lemma is useful. 

\begin{lem}\label{truncate Besov}
Let $f \in \ch{B}^{\theta}_{p}(Z)$ be given and define
\[
\hat{f} = \max\{0,\min\{1,f\}\}.
\] 
Then $\hat{f} \in \ch{B}^{\theta}_{p}(Z)$ with $\|\hat{f}\|_{\ch{B}^{\theta}_{p}(Z)} \leq \|f\|_{\ch{B}^{\theta}_{p}(Z)}$. 
\end{lem}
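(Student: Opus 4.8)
The plan is to reduce the statement to two separate, essentially pointwise, observations: one for the $L^p$-norm and one for the Besov seminorm. Note first that $\hat f = \max\{0,\min\{1,f\}\}$ is obtained from $f$ by the $1$-Lipschitz map $\varphi:\R\to\R$, $\varphi(t) = \max\{0,\min\{1,t\}\}$, which satisfies $|\varphi(s)-\varphi(t)|\le|s-t|$ for all $s,t$ and also $|\varphi(t)|\le|t|$ (since $\varphi(0)=0$ and $\varphi$ is $1$-Lipschitz, or directly: if $t\le 0$ then $\varphi(t)=0$, if $0\le t\le 1$ then $0\le\varphi(t)=t\le|t|$, and if $t\ge 1$ then $\varphi(t)=1\le t=|t|$). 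In particular $\hat f$ is measurable and $|\hat f|\le|f|$ $\nu$-a.e.

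First I would handle the $L^p$ part: from $|\hat f(x)|\le|f(x)|$ for all $x\in Z$ we get immediately $\|\hat f\|_{L^p(Z)}\le\|f\|_{L^p(Z)}<\infty$, so $\hat f\in L^p(Z)\subset L^p_{\loc}(Z)$. Next, for the Besov seminorm, apply the $1$-Lipschitz bound pointwise: for all $x,y\in Z$,
\[
|\hat f(x)-\hat f(y)| = |\varphi(f(x))-\varphi(f(y))| \le |f(x)-f(y)|.
\]
Dividing by $d(x,y)^{p\theta}$ (after raising to the $p$-th power) and integrating against the same measure $\dfrac{d\nu(x)\,d\nu(y)}{\nu(B(x,d(x,y)))}$ appearing in \eqref{Besov norm} yields $\|\hat f\|_{B^\theta_p(Z)}^p\le\|f\|_{B^\theta_p(Z)}^p$, hence $\hat f\in\t B^\theta_p(Z)$. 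Combining, $\hat f\in L^p(Z)\cap\t B^\theta_p(Z)=\ch B^\theta_p(Z)$, and adding the two inequalities gives
\[
\|\hat f\|_{\ch B^\theta_p(Z)} = \|\hat f\|_{L^p(Z)} + \|\hat f\|_{B^\theta_p(Z)} \le \|f\|_{L^p(Z)} + \|f\|_{B^\theta_p(Z)} = \|f\|_{\ch B^\theta_p(Z)},
\]
as claimed.

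There is essentially no obstacle here; the only point requiring a line of care is the elementary verification that the truncation map $\varphi$ is $1$-Lipschitz and satisfies $|\varphi(t)|\le|t|$, which I would dispatch by the three-case check above, and the observation that both bounds are genuinely pointwise in $x$ (and in $(x,y)$ for the double integral), so no integrability of $f$ is needed to justify passing to the integral — monotonicity of the integral suffices. If one wished, the Besov seminorm bound could alternatively be cited as a special case of the general fact that postcomposition with a $1$-Lipschitz function is norm-nonincreasing on $\t B^\theta_p(Z)$, but it is just as quick to do it directly.
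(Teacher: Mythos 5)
Your proof is correct and follows essentially the same route as the paper's: both rest on the pointwise observations that the truncation map is $1$-Lipschitz with $|\varphi(t)|\le|t|$, yielding $|\hat f(x)-\hat f(y)|\le|f(x)-f(y)|$ and $|\hat f|\le|f|$, and then conclude by monotonicity of the integrals defining \eqref{Besov norm} and the $L^p$ norm. The extra verification that $\varphi$ is $1$-Lipschitz is a harmless elaboration of what the paper takes for granted.
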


\begin{proof}
For $\nu$-a.e.~ $x,y \in Z$ we have
\[
|\hat{f}(x)-\hat{f}(y)| \leq |f(x)-f(y)|.
\]
This immediately implies that $\|\hat{f}\|_{B^{\theta}_{p}(Z)} \leq \|f\|_{B^{\theta}_{p}(Z)}$. Since we also have $|\hat{f}(x)| \leq |f(x)|$ for $\nu$-a.e.~ $x \in Z$, we conclude that $\|\hat{f}\|_{L^{p}(Z)} \leq \|f\|_{L^{p}(Z)}$ and therefore $\|\hat{f}\|_{\ch{B}^{\theta}_{p}(Z)} \leq \|f\|_{\ch{B}^{\theta}_{p}(Z)}$.
\end{proof}

We have the following key estimate, which follows from the estimates of \cite[Theorem 5.2]{GKS10} with $\alpha > 1$ in place of $2$; see also \cite[Lemma 9.9]{BBS21} which gives the analogous estimate on a bounded space.

\begin{lem}\label{Besov estimate}
For any $\alpha > 1$ and $u \in B^{\theta}_{p}(Z)$ we have
\[
\|u\|_{B^{\theta}_{p}(Z)}^{p} \asymp_{C} \sum_{n\in \Z} \int_{Z} \dashint_{B_{Z}(x,\alpha^{-n})}\frac{|u(x)-u(y)|^{p}}{\alpha^{-n \theta p}}\,d\nu(y)d\nu(x),
\]
with $C = C(\alpha,p,\theta,C_{\nu})$.
\end{lem}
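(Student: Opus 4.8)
The plan is to show the two-sided bound by comparing the double integral defining $\|u\|_{B^{\theta}_{p}(Z)}^{p}$ with the dyadic-type sum on the right, decomposing the domain of integration $Z \times Z$ according to the scale $d(x,y)$. Concretely, for $x \in Z$ and $n \in \Z$ let $A_{n}(x) = \{y \in Z : \alpha^{-n-1} \leq d(x,y) < \alpha^{-n}\}$, so that $Z = \{x\} \cup \bigsqcup_{n \in \Z} A_{n}(x)$ up to a set of $\nu$-measure zero (the set $\{y : d(x,y) = \alpha^{-n}\}$ is handled by reindexing and is harmless since countably many such spheres are involved). On $A_{n}(x)$ we have $d(x,y) \asymp_{\alpha} \alpha^{-n}$, hence $d(x,y)^{-p\theta} \asymp_{\alpha,p,\theta} \alpha^{np\theta}$, and by the doubling property $\nu(B(x,d(x,y))) \asymp_{C_{\nu},\alpha} \nu(B(x,\alpha^{-n}))$. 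Therefore
\[
\int_{Z}\int_{Z}\frac{|u(x)-u(y)|^{p}}{d(x,y)^{p\theta}}\frac{d\nu(y)d\nu(x)}{\nu(B(x,d(x,y)))} \asymp_{C} \sum_{n \in \Z}\int_{Z}\frac{1}{\nu(B(x,\alpha^{-n}))}\int_{A_{n}(x)}\frac{|u(x)-u(y)|^{p}}{\alpha^{-np\theta}}\,d\nu(y)\,d\nu(x),
\]
with $C = C(\alpha,p,\theta,C_{\nu})$, where the interchange of sum and integral is justified by Tonelli's theorem since all integrands are nonnegative.

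Next I would pass from the annuli $A_{n}(x)$ to the balls $B_{Z}(x,\alpha^{-n})$. For the upper bound direction (bounding the sum over annuli by the sum over balls), note that $A_{n}(x) \subset B_{Z}(x,\alpha^{-n})$, so the integral over $A_{n}(x)$ is dominated by the integral over $B_{Z}(x,\alpha^{-n})$; after dividing by $\nu(B(x,\alpha^{-n}))$ this gives exactly the averaged quantity appearing in the statement, proving $\|u\|_{B^{\theta}_{p}(Z)}^{p} \lesssim_{C} \sum_{n}\int_{Z}\dashint_{B_{Z}(x,\alpha^{-n})}\cdots$. For the reverse bound, observe that $B_{Z}(x,\alpha^{-n}) = \{x\} \cup \bigsqcup_{m \geq n} A_{m}(x)$ (again up to null sets), so
\[
\dashint_{B_{Z}(x,\alpha^{-n})}\frac{|u(x)-u(y)|^{p}}{\alpha^{-np\theta}}\,d\nu(y) = \frac{\alpha^{np\theta}}{\nu(B(x,\alpha^{-n}))}\sum_{m \geq n}\int_{A_{m}(x)}|u(x)-u(y)|^{p}\,d\nu(y).
\]
Summing over $n \in \Z$ and interchanging the order of summation (Tonelli again), each term $\int_{A_{m}(x)}|u(x)-u(y)|^{p}\,d\nu(y)$ gets weighted by $\sum_{n \leq m}\alpha^{np\theta}/\nu(B(x,\alpha^{-n}))$. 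The key point is that this weight is comparable to its largest term $\alpha^{mp\theta}/\nu(B(x,\alpha^{-m}))$: indeed, for $n \leq m$ the ratio of consecutive terms is $\alpha^{(n-1)p\theta}\nu(B(x,\alpha^{-n}))/\big(\alpha^{np\theta}\nu(B(x,\alpha^{-n+1}))\big) = \alpha^{-p\theta}\cdot \nu(B(x,\alpha^{-n}))/\nu(B(x,\alpha^{-n+1})) \leq \alpha^{-p\theta} < 1$ since the ball on top is smaller. Hence the geometric-type series is summable with a bound depending only on $\alpha,p,\theta$, and we recover $\sum_{n}\int_{Z}\dashint_{B_{Z}(x,\alpha^{-n})}\cdots \lesssim_{C} \sum_{m}\int_{Z}\nu(B(x,\alpha^{-m}))^{-1}\int_{A_{m}(x)}\cdots$, which by the first display is comparable to $\|u\|_{B^{\theta}_{p}(Z)}^{p}$.

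The only genuinely delicate point — and the one I expect to require the most care — is the verification that the weights $\alpha^{np\theta}/\nu(B(x,\alpha^{-n}))$ form a summable (geometric-dominated) sequence as $n \to -\infty$; this is exactly where $\theta > 0$ is used and where one must be careful that the volume factor works in the favorable direction. Note that this argument does \emph{not} require any reverse-doubling hypothesis: only the plain doubling inequality for $\nu$ and the monotonicity $\nu(B(x,\alpha^{-n})) \leq \nu(B(x,\alpha^{-n+1}))$ are needed, since we only need the series to converge going to smaller scales. The bookkeeping with half-open annuli versus closed/open balls (which spheres $d(x,y) = \alpha^{-n}$ go where) is routine but should be stated cleanly; since only countably many radii are involved and the estimates are two-sided up to constants, any consistent convention works. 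Everything else is Tonelli's theorem plus the doubling-property comparisons already recorded above.
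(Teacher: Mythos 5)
Your proof is correct, and since the paper itself proves nothing here — it simply cites \cite{GKS10} (with $\alpha>1$ in place of $2$) together with the bounded-space analogue \cite[Lemma 9.9]{BBS21} — you have supplied a self-contained derivation of a statement the paper leaves to the references. The structure is the standard dyadic-annulus argument: on $A_n(x)=\{y:\alpha^{-n-1}\le d(x,y)<\alpha^{-n}\}$ both $d(x,y)^{-p\theta}$ and $\nu(B_Z(x,d(x,y)))^{-1}$ are comparable (using plain doubling, iterated a number of times depending on $\alpha$) to their values at the reference scale $\alpha^{-n}$; the inclusion $A_n(x)\subset B_Z(x,\alpha^{-n})$ gives one inequality; and unfolding each ball average as $\nu(B_Z(x,\alpha^{-n}))^{-1}\sum_{m\ge n}\int_{A_m(x)}$ and resumming shows that the annulus term at index $m$ acquires the weight $\sum_{n\le m}\alpha^{np\theta}/\nu(B_Z(x,\alpha^{-n}))$, which you correctly dominate by a geometric multiple of its top term using only $\theta>0$ and monotonicity of the measure. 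The Tonelli interchanges are justified by nonnegativity, and the constants depend on $\alpha,p,\theta,C_\nu$ exactly as required. One wording slip worth fixing: you say the series must converge ``going to smaller scales,'' but $\sum_{n\le m}$ runs towards $n\to-\infty$, i.e.\ towards \emph{larger} balls $B_Z(x,\alpha^{-n})$; it is the geometric decay of $\alpha^{np\theta}$ against the (non-decreasing) ball volumes in that direction that forces convergence. The mathematics is unaffected, and your observation that no reverse-doubling hypothesis is needed is correct.
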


For a function $\psi: Z \rightarrow \R$ we write
\[
\mathrm{supp}(\psi):=\overline{\{z\in Z:\psi(z) \neq 0\}},
\]
for the support of $\psi$, defined as the closure of the points on which $\psi$ is nonzero. We say that $\psi$ has \emph{bounded support} if $\supp(\psi)$ is a bounded subset of $Z$, and we say that $\psi$ is \emph{compactly supported} if $\supp(\psi)$ is compact. These two notions coincide when $Z$ is proper. Via a somewhat lengthy computation one can compute using the estimate of Lemma \ref{Besov estimate} that Lipschitz functions on $Z$ with bounded support belong to $\ch{B}^{\theta}_{p}(Z)$ for each $0 < \theta < 1$. This shows in particular that these Besov spaces are always nontrivial (i.e., contain functions that are not constant $\nu$-a.e.) whenever $Z$ contains more than one point. 

\section{Hyperbolic fillings}\label{sec:fillings}

In this section we will review the results that we need from \cite{Bu20} on hyperbolic fillings of metric spaces and prove some additional results that we will need in subsequent sections.

\subsection{Gromov hyperbolic spaces}\label{subsec:hyp} 
We start by recalling some general results regarding Gromov hyperbolic spaces, for which a good reference is \cite{BS07}. Let $(X,d)$ be a geodesic metric space. A \emph{geodesic triangle} $\Delta$ in $X$ consists of three points $x,y,z \in X$ together with geodesics joining these points to one another. Writing $\Delta = \gamma_{1} \cup \gamma_{2} \cup \gamma_{3}$ as a union of its edges, we say that $\Delta$ is \emph{$\delta$-thin} for a given $\delta \geq 0$ if for each point $p \in \gamma_{i}$, $i =1,2,3$, there is a point $q \in \gamma_{j}$ with $d(p,q) \leq \delta$ and $i \neq j$. A geodesic metric space $X$ is \emph{Gromov hyperbolic} if there is a $\delta \geq 0$ such that all geodesic triangles in $X$ are $\delta$-thin; in this case we will also say that $X$ is \emph{$\delta$-hyperbolic}. When considering Gromov hyperbolic spaces $X$ we will usually use the generic distance notation $|xy|:=d(x,y)$ for the distance between $x$ and $y$ in $X$ and the generic notation $xy$ for a geodesic connecting two points $x,y \in X$, even when this geodesic is not unique.  

The Gromov boundary $\p X$ of a proper geodesic $\delta$-hyperbolic space $X$ is defined to be the collection of all geodesic rays $\gamma: [0,\infty) \rightarrow X$ up to the equivalence relation of two rays being equivalent if they are at a bounded distance from one another. We will often refer to the point $\omega \in \p X$ corresponding to a geodesic ray $\gamma$ as the \emph{endpoint} of $\gamma$. Using the Arzela-Ascoli theorem it is easy to see in a proper geodesic $\delta$-hyperbolic space that for any points $x,y \in X \cup \p X$ there is a geodesic $\gamma$ joining $x$ to $y$. We will continue to write $xy$ for any such choice of geodesic joining $x$ to $y$. 



For a geodesic ray $\gamma:[0,\infty) \rightarrow X$, the \emph{Busemann function} $b_{\gamma}: X \rightarrow \R$ associated to $\gamma$ is defined by the limit
\begin{equation}\label{first busemann definition}
b_{\gamma}(x) = \lim_{t \rightarrow \infty} |\gamma(t)x|-t. 
\end{equation} 
We then define
\begin{equation}\label{extension busemann definition}
\mathcal{B}(X) = \{b_{\gamma}+s: \text{$\gamma$ a geodesic ray in $X$, $s \in \R$}\},
\end{equation}
and refer to any function $b \in \mathcal{B}(X)$ as a Busemann function on $X$. See \cite[(1.4-1.5)]{Bu20} for further details on these definitions. The Busemann functions $b \in \mathcal{B}(X)$ are all $1$-Lipschitz functions on $X$. For a Busemann function $b$ of the form $b = b_{\gamma} + s$ for some $s \in \R$, we define the endpoint $\omega \in \p X$ of $\gamma$ to be the \emph{basepoint} of $b$ and say that $b$ is \emph{based at $\omega$}.

Fix a Busemann function $b \in \mathcal{B}(X)$ with basepoint $\omega$. The \emph{Gromov product} of $x,y \in X$ based at $b$ is defined by 
\begin{equation}\label{Gromov Busemann product}
(x|y)_{b} = \frac{1}{2}(b(x) + b(y) - |xy|). 
\end{equation}
We note that Gromov products are usually defined with basepoints in $X$, however we will only need to consider Gromov products based at Busemann functions in this paper. The following statements briefly summarize a more extensive discussion of Gromov products in \cite[Section 2]{Bu20}, which is based on \cite[Chapter 3]{BS07}. A sequence $\{x_{n}\}$ \emph{converges to infinity with respect to $\omega$} if $(x_{m}|x_{n})_{b} \rightarrow \infty$ as $m,n \rightarrow \infty$, and two sequences $\{x_{n}\}$ and $\{y_{n}\}$ are \emph{equivalent with respect to $\omega$} if $(x_{n}|y_{n})_{b} \rightarrow \infty$ as $n \rightarrow \infty$. These definitions do not depend on the choice of Busemann function based at $\omega$ by \cite[Lemma 2.5]{Bu20}. The \emph{Gromov boundary relative to $\omega$} is defined to be the set $\p_{\omega}X$ of all equivalence classes of sequences converging to infinity with respect to $\omega$.

By \cite[Proposition 3.4.1]{BS07} we have a canonical identification of $\p_{\omega}X$ with the complement $\p X \backslash\{\omega\}$ of $\omega$ in the Gromov boundary $\p X$. We will thus use the notation $\p_{\omega}X = \p X \backslash \{\omega\}$ throughout the rest of the paper. For $\xi \in \p_{\omega} X$ and a sequence $\{x_{n}\}$ that converges to infinity with respect $\omega$ we will write $\{x_{n}\} \in \xi$ if $\{x_{n}\}$ belongs to the equivalence class of $\xi$, and in this case we will also say that $\{x_{n}\}$ \emph{converges to $\xi$}.

Gromov products based at Busemann functions $b \in \mathcal{B}(X)$ can be extended to points of $\p_{\omega} X$ by defining the Gromov product of equivalence classes $\xi$, $\zeta \in \p_{\omega} X$ based at $b$ to be
\begin{equation}\label{full extended definition}
(\xi |\zeta)_{b} = \inf \liminf_{n \rightarrow \infty}(x_{n}|y_{n})_{b},
\end{equation}
with the infimum taken over all sequences $\{x_{n}\} \in \xi$, $\{y_{n}\} \in \zeta$; we leave this expression undefined when $\xi = \zeta = \omega$. As a consequence of \cite[Lemma 2.2.2]{BS07}, \cite[Lemma 3.2.4]{BS07}, and the discussion in \cite[Section 2.2]{Bu20}, for any choices of sequences  $\{x_{n}\} \in  \xi$ and  $\{y_{n}\} \in \zeta$ we have
\begin{equation}\label{sequence approximation}
(\xi |\zeta)_{b}  \leq \liminf_{n \rightarrow \infty}(x_{n}|y_{n})_{b} \leq \limsup_{n \rightarrow \infty}(x_{n}|y_{n})_{b} \leq (\xi |\zeta)_{b} + c(\delta),
\end{equation}
with the constant $c(\delta)$ depending only on $\delta$. One may take $c(\delta)  = 600\delta$. For $x \in X$ and $\xi \in \p X$ the Gromov product based at $b$ is defined analogously as 
\begin{equation}\label{extended definition}
(x |\xi)_{b} = \inf \liminf_{n \rightarrow \infty}(x|x_{n})_{b},
\end{equation}
and the analogous inequality \eqref{sequence approximation} holds with the same constants. By \cite[(2.10)]{Bu20}, for all $x,y \in X \cup \p X$ with $(x,y) \neq (\omega,\omega)$ we have
\begin{equation}\label{both busemann boundary}
(x|y)_{b} \leq \min\{b(x),b(y)\}+c(\delta), 
\end{equation}
where we set $b(\xi) = \infty$ for $\xi \in \p_{\omega} X$. We may also take $c(\delta) = 600\delta$ here. 

Gromov products based at Busemann functions $b \in \mathcal{B}(X)$ can be used to define visual metrics on the Gromov boundary $\p_{\omega} X$ relative to $\omega$. We refer to \cite[Chapters 2-3]{BS07} as well as \cite[Section 2.3]{Bu20} for precise details on this topic. We will summarize the results we need here below. For $\e > 0$ we define for $\xi$, $\zeta \in \p_{\omega} X$,
\begin{equation}\label{visual quasi}
\vartheta_{\e,b}(\xi,\zeta) = e^{-\e (\xi|\zeta)_{b}}.
\end{equation}
This expression may not define a metric on $\p_{\omega} X$, since the triangle inequality may not hold. However there is always $\e_{*} = \e_{*}(\delta) > 0$ depending only on $\delta$ such that for $0 < \e \leq \e_{*}$ the function $\vartheta_{\e,b}$ is $4$-biLipschitz to a metric $\vartheta$ on $\p_{\omega} X$. We refer to any metric $\vartheta$ on $\p_{\omega}X$ that is biLipschitz to $\vartheta_{\e,b}$ as a \emph{visual metric} on $\p_{\omega} X$ based at $b$ and refer to $\e$ as the \emph{parameter} of $\vartheta$. We give $\p_{\omega} X$ the topology associated to a visual metric based at $b$ for any Busemann function $b$ based at $\omega$. When equipped with a visual metric $\p_{\omega}X$ is a locally compact metric space. 

\subsection{Structure of hyperbolic fillings}Let $(Z,d)$ be a metric space and let $\alpha,\tau > 1$ be given parameters. We will assume these parameters satisfy
\begin{equation}\label{tau requirement}
\tau > \left\{3,\frac{\alpha}{\alpha-1}\right\}.
\end{equation} 
We write $B_{Z}(z,r)$ for the ball of radius $r$ centered at $z$ in $Z$. We construct a hyperbolic filling $X$ of $Z$ associated to these parameters as in \cite{Bu20}. A subset $S \subset Z$ is \emph{$r$-separated} for a given $r > 0$ if for each $x,y \in S$ we have $d(x,y) \geq r$. For each $n \in \Z$ we select a maximal $\alpha^{-n}$-separated subset $S_{n}$ of $Z$. Then for each $n \in \Z$ the balls $B_{Z}(z,\alpha^{-n})$, $z \in S_{n}$, cover $Z$.

The vertex set of $X$ has the form 
\[
V = \bigcup_{n \in \Z} V_{n}, \;\;\; V_{n} = \{(z,n):z \in S_{n}\}.
\]
To each vertex $v = (z,n)$ we associate the dilated ball $B(v) := B_{Z}(z,\tau \alpha^{-n})$. We define a projection $\pi: V \rightarrow Z$ by setting $\pi(z,n) = z$ and define the \emph{height function} $h: V \rightarrow \Z$ by $h(z,n) = n$. For a vertex $v \in V$ we will sometimes write $v$ in place of $\pi(v)$ and consider $v$ both as a point of $Z$ and a vertex of $X$, except in places where this could cause confusion. 

We place an edge between two distinct vertices $v$ and $w$ if $|h(v)-h(w)| \leq 1$ and $B(v) \cap B(w) \neq \emptyset$. For vertices $v,w \in V$ we will write $v \sim w$ if there is an edge joining $v$ to $w$. We write $E$ for the set of edges in $X$.  The resulting graph $X$ is connected by \cite[Proposition 5.5]{Bu20}. We give $X$ the geodesic metric in which all edges have unit length. We extend the height function piecewise linearly to the edges of $X$ to define a $1$-Lipschitz function $h: X \rightarrow \R$. By \cite[Proposition 5.9]{Bu20} we have that the hyperbolic filling $X$ is $\delta$-hyperbolic with $\delta = \delta(\alpha,\tau)$.

We say that an edge $e$ is \emph{vertical} if it connects two vertices of different heights and \emph{horizontal} if it connects two vertices of the same height. A \emph{vertical edge path} in $X$ is a sequence of edges joining a sequence of vertices $\{v_{k}\}$ with $h(v_{k+1}) = h(v_{k}) +1$ for each $k$ or $h(v_{k+1}) = h(v_{k}) -1$ for each $k$. In the first case we say that the edge path is \emph{ascending} and in the second case we say that the edge path is \emph{descending}. We observe that vertical edge paths are always geodesics in $X$ since edges in $X$ can only join two vertices of the same or adjacent heights. Thus we will also refer to vertical edge paths as \emph{vertical geodesics}. The following is an easy observation from the construction of $X$. 

\begin{lem}\label{height connection}\cite[Lemma 5.2]{Bu20}
Let $v,w \in V$ with $h(v) \neq h(w)$ and $B(v) \cap B(w) \neq \emptyset$. Then there is a vertical geodesic connecting $v$ to $w$.
\end{lem}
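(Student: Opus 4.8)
The plan is to explicitly build a vertical geodesic between $v$ and $w$ by descending or ascending through a nested sequence of vertices. Without loss of generality assume $h(v) < h(w)$; write $m = h(v)$ and $n = h(w)$, so $m < n$, and fix a point $p \in B(v) \cap B(w)$, which is nonempty by hypothesis. For each integer $k$ with $m \le k \le n$ I would use the covering property of $S_k$ to choose a vertex $v_k \in V_k$ with $p \in B_Z(\pi(v_k), \alpha^{-k})$; in particular $p \in B(v_k) = B_Z(\pi(v_k), \tau\alpha^{-k})$ as well, since $\tau > 1$. We may take $v_m = v$ and $v_n = w$ at the endpoints: indeed $p \in B(v)$ means $d(p,\pi(v)) < \tau\alpha^{-m}$, and more care is needed here — see the obstacle paragraph below — but the key point is that at the two ends we are forced to use the given vertices $v$ and $w$, while at intermediate heights we have freedom to choose.

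The central claim is then that consecutive vertices $v_k$ and $v_{k+1}$ in this chain are joined by an edge, i.e. $B(v_k) \cap B(v_{k+1}) \ne \emptyset$. This is immediate from the construction: both balls contain the common point $p$ (for the intermediate vertices by choice, and for $v_m = v$, $v_n = w$ by the original hypothesis $p \in B(v) \cap B(w)$, provided the adjacent intermediate vertex also catches $p$). Since $|h(v_k) - h(v_{k+1})| = 1 \le 1$, the edge criterion in the construction of $X$ is satisfied, so $v_k \sim v_{k+1}$ (when $v_k \ne v_{k+1}$; if they coincide we simply shorten the chain, but the heights differ by $1$ so this cannot happen). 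Concatenating these edges produces an edge path from $v$ to $w$ in which the height strictly increases by $1$ at each step, so by definition it is an ascending vertical edge path. As noted in the text just before the lemma, vertical edge paths are automatically geodesics in $X$ (a path of length $\ell = n - m$ between vertices of height difference $n-m$ cannot be shortened, since each edge changes height by at most $1$ and $h$ is $1$-Lipschitz). This gives the desired vertical geodesic.

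The main obstacle is handling the endpoints cleanly. The point $p \in B(v) \cap B(w)$ lies in the \emph{dilated} balls $B(v) = B_Z(\pi(v), \tau\alpha^{-m})$ and $B(w) = B_Z(\pi(w), \tau\alpha^{-n})$, not in the undilated balls $B_Z(\pi(v), \alpha^{-m})$, so one cannot directly assert $v = v_m$ for a vertex $v_m$ chosen by the covering property of $S_m$. The fix is to verify the edge condition directly at the two ends rather than forcing equality: choose intermediate vertices $v_{m+1}, \dots, v_{n-1}$ as above with $p \in B_Z(\pi(v_k),\alpha^{-k}) \subset B(v_k)$, and then observe that $p \in B(v) \cap B(v_{m+1})$ gives the edge $v \sim v_{m+1}$ directly (heights $m$ and $m+1$ differ by $1$), and similarly $p \in B(v_{n-1}) \cap B(w)$ gives $w \sim v_{n-1}$. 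In the degenerate case $n = m+1$ there are no intermediate vertices and the single edge $v \sim w$ is supplied immediately by the hypothesis. One should also double-check the trivial edge case and note that when $v_k = v$ or $v_k = w$ happens to coincide with a chosen intermediate vertex the argument only simplifies. Modulo this endpoint bookkeeping, which relies only on $\tau > 1$ and the maximality/covering property of the sets $S_k$, the proof is routine.
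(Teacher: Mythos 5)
Your proof is correct, and it is essentially the standard argument one expects for this lemma (which the paper imports from \cite[Lemma 5.2]{Bu20} without reproving). The key steps — choosing a common point $p \in B(v)\cap B(w)$, using the covering property of each $S_k$ to find intermediate vertices $v_k$ at every height $m<k<n$ whose balls contain $p$, and then observing that the shared point $p$ forces edges between consecutive vertices — are exactly what is needed, and your obstacle paragraph correctly resolves the one subtlety: rather than insisting the constructed $v_m$, $v_n$ coincide with $v$, $w$, you simply verify the edge condition at the endpoints directly via $p\in B(v)\cap B(v_{m+1})$ and $p\in B(v_{n-1})\cap B(w)$. The identification of the resulting path as a geodesic is also clean: each edge raises $h$ by exactly $1$, $h$ is $1$-Lipschitz on $X$, so any path from $v$ to $w$ has length at least $|h(v)-h(w)| = n-m$, which the constructed path achieves. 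One minor tidying remark: your caveat about an intermediate $v_k$ possibly coinciding with $v$ or $w$ is vacuous, since the intermediate vertices live at strictly different heights from both endpoints; you can safely delete that sentence.
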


The next lemma concerns finding vertices $v \in V$ for which the associated ball $B(v)$ contains a given ball $B \subset Z$. 

\begin{lem}\label{large inclusion}
Let $B \subset Z$ be a ball in $Z$, let $k \in \Z$ be such that $\alpha^{-k} \geq r(B)$, and suppose that $B_{Z}(\pi(v),\alpha^{-k}) \cap B \neq \emptyset$ for some $v \in V_{k}$. Then $B \subset B(v)$. 

Consequently if $\alpha^{-k} \geq r(B)$ then there is always some vertex $v \in V_{k}$ such that $B \subset B(v)$. 
\end{lem}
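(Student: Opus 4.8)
The statement has two parts. For the first part, I would argue directly from the triangle inequality in $Z$. Fix $v \in V_k$ with $B_Z(\pi(v),\alpha^{-k}) \cap B \neq \emptyset$, say with a common point $p$, and write $B = B_Z(z,r)$ with $r = r(B) \leq \alpha^{-k}$. For any $y \in B$, I would estimate $d(\pi(v),y)$ by traveling through $p$: we have $d(\pi(v),p) < \alpha^{-k}$ since $p \in B_Z(\pi(v),\alpha^{-k})$, and $d(p,y) \leq d(p,z) + d(z,y) < r + r = 2r \leq 2\alpha^{-k}$. Hence $d(\pi(v),y) < 3\alpha^{-k} \leq \tau\alpha^{-k}$, using the hypothesis $\tau > 3$ from \eqref{tau requirement}. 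Therefore $y \in B_Z(\pi(v),\tau\alpha^{-k}) = B(v)$, which gives $B \subset B(v)$. (A slightly more careful bookkeeping of strict versus non-strict inequalities confirms $3\alpha^{-k} < \tau\alpha^{-k}$ since $\tau > 3$ strictly, so there is room to spare.)

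For the second ("consequently") part, I need to produce, for any ball $B$ and any $k$ with $\alpha^{-k} \geq r(B)$, some vertex $v \in V_k$ to which the first part applies. The key fact is that $S_k$ is a \emph{maximal} $\alpha^{-k}$-separated subset of $Z$, so the balls $B_Z(z,\alpha^{-k})$ for $z \in S_k$ cover $Z$ (this is stated explicitly in the construction in Section~\ref{sec:fillings}). Let $z_0$ be the center of $B$; then $z_0$ lies in some $B_Z(z,\alpha^{-k})$ with $z \in S_k$, i.e.\ there is $v = (z,k) \in V_k$ with $\pi(v) = z$ and $z_0 \in B_Z(\pi(v),\alpha^{-k})$. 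Since $z_0 \in B$ as well, we get $B_Z(\pi(v),\alpha^{-k}) \cap B \neq \emptyset$, and the first part of the lemma then yields $B \subset B(v)$.

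I do not expect any genuine obstacle here; the proof is a short exercise in the triangle inequality together with the covering property of maximal separated sets. The only point requiring mild care is the placement of the parameter constraint $\tau > 3$: one must track the constant $3$ arising from the three applications of the triangle inequality ($\alpha^{-k}$ from the overlap point, and $2r \leq 2\alpha^{-k}$ across the diameter of $B$) and note that this is exactly why \eqref{tau requirement} is stated with $\tau > 3$. It is worth remarking in passing that the condition $\tau > \alpha/(\alpha-1)$ from \eqref{tau requirement} is not needed for this particular lemma; only $\tau > 3$ is used.
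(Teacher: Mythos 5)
Your proof is correct and follows the same route as the paper: a triangle-inequality estimate through the overlap point to show $d(\pi(v),y) < 3\alpha^{-k} \leq \tau\alpha^{-k}$, followed by the maximal-separated-set covering property of $S_k$ for the "consequently" part. If anything, your bookkeeping is slightly more careful than the paper's written version, which asserts $d(x,z)\leq \alpha^{-k}$ for two points $x,z$ of $B$ (really this could be as large as $2\alpha^{-k}$); the conclusion is unaffected since $\tau>3$ leaves the needed room.
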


\begin{proof}
Let $z \in B \cap B_{Z}(\pi(v),\alpha^{-k})$ and let $x \in B$ be any other point in $B$. Then $d(x,z) \leq \alpha^{-k}$ by assumption. Thus 
\[
d(x,\pi(v)) \leq d(x,z) + d(z,\pi(v)) < 2\alpha^{-k} < \tau \alpha^{-k},
\]
since we assume that $\tau > 3$. It follows that $x \in B(v)$. We conclude that $B \subset B(v)$. The final assertion follows by observing that the balls $B_{Z}(\pi(v),\alpha^{-k})$ for $v \in V_{k}$ cover $Z$ for each $k \in \Z$.
\end{proof}

The metric space $Z$ is \emph{doubling} if there is an integer $D \geq 1$ such that for each $z \in Z$ and $r > 0$ any $r$-separated subset of $B_{Z}(z,2r)$ has at most $D$ points. The graph $X$ has \emph{bounded degree} if there is an integer $N \geq 1$ such that each vertex is connected by an edge to at most $N$ other vertices. Note that if $X$ has bounded degree then it is proper, i.e., closed balls in $X$ are compact. These two concepts are closely linked to one another, as the following proposition shows.

\begin{prop}\label{doubling degree}
The metric space $Z$ is doubling if and only if the hyperbolic filling $X$ has bounded degree, and this equivalence is quantitative in the doubling constant, vertex degree, $\alpha$, and $\tau$. 
\end{prop}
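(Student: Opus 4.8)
The plan is to prove both implications by direct counting arguments that track how the combinatorial structure of $X$ reflects the metric structure of $Z$, with all constants kept explicit. The key geometric fact underpinning both directions is the following: whether or not there is an edge between two vertices $v \in V_m$ and $w \in V_n$ with $|m-n| \le 1$ is governed by whether the balls $B(v) = B_Z(\pi(v), \tau\alpha^{-m})$ and $B(w) = B_Z(\pi(w), \tau\alpha^{-n})$ intersect, so neighbors of a vertex $v \in V_n$ correspond to certain points of the separated sets $S_{n-1}, S_n, S_{n+1}$ lying in a controlled ball around $\pi(v)$ in $Z$.

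\emph{Doubling implies bounded degree.} Suppose $Z$ is doubling with constant $D$ (as in the definition above: every $r$-separated subset of any ball of radius $2r$ has at most $D$ points). Fix $v \in V_n$ and set $z = \pi(v)$. Any neighbor $w$ of $v$ lies in $V_{n-1} \cup V_n \cup V_{n+1}$ and satisfies $B(w) \cap B(v) \neq \emptyset$; for $w \in V_m$ this forces $d(\pi(v),\pi(w)) < \tau\alpha^{-m} + \tau\alpha^{-n} \le \tau(\alpha + 1)\alpha^{-n}$. So all neighbors of $v$ project into the ball $B_Z(z, R)$ with $R = \tau(\alpha+1)\alpha^{-n}$. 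The neighbors in a single level $V_m$ project to an $\alpha^{-m}$-separated set (since $S_m$ is $\alpha^{-m}$-separated), and $\alpha^{-m} \ge \alpha^{-n-1}$; so by iterating the doubling property a bounded number of times (depending only on $\alpha$, $\tau$ through the ratio $R/\alpha^{-n-1}$), each level contributes at most $D'$ vertices with $D' = D'(D,\alpha,\tau)$. Summing over the three levels gives a degree bound $N = 3D'$, and properness of $X$ follows since a bounded-degree graph with unit edge lengths has compact closed balls.

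\emph{Bounded degree implies doubling.} Conversely suppose $X$ has bounded degree $N$. Fix $z \in Z$ and $r > 0$; we must bound the cardinality of an $r$-separated set $S \subset B_Z(z,2r)$ by a constant depending only on $N$, $\alpha$, $\tau$. Choose $k \in \Z$ with $\alpha^{-k}$ comparable to $r$ (say $\alpha^{-k-1} \le r < \alpha^{-k}$), and pick a vertex $u$ at some level well above $k$ — precisely $u \in V_{k-j}$ with $j = j(\alpha,\tau)$ chosen via Lemma~\ref{large inclusion} so that $B_Z(z,2r) \subset B(u)$ (this is possible since $\alpha^{-(k-j)} \ge 2r$ for $j$ large enough, and the covering property of $S_{k-j}$ supplies such a $u$). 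Now for each point $s \in S$, by Lemma~\ref{large inclusion} again (or by maximality of $S_k$) there is a vertex $v_s \in V_k$ with $s \in B(v_s)$, in fact with $d(s, \pi(v_s)) < \alpha^{-k}$; distinct points of $S$ can share the same $v_s$ only boundedly often because $S$ is $r$-separated and each $B(v_s)$ has radius $\tau\alpha^{-k}$, so it suffices to bound the number of distinct vertices $v_s$ that arise. Each such $v_s$ lies in $B(u)$-region, hence is joined to $u$ by a vertical geodesic of length $j$ in $X$ by Lemma~\ref{height connection}; since $X$ has degree at most $N$, the number of vertices within distance $j$ of $u$ is at most $N^{j+1}$, giving the desired bound $|S| \le C(N,\alpha,\tau)$ and hence the doubling property of $Z$.

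\emph{Main obstacle.} The routine part is the counting; the delicate point, in both directions, is keeping the passage between ``$B(v) \cap B(w) \neq \emptyset$'' and ``$\pi(v), \pi(w)$ are close in $Z$'' uniform in $n$ — i.e., verifying that the number of separated levels one must ascend (the $j$ above) and the comparison constants depend only on $\alpha$ and $\tau$ and not on the particular ball or its scale. This is where the parameter restriction \eqref{tau requirement}, in particular $\tau > \alpha/(\alpha-1)$, is used: it guarantees that balls at consecutive levels overlap enough for vertical geodesics to exist (Lemma~\ref{height connection}) and that Lemma~\ref{large inclusion} applies at the scale we need, so that the two notions of ``local'' — combinatorial in $X$ and metric in $Z$ — are quantitatively interchangeable.
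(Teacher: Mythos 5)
Your first direction (doubling $\Rightarrow$ bounded degree) is correct and essentially identical to the paper's argument: all neighbors of $v \in V_n$ project into a ball of radius $\lesssim_{\alpha,\tau}\alpha^{-n}$ around $\pi(v)$, and at each of the three relevant levels those projections form an $\alpha^{-n-1}$-separated set, so iterating the doubling condition a bounded number of times gives a uniform bound.

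The second direction (bounded degree $\Rightarrow$ doubling) has a circularity. You tag each $s \in S \subset B_Z(z,2r)$ with a vertex $v_s \in V_k$ at the scale $\alpha^{-k} \approx r$, and then assert that ``distinct points of $S$ can share the same $v_s$ only boundedly often because $S$ is $r$-separated and each $B(v_s)$ has radius $\tau\alpha^{-k}$.'' But bounding the number of $r$-separated points lying in a common ball of radius $\tau\alpha^{-k} \lesssim \tau\alpha\, r$ is precisely an instance of the doubling condition — the conclusion you are trying to establish. Nothing in the bounded-degree hypothesis lets you control the fibers of $s \mapsto v_s$, because that hypothesis constrains the number of \emph{vertices}, while the fiber count concerns the number of \emph{points of $Z$} mapping to a single vertex. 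Note also that since $\alpha^{-k-1} \le r < \alpha^{-k}$, the vertex scale $\alpha^{-k}$ exceeds the separation $r$, so there is no way to make the tagging injective at level $k$.

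The paper's fix, which you should incorporate, is to tag at a strictly \emph{finer} level: choose $l = l(\alpha)$ with $\alpha^{-l+1} \le \tfrac12$ and, for each $s \in S$, pick $w_s \in V_{k+2l}$ with $\pi(w_s) \in B_Z(s,\alpha^{-k-2l})$. Since $S$ is $r$-separated and $r \ge \alpha^{-k-1} > 2\alpha^{-k-2l}$, the balls $B_Z(s,\alpha^{-k-2l})$ are pairwise disjoint, so the map $s \mapsto w_s$ is injective — no fiber bound is needed, hence no circularity. One then shows $B(w_s) \cap B(u) \neq \emptyset$ for a fixed $u$ at a coarser level containing $B_Z(z,2r)$ (via Lemma~\ref{large inclusion}), so by Lemma~\ref{height connection} each $w_s$ is joined to $u$ by a vertical geodesic of bounded length, and the bounded degree of $X$ caps their number. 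The rest of your argument then goes through. This ``descend to a strictly finer scale so the tagging is free'' step is the essential idea your proposal is missing.
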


\begin{proof}
We give a proof that closely follows the proof of an analogous proposition in the work of Buyalo-Schroeder \cite[Proposition 8.3.3]{BS07}. We first show that $X$ has bounded degree if $Z$ is doubling. It's an easy standard fact that if $Z$ is doubling with constant $D$ then there is a control function $\Lambda: [1,\infty) \rightarrow \N$, quantitative in $D$, such that for each $r > 0$ every ball of radius $sr$ contains at most $\Lambda(s)$ points that are $r$-separated (see \cite[Exercise 8.3.1]{BS07} ). This follows by simply applying the doubling condition repeatedly across multiple scales. 

Let $v \in V_{n}$ be any vertex and consider the associated ball $B(v)$. For each $m \in \Z$ we let $S_{m}(v) \subset S_{m}$ denote the set of points $z \in S_{m}$ such that the associated vertex $w \in V_{m}$ satisfies $v \sim w$. Then we can only have $S_{m}(v) \neq \emptyset$ when $|m-n| \leq 1$. If $w \in S_{m}(v)$ then $B(v) \cap B(w) \neq \emptyset$ and $m \leq n-1$.  This implies that $d(\pi(v),\pi(w)) < 2\tau \alpha^{-n+1}$. Thus $S_{n-1}(v)$, $S_{n}(v)$, and $S_{n+1}(v)$ each form an $\alpha^{-n-1}$-separated set inside of the ball $B(\pi(v),2\tau \alpha^{-n+1})$ and thus each have cardinality bounded by $\Lambda(2\tau \alpha^{2})$. We conclude that $X$ has bounded degree with degree bound $N = 3\Lambda(2\tau \alpha^{2})$. 

Let's now assume that $X$ has bounded degree, so that each vertex of $X$ is connected to at most $N$ other vertices. Consider a ball $B_{Z}(z,2r)$ in $Z$. Let $k \in \Z$ be such that $\alpha^{-k-1} < 2r \leq \alpha^{-k}$. Let $l \in \N$ be the minimal integer such that $\alpha^{-l+1} \leq \frac{1}{2}$. Then any $r$-separated subset of $B_{Z}(z,2r)$ is also $\alpha^{-k-l}$-separated. Let $v \in V_{k}$ be such that $d(z,\pi(v)) < \alpha^{-k}$. Then for $y \in B_{Z}(z,2r)$ we have
\[
d(y,\pi(v)) \leq d(y,z) + d(z,\pi(v)) < 2\alpha^{-k} < \tau \alpha^{-k},
\]
since $\tau > 3$, which implies that $B_{Z}(z,2r) \subseteq B(v)$. It thus suffices to show that there is a uniform bound on the size of an $\alpha^{-k-l}$-separated subset of $B(v)$, quantitative in $N$, $\alpha$, $l$, and $\tau$ (note $l$ is quantitative in $\alpha$). 

If $\{z_{n}\}$ is an $\alpha^{-k-l}$-separated subset of $B(v)$ then the balls $B_{Z}(z_{n},\alpha^{-k-2l})$ are all disjoint since $\alpha^{-l} < \frac{1}{2}$.  We select for each $z_{n}$ a corresponding vertex $v_{n} \in V_{k+2l}$ such that $\pi(v_{n}) \in B_{Z}(z_{n},\alpha^{-k-2l})$; these vertices are all distinct since these balls are disjoint. We then have 
\[
B_{Z}(z_{n},\alpha^{-k-2l}) \subset B_{Z}(\pi(v_{n}),2\alpha^{-k-2l}) \subset B(v_{n}),
\]
since $\tau > 3$, which implies that the vertex $v_{n}$ then satisfies $B(v_{n}) \cap B(v) \neq \emptyset$. It thus suffices to produce a uniform bound on the number of vertices $w \in V_{k+2l}$ such that $B(w) \cap  B(v) \neq \emptyset$. Given such a vertex $w$, since $B(w) \cap B(v) \neq \emptyset$ and $l \geq 1$, by Lemma \ref{height connection} we can find a vertical geodesic from $w$ to $v$ of length $2l$.  We conclude that any vertex $w \in V_{k+2l}$ with $B(w) \cap B(v)$ is joined to $v$ by a vertical geodesic of length $2l$. Since the number of  vertices joined to $v$ by a vertical geodesic of length $2l$ is at most $N^{2l}$, this produces our desired bound.
\end{proof}

We will assume for the rest of this section that $Z$ is a complete doubling metric space, from which it follows that $X$ is a proper geodesic metric space. Note that the doubling condition on $Z$ implies that bounded subsets of $Z$ are totally bounded, from which it follows that any closed and bounded subset of $Z$ is compact. We conclude in particular that $Z$ is proper. The following consequence of the doubling condition will be used frequently in subsequent sections. 

\begin{lem}\label{doubling overlap}
Let $Z$ be doubling with doubling constant $D$. Let $S$ be an $r$-separated subset of $Z$ for a given $r > 0$. Then for any $\tau \geq 1$ and $z \in Z$ we have that $z \in B_{Z}(x,\tau r)$ for at most $D^{l+1}$ points $x \in S$, where $l$ is the minimal integer such that $2^{l} \geq \tau$.  
\end{lem}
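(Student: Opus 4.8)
The plan is to reduce the claim to a simple volume-packing argument using the doubling condition iterated across scales. Suppose $z \in B_Z(x, \tau r)$ for points $x_1, \dots, x_m \in S$; I need to bound $m$ by $D^{l+1}$. The key observation is that all of these points $x_i$ lie in the ball $B_Z(z, \tau r)$, which is contained in $B_Z(x_1, 2\tau r)$. So it suffices to bound the number of points of an $r$-separated set that can lie inside a ball of radius $2\tau r$ centered at one of those points (or, more symmetrically, inside $B_Z(z, \tau r)$).

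The main step is then the standard iterated-doubling estimate. I would argue as follows: fix the center $z$ and consider the ball $B_Z(z, \tau r)$. Since $2^l \geq \tau$, we have $\tau r \leq 2^l r$, so $B_Z(z, \tau r) \subset B_Z(z, 2^l r)$. Now I apply the doubling condition from the definition (an $s$-separated subset of $B_Z(w, 2s)$ has at most $D$ points) repeatedly. First cover $B_Z(z, 2^l r)$ by at most $D$ balls of radius $2^{l-1} r$ centered at points of a maximal $2^{l-1}r$-separated subset of $B_Z(z, 2^l r)$ — wait, more carefully: I want to produce a nested covering argument. The cleanest route: a maximal $2^{j-1} r$-separated subset of $B_Z(z, 2^j r)$ has at most $D$ points, since each such point $y$ satisfies $B_Z(z, 2^j r) \subset B_Z(y, 2^{j+1} r) = B_Z(y, 2 \cdot 2^j r)$, and the separated subset sits inside... hmm, I should instead note that any $2^{j-1}r$-separated subset of $B_Z(z,2^jr)$ is a $2^{j-1}r$-separated subset of $B_Z(y, 2\cdot 2^{j-1}r)$ for any of its members $y$, hence has at most $D$ points by the definition of doubling. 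Iterating, a maximal $r$-separated subset of $B_Z(z, 2^l r)$ — obtained by building a nested sequence of maximal separated covers at scales $2^{l-1}r, 2^{l-2}r, \dots, r$ — has at most $D^{l}$ points; the extra factor of $D$ (giving $D^{l+1}$) comes from the final doubling step or from being slightly wasteful at the top scale, and I'll track it honestly in the writeup.

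Concretely I would phrase the iteration as: for $0 \le j \le l$, let $T_j$ be a maximal $2^{l-j}r$-separated subset of $B_Z(z, \tau r)$ containing $T_{j-1}$ at each stage; maximality of $T_{j-1}$ inside the $2^{l-j+1}r$-balls around its points, together with the doubling condition, gives $|T_j| \le D \,|T_{j-1}|$, and $|T_0| \le D$ since $T_0$ is a $2^l r$-separated subset of $B_Z(z,\tau r) \subset B_Z(z, 2^l r)$, which has at most... this needs $2^l r \ge \tau r$ handled carefully, but a $2^l r$-separated subset of a ball of radius $2^l r \le 2 \cdot 2^{l-1} \cdot r \cdot 2$... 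I'll just bound $|T_0| \le D$ by noting the ball has radius $\le 2^l r$ so a $2^l r$-separated subset has at most $D$ points (it lies in $B_Z(y, 2 \cdot 2^l r /2)$ for any member $y$ — actually more simply, any $2^l r$-separated subset of a set of diameter $\le 2^{l+1} r$ has at most... I'll use the doubling definition directly with $s = 2^l r$). Then $|T_l| \le D^{l+1}$, and since our $m$ points $x_1,\dots,x_m$ form an $r$-separated subset of $B_Z(z, \tau r)$, comparing with the maximal such set $T_l$ gives $m \le |T_l| \le D^{l+1}$.

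The main obstacle is purely bookkeeping: getting the exponent exactly $D^{l+1}$ rather than $D^l$ or $D^{l+2}$, and making the base case of the induction precise given that $\tau r$ need not equal $2^l r$ but only satisfies $\tau r \le 2^l r$. This is not a genuine difficulty — it is a matter of stating the iterated doubling lemma (which is essentially \cite[Exercise 8.3.1]{BS07}, already invoked in the proof of Proposition \ref{doubling degree}) with the correct constant and applying it. I expect the writeup to be short, roughly: reduce to counting an $r$-separated subset of a ball of radius $\tau r \le 2^l r$; invoke the iterated doubling bound $\Lambda(2^l) \le D^{l+1}$; conclude.
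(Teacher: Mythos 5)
Your proposal is essentially the same argument as the paper's: reduce to a packing estimate for an $r$-separated set in a ball of radius comparable to $\tau r$, then apply the iterated doubling bound. The one cosmetic difference is the center: the paper fixes a point $x \in S$ with $z \in B_Z(x,\tau r)$ and observes that all admissible $y \in S$ lie in $B_Z(x,2\tau r) \subset B_Z(x,2^{l+1}r)$, giving $D^{l+1}$ directly from iterated doubling, whereas you center at $z$ and use $B_Z(z,\tau r) \subset B_Z(z,2^l r)$, which would in fact yield the sharper bound $D^l$ if pushed through cleanly.

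There is, however, one step in your writeup that does not go through as stated: the final inference ``since our $m$ points $x_1,\dots,x_m$ form an $r$-separated subset of $B_Z(z,\tau r)$, comparing with the maximal such set $T_l$ gives $m \le |T_l|$.'' This is false in general. A given $r$-separated subset need not have cardinality bounded by that of some other maximal $r$-separated subset of the same ambient set; maximal $r$-separated subsets can have different sizes. For instance, in $\{0,\,0.9,\,1.8\} \subset \mathbb{R}$ with $r=1$, both $\{0,\,1.8\}$ and $\{0.9\}$ are maximal $1$-separated subsets, with cardinalities $2$ and $1$. Your chain $T_0 \subset T_1 \subset \cdots \subset T_l$ is built top-down and need not contain the $x_i$. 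The fix is to drop the chain of nested maximal sets entirely and instead prove, by induction on $k$, that \emph{any} $r$-separated subset $A$ of $B_Z(w,2^k r)$ has at most $D^k$ points: at the inductive step take a maximal $2^{k-1}r$-separated subset $T$ of the ball (so $|T| \le D$ and the half-radius balls around $T$ cover), and apply the inductive hypothesis to each $A \cap B_Z(t,2^{k-1}r)$. Applying this to $A = \{x_1,\dots,x_m\}$ and $k = l$ (your centering) or $k = l+1$ (the paper's) then finishes. The paper sidesteps the issue by stating the iterated bound directly, citing [BS07, Exercise 8.3.1], exactly as it does in Proposition 3.5.
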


\begin{proof}
Suppose that $z \in B_{Z}(x,\tau r)$ for some $x \in S$. Then $y \in B_{Z}(x,2\tau r)$ for any other $y \in S$ such that $z \in B_{Z}(y,\tau r)$. Thus $S \cap B_{Z}(x,2\tau r)$ defines an $r$-separated subset of the ball $B_{Z}(x,2\tau r)$, which is also an $r$-separated subset of the ball $B_{Z}(x,2^{l+1}r)$. The doubling property then implies that the cardinality of $S \cap B_{Z}(x,2\tau r)$ is bounded above by $D^{l+1}$. 
\end{proof}

A vertical geodesic $\gamma$ in $X$ is \emph{anchored} at a point $z \in Z$ if for each vertex $v$ on $\gamma$ we have that $z \in B_{Z}(\pi(v),\alpha^{-h(v)})$. Note this implies that $z \in B(v)$. If we do not need to mention the point $z$ then we will just say that $\gamma$ is \emph{anchored}. The following simple lemma shows that for any $z \in Z$ we can find a geodesic line anchored at $z$. 

\begin{lem}\label{second height connection} 
For any $z \in Z$ there is a vertical geodesic line $\gamma: \R \rightarrow X$ anchored at $z$ . If $v \in V$ satisfies $z \in B_{Z}(\pi(v),\alpha^{-h(v)})$ then we can choose $\gamma$ such that $v \in \gamma$.  
\end{lem}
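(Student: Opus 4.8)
The plan is to construct $\gamma$ directly from the separated sets $S_n$. Fix $z \in Z$. For each $n \in \Z$, since the balls $B_Z(y, \alpha^{-n})$ with $y \in S_n$ cover $Z$, there is at least one point $y_n \in S_n$ with $z \in B_Z(y_n, \alpha^{-n})$; let $v_n = (y_n, n) \in V_n$ be the corresponding vertex, so that $z \in B_Z(\pi(v_n), \alpha^{-h(v_n)})$. To get a vertical geodesic passing through all of these, I first claim that consecutive vertices are adjacent: for each $n$, both $z \in B_Z(\pi(v_n), \alpha^{-n}) \subset B(v_n)$ and $z \in B_Z(\pi(v_{n+1}), \alpha^{-n-1}) \subset B(v_{n+1})$, so $z \in B(v_n) \cap B(v_{n+1})$ and $|h(v_n) - h(v_{n+1})| = 1$, hence $v_n \sim v_{n+1}$ (and $v_n \neq v_{n+1}$ since they have different heights). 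Concatenating the edges $v_n v_{n+1}$ over all $n \in \Z$ produces an edge path; because consecutive vertices have heights differing by exactly $1$ with the height strictly increasing along the path, and because edges in $X$ join only vertices of the same or adjacent heights, this edge path is a vertical geodesic line $\gamma : \R \to X$ (this is precisely the observation in the paragraph preceding Lemma \ref{height connection} that vertical edge paths are geodesics). By construction every vertex on $\gamma$ is one of the $v_n$, and each satisfies $z \in B_Z(\pi(v_n), \alpha^{-h(v_n)})$, so $\gamma$ is anchored at $z$.

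For the second assertion, suppose $v \in V$ satisfies $z \in B_Z(\pi(v), \alpha^{-h(v)})$; write $h(v) = m$. In the construction above, I simply make the choice $v_m = v$ at level $m$ (which is a legitimate choice since $v$ has the required property), and for $n \neq m$ choose $v_n$ as before: any $y_n \in S_n$ with $z \in B_Z(y_n, \alpha^{-n})$. The same adjacency argument shows $v_n \sim v_{n+1}$ for all $n$, in particular $v_{m-1} \sim v = v_m \sim v_{m+1}$, so the resulting vertical geodesic line passes through $v$ and is anchored at $z$.

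I do not expect any serious obstacle here; this is a routine consequence of the covering property of the $S_n$ and the definition of the edge relation. The only point requiring a small amount of care is verifying that the infinite concatenation is genuinely a geodesic line rather than merely a path, but this is immediate from the fact that the height function is $1$-Lipschitz on $X$ and increases by exactly $1$ between consecutive vertices of $\gamma$: for vertices $v_j, v_k$ on $\gamma$ with $j < k$ the path between them has length $k - j = h(v_k) - h(v_j) \leq d(v_k, v_j)$, while the path realizes distance at most $k-j$, so it is a geodesic segment, and letting the endpoints run to $\pm\infty$ gives a geodesic line. One could alternatively invoke Lemma \ref{height connection} to join $v_n$ to $v_{n+1}$, but since these vertices already have adjacent heights the direct argument is cleaner.
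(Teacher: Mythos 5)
Your proof is correct and follows essentially the same approach as the paper: pick, for each $n$, a vertex $v_n \in V_n$ whose $\alpha^{-n}$-ball covers $z$, observe $z \in B(v_n) \cap B(v_{n+1})$ so consecutive vertices are adjacent, and concatenate to get a vertical geodesic; the second assertion follows by setting $v_{h(v)} = v$. Your extra remarks (that $v_n \neq v_{n+1}$, and the explicit verification via the $1$-Lipschitz height function that the vertical edge path is a geodesic) merely spell out what the paper takes as established from the preceding discussion.
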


\begin{proof}
For each $n \in \Z$ the balls $B_{Z}(\pi(v),\alpha^{-n})$ for $v \in V_{n}$ cover $Z$. Thus for each $n$ we can choose a vertex $v_{n} \in \Z$ such that $z \in B_{Z}(\pi(v_{n}),\alpha^{-n})$. Then $z \in B(v_{n})$ as well. It follows that $B(v_{n}) \cap B(v_{n+1}) \neq \emptyset$ for each $n \in \Z$, i.e., $v_{n}\sim v_{n+1}$. We can then find a vertical geodesic $\gamma: \R \rightarrow X$ such that $v_{n} \in \gamma$ for each $n$. If $v \in V$ is a given vertex such that $z \in B_{Z}(\pi(v),\alpha^{-h(v)})$ then we can choose $v_{h(v)} = v$ in this construction to guarantee that $v \in \gamma$. 
\end{proof}

\begin{rem}\label{anchored remark} The definition of anchored geodesics in \cite{Bu20} is less restrictive, requiring only that $z \in B_{Z}(\pi(v),\frac{\tau}{3}\alpha^{-h(v)})$ for each $v \in \gamma$ instead. This was because we were not assuming that $Z$ was complete in that paper. We will use the more restrictive inclusion $z \in B_{Z}(\pi(v),\alpha^{-h(v)})$ here instead, with the understanding that all of the claims proved regarding anchored geodesics in \cite{Bu20} hold for these geodesics in particular.  
\end{rem}

By \cite[Lemma 5.11]{Bu20} all descending anchored geodesic rays in $X$ are at a bounded distance from one another and therefore define a common point $\omega \in \p X$. By \cite[Proposition 5.13]{Bu20} we have a canonical identification $\p_{\omega}X \cong Z$ that can be realized by identifying a point $z \in Z$ with the collection of ascending geodesic rays anchored at $z$. Under this identification the metric $d$ on $Z$ defines a visual metric on $\p_{\omega}X$ with parameter $\e = \log \alpha$.  

By \cite[Lemma 5.12]{Bu20} there is a Busemann function $b$ based at $\omega$ such that the height function $h$ satisfies $h \doteq_{3} b$. Thus the height function $h$ can be thought of as a Busemann function on $X$ based at $\omega$, up to an additive error of $3$. We define the Gromov product based at $h$ by
\[
(x|y)_{h} = \frac{1}{2}(h(x)+h(y)-2|xy|),
\]
for $x,y \in X$. We then observe that $(x|y)_{h} \doteq_{3} (x|y)_{b}$ as well. Since $h$ is $1$-Lipschitz we also have
\[
(x|y)_{h} \leq \min\{h(x),h(y)\},
\] 
for $x,y \in X$. The Gromov product based at $h$ is then extended to $\p_{\omega} X$ by the same formulas \eqref{full extended definition} and \eqref{extended definition} as were used for $b$, and the same estimates \eqref{sequence approximation} and \eqref{both busemann boundary} hold for this extension with $h$ replacing $b$, at the cost of increasing the constant $c(\delta)$ by $6$. 

\subsection{Uniformizing the hyperbolic filling} For this section we will remain in the same setting as the previous section. Throughout this section all implied constants are considered to only depend on $\alpha$ and $\tau$. We recall that the hyperbolic filling $X$ is $\delta$-hyperbolic with $\delta = \delta(\alpha,\tau)$. We also recall that $\omega \in \p X$ denotes the equivalence class of all anchored descending geodesic rays in $X$. 

We first recall the definition of a uniform metric space. We consider an incomplete metric space $(\Omega,d)$ and write $\p \Omega = \bar{\Omega}\backslash \Omega$. We write $d_{\Omega}(x) :=\dist(x,\p \Omega)$ for the distance of a point $x \in \Omega$ to the boundary $\p \Omega$. A basic observation that we will use without comment in what follows is that $d_{\Omega}$ defines a $1$-Lipschitz function on $\Omega$, i.e., for $x,y \in \Omega$ we have 
\[
|d_{\Omega}(x)-d_{\Omega}(y)| \leq d(x,y). 
\]
For a curve $\gamma: I \rightarrow \Omega$ we write $\ell(\gamma)$ for the length of $\gamma$. For an interval $I \subset \R$ we write $I_{\leq t} = \{s \in I: s\leq t\}$ and $I_{\geq t} = \{s \in I: s\geq t\}$. 

\begin{defn}\label{def:uniform}A curve $\gamma: I \rightarrow \Omega$ joining two points $x,y \in \Omega$ is \emph{$A$-uniform} for a constant $A \geq 1$ if 
\[
\ell(\gamma) \leq Ad(x,y),
\]
and if for every $t \in I$ we have
\[
\min\{\ell(\gamma|_{I_{\leq t}}),\ell(\gamma|_{I_{\geq t}})\} \leq A d_{\Omega}(\gamma(t)). 
\]
The metric space $\Omega$ is \emph{$A$-uniform} if any two points in $\Omega$ can be joined by an $A$-uniform curve. 
\end{defn}

We define a density $\rho: X \rightarrow (0,\infty)$ by $\rho(x) = \alpha^{-h(x)}$. For a curve $\gamma$ in $X$ we write
\[
\ell_{\rho}(\gamma) = \int_{\gamma} \rho \, ds,
\]
for the line integral of $\rho$ along $\gamma$. We define a new metric $d_{\rho}$ on $X$ by setting for $x,y \in X$,
\begin{equation}\label{rho formula}
d_{\rho}(x,y) = \inf  \ell_{\rho}(\gamma),
\end{equation}
with the infimum taken over all curves $\gamma$ joining $x$ to $y$. We write $X_{\rho} = (X,d_{\rho})$ for the resulting metric space, which we refer to as the \emph{conformal deformation of $X$ with conformal factor $\rho$}. We write $\bar{X}_{\rho}$ for the completion of $X_{\rho}$ and write $\p X_{\rho} = \bar{X}_{\rho} \backslash X_{\rho}$ for the complement of $X_{\rho}$ inside its completion. We will continue to write $d_{\rho}$ for the canonical extension of this metric to the completion $\bar{X}_{\rho}$. For $x \in X_{\rho}$ we write $d_{\rho}(x) = d_{X_{\rho}}(x)$ for the distance to the boundary $\p X_{\rho}$. For $x \in \bar{X}_{\rho}$ and $r > 0$ we will write $B_{\rho}(x,r)$ for the ball of radius $r$ centered at $x$ in $\bar{X}_{\rho}$ in the metric $d_{\rho}$. 

By \cite[Theorem 1.12]{Bu20} the metric space $X_{\rho}$ is $A$-uniform with $A = A(\alpha,\tau)$ depending only on $\alpha$ and $\tau$. The theorem in fact shows for any $x,y \in X$ that any geodesic $\gamma$ joining $x$ to $y$ in $X$ is an $A$-uniform curve in $X_{\rho}$. The theorem also shows that we have a canonical identification of $\p X_{\rho}$ with $\p_{\omega}X$, which is given by showing that a sequence $\{x_{n}\}$ in $X$ is a Cauchy sequence in $X_{\rho}$ if and only if it converges to infinity with respect to $\omega$ in $X$. Composing the identification $\p X_{\rho} \cong \p_{\omega}X$ with the identification $\p_{\omega}X \cong Z$ then gives a canonical $L$-biLipschitz identification of $\p X_{\rho}$ with $Z$, with $L = L(\alpha,\tau)$. Thus after a biLipschitz change of metric on $Z$ (quantitative in $\alpha$ and $\tau$) we can assume that $Z$ is isometrically identified with $\p X_{\rho}$. We will make this biLipschitz change of metric and thus consider $Z$ as an isometrically embedded subset of $\bar{X}_{\rho}$ with $Z = \p X_{\rho}$. We will then use the notation $Z$ and $\p X_{\rho}$ interchangeably for $Z$, with the choice of notation depending on the context. 

The local compactness of $X_{\rho}$ implies by the Arzela-Ascoli theorem that, for a given $x,y \in X$, a minimizing curve $\gamma$ for the right side of \eqref{rho formula} always exists. It is easy to see that such a curve must be a geodesic in $X_{\rho}$, from which we conclude that $X_{\rho}$ is geodesic. Since $X_{\rho}$ is a uniform metric space, by \cite[Proposition 2.20]{BHK} the completion $\bar{X}_{\rho}$ of $X_{\rho}$ is proper, and in particular is also locally compact. A second application of Arzela-Ascoli then shows that $\bar{X}_{\rho}$ is also geodesic.

We have the following two key lemmas, in which the implied constants depend only on $\alpha$ and $\tau$. Below we define $|xy| = \infty$ if $x \neq y$ and either $x \in \p_{\omega} X$ or $y \in \p_{\omega} X$, and set $|xy|=0$ if $x = y \in \p_{\omega} X$. We recall that we have canonically identified $\p_{\omega} X$ with $\p X_{\rho}$. 

\begin{lem}\label{filling estimate both}\cite[Lemma 4.13]{Bu20}
Let $x,y \in X \cup \p_{\omega} X$. Then we have
\[
d_{\rho}(x,y) \asymp \alpha^{-(x|y)_{h}}\min\{1,|xy|\}.
\]
\end{lem}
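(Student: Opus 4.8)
The plan is to establish the estimate first for $x,y\in X$, and then obtain the boundary cases by approximation. So fix $x,y\in X$ and a geodesic $\gamma=[x,y]$ in $X$. By \cite[Theorem 1.12]{Bu20}, $\gamma$ is an $A$-uniform curve in $X_{\rho}$ with $A=A(\alpha,\tau)$; since the length of $\gamma$ measured in $d_{\rho}$ is its $\rho$-weighted length $\ell_{\rho}(\gamma)=\int_{\gamma}\alpha^{-h}\,ds$, the uniformity bound $\ell_{\rho}(\gamma)\le A\,d_{\rho}(x,y)$ and the trivial bound $d_{\rho}(x,y)\le\ell_{\rho}(\gamma)$ together give $d_{\rho}(x,y)\asymp\ell_{\rho}(\gamma)$. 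It therefore suffices to prove $\ell_{\rho}(\gamma)\asymp\alpha^{-(x|y)_{h}}\min\{1,|xy|\}$, and the whole analysis is driven by the behaviour of the $1$-Lipschitz function $h$ along $\gamma$. The decisive quantity is $m:=\min_{\gamma}h$, and I claim $m\doteq_{c(\delta)}(x|y)_{h}$. The bound $m\ge (x|y)_{h}$ is free: for $p\in\gamma$ one has $2h(p)\ge (h(x)-|xp|)+(h(y)-|py|)=h(x)+h(y)-|xy|\ge 2(x|y)_{h}$ since $|xy|\ge 0$. The reverse bound $m\le (x|y)_{h}+c(\delta)$ is the standard fact from Gromov hyperbolic geometry that a geodesic passes within $O(\delta)$ of the ``corner'' of the ideal triangle on $x,y,\omega$, where the Busemann function (here $h$, up to the additive error $h\doteq_{3}b$) is comparable to $(x|y)_{h}$; cf.\ \cite[Chapter 3]{BS07}.

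To estimate $\ell_{\rho}(\gamma)=\int_{\gamma}\alpha^{-h}\,ds$ I would distinguish whether $|xy|\le 3$ or $|xy|>3$. If $|xy|\le 3$, then $h$ varies by at most $|xy|$ along $\gamma$, so $\alpha^{-h}\asymp\alpha^{-h(x)}$ on $\gamma$; moreover $h(x)-2|xy|\le (x|y)_{h}\le\min\{h(x),h(y)\}\le h(x)$, so $\alpha^{-(x|y)_{h}}\asymp\alpha^{-h(x)}$, and hence $\ell_{\rho}(\gamma)\asymp\alpha^{-h(x)}|xy|\asymp\alpha^{-(x|y)_{h}}\min\{1,|xy|\}$ (in this range $|xy|\asymp\min\{1,|xy|\}$). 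If $|xy|>3$, then $\min\{1,|xy|\}=1$ and, since $\alpha^{-m}\asymp\alpha^{-(x|y)_{h}}$, it is enough to show $\ell_{\rho}(\gamma)\asymp\alpha^{-m}$. For the lower bound, $\gamma$ contains a point of height $m$ from which a subarc of length $1$ emanates, and $h\le m+1$ on it, so $\ell_{\rho}(\gamma)\ge\alpha^{-1}\alpha^{-m}$. For the upper bound, one uses that geodesics in the hyperbolic filling are close to vertical: thin triangles force $\gamma$ to have only $O(1)$ edges at each height level $\ge m$, so $\ell_{\rho}(\gamma)\ls\sum_{j\ge m}\alpha^{-j}\asymp\alpha^{-m}$; alternatively one exhibits an explicit efficient connecting curve that descends an anchored ray to a vertex of height $\approx (x|y)_{h}$, crosses $O(1)$ horizontal edges there (available by Lemma \ref{large inclusion}), and ascends to $y$, and sums the same geometric series.

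For the boundary cases, in which at least one of $x,y$ lies in $\p_{\omega}X$, choose sequences $\{x_{n}\},\{y_{n}\}$ in $X$ with $x_{n}\to x$ and $y_{n}\to y$ (the constant sequence if a point is already in $X$). By \cite[Theorem 1.12]{Bu20} these also converge to $x$ and $y$ in $\bar{X}_{\rho}$, so $d_{\rho}(x_{n},y_{n})\to d_{\rho}(x,y)$. Moreover $|x_{n}y_{n}|\to\infty$, so $\min\{1,|x_{n}y_{n}|\}=1$ for large $n$, and by \eqref{sequence approximation} (applied to the Gromov product based at $h$) the numbers $(x_{n}|y_{n})_{h}$ eventually lie within $c(\delta)$ of $(x|y)_{h}$, whence $\alpha^{-(x_{n}|y_{n})_{h}}\asymp\alpha^{-(x|y)_{h}}$. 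Passing the interior estimate $d_{\rho}(x_{n},y_{n})\asymp\alpha^{-(x_{n}|y_{n})_{h}}$ to the limit yields $d_{\rho}(x,y)\asymp\alpha^{-(x|y)_{h}}=\alpha^{-(x|y)_{h}}\min\{1,|xy|\}$. The degenerate subcase $x=y\in\p_{\omega}X$ is trivial, both sides vanishing.

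The step I expect to be the main obstacle is the lower bound $d_{\rho}(x,y)\gs\alpha^{-(x|y)_{h}}$ when $|xy|>3$: a priori some roundabout curve joining $x$ to $y$ might escape into the region where $\rho=\alpha^{-h}$ is small (i.e.\ to large height) and thereby beat $\alpha^{-(x|y)_{h}}$. This is exactly where the $A$-uniformity of geodesics in $X_{\rho}$ provided by \cite[Theorem 1.12]{Bu20} is essential: it collapses the infimum over all connecting curves in the definition of $d_{\rho}$ to the single geodesic $[x,y]$, whose $\rho$-length is then bounded below by the elementary ``bottom of the dip'' estimate together with $m\doteq(x|y)_{h}$. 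A secondary technical point is the two-sided comparison $m\doteq_{c(\delta)}(x|y)_{h}$ itself, whose nontrivial half is the relation between geodesics and Busemann functions in $\delta$-hyperbolic spaces noted above.
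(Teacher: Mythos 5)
The paper does not prove Lemma~\ref{filling estimate both}: it carries the citation \cite[Lemma 4.13]{Bu20}, and the only work done here is the transition from the Busemann density $\rho_\e = \alpha^{-b}$ used in \cite{Bu20} to the height density $\rho = \alpha^{-h}$, which is handled in Remark~\ref{rem:transition} via $b \doteq_3 h$. Your from-scratch argument is therefore doing something the paper chooses not to do, and should be measured against what one expects to find inside \cite{Bu20}.

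Your sketch reproduces the standard BHK-style derivation, and its individual steps are correct: the reduction $d_\rho(x,y)\asymp\ell_\rho(\gamma)$, the two-sided comparison $m:=\min_\gamma h\doteq_{c(\delta)}(x|y)_h$ (elementary in one direction, the thin ideal triangle $xy\omega$ in the other), the geometric-series estimate of $\ell_\rho(\gamma)$ using that the geodesic spends $O(1)$ time at each height above $m$, and the boundary passage via \eqref{sequence approximation} together with $|x_ny_n|\to\infty$. The structural problem is circularity. You invoke \cite[Theorem 1.12]{Bu20} to get $\ell_\rho(\gamma)\leq A\,d_\rho(x,y)$, and you rightly identify this as the crux of the proof. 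But in \cite{Bu20}, which follows the scheme of \cite{BHK}, the uniformity of $X$-geodesics in $X_\rho$ is not a free black box: its nontrivial half, $\ell_\rho(\gamma)\ls d_\rho(x,y)$, is proved precisely by establishing the two-sided distance estimate $d_\rho(x,y)\asymp\alpha^{-(x|y)}\min\{1,|xy|\}$. Citing Theorem~1.12 to derive Lemma~4.13 therefore almost certainly hides the lemma inside the hypothesis. A genuinely independent proof must bound $\ell_\rho(\sigma)$ from below for an \emph{arbitrary} competitor curve $\sigma$ joining $x$ to $y$, not just for the $X$-geodesic; this is the real content of the cited lemma, and it is exactly the step your argument outsources. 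Everything else you write, including the two technical points you flag as needing care, is sound.
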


\begin{lem}\label{filling compute distance}\cite[Lemma 4.15]{Bu20}
For $x \in X$ we have
\[
d_{\rho}(x) \asymp  \alpha^{-h(x)}.
\]
\end{lem}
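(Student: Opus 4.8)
The plan is to derive the comparison directly from the already-established Lemma~\ref{filling estimate both}, treating the two inequalities separately. Throughout, recall that $d_{\rho}(x) = \dist(x,\p X_{\rho})$ (distance in the metric $d_{\rho}$) and that $\p X_{\rho}$ has been canonically identified with $\p_{\omega}X$.

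For the lower bound $d_{\rho}(x) \gs \alpha^{-h(x)}$ I would show that \emph{every} boundary point is $d_{\rho}$-far from $x$. Fix $\xi \in \p X_{\rho} = \p_{\omega}X$. Since $x \in X$ and $\xi \in \p_{\omega}X$ we have $|x\xi| = \infty$, so $\min\{1,|x\xi|\} = 1$ and Lemma~\ref{filling estimate both} gives $d_{\rho}(x,\xi) \asymp \alpha^{-(x|\xi)_{h}}$. The extension of the estimate \eqref{both busemann boundary} to the Gromov product based at $h$, together with the convention $h(\xi) = \infty$ for $\xi \in \p_\omega X$, yields $(x|\xi)_{h} \leq \min\{h(x),h(\xi)\} + c(\delta) = h(x) + c(\delta)$. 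Hence $d_{\rho}(x,\xi) \gs \alpha^{-h(x)-c(\delta)} \asymp \alpha^{-h(x)}$, with implied constants independent of $\xi$; taking the infimum over $\xi \in \p X_{\rho}$ gives $d_{\rho}(x) \gs \alpha^{-h(x)}$.

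For the upper bound $d_{\rho}(x) \ls \alpha^{-h(x)}$ I would exhibit a short curve running from $x$ out to $\p X_{\rho}$. The point $x$ lies on some edge $e$ of $X$, along which $h$ varies by at most $1$; thus $d_{\rho}(x,v) \leq \ell_{\rho}(e) \leq \alpha^{-h(x)+1}$ for either endpoint $v$ of $e$, and we may choose $v$ with $h(v) \geq h(x)$. Since $\pi(v) \in B_{Z}(\pi(v),\alpha^{-h(v)})$ trivially, Lemma~\ref{second height connection} provides a vertical geodesic line through $v$ anchored at $z := \pi(v)$; let $\gamma^{+}$ be its ascending subray starting at $v$. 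Parametrizing $\gamma^{+}$ by arclength $s \geq 0$ we have $h(\gamma^{+}(s)) = h(v) + s$, so
\[
\ell_{\rho}(\gamma^{+}) = \int_{0}^{\infty} \alpha^{-(h(v)+s)}\,ds = \frac{\alpha^{-h(v)}}{\log \alpha} \leq \frac{\alpha^{-h(x)}}{\log \alpha}.
\]
In particular the tails of $\gamma^{+}$ have $\rho$-length tending to $0$, so $\gamma^{+}$ is a rectifiable Cauchy curve in $X_{\rho}$; by the identification recalled in this section its limit is the point $\xi \in \p X_{\rho}$ corresponding to $z$ (the endpoint of the ascending ray anchored at $z$). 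Therefore $d_{\rho}(x) \leq d_{\rho}(x,v) + \ell_{\rho}(\gamma^{+}) \leq \alpha^{-h(x)+1} + \alpha^{-h(x)}/\log\alpha \ls \alpha^{-h(x)}$, completing the comparison.

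The only genuinely nontrivial input is the claim that the ascending anchored ray $\gamma^{+}$ is Cauchy in $X_{\rho}$ and converges to the corresponding point of $\p_{\omega}X \cong \p X_{\rho}$; but this is precisely part of the uniformization package of \cite{Bu20} recalled above, via the characterization of Cauchy sequences in $X_{\rho}$ as the sequences converging to infinity with respect to $\omega$. Everything else reduces to the geometric-series computation displayed above and to keeping track of additive constants, which become harmless multiplicative constants after applying $\alpha^{-(\cdot)}$. I expect the identification of the limit point to be the one place where care (rather than routine estimation) is needed.
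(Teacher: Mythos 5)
Your proof is correct. The paper does not prove this lemma in-text but only cites \cite[Lemma 4.15]{Bu20}; your self-contained derivation from Lemma \ref{filling estimate both} is a sound and natural route: the lower bound via the extension of \eqref{both busemann boundary} to Gromov products based at $h$ (with $h(\xi)=\infty$ for $\xi\in\p_{\omega}X$) gives a bound uniform over all $\xi\in\p X_{\rho}$, and the upper bound via the geometric-series computation $\int_0^\infty \alpha^{-(h(v)+s)}\,ds$ along an ascending anchored ray is exactly the same calculation the paper reproduces in Lemma \ref{vertex to base}.
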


\begin{rem}\label{rem:transition}
Lemmas \ref{filling estimate both} and \ref{filling compute distance} are stated in a slightly different context in \cite{Bu20}. Letting $b$ denote a Busemann function on $X$ such that $b \doteq_{3} h$ and letting $\e = \log \alpha$, the results in that work show that these lemmas hold (with constants depending only on $\alpha$ and $\tau$) if we replace $\rho$ by 
\[
\rho_{\e}(x) = e^{-\e b(x)} = \alpha^{-b(x)}. 
\]
The rough equality $b \doteq_{3} h$ implies that the conformal deformation $X_{\rho_{\e}}$ of $X$ with conformal factor $\rho_{\e}$ is $\alpha^{3}$-biLipschitz to $X_{\rho}$ by the identity map on $X$. From this observation we can then deduce that Lemmas \ref{filling estimate both} and \ref{filling compute distance} hold as stated above. 
\end{rem}

The following observation will be useful in conjunction with Lemma \ref{filling compute distance}.

\begin{lem}\label{vertex to base}
For $v \in V$ we have
\[
d_{\rho}(v,\pi(v)) \asymp d_{\rho}(v) \asymp \alpha^{-h(v)}.
\]
\end{lem}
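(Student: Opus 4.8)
The plan is to establish the two rough comparisons separately. The estimate $d_\rho(v) \asymp \alpha^{-h(v)}$ is exactly Lemma~\ref{filling compute distance} applied at the point $x = v \in X$, so nothing is needed there beyond invoking the earlier result. The real content is the comparison $d_\rho(v,\pi(v)) \asymp \alpha^{-h(v)}$, where $\pi(v) \in Z = \p X_\rho$, so that $|v\,\pi(v)| = \infty$ in the notation preceding Lemma~\ref{filling estimate both}. Since $\min\{1,|v\,\pi(v)|\} = 1$, Lemma~\ref{filling estimate both} gives immediately
\[
d_\rho(v,\pi(v)) \asymp \alpha^{-(v\,|\,\pi(v))_h}.
\]
Thus it suffices to show that the Gromov product $(v\,|\,\pi(v))_h$, computed via \eqref{extended definition} with $h$ in place of $b$, satisfies $(v\,|\,\pi(v))_h \doteq h(v)$ up to an additive error depending only on $\alpha,\tau$ (equivalently only on $\delta$).

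First I would get the upper bound: by \eqref{both busemann boundary} (in the version stated for $h$ at the end of Section~\ref{subsec:hyp}, with $h(\xi) = \infty$ for $\xi \in \p_\omega X$), we have $(v\,|\,\pi(v))_h \leq \min\{h(v), h(\pi(v))\} + c(\delta) = h(v) + c(\delta)$. For the matching lower bound, I would use Lemma~\ref{second height connection}: there is a vertical geodesic line $\gamma$ anchored at $z = \pi(v)$ with $v \in \gamma$. Let $\{v_n\}$ be the vertices of the ascending ray of $\gamma$ beyond $v$, so that $h(v_n) = h(v) + (n - h(v))$ for $n \geq h(v)$ and $v_n$ is anchored at $\pi(v)$; this ascending sequence converges to the point of $\p_\omega X$ identified with $\pi(v)$, i.e.\ $\{v_n\} \in \pi(v)$. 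Since $v$ and $v_n$ both lie on the vertical geodesic $\gamma$, which is a geodesic in $X$, we have $|v\,v_n| = h(v_n) - h(v)$, hence
\[
(v\,|\,v_n)_h = \tfrac12\bigl(h(v) + h(v_n) - 2|v\,v_n|\bigr) = \tfrac12\bigl(h(v) + h(v_n) - 2(h(v_n) - h(v))\bigr) = \tfrac{3h(v) - h(v_n)}{2}.
\]
Hmm --- this tends to $-\infty$, which is the wrong behavior, so I should instead choose the sequence approaching $\pi(v)$ to consist of $v$ itself together with points that stay near height $h(v)$ as they converge; but the anchored geodesic is vertical and its ray necessarily ascends, so that does not work directly either. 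The correct route: apply \eqref{extended definition} and observe that $(v\,|\,\pi(v))_h = \inf \liminf_{n\to\infty} (v\,|\,x_n)_h$ over sequences $\{x_n\} \in \pi(v)$, and for the anchored ascending ray $\{v_n\}$ from $v$ we instead use that $v_n$ and $v$ lie on a common vertical geodesic to get $(v\,|\,v_n)_h \geq h(v) - c$ from the fact that $h$ is $1$-Lipschitz and roughly equal to a Busemann function $b$ with $b\doteq_3 h$, together with the standard estimate for Gromov products along geodesics through a common point: in a $\delta$-hyperbolic space, if $v$ lies within bounded distance of a geodesic from a fixed basepoint-direction to $x_n$, then $(v\,|\,x_n)_b \doteq b(v)$. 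Concretely, since $b \doteq_3 h$ and $v \in \gamma$ with $\gamma$ a vertical (hence $b$-monotone) geodesic line having one end at $\omega$, the point $v$ is essentially a point where $b$ achieves its value along $\gamma$, and for any $x_n$ on the ascending end of $\gamma$ one has $(v\,|\,x_n)_b = \tfrac12(b(v) + b(x_n) - |v\,x_n|) \doteq b(v)$ because $|v\,x_n| \doteq b(x_n) - b(v)$ along the geodesic. Passing to the liminf and then the infimum over all sequences in $\pi(v)$ (using \eqref{sequence approximation} to control the discrepancy between different sequences) gives $(v\,|\,\pi(v))_h \geq h(v) - c(\delta)$.

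Combining the two bounds yields $(v\,|\,\pi(v))_h \doteq_{c(\delta)} h(v)$, hence $d_\rho(v,\pi(v)) \asymp \alpha^{-h(v)}$ with constant depending only on $\alpha,\tau$; together with Lemma~\ref{filling compute distance} this completes the proof. The main obstacle, as the discussion above indicates, is pinning down the lower bound $(v\,|\,\pi(v))_h \gtrsim h(v)$: one must correctly exploit that $v$ sits on an anchored vertical geodesic terminating at $\pi(v) \in \p_\omega X$ and translate the "Gromov product along a geodesic through a common point" heuristic into a clean additive estimate using $b \doteq_3 h$ and \eqref{sequence approximation}. An alternative, possibly cleaner, argument for the lower bound would bypass Gromov products entirely: exhibit an explicit curve from $v$ to $\pi(v)$ along the anchored ascending ray $\gamma$ and compute $\ell_\rho(\gamma) = \sum_{n \geq h(v)} \alpha^{-n} \asymp \alpha^{-h(v)}$ directly for the upper bound on $d_\rho(v,\pi(v))$, while the lower bound $d_\rho(v,\pi(v)) \gtrsim d_\rho(v) \asymp \alpha^{-h(v)}$ is automatic since $\pi(v) \in \p X_\rho$ forces $d_\rho(v,\pi(v)) \geq \dist(v,\p X_\rho) = d_\rho(v)$. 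This second approach is likely the one to write up, as it reduces everything to Lemma~\ref{filling compute distance} plus a one-line geometric-series computation.
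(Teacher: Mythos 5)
Your final alternative argument---the trivial lower bound $d_{\rho}(v,\pi(v)) \geq d_{\rho}(v)$, the explicit computation $\ell_{\rho}(\gamma) = \int_{0}^{\infty}\alpha^{-t-h(v)}\,dt \ls \alpha^{-h(v)}$ along an ascending vertical geodesic ray anchored at $\pi(v)$ (from Lemma~\ref{second height connection}), and then Lemma~\ref{filling compute distance}---is exactly the paper's proof. The Gromov-product detour in the first part of your writeup is unnecessary, and as you correctly concluded, the geometric-series approach is the one to keep.
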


\begin{proof}
We trivially have $d_{\rho}(v,\pi(v)) \geq d_{\rho}(v)$. On the other hand, by Lemma \ref{second height connection} we can find an ascending vertical geodesic ray $\gamma: [0,\infty) \rightarrow X$ starting at $v$ and anchored at $\pi(v)$. A straightforward computation shows that
\[
\ell_{\rho}(\gamma) = \int_{0}^{\infty} \alpha^{-t-h(v)}\, dt \ls \alpha^{-h(v)},
\]
from which it follows that $d_{\rho}(v,\pi(v)) \ls \alpha^{-h(v)}$. The conclusion of the lemma then follows from Lemma \ref{filling compute distance}.
\end{proof}




We now introduce a concept inspired by work of Lindquist \cite[Definition 4.10]{L17}. Our definition is somewhat different from the one given there. For this definition we recall that balls are always considered to have a fixed center and radius, even if a particular subset can be described as a ball in multiple different ways. 

\begin{defn}\label{defn:hull}
Let $B = B_{Z}(z,r)$ be any ball in $Z$. The \emph{hull} $H^{B} \subset X_{\rho}$ of $B$ in $X_{\rho}$ is the union $H^{B} = \bigcup \bar{B}_{X}(v,\frac{1}{2})$ over all vertices $v \in V$ such that $\alpha^{-h(v)} \leq r$ and $B(v) \cap B \neq \emptyset$. We consider $H^{B}$ as being equipped with the uniformized metric $d_{\rho}$.  For $n \in \Z$ we write $H_{n}^{B}= H^{B}\cap V_{n}$ for the set of vertices in $H^{B}$ at height $n$. 
\end{defn}

Since balls are considered to come assigned with a center and radius, it may be the case for two balls $B$ and $B'$ in $Z$ with different centers and radii that $B = B'$ as sets but $H^{B} \neq H^{B'}$. By construction each vertex $v \in H^{B}$ has the property that $\alpha^{-h(v)} \leq r$ and $B(v) \cap B \neq \emptyset$, and for each $x \in H^{B}$ there is a vertex $v \in H^{B}$ such that $|xv| \leq \frac{1}{2}$. An edge $e$ in $X$ satisfies $e \subset H^{B}$ if and only if the endpoints of $e$ both belong to $H^{B}$. For each $n \in \Z$ such that $\alpha^{-n} \leq r$ we have by construction that the balls $B(v)$ for $v \in H_{n}^{B}$ cover $B$. In this paper we will primarily be using hulls of balls in $Z$ as a convenient approximation to balls in $\bar{X}_{\rho}$ centered at points of $Z$. We first show that the metric boundary of $H^{B}$ coincides with the closure $\bar{B}$ of $B$ in $Z$. 

\begin{lem}\label{hull closure}
Let $B \subset Z$ be any ball and let $H^{B} \subset X_{\rho}$ be its hull. Then 
\[
\p H^{B} = \overline{H^{B}}\backslash H^{B} = \bar{B} \subset Z. 
\]
\end{lem}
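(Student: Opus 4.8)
The plan is to show the two set equalities $\p H^{B} = \overline{H^{B}} \setminus H^{B}$ and $\overline{H^{B}} \setminus H^{B} = \bar B$ separately, where closures are taken in $\bar X_{\rho}$. The first equality is essentially formal: since $H^{B} \subset X_{\rho}$ and $X_{\rho}$ is dense in $\bar X_{\rho}$, the closure $\overline{H^{B}}$ computed in $\bar X_{\rho}$ agrees with the completion of $H^{B}$ as a metric space, so $\p H^{B}$ (the metric boundary of $H^{B}$) coincides with $\overline{H^{B}} \setminus H^{B}$ provided $H^{B}$ is itself a closed subset of $X_{\rho}$. To see the latter, note $H^{B}$ is a union of closed balls $\bar B_X(v,\tfrac12)$ over $v$ with $\alpha^{-h(v)} \le r$ and $B(v) \cap B \ne \emptyset$; in any compact subset of $X_\rho$ only finitely many vertices $v$ occur with height bounded below (since $X$ has bounded degree and $d_\rho(v) \asymp \alpha^{-h(v)}$ forces large height near the boundary), so locally $H^B$ is a finite union of compact sets, hence closed in $X_\rho$. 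Thus $\p H^B = \overline{H^B} \setminus H^B$.

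The substantive content is the identification $\overline{H^{B}} \setminus H^{B} = \bar B$, and here I would argue by proving the two inclusions. For $\bar B \subseteq \overline{H^B}$: given $z \in \bar B$, pick points $z_k \in B$ with $z_k \to z$; for each $k$ and each sufficiently large height $n$ (with $\alpha^{-n} \le r$) choose a vertex $v_{k,n} \in V_n$ with $z_k \in B_Z(\pi(v_{k,n}), \alpha^{-n})$, which by the covering property exists and lies in $H^B_n$ since $B(v_{k,n}) \cap B \ni z_k$. By Lemma \ref{second height connection} there is an ascending anchored geodesic ray from such a vertex; by Lemma \ref{vertex to base} (or Lemma \ref{filling compute distance}) the $d_\rho$-distance from $v_{k,n}$ to $\pi(v_{k,n}) \in Z$ is $\asymp \alpha^{-n}$, which goes to $0$ as $n \to \infty$. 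Combining with $d(z_k, \pi(v_{k,n})) < \alpha^{-n}$ and Lemma \ref{filling estimate both} to compare $d_\rho$ on $Z$ with $d$, one produces a sequence of vertices in $H^B$ converging in $d_\rho$ to $z$; hence $z \in \overline{H^B}$, and since $z \in Z = \p X_\rho$ it is not in $H^B \subset X_\rho$, so $z \in \overline{H^B}\setminus H^B$.

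For the reverse inclusion $\overline{H^{B}} \setminus H^{B} \subseteq \bar B$: a point $\xi \in \overline{H^B}\setminus H^B$ is a $d_\rho$-limit of a sequence $x_j \in H^B$, and since $\xi \notin H^B \subset X_\rho$ we must have $\xi \in \p X_\rho = Z$; in particular $d_\rho(x_j) \to 0$, which by Lemma \ref{filling compute distance} forces $h(x_j) \to \infty$. Choosing vertices $v_j \in H^B$ with $|x_j v_j| \le \tfrac12$, we get $h(v_j) \to \infty$ as well, and each $v_j$ satisfies $B(v_j) \cap B \ne \emptyset$, i.e. there is $w_j \in B$ with $d(w_j, \pi(v_j)) < \tau \alpha^{-h(v_j)} \to 0$. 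Meanwhile $\pi(v_j) \to \xi$ in $d_\rho$ by Lemma \ref{vertex to base}, and on $Z$ the metric $d_\rho$ is biLipschitz to $d$, so $\pi(v_j) \to \xi$ in $d$ too; hence $w_j \to \xi$ in $d$ with $w_j \in B$, giving $\xi \in \bar B$.

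The main obstacle I anticipate is the bookkeeping in the second inclusion: one must carefully use that convergence of $x_j$ to a boundary point in $d_\rho$ forces the heights of nearby vertices to blow up (so that the radii $\tau\alpha^{-h(v_j)}$ of the associated balls $B(v_j)$ shrink to zero), and then transfer $d_\rho$-convergence of the base points $\pi(v_j)$ to $d$-convergence via the biLipschitz identification of $Z$ with $\p X_\rho$ together with Lemma \ref{filling estimate both}. Everything else is a routine application of Lemmas \ref{second height connection}, \ref{filling estimate both}, \ref{filling compute distance}, and \ref{vertex to base}, plus the covering property of the sets $H^B_n$ built into Definition \ref{defn:hull}.
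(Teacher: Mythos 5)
Your proposal is correct and follows essentially the same approach as the paper: show $H^B$ is closed in $X_\rho$ so that $\overline{H^B}\setminus H^B\subset Z$, then prove the two inclusions against $\bar B$ using the distance estimates of Lemmas \ref{filling estimate both}, \ref{filling compute distance}, and \ref{vertex to base}. Two small remarks: the identity $\p H^B=\overline{H^B}\setminus H^B$ holds by the definition of the metric boundary of an incomplete space together with completeness of $\bar X_\rho$ (it does not require closedness of $H^B$ in $X_\rho$; closedness is what you need to place $\p H^B$ inside $Z$), and the inclusion $\bar B\subseteq\overline{H^B}$ is obtained more directly by anchoring a single ascending geodesic at $y\in\bar B$ via Lemma \ref{second height connection} and noting that $y\in B(v_n)$ with $B(v_n)$ open forces $B(v_n)\cap B\neq\emptyset$, which avoids the diagonal argument through an approximating sequence $z_k\to y$ in $B$.
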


\begin{proof}
Let $r = r(B)$. Let $\{x_{n}\} \subset H^{B}$ be any sequence for which there is some $y \in \bar{X}_{\rho} \backslash H^{B}$ such that $d_{\rho}(x_{n},y) \rightarrow 0$. If $y \in X_{\rho}$ then, since the metric $d_{\rho}$ is locally biLipschitz to the hyperbolic metric on $X$, we must have $|x_{n}y| \rightarrow 0$ as well. Since the closed balls $\bar{B}_{X}(v,\frac{1}{2})$ for $v \in V$ cover $X$, we can find a vertex $v$ such that $y \in \bar{B}_{X}(v,\frac{1}{2})$. If $y \in B_{X}(v,\frac{1}{2})$ then we must have $x_{n} \in B_{X}(v,\frac{1}{2})$ for $n$ sufficiently large. Since $x_{n} \in H^{B}$ this implies that $v \in H^{B}$, from which it follows that $y \in H^{B}$, contradicting our assumptions. If $|yv| = \frac{1}{2}$ then we let $w$ be the other vertex on the edge containing $y$. Then for $n$ sufficiently large we must have either $x_{n} \in \bar{B}_{X}(v,\frac{1}{2})$ or $x_{n} \in \bar{B}_{X}(w,\frac{1}{2})$. Thus we must have either $v \in H^{B}$ or $w \in H^{B}$, both of which imply that $y \in H^{B}$. This again contradicts our assumptions. 

Thus we must have $y \in Z = \p X_{\rho}$. Then $\{x_{n}\}$ converges to a point of $\p X_{\rho}$ and therefore $h(x_{n}) \rightarrow \infty$ as $n \rightarrow \infty$.  By the definition of $H^{B}$ we can find a sequence of vertices $\{v_{n}\} \subset H^{B}$ such that $|x_{n}v_{n}| \leq \frac{1}{2}$. By Lemma \ref{filling estimate both} it follows that $d_{\rho}(x_{n},v_{n}) \rightarrow 0$, so we also have $d_{\rho}(v_{n},y) \rightarrow 0$. Set $z_{n} = \pi(v_{n})$. Then $d_{\rho}(z_{n},v_{n}) \ls \alpha^{-h(v_{n})}$ by Lemma \ref{vertex to base} and therefore $d_{\rho}(z_{n},v_{n}) \rightarrow 0$. Thus $d_{\rho}(z_{n},y) \rightarrow 0$. But since $B \cap B(v_{n}) \neq \emptyset$, we can find points $y_{n} \in B \cap B(v_{n})$ for each $n$ that satisfy $d(y_{n},z_{n}) < \tau \alpha^{-h(v_{n})}$. This implies that $d(y_{n},y) \rightarrow 0$ as well, which implies that $y \in \bar{B}$ since $y_{n} \in B$ for each $n$. 

We conclude that $\p H^{B} \subset \bar{B}$. To obtain equality, let $y \in \bar{B}$ be any point and let $\gamma$ be an ascending vertical geodesic ray anchored at $y$ as constructed in Lemma \ref{second height connection} starting from a vertex $v_{0} \in V_{0}$. Let $\{v_{n}\}_{n \geq 0}$ be the sequence of vertices on $\gamma$ with $h(v_{n}) = n$. For each $n$ we then have $y \in B(v_{n})$ by construction and therefore $B(v_{n}) \cap B \neq \emptyset$ by the definition of the closure $\bar{B}$. For $n$ sufficiently large we will have that $\alpha^{-n} \leq r$ and therefore $v_{n} \in H^{B}$. Then $d_{\rho}(v_{n},y) \rightarrow 0$ since $\gamma$ has $y$ as its endpoint in $\p X_{\rho}$. It follows that $y \in \overline{H^{B}}$. Since $H^{B} \subset X_{\rho}$, we in fact have $y \in \p H^{B}$. Thus $\p H^{B} = \bar{B}$.  
\end{proof}

We remind the reader that in general $\bar{B} = \overline{B_{Z}(z,r)}$ may be a proper subset of the closed ball $\bar{B}_{Z}(z,r) = \{x \in Z:d(x,z) \leq r\}$. 

We next show that the closure of the hull of a ball $B \subset Z$ can be approximated by balls in $\bar{X}_{\rho}$ centered at the center of $B$. We then use this to show that balls in $\bar{X}_{\rho}$ centered at points of $Z$ can be approximated by the closures of hulls of balls in $Z$. For a ball $B = B_{Z}(z,r)$ in $Z$ we write $\hat{B} = B_{\rho}(z,r)$ for the corresponding ball centered at $z$ in $\bar{X}_{\rho}$.

\begin{lem}\label{hull approximation}
There is a constant $C = C(\alpha,\tau) \geq 1$ such that if $B$ is any ball in $Z$ then $C^{-1}\hat{B} \subset \overline{H^{B}} \subset C \hat{B}$. Consequently we have
\begin{equation}\label{ball to hull}
\overline{H^{C^{-1}B}} \subset \hat{B} \subset \overline{H^{CB}}.
\end{equation}
\end{lem}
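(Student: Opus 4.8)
The plan is to establish the two inclusions $C^{-1}\hat{B} \subset \overline{H^B}$ and $\overline{H^B} \subset C\hat{B}$ separately, using Lemma~\ref{filling estimate both} and Lemma~\ref{vertex to base} as the main tools, together with the description of $\p H^B = \bar B$ from Lemma~\ref{hull closure}. Write $B = B_Z(z,r)$ and fix an ascending vertical geodesic ray $\gamma$ anchored at $z$ passing through some vertex $v_0 \in V_{k_0}$ where $k_0$ is the least integer with $\alpha^{-k_0} \le r$; all vertices $v_n$ on $\gamma$ above $v_0$ then satisfy $\alpha^{-h(v_n)} \le r$ and $z = \pi(v_0) \in B(v_n)$, hence $B(v_n) \cap B \ne \emptyset$, so $v_n \in H^B$.

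\textbf{Containment $\overline{H^B} \subset C\hat B$.} The plan is to bound $d_\rho(x,z)$ for $x \in H^B$. Given such $x$, pick a vertex $v \in H^B$ with $|xv| \le \tfrac12$; then by Lemma~\ref{filling estimate both}, $d_\rho(x,v) \lesssim \alpha^{-h(v)} \le r$. Since $v \in H^B$ we have $B(v) \cap B \ne \emptyset$, so there is $y \in B \cap B(v)$ with $d(y,z) < r$ and $d(y,\pi(v)) < \tau\alpha^{-h(v)} \le \tau r$. Using Lemma~\ref{filling estimate both} applied to the boundary point $z \in \p_\omega X$ and the vertex $v$ — together with $(v|z)_h \ge h(v) - O(1)$ when $z \in B_Z(\pi(v), \tau\alpha^{-h(v)})$ (this is essentially the anchoring estimate underlying Lemma~\ref{filling estimate both}) and $|vz| \lesssim \alpha^{-h(v)} \cdot$ (factor from the visual metric comparison, since $d$ is a visual metric of parameter $\log\alpha$) — one gets $d_\rho(v,z) \lesssim \alpha^{-h(v)} + d(\pi(v),z) \lesssim r$. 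Combining, $d_\rho(x,z) \le d_\rho(x,v) + d_\rho(v,z) \lesssim r$, so $x \in C\hat B$ for suitable $C$; this passes to the closure.

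\textbf{Containment $C^{-1}\hat B \subset \overline{H^B}$.} Here the plan is: given $x \in \bar X_\rho$ with $d_\rho(x,z) < C^{-1}r$ (with $C$ to be chosen large), show $x \in \overline{H^B}$. If $x \in Z = \p X_\rho$ then $d(x,z) \lesssim d_\rho(x,z) < C^{-1}r < r$, so $x \in B \subset \bar B = \p H^B \subset \overline{H^B}$ by Lemma~\ref{hull closure}. If $x \in X_\rho$, pick a vertex $v$ with $|xv| \le \tfrac12$; then Lemma~\ref{filling compute distance} gives $d_\rho(x) \asymp \alpha^{-h(x)} \asymp \alpha^{-h(v)}$, and since $d_\rho(x) \le d_\rho(x,z)$ (as $z \in \p X_\rho$), we obtain $\alpha^{-h(v)} \lesssim C^{-1}r$, so $\alpha^{-h(v)} \le r$ once $C$ is large enough. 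It remains to see $B(v) \cap B \ne \emptyset$. Estimate $d_\rho(\pi(v),z) \le d_\rho(\pi(v),v) + d_\rho(v,x) + d_\rho(x,z) \lesssim \alpha^{-h(v)} + C^{-1}r \lesssim C^{-1}r$ using Lemma~\ref{vertex to base} and Lemma~\ref{filling estimate both}; converting back via the biLipschitz relation between $d$ and $d_\rho$ on $Z$ gives $d(\pi(v),z) \lesssim C^{-1}r < \tau\alpha^{-h(v)}$ (for $C$ large, using $\alpha^{-h(v)} \gtrsim C^{-1} r$ is \emph{not} available, so instead use $d(\pi(v),z) \le r/2 < r + \tau\alpha^{-h(v)}$ directly — more carefully, $z \in B(v)$ iff $d(\pi(v),z) < \tau\alpha^{-h(v)}$, and we may instead only need that $B(v)$ and $B$ intersect, which holds as soon as $d(\pi(v),z) < r + \tau\alpha^{-h(v)}$, clearly true). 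Hence $v \in H^B$ and $d_\rho(x,v) \le \tfrac12 \cdot$ (local biLipschitz factor)$\cdot \alpha^{-h(v)} \to$ small; in fact $x$ lies in the closed ball $\bar B_X(v,\tfrac12) \subset H^B$, so $x \in H^B \subset \overline{H^B}$.

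The display \eqref{ball to hull} follows immediately: applying the two-sided inclusion to the ball $C^{-1}B$ gives $\overline{H^{C^{-1}B}} \subset C \cdot \widehat{C^{-1}B} = \hat B$ (since $\widehat{C^{-1}B} = B_\rho(z, C^{-1}r)$ and scaling the radius by $C$ recovers $\hat B$), and applying it to $CB$ gives $\hat B = \widehat{CB}$ scaled down, i.e. $C^{-1}\widehat{CB} \subset \overline{H^{CB}}$, hence $\hat B \subset \overline{H^{CB}}$.

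\textbf{Main obstacle.} The delicate point is the uniform control of $d_\rho(v,z)$ (resp. $d_\rho(\pi(v),z)$) when $v \in H^B$, i.e., translating the combinatorial condition $B(v) \cap B \ne \emptyset$ together with $\alpha^{-h(v)} \le r$ into a genuine metric estimate $d_\rho(v,z) \lesssim r$. This requires carefully invoking Lemma~\ref{filling estimate both} for the mixed pair consisting of a vertex and a boundary point, keeping track of the Gromov product lower bound that anchoring at a nearby point provides, and reconciling the visual-metric normalization of $d$ on $Z$ with the constant in Lemma~\ref{filling estimate both}; all constants must depend only on $\alpha,\tau$, which they do since every lemma invoked has that property.
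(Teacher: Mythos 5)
Your proposal is essentially correct and follows the same route as the paper's proof: both establish $C^{-1}\hat{B}\subset\overline{H^B}$ and $\overline{H^B}\subset C\hat{B}$ separately by passing through a nearby vertex $v$ with $|xv|\le\tfrac12$, using Lemma~\ref{filling estimate both} to control $d_\rho(x,v)$, Lemma~\ref{vertex to base} to bound $d_\rho(v,z)$ via the triangle inequality through $\pi(v)$, a height bound to ensure $\alpha^{-h(v)}\le r$, and Lemma~\ref{hull closure} to handle boundary points; the display \eqref{ball to hull} is then the two-sided inclusion applied to $CB$ and $C^{-1}B$.

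Two small cleanup points. First, in the direction $\overline{H^B}\subset C\hat{B}$, the digression through Gromov products and the claimed lower bound $(v|z)_h\ge h(v)-O(1)$ is unnecessary and is not directly supplied by the cited lemmas: the estimate $d_\rho(v,z)\ls\alpha^{-h(v)}+d(\pi(v),z)$ you land on is just the triangle inequality $d_\rho(v,z)\le d_\rho(v,\pi(v))+d(\pi(v),z)$ combined with Lemma~\ref{vertex to base}, which is what the paper does. Second, in the direction $C^{-1}\hat{B}\subset\overline{H^B}$, the criterion ``$d(\pi(v),z)<r+\tau\alpha^{-h(v)}$ forces $B(v)\cap B\ne\emptyset$'' is false for a general (non-geodesic) metric space $Z$; what you actually have, and what suffices, is the stronger bound $d(\pi(v),z)<r$ for $C$ large enough, which gives $\pi(v)\in B\cap B(v)$ directly (this is exactly the paper's observation). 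With these tidyings, your argument matches the paper's.
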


\begin{proof}
Let $B = B_{Z}(z,r)$ be a given ball. Let $0 < \la_{0} < 1$ be a given parameter to be tuned in the proof and suppose that $v \in V$ is a vertex satisfying $d_{\rho}(v,z) <  \la_{0} r$. Then by Lemma \ref{filling estimate both} we have that
\[
\alpha^{-(v|z)_{h}} \asymp d_{\rho}(v,z) <  \la_{0} r.
\]
By inequality \eqref{both busemann boundary} it follows that
\begin{equation}\label{kappa inequality}
\alpha^{-h(v)} \lesssim  \la_{0} r,
\end{equation}
as well. We have by inequality \eqref{kappa inequality} and Lemma \ref{vertex to base},
\[
d(\pi(v),z) \leq d_{\rho}(\pi(v),v) + d_{\rho}(v,z) \ls \alpha^{-h(v)} + \la_{0} r \ls  \la_{0} r. 
\]
Thus $d(\pi(v),z) < C_{0} \la_{0} r$ and $\alpha^{-h(v)} \leq C_{0} \la_{0}r$ for a constant $C_{0} = C_{0}(\alpha,\tau) \geq 1$ depending only on $\alpha$ and $\tau$. We set $\la_{0} = C^{-1}_{0}$.  It then follows that we have $d(\pi(v),z) < r$ and $\alpha^{-h(v)} \leq r$.  This implies that $\pi(v) \in B$, which means that $B(v) \cap B \neq \emptyset$. Since $\alpha^{-h(v)} \leq r$ it then follows that $v \in H^{B}$. 

We conclude that all vertices $v \in  \la_{0}\hat{B}$ satisfy $v \in H^{B}$. Let $0 < \la <  \la_{0}$ be another given parameter. Let $x \in \la \hat{B} \cap X_{\rho}$ be any given point and let $v$ be a vertex satisfying $|xv| \leq \frac{1}{2}$. Then by Lemma \ref{filling estimate both} and inequality \eqref{kappa inequality} (for $\la$ instead of $\la_{0}$) we have 
\[
d_{\rho}(v,x) \ls \alpha^{-(v|x)_{h}} \ls \alpha^{-h(v)} \ls \la r,
\]
from which it follows that 
\[
d_{\rho}(v,z) \leq d_{\rho}(v,x) + d_{\rho}(x,z) \ls \la r. 
\]
Thus there is a constant $C = C(\alpha,\tau)$ such that $d_{\rho}(v,z) < C\la r$ for any vertex $v$ such that there is a point $x \in \la \hat{B}$ with $|xv| \leq \frac{1}{2}$. We set $\la = C^{-1}\la_{0}$. Then it follows that $v \in \la_{0} \hat{B}$ and therefore $v \in H^{B}$. By the definition of the hull we then conclude that $x \in H^{B}$. Thus $\la \hat{B} \cap X_{\rho} \subset H^{B}$. Finally, since $\la < 1$ and $\p X_{\rho} = Z$, if $x \in \la \hat{B} \cap \p X_{\rho} = \la B$ then $x \in B$ and therefore $x \in \overline{H^{B}}$ by Lemma \ref{hull closure}. This proves the inclusion $\la\hat{B} \subset \overline{H^{B}}$. 

Now let $v \in H^{B}$ be any vertex. Let $y \in B(v) \cap B$ be a point in this intersection. Then
\[
d(\pi(v),z) \leq d(\pi(v),y) + d(y,z) < \tau \alpha^{-h(v)} + r \lesssim r, 
\]
since $\alpha^{-h(v)} \leq r$. Then by Lemma \ref{vertex to base}, 
\[
d_{\rho}(v,z) \leq d_{\rho}(v,\pi(v)) + d(\pi(v),z) \ls \alpha^{-h(v)} + r \ls r. 
\]
Thus there is a constant $C = C(\alpha,\tau) \geq 1$ such that $d_{\rho}(v,z) \leq Cr$. It follows that $v \in C\hat{B}$.

If $x \in H^{B}$ is an arbitrary point then we can find a vertex $v \in H^{B}$ such that $|xv| \leq \frac{1}{2}$. Then $v \in C\hat{B}$ by our prior calculations. By Lemma \ref{filling estimate both} we have
\[
d_{\rho}(x,v) \ls \alpha^{-(x|v)_{h}} \ls \alpha^{-h(v)} \leq r. 
\]
Thus
\[
d_{\rho}(x,z) \leq d_{\rho}(x,v) + d_{\rho}(v,z) \ls r. 
\]
It follows that $x \in C\hat{B}$ as well, for a possibly larger constant $C = C(\alpha,\tau)$. Finally since $\p H^{B} = \bar{B} \subset 2 \hat{B}$ by Lemma \ref{hull closure}, we conclude that there is a constant $C = C(\alpha,\tau)$ such that $\overline{H^{B}} \subset C\hat{B}$. This completes the proof of the first assertion. 

It remains to obtain the chain of inclusions \eqref{ball to hull}. Let $B$ be a given ball in $Z$ and let $C = C(\alpha,\tau) \geq 1$ be the constant obtained above. Applying the conclusion of the first part to the ball $CB$ gives that $\hat{B} \subset \overline{H^{CB}}$ and applying the conclusion of the first part to the ball $C^{-1}B$ gives $\overline{H^{C^{-1}B}} \subset \hat{B}$. This establishes the inclusions \eqref{ball to hull}.
\end{proof}

Lastly we make the useful observation that the hulls $H^{B(v)}$ for $v \in V_{n}$ have bounded overlap in $X$. 

\begin{lem}\label{hull overlap}
There is a constant $M = M(\tau,D)$ depending only on $\tau$ and the doubling constant $D$ for $Z$ such that for any fixed $n \in \Z$ a point $x \in X$ can belong to at most $M$ of the sets $H^{B(v)}$ for $v \in V_{n}$. 
\end{lem}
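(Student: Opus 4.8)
The plan is to reduce the overlap count to a single application of the doubling property of $Z$, exploiting the elementary observation that each point of $X$ lies within graph-distance $\frac{1}{2}$ of at most two vertices. First I would fix $n \in \Z$ and $x \in X$ and identify this small set of nearby vertices: if $x$ is itself a vertex then, since distinct vertices of the simple graph $X$ are at graph-distance at least $1$, the only vertex $w$ with $|xw| \le \frac{1}{2}$ is $w = x$; if instead $x$ lies in the interior of an edge with endpoints $u_{1},u_{2}$, then any vertex $w \notin \{u_{1},u_{2}\}$ satisfies $|xw| \ge 1 > \frac{1}{2}$, so the only candidates are $u_{1}$ and $u_{2}$. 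Thus there is a set $W_{x} \subset V$ with $\#W_{x} \le 2$ containing every vertex within graph-distance $\frac{1}{2}$ of $x$.

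Next I would invoke Definition \ref{defn:hull}: if $x \in H^{B(v)}$ for some $v \in V_{n}$, then $x \in \bar{B}_{X}(w,\frac{1}{2})$ for some vertex $w$ with $\alpha^{-h(w)} \le r(B(v)) = \tau\alpha^{-n}$ and $B(w) \cap B(v) \neq \emptyset$, and necessarily $w \in W_{x}$. It therefore suffices to bound, for each fixed $w \in W_{x}$ with $\alpha^{-h(w)} \le \tau\alpha^{-n}$, the number of vertices $v \in V_{n}$ for which $B(w) \cap B(v) \neq \emptyset$.

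For the final step, given such a $v$ I would pick $z \in B(w) \cap B(v)$ and note, using $B(v) = B_{Z}(\pi(v),\tau\alpha^{-n})$, $B(w) = B_{Z}(\pi(w),\tau\alpha^{-h(w)})$, and $\alpha^{-h(w)} \le \tau\alpha^{-n}$, that $d(\pi(v),\pi(w)) < \tau\alpha^{-n} + \tau\alpha^{-h(w)} \le \tau(1+\tau)\alpha^{-n}$; hence $\pi(v) \in B_{Z}(\pi(w),\tau(1+\tau)\alpha^{-n})$. Since $\pi$ restricts to a bijection of $V_{n}$ onto the $\alpha^{-n}$-separated set $S_{n}$, Lemma \ref{doubling overlap} (applied with separation $r = \alpha^{-n}$ and dilation factor $\tau(1+\tau) \ge 1$) bounds the number of such $v$ by $D^{\,l+1}$, where $l$ is the least integer with $2^{l} \ge \tau(1+\tau)$. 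Summing over the at most two elements of $W_{x}$ then yields the lemma with $M = 2D^{\,l+1}$, which depends only on $\tau$ and $D$.

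I do not expect a genuine obstacle here. The only step needing a little care is the first one — the claim that a point of $X$ sees at most two vertices within distance $\frac{1}{2}$ — which must treat the case $x \in V$ separately from the case where $x$ is an interior edge point. After that reduction, everything is a direct consequence of the doubling estimate in Lemma \ref{doubling overlap}.
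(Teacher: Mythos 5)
Your proof is correct and follows the paper's approach: both rest on the observation that $x$ lies within graph-distance $\frac{1}{2}$ of at most two vertices, followed by an application of Lemma \ref{doubling overlap} to the $\alpha^{-n}$-separated set $S_n$. Your bookkeeping — grouping the vertices $v$ according to which of the at most two candidates $w \in W_x$ witnesses $x \in H^{B(v)}$, then bounding each group directly — is marginally cleaner than the paper's (which fixes one $v$ and compares to the others via a case split on whether the witnessing vertices coincide), but it is the same argument.
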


\begin{proof}
Let $n \in \Z$ and $x \in X$ be given. For the purposes of the lemma we can suppose that $x$ belongs to at least one hull $H^{B(v)}$ for some $v \in V_{n}$. We fix this vertex $v$ in what follows. The point $x$ can only belong to $H^{B(v)}$ if at least one of the vertices on the edge containing $x$ also belongs to $H^{B(v)}$. We let $x_{v}$ be a vertex on the edge containing $x$ such that $x_{v} \in H^{B(v)}$. Then we must have $B(x_{v}) \cap B(v) \neq \emptyset$ and $\alpha^{-h(x_{v})} \leq \tau \alpha^{-n}$ by the definition of $H^{B(v)}$. It follows that, for a point $z \in B(x_{v}) \cap B(v)$ in this intersection, 
\[
d(\pi(v),\pi(x_{v})) \leq d(\pi(v),z) + d(z,\pi(x_{v})) \leq 2\tau \alpha^{-n}. 
\]
Let $w \in V_{n}$ be any other vertex such that $x \in H^{B(w)}$. We similarly let $x_{w}$ be a vertex on the edge containing $x$ such that $x_{w} \in H^{B(w)}$. If $x_{v} = x_{w}$ then 
\[
d(\pi(v),\pi(w)) \leq d(\pi(v),\pi(x_{v})) + d(\pi(x_{v}),\pi(w)) \leq 4\tau \alpha^{-n}. 
\]
If $x_{v} \neq x_{w}$ then we note that $B(x_{v}) \cap B(x_{w}) \neq \emptyset$ and $\max\{\alpha^{-h(x_{v})},\alpha^{-h(x_{w})}\} \leq \tau \alpha^{-n}$. Thus 
\[
d(\pi(x_{v}),\pi(x_{w})) \leq 2\tau^{2}\alpha^{-n}.
\]
It follows by the triangle inequality that
\[
d(\pi(v),\pi(w)) \leq 6\tau^{2}\alpha^{-n}. 
\]
Thus the vertices $w \in V_{n}$ such that $x \in H^{B(w)}$ form an $\alpha^{-n}$-separated subset of the ball $B_{Z}(\pi(v),6\tau^{2} \alpha^{-n})$ in $Z$. Hence the number of such vertices is bounded above by $D^{2l+3}$, where $l$ is the minimal integer such that $2^{l} \geq \tau$.
\end{proof}

\section{Lifting doubling measures}\label{sec:lift}

In this section we start with a complete doubling metric measure space $(Z,d,\nu)$. We will write $C_{\nu}$ for the doubling constant of $\nu$ in the doubling inequality \eqref{doubling inequality}. It's easy to see for a doubling metric measure space $(Z,d,\nu)$ that the underlying metric space $(Z,d)$ is doubling with constant $D = D(C_{\nu})$, see for instance \cite[Chapter 4.1]{HKST}. Thus by Proposition \ref{doubling degree} any hyperbolic filling $X$ of $Z$ with parameters $\alpha,\tau > 1$ satisfying \eqref{tau requirement} will have vertex degree bounded above by $N = N(\alpha,\tau,C_{\nu})$.

We fix parameters $\alpha,\tau > 1$ satisfying \eqref{tau requirement} and let $X$ be a hyperbolic filling of $Z$ with these parameters as constructed in the previous section. We carry over all concepts and notation from the previous section. In particular we let $X_{\rho}$ denote the conformal deformation of $X$ with conformal factor $\rho(x) = \alpha^{-h(x)}$ and isometrically identify $Z$ with $\p X_{\rho}$ via a biLipschitz change of metric on $Z$. Throughout the first part of this section (until Lemma \ref{hull measure}) all implied constants will depend only on $\alpha$, $\tau$, and the doubling constant $C_{\nu}$ for $\nu$. 

Our first task in this section will be to lift the doubling measure $\nu$ to a uniformly locally doubling measure $\mu$ on $X$ that supports a uniformly local 1-Poincar\'e inequality. The terminology here is explained in the statement of Proposition \ref{local hyp}. To this end we adapt the construction in \cite[Section 10]{BBS21}. As before we write $V  = \bigcup_{n \in \Z} V_{n}$ for the vertices of $X$ and write $E$ for the set of edges of $X$. We let $\mathcal{L}$ denote the Borel measure on $X$ given by Lebesgue measure on each edge of $X$, recalling that each edge of $X$ has unit length. The measure $\mathcal{L}$ can equivalently be thought of as the $1$-dimensional Hausdorff measure on $X$. 

We define a measure $\hat{\mu}$ on $V$ by setting for each $v \in V$, 
\begin{equation}\label{hat measure definition}
\hat{\mu}(\{v\}) = \nu(B(v)),
\end{equation}
where we recall that if $v = (z,n)$ then $B(v) = B_{Z}(z,\tau \alpha^{-n})$. To simplify notation we will write $\hat{\mu}(v) :=\hat{\mu}(\{v\})$. Our first lemma shows that adjacent vertices have comparable $\hat{\mu}$-measure. 

\begin{lem}\label{doubling vertex comparison}
Let $v,w \in V$ satisfy $v \sim w$. Then
\[
\hat{\mu}(v) \asymp \hat{\mu}(w).
\]
\end{lem}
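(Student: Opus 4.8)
The plan is to show that when two vertices $v,w\in V$ are joined by an edge, the dilated balls $B(v)$ and $B(w)$ are each contained in a bounded dilate of the other, and then invoke the doubling property of $\nu$ a bounded number of times.

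\textbf{Setting up the geometry.} Write $v = (z_1,n_1)$ and $w = (z_2,n_2)$, so $B(v) = B_Z(z_1,\tau\alpha^{-n_1})$ and $B(w) = B_Z(z_2,\tau\alpha^{-n_2})$. Since $v\sim w$ we have $|n_1 - n_2|\le 1$ and $B(v)\cap B(w)\neq\emptyset$; without loss of generality assume $n_2 \in \{n_1-1, n_1, n_1+1\}$, so that $\alpha^{-n_2} \asymp_\alpha \alpha^{-n_1}$ (the comparison constant here is just $\alpha$). Pick a point $p \in B(v)\cap B(w)$. Then for any $x \in B(w)$,
\[
d(x, z_1) \le d(x, z_2) + d(z_2, p) + d(p, z_1) < \tau\alpha^{-n_2} + \tau\alpha^{-n_2} + \tau\alpha^{-n_1} \le (2\alpha + 1)\tau\alpha^{-n_1},
\]
using $\alpha^{-n_2} \le \alpha\cdot\alpha^{-n_1}$. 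Hence $B(w) \subset B_Z(z_1, (2\alpha+1)\tau\alpha^{-n_1}) = \lambda B(v)$ with $\lambda = 2\alpha + 1$. By the symmetric argument (swapping the roles of $v$ and $w$, and noting $\alpha^{-n_1}\le\alpha\cdot\alpha^{-n_2}$) we also get $B(v) \subset \lambda' B(w)$ for a comparable constant $\lambda'$; take $\Lambda = \max\{\lambda,\lambda'\}$, which depends only on $\alpha$.

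\textbf{Applying doubling.} Now iterate the doubling inequality \eqref{doubling inequality}: if $k$ is the least integer with $2^k \ge \Lambda$, then $\nu(\Lambda B) \le \nu(2^k B) \le C_\nu^{\,k}\,\nu(B)$ for any ball $B$. Applying this with $B = B(v)$ and using $B(w)\subset\Lambda B(v)$ gives $\hat\mu(w) = \nu(B(w)) \le \nu(\Lambda B(v)) \le C_\nu^{\,k}\,\nu(B(v)) = C_\nu^{\,k}\,\hat\mu(v)$, and symmetrically $\hat\mu(v) \le C_\nu^{\,k}\,\hat\mu(w)$. Thus $\hat\mu(v)\asymp_{C}\hat\mu(w)$ with $C = C_\nu^{\,k}$ depending only on $\alpha$ and $C_\nu$ (and hence, in the bookkeeping convention of this section, only on $\alpha$, $\tau$, and $C_\nu$), which is the claim.

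\textbf{Main obstacle.} There is no real obstacle here; the only mild subtlety is being careful with the height discrepancy $|n_1 - n_2|\le 1$, which forces the factor of $\alpha$ into the radius comparisons and must be tracked so that the final constant is seen to depend only on $\alpha$ and $C_\nu$ and not on the particular vertices. One should also note that $0 < \nu(B(v)) < \infty$ for every vertex (so $\hat\mu$ is a genuine measure with finite positive weights), which is immediate from the standing assumption that $\nu$ is finite and positive on balls.
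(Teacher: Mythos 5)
Your proof is correct and follows essentially the same approach as the paper: show that $B(v)$ and $B(w)$ are each contained in a bounded dilate of the other (with dilation factor depending only on $\alpha$ and $\tau$) and then iterate the doubling inequality. The paper streamlines this slightly by noting upfront that symmetry reduces the claim to one inequality, but the geometric estimate and the use of doubling are the same.
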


\begin{proof}
By symmetry it suffices to verify the upper bound $\hat{\mu}(v) \ls \hat{\mu}(w)$. Since $v \sim w$ we must have $|h(v)-h(w)| \leq 1$, which implies that $h(v) \geq h(w)-1$. Since $B(v) \cap B(w) \neq \emptyset$, we must then have $d(\pi(v),\pi(w)) < 2\tau \alpha^{-h(w)+1}$.  Thus if $z \in B(v)$ then 
\[
d(z,\pi(w)) \leq d(\pi(v),\pi(w)) + d(z,\pi(v)) < 3\tau \alpha^{-h(w)+1}. 
\]
Thus $B(v) \subset B_{Z}(\pi(w),3\tau \alpha^{-h(w)+1})$. Writing $3\tau \alpha^{-h(w)+1} = 3\alpha(\tau \alpha^{-h(w)})$, the doubling condition on $\nu$ implies that 
\[
\nu(B(w)) \asymp \nu(B_{Z}(\pi(w),3\tau \alpha^{-h(w)+1})) \geq \nu(B(v)).
\]
This implies that $\hat{\mu}(v) \ls \hat{\mu}(w)$. 
\end{proof}

We next smear out $\hat{\mu}$ to a measure $\mu$ on $X$ by setting, for a Borel set $A \subset X$, 
\begin{equation}\label{lift definition}
\mu(A) = \sum_{v \in V} \sum_{w \sim v}(\hat{\mu}(v) + \hat{\mu}(w))\mathcal{L}(A \cap vw).
\end{equation}
Here $vw$ denotes the edge connecting $v$ to $w$. By Lemma \ref{doubling vertex comparison} and the fact that $X$ has vertex degree bounded by $N = N(\alpha,\tau,C_{\nu})$, we obtain the useful comparison for any vertex $v \in V$ and any $w \sim v$,
\begin{equation}\label{measure to edge}
\nu(B(v)) = \hat{\mu}(v) \asymp \mu(vw).
\end{equation}

We can now apply \cite[Theorem 10.2]{BBS21} to directly obtain uniformly local doubling of $\mu$ and a uniformly local $1$-Poincar\'e inequality on $X$. 

\begin{prop}\label{local hyp}
For each $R_{0} > 0$ there is a constant $C_{0} = C_{0}(\alpha,\tau,C_{\nu},R_{0}) \geq 1$ such that for all balls $B = B_{X}(x,r)$ in $X$ with $0 < r \leq R_{0}$ and every integrable function $u$ on $B$ with upper gradient $g$ on $B$ we have
\begin{equation}\label{local hyp doubling}
\mu(2B) \leq C_{0}\mu(B),
\end{equation}
and 
\begin{equation}\label{local hyp Poincare}
\dashint_{B} |u-u_{B}|\, d\mu \leq C_{0}r\dashint_{B} g\, d\mu. 
\end{equation}
\end{prop}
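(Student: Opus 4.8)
The plan is to invoke \cite[Theorem 10.2]{BBS21} directly, after verifying that the combinatorial data of our hyperbolic filling $X$ together with the vertex weights $\hat\mu$ satisfies the hypotheses of that theorem. The key structural inputs are: (i) $X$ is a connected graph with edges of unit length equipped with the length measure $\mathcal L$; (ii) $X$ has bounded vertex degree $N = N(\alpha,\tau,C_\nu)$ by Proposition \ref{doubling degree}; (iii) the weights satisfy the adjacent-comparability $\hat\mu(v) \asymp \hat\mu(w)$ whenever $v\sim w$, by Lemma \ref{doubling vertex comparison}; and (iv) the measure $\mu$ on $X$ is obtained from $\hat\mu$ by the smearing formula \eqref{lift definition}, so that $\mu(vw) \asymp \hat\mu(v)$ by \eqref{measure to edge}. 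The theorem of Bj\"orn-Bj\"orn-Shanmugalingam takes exactly this kind of data and produces, for every fixed scale $R_0$, a doubling inequality \eqref{local hyp doubling} and a $1$-Poincar\'e inequality \eqref{local hyp Poincare} valid for all balls of radius at most $R_0$, with constant depending only on $N$, the comparability constant in (iii), and $R_0$. Since all of these depend only on $\alpha$, $\tau$, and $C_\nu$, the resulting $C_0$ depends only on $\alpha,\tau,C_\nu,R_0$ as claimed.

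Concretely, the proof reads: \emph{By Proposition \ref{doubling degree} the graph $X$ has bounded vertex degree $N = N(\alpha,\tau,C_\nu)$. By Lemma \ref{doubling vertex comparison} the vertex weights $\hat\mu$ defined in \eqref{hat measure definition} satisfy $\hat\mu(v)\asymp\hat\mu(w)$ with comparison constant depending only on $\alpha,\tau,C_\nu$ whenever $v\sim w$, and the measure $\mu$ on $X$ defined by \eqref{lift definition} satisfies $\mu(vw)\asymp\hat\mu(v)$ by \eqref{measure to edge}. These are precisely the hypotheses of \cite[Theorem 10.2]{BBS21}, whose conclusion gives, for each $R_0>0$, a constant $C_0 = C_0(N, R_0, \text{comparison constant}) = C_0(\alpha,\tau,C_\nu,R_0)$ for which \eqref{local hyp doubling} and \eqref{local hyp Poincare} hold for all balls $B_X(x,r)$ with $0<r\le R_0$ and all integrable $u$ on $B$ with upper gradient $g$.} I expect this to be essentially a citation argument with no genuine obstacle, the only subtlety being bookkeeping: one must confirm that the metric graph considered in \cite[Theorem 10.2]{BBS21} is set up in the same way (unit-length edges, length measure, weights attached to vertices) as ours, and that the restriction to balls of bounded radius $R_0$ in their statement matches \eqref{local hyp doubling}-\eqref{local hyp Poincare}. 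The phrase ``uniformly locally doubling'' and ``uniformly local $1$-Poincar\'e inequality'' referenced in the surrounding text just names the conclusion: the constants are uniform over all balls below a fixed radius but are allowed to degenerate as $R_0\to\infty$, which is unavoidable since $\mu$ itself can grow or decay exponentially in the height function.

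If one prefers not to cite \cite[Theorem 10.2]{BBS21} as a black box, the alternative is to prove it directly, which is not hard given the bounded-degree structure. For the doubling inequality, observe that a ball $B_X(x,r)$ of radius $r\le R_0$ meets a bounded number (depending on $N$ and $R_0$) of edges, and since adjacent vertex weights are comparable, all the edge-weights $(\hat\mu(v)+\hat\mu(w))$ appearing in $\mu(B)$ and $\mu(2B)$ are comparable to one another with constant depending on $N$ and $R_0$; combined with the elementary fact that $\mathcal L(2B)\le C(N,R_0)\,\mathcal L(B)$ on the metric graph, this yields \eqref{local hyp doubling}. For the $1$-Poincar\'e inequality, one uses that on each individual edge $\mathcal L$ is just Lebesgue measure so the one-dimensional Poincar\'e inequality holds trivially, and then propagates it across the boundedly many edges meeting $B$ using a telescoping/chaining argument along paths in the graph together with the weight-comparability; this is exactly the argument in \cite[Theorem 10.2]{BBS21}. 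Either way the main point is simply that all relevant constants are controlled by $\alpha$, $\tau$, $C_\nu$, and $R_0$.
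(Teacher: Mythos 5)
Your proposal matches the paper exactly: the paper proves Proposition \ref{local hyp} simply by citing \cite[Theorem 10.2]{BBS21}, relying on the preceding Lemma \ref{doubling vertex comparison}, the comparison \eqref{measure to edge}, and Proposition \ref{doubling degree} to supply the required hypotheses (bounded degree, adjacent-vertex weight comparability, unit-length edges with length measure). Your verification of the hypotheses and bookkeeping of constant dependencies is correct, and the optional direct-proof sketch you append is a sound outline of what the cited theorem does under the hood, though the paper itself does not include it.
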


We let $\beta > 0$ be given and define a measure $\mu_{\beta}$ on $X_{\rho}$ by, for $x \in X$,  
\begin{equation}\label{beta def}
d\mu_{\beta}(x) = \alpha^{-\beta h(x)}d\mu(x). 
\end{equation}
We extend $\mu_{\beta}$ to a measure on $\bar{X}_{\rho}$ by setting $\mu_{\beta}(\p X_{\rho}) = 0$. We will be applying the results of \cite{Bu22} to establish that the metric measure spaces $(X_{\rho},d_{\rho},\mu_{\beta})$ and $(\bar{X}_{\rho},d_{\rho},\mu_{\beta})$ are each doubling and support a $1$-Poincar\'e inequality. As in Remark \ref{rem:transition}, the setting we considered in \cite{Bu22} is slightly different than the setting we are considering here, as we used a Busemann function $b$ on $X$ in place of the height function $h$. By using \cite[Lemma 5.12]{Bu20} to choose a Busemann function $b$ on $X$ such that $b \doteq_{3} h$, we see that we can apply the results of \cite{Bu22} in our setting here, at the cost of an $\alpha^{3}$-biLipschitz change of metric on $X_{\rho}$ and changing the $\mu_{\beta}$-measure of measurable sets by at most a factor of $\alpha^{3\beta}$. Both of these changes only impact the doubling property and $1$-Poincar\'e inequality for these metric measure spaces up to a constant factor depending only on $\alpha$ and $\beta$. Hence we can freely apply the results of \cite{Bu22} to the setting considered here. We also remark that we have made a slight change of notation from \cite{Bu22}: our definition of $\mu_{\beta}$ in this work corresponds to  $\mu_{\beta \e}$ in \cite{Bu22} with $\e = \log \alpha$. 

\begin{rem}\label{rem:metric measure}
Strictly speaking we do not yet know that $(X_{\rho},d_{\rho},\mu_{\beta})$ and $(\bar{X}_{\rho},d_{\rho},\mu_{\beta})$ are metric measure spaces in the sense defined in the introduction, since we have not yet shown that balls in these spaces have finite $\mu_{\beta}$-measure. The finiteness of the measure of balls is a consequence of Lemma \ref{hull measure} below. Proposition \ref{ball infinite measure} shows that this finiteness property does not hold if we replace $\mu_{\beta}$ with $\mu$; in particular the comparison constant in \eqref{ball compute} must go to infinity as $\beta \rightarrow 0$. 
\end{rem}

We start by estimating the measure of the hull of a ball in $Z$ and use this to estimate the measure of balls centered at the boundary. We recall for a ball $B = B_{Z}(z,r)$ in $Z$ that we write $\hat{B} = B_{\rho}(z,r)$ for the corresponding ball in $\bar{X}_{\rho}$. Throughout the rest of this section all implied constants will depend only on $\alpha$, $\tau$, $C_{\nu}$, and $\beta$. By the estimate  \eqref{measure to edge} and the fact that $h$ is $1$-Lipschitz, we obtain for any vertex $v \in V$ and any edge $e$ with $v \in e$, 
\begin{equation}\label{transformed}
\mu_{\beta}(e) \asymp \alpha^{-\beta h(v)}\nu(B(v)).
\end{equation}
A \emph{half-edge} $e_{*}$ in $X$ is a geodesic segment in $X$ of length $\frac{1}{2}$ starting from a vertex $v \in V$. By applying the definition \eqref{lift definition} and using Lemma \ref{doubling vertex comparison} we similarly obtain for any half-edge $e_{*} \subset X$,
\begin{equation}\label{half-edge transformed}
\mu_{\beta}(e_{*}) \asymp \alpha^{-\beta h(v)}\nu(B(v)).
\end{equation}
Since $X$ has vertex degree bounded by $N = N(\alpha,\tau,C_{\nu})$, we obtain the useful estimate for a vertex $v \in V$, 
\begin{equation}\label{vertex transformed}
\mu_{\beta}\left(\bar{B}_{X}\left(v,\frac{1}{2}\right)\right) = \sum_{v \in e_{*}} \mu_{\beta}(e_{*}) \asymp \alpha^{-\beta h(v)}\nu(B(v)),
\end{equation}
where the sum is taken over all half-edges $e_{*}$ starting from $v$.

\begin{lem}\label{hull measure}
Let $B $ be any ball in $Z$. Then we have
\begin{equation}\label{hull compute}
\mu_{\beta}(H^{B}) \asymp r^{\beta}\nu(B),
\end{equation}
and
\begin{equation}\label{ball compute}
\mu_{\beta}(\hat{B}) \asymp r^{\beta}\nu(B).
\end{equation}
\end{lem}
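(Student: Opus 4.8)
The plan is to establish \eqref{hull compute} first and then deduce \eqref{ball compute} from it using the hull approximation Lemma \ref{hull approximation}. For \eqref{hull compute}, write $r = r(B)$ and choose $n_0 \in \Z$ to be the smallest integer with $\alpha^{-n_0} \leq r$, so $\alpha^{-n_0} \asymp r$. By the definition of the hull, $H^B = \bigcup_{n \geq n_0} \bigcup_{v \in H_n^B} \bar B_X(v, \tfrac12)$, and each vertex $v \in H_n^B$ satisfies $\alpha^{-h(v)} \leq r$ and $B(v) \cap B \neq \emptyset$. Using the estimate \eqref{vertex transformed}, the contribution of the half-edges around a vertex $v \in V_n$ is $\asymp \alpha^{-\beta n}\nu(B(v))$, and since each point of $X$ lies in boundedly many of these closed half-balls (by bounded vertex degree), we get
\[
\mu_\beta(H^B) \asymp \sum_{n \geq n_0} \alpha^{-\beta n} \sum_{v \in H_n^B} \nu(B(v)).
\]
The key geometric input is a two-sided comparison of $\sum_{v \in H_n^B} \nu(B(v))$ with $\nu(B)$ for each fixed $n \geq n_0$: the lower bound holds because the balls $B(v)$, $v \in H_n^B$, cover $B$ (as remarked after Definition \ref{defn:hull}); the upper bound holds because these balls all have radius $\tau\alpha^{-n} \leq \tau r$, meet $B$, and hence are contained in a fixed dilate $CB$, while the centers $\pi(v)$ form an $\alpha^{-n}$-separated set, so Lemma \ref{doubling overlap} bounds their overlap by a constant $M = M(\tau, C_\nu)$, whence $\sum_{v \in H_n^B}\nu(B(v)) \leq M\,\nu(CB) \asymp \nu(B)$ by doubling. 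Thus for each $n \geq n_0$ we have $\sum_{v \in H_n^B}\nu(B(v)) \asymp \nu(B)$, and summing the geometric series $\sum_{n \geq n_0}\alpha^{-\beta n} \asymp \alpha^{-\beta n_0} \asymp r^\beta$ (here is where $\beta > 0$ is essential, matching Remark \ref{rem:metric measure}) gives $\mu_\beta(H^B) \asymp r^\beta \nu(B)$.

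For \eqref{ball compute}, apply Lemma \ref{hull approximation}: there is $C = C(\alpha,\tau) \geq 1$ with $\overline{H^{C^{-1}B}} \subset \hat B \subset \overline{H^{CB}}$. Since $\mu_\beta$ is supported on $X_\rho$ (it assigns zero mass to $\p X_\rho = Z$) and $\overline{H^{B}} \setminus H^B = \bar B \subset Z$ by Lemma \ref{hull closure}, we have $\mu_\beta(\overline{H^{B'}}) = \mu_\beta(H^{B'})$ for any ball $B'$. Therefore
\[
\mu_\beta(\hat B) \leq \mu_\beta(\overline{H^{CB}}) = \mu_\beta(H^{CB}) \asymp (Cr)^\beta \nu(CB) \asymp r^\beta \nu(B),
\]
using \eqref{hull compute} and the doubling property of $\nu$, and similarly $\mu_\beta(\hat B) \geq \mu_\beta(H^{C^{-1}B}) \asymp (C^{-1}r)^\beta \nu(C^{-1}B) \asymp r^\beta\nu(B)$. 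This gives \eqref{ball compute}.

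The main obstacle is the uniform (in $n$) two-sided comparison $\sum_{v \in H_n^B}\nu(B(v)) \asymp \nu(B)$; the lower bound is essentially a covering statement, but the upper bound requires combining the separation of the centers $\pi(v)$ at scale $\alpha^{-n}$ with the doubling of $\nu$ via Lemma \ref{doubling overlap}, and one must be careful that the containment $B(v) \subset CB$ holds with a single constant $C$ independent of $n$ (it does, since $B(v)$ meets $B$ and has radius $\leq \tau r$, so $C = 1 + 2\tau$ works). A secondary technical point is bookkeeping the constant overlap of the sets $\bar B_X(v,\tfrac12)$ when converting the sum over vertices into $\mu_\beta(H^B)$; this is handled by the bounded vertex degree $N = N(\alpha,\tau,C_\nu)$ together with \eqref{vertex transformed}.
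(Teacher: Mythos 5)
Your proof is correct and follows essentially the same route as the paper: convert $\mu_\beta(H^B)$ to a weighted sum over vertices via \eqref{vertex transformed}, establish the two-sided level comparison $\sum_{v\in H_n^B}\nu(B(v)) \asymp \nu(B)$ by combining the covering of $B$ with the bounded-overlap estimate of Lemma \ref{doubling overlap}, sum the resulting geometric series in $\beta > 0$, and then transfer to $\hat B$ via Lemma \ref{hull approximation} and $\mu_\beta(\p X_\rho) = 0$. (One small refinement: the overlap count for the closed half-balls $\bar B_X(v,\tfrac12)$ is at most $2$ for purely graph-metric reasons, so bounded degree is not actually needed at that step; this does not affect the validity of your argument.)
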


\begin{proof}
We first obtain the comparison \eqref{hull compute}. Let $B = B_{Z}(z,r)$ be the given ball.  Let $m$ be the minimal integer such that $\alpha^{-m} \leq r$, so that we have $\alpha^{-m} \asymp r$. We then sum the estimate \eqref{vertex transformed} over all vertices $v \in H^{B}$, noting that each point $x \in H^{B}$ satisfies $|xv| \leq \frac{1}{2}$ for at least one vertex $v \in H^{B}$ and at most two vertices. We then obtain that 
\begin{equation}\label{sum hull}
\mu_{\beta}(H^{B}) \asymp \sum_{n=m}^{\infty}\alpha^{-\beta n}\left(\sum_{v \in H_{n}^{B}} \nu(B(v))\right).
\end{equation}
To estimate the inner sum on the right, note by Lemma \ref{doubling overlap} that there is a constant $M = M(\alpha,\tau,C_{\nu})$ such that for any $x \in Z$ we have that $x$ belongs to at most $M$ balls $B(v)$ for $v \in V_{n}$. Furthermore each ball $B(v)$ for $v \in H^{B}$ has radius at most $\tau r$ by the definition of $H^{B}$, so for each $n \in \Z$ we must have that
\[
B \subset \bigcup_{v \in H_{n}^{B}} B(v) \subset 3\tau B.
\]
By the doubling property of $\nu$ and the bounded overlap of the balls $B(v)$ associated to vertices $v \in H_{n}^{B}$, we conclude that
\[
\sum_{v \in H_{n}^{B}} \nu(B(v)) \asymp \nu(B). 
\]
Applying this comparison, summing the resulting geometric series in \eqref{sum hull}, and then using $\alpha^{-m} \asymp r$ gives 
\[
\mu_{\beta}(H^{B}) \asymp r^{\beta} \nu(B),
\]
as desired. To obtain the corresponding result for $\hat{B}$, we use Lemma \ref{hull approximation} together with the fact that $\mu_{\beta}(\p X_{\rho}) = 0$ to obtain that 
\[
\mu_{\beta}(H^{C^{-1}B}) \leq \mu_{\beta}(\hat{B}) \leq \mu_{\beta}(H^{CB}),
\]
with $C = C(\alpha,\tau)$. By \eqref{hull compute} and the doubling property for $\nu$ we have $\mu_{\beta}(H^{C^{-1}B}) \asymp r^{\beta}\nu(B)$ and $\mu_{\beta}(H^{CB}) \asymp r^{\beta}\nu(B)$, so the comparison for $\mu_{\beta}(\hat{B})$ follows. 
\end{proof}

\begin{prop}\label{all beta doubling}
The metric measure spaces $(X_{\rho},d_{\rho},\mu_{\beta})$ and $(\bar{X}_{\rho},d_{\rho},\mu_{\beta})$ are each doubling and support a $1$-Poincar\'e inequality, with constants depending only on  $\alpha$, $\tau$, $C_{\nu}$ and $\beta$. 
\end{prop}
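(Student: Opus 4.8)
### Plan of proof

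The plan is to deduce the two assertions from the results of \cite{Bu22} on the behavior of doubling measures and Poincar\'e inequalities under uniformization, applied to the uniformly locally doubling measure $\mu$ on $X$ which supports a uniformly local $1$-Poincar\'e inequality (Proposition \ref{local hyp}). As noted in the discussion preceding Lemma \ref{hull measure}, by choosing a Busemann function $b$ on $X$ with $b \doteq_{3} h$ (via \cite[Lemma 5.12]{Bu20}) the setting of \cite{Bu22} applies after an $\alpha^{3}$-biLipschitz change of metric on $X_{\rho}$ and a change of each $\mu_{\beta}$-measure by at most a factor $\alpha^{3\beta}$; both are harmless for the doubling and Poincar\'e conclusions. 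The first thing I would record is that $(X_{\rho},d_{\rho},\mu_{\beta})$ and $(\bar{X}_{\rho},d_{\rho},\mu_{\beta})$ are genuine metric measure spaces, i.e.\ that balls have finite positive $\mu_{\beta}$-measure: positivity is immediate since every ball contains a half-edge and \eqref{half-edge transformed} gives it positive measure, while finiteness follows from Lemma \ref{hull measure} together with Lemma \ref{hull approximation} (every ball in $\bar{X}_{\rho}$ is contained in $\overline{H^{CB}}$ for an appropriate ball $B\subset Z$, whose $\mu_{\beta}$-measure is comparable to $r^{\beta}\nu(B) < \infty$). This settles Remark \ref{rem:metric measure}.

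Next I would invoke the main transformation theorems of \cite{Bu22}. The key structural input is exactly the conclusion of Proposition \ref{local hyp}: $\mu$ is uniformly locally doubling on $X$ and supports a uniformly local $1$-Poincar\'e inequality, with constants depending only on $\alpha$, $\tau$, $C_{\nu}$, and the scale $R_{0}$. The results of \cite{Bu22} (the generalizations of \cite{BBS20} to the Busemann-function uniformization introduced in \cite{Bu20}) then assert precisely that for every $\beta > 0$ the uniformized measure $\mu_{\beta}$ on $X_{\rho}$, extended to $\bar{X}_{\rho}$ with $\mu_{\beta}(\p X_{\rho}) = 0$, is globally doubling and supports a global $1$-Poincar\'e inequality, with constants depending only on the uniformization data ($\alpha$, $\tau$, hence $\delta$) and on $\beta$, together with the uniform local doubling/Poincar\'e constants for $\mu$ (which themselves depend only on $\alpha$, $\tau$, $C_{\nu}$). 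Tracking this dependence chain gives the claimed dependence of the final constants on $\alpha$, $\tau$, $C_{\nu}$, $\beta$ alone. I would state the cited results of \cite{Bu22} in the form needed and then simply note that the hypotheses are verified by Proposition \ref{local hyp}.

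The one point requiring genuine care, rather than citation, is the passage between $X_{\rho}$ and its completion $\bar{X}_{\rho}$: one must check that adjoining the boundary $Z = \p X_{\rho}$ (of measure zero) preserves both the doubling property and the $1$-Poincar\'e inequality. For doubling this is standard once one knows that for a ball $B_{\rho}(x,r)$ centered at a boundary point $x \in Z$ one has the comparison $\mu_{\beta}(B_{\rho}(x,2r)) \asymp \mu_{\beta}(B_{\rho}(x,r))$, which follows from Lemma \ref{hull measure} and Lemma \ref{hull approximation} combined with the doubling property of $\nu$ on $Z$; balls centered at interior points are handled either directly by the $X_{\rho}$ statement or by enlarging slightly to capture a boundary-centered ball. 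For the $1$-Poincar\'e inequality on $\bar{X}_{\rho}$ I would use that $\bar{X}_{\rho}$ is a geodesic (indeed proper, uniform) space, that $X_{\rho}$ is dense in $\bar{X}_{\rho}$ of full measure, and that a curve family argument shows upper gradients on $\bar{X}_{\rho}$ restrict to upper gradients on $X_{\rho}$; the Poincar\'e inequality for $X_{\rho}$ then transfers to $\bar{X}_{\rho}$ after passing to limits of ball averages, using that $\mu_{\beta}(\p X_{\rho}) = 0$ so averages over $B_{\rho}(x,r) \cap X_{\rho}$ and over $B_{\rho}(x,r)$ agree. In fact this completion step is precisely what is handled in \cite{Bu22} (mirroring \cite{BBS20}), so the cleanest route is to cite the version of their theorem that already includes the statement for $\bar{X}_{\rho}$.

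The main obstacle, then, is not any single hard estimate but the bookkeeping: making sure the biLipschitz/measure-scaling reductions from the Busemann-function setting of \cite{Bu22} to the height-function setting used here are applied correctly, and that the constant dependence really collapses to $\alpha$, $\tau$, $C_{\nu}$, $\beta$. Everything else is either a direct citation (Proposition \ref{local hyp}, the theorems of \cite{Bu22}) or a routine consequence of Lemmas \ref{hull closure}, \ref{hull approximation}, and \ref{hull measure}.
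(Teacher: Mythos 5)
Your plan correctly identifies the main tools (the uniformization results of \cite{Bu22} applied to the uniformly locally doubling/Poincar\'e measure $\mu$ from Proposition \ref{local hyp}, plus the Busemann-to-height-function bookkeeping), but it glosses over the step where essentially all of the real work lies. You write that the results of \cite{Bu22} ``assert precisely'' that the uniformly local doubling/Poincar\'e for $\mu$ already yields global doubling and a global $1$-Poincar\'e inequality for $\mu_{\beta}$, and that you would ``simply note that the hypotheses are verified by Proposition \ref{local hyp}.'' That is not how the cited result works: \cite[Proposition~3.3]{Bu22} has an additional hypothesis beyond uniformly local doubling, namely a quantitative measure bound tying boundary-centered balls in $\bar{X}_{\rho}$ to hyperbolic interior balls in $X$. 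Concretely, given a cutoff $\kappa = \kappa(\alpha,\tau)$, one must produce $C_{0}$ so that whenever $z\in Z$, $r>0$, $x\in X$ satisfy $B_{\rho}(x,\kappa r)\subset B_{\rho}(z,r)$ and $d_{\rho}(x)\geq 2\kappa r$, one has $\mu_{\beta}(B_{\rho}(z,r))\leq C_{0} r^{\beta}\mu(B_{X}(x,1))$. Verifying this estimate is the core of the paper's proof: it requires Lemma \ref{hull measure} to compute $\mu_{\beta}(\hat{B})\asymp r^{\beta}\nu(B)$, the comparison \eqref{measure to edge} to relate $\mu(B_{X}(x,1))$ to $\nu(B(v))$ for a nearby vertex $v$, Lemma \ref{filling compute distance} to see that $\alpha^{-h(v)}\asymp r$ under the stated hypotheses, Lemma \ref{hull approximation} to locate $v$ in the hull $H^{CB}$ so that $B(v)\cap CB\neq\emptyset$, and finally the doubling of $\nu$ to conclude $\nu(B)\lesssim\nu(B(v))$. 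None of this is a consequence of Proposition \ref{local hyp} alone.

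Relatedly, your treatment of the $X_{\rho}$-versus-$\bar{X}_{\rho}$ passage runs in the wrong direction and is unnecessary. Once the criterion of \cite[Proposition~3.3]{Bu22} is verified, that result directly delivers doubling of $\mu_{\beta}$ on the completion $\bar{X}_{\rho}$; the statement for $X_{\rho}$ is then immediate since $\mu_{\beta}(\p X_{\rho})=0$, with no curve-family or limit-of-averages argument required. The $1$-Poincar\'e inequality then follows for both spaces from \cite[Theorem~1.3]{Bu22} combined with the uniformly local $1$-Poincar\'e inequality of Proposition \ref{local hyp}. So the separate paragraph you devote to upgrading from $X_{\rho}$ to $\bar{X}_{\rho}$ can be deleted entirely; that labour should be reallocated to verifying the measure criterion, which your proposal currently does not do.
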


\begin{proof}
We start by proving that the metric measure space $(\bar{X}_{\rho},d_{\rho},\mu_{\beta})$ is doubling with constant $C_{\mu_{\beta}}$ depending only on $\alpha$, $\tau$, $C_{\nu}$, and $\beta$. Since $\mu_{\beta}(\p X_{\rho}) = 0$ it then follows that $(X_{\rho},d_{\rho},\mu_{\beta})$ is also doubling with the same doubling constant. Let $\beta > 0$ be given. We will verify the doubling property using the uniformly local doubling property from Proposition \ref{local hyp} with $R_{0} = 1$ and a criterion established in our previous work \cite[Proposition 3.3]{Bu22}. For this criterion we are given a cutoff $\kappa = \kappa(\alpha,\tau) > 0$ determined by the data used to uniformize $X$ with the density $\rho$ (the conclusions of \cite[Theorem 1.12]{Bu20} show that all of this data depends only on $\alpha$ and $\tau$) and we must show that there is a constant $C_{0}$ such that whenever $z \in Z \cong \p X_{\rho}$, $r > 0$, and $x \in X$ are such that $B_{\rho}(x,\kappa r) \subset B_{\rho}(z,r)$ and $d_{\rho}(x) \geq 2\kappa r$, we have the inequality
\begin{equation}\label{controlled verify}
\mu_{\beta}(B_{\rho}(z,r)) \leq C_{0}r^{\beta}\mu(B_{X}(x,1)).
\end{equation}
We recall that $\beta$ here corresponds to $\beta \e$ in \cite{Bu22} with $\e = \log \alpha$. We can then conclude from \cite[Proposition 3.3]{Bu22} that $\mu_{\beta}$ is doubling on $\bar{X}_{\rho}$ with doubling constant $C_{\mu_{\beta}}$ depending only on $\alpha$, $\tau$, and $C_{0}$; our calculations will show that we can take $C_{0} = C_{0}(\alpha,\tau,C_{\nu},\beta)$, so in fact we can take  $C_{\mu_{\beta}}$ to depend only on $\alpha$, $\tau$, $C_{\nu}$, and $\beta$ (note that the constants in Proposition \ref{local hyp} also depend only on these same parameters). 

Assume that $z \in Z$ and $x \in X$ are given as specified above. It then follows from the assumptions that $d_{\rho}(x) \asymp r$ since $d_{\rho}(x) \geq 2\kappa r$ and $x \in B_{\rho}(z,r)$. Viewing $B_{\rho}(z,r)$ as $\hat{B}$ for the ball $B = B_{Z}(z,r)$ in $Z$, we conclude from Lemma \ref{hull measure} that it suffices to produce a bound
\[
\nu(B) \ls \mu(B_{X}(x,1)),
\]
with implied constant depending only on $\alpha$, $\tau$, $C_{\nu}$, and $\beta$. Let $e$ be an edge in $X$ containing $x$. Since $\bar{X}_{\rho}$ is geodesic, we can find a vertex $v$ on $e$ such that $v \in \hat{B}$ since $x \in \hat{B}$. Then $e \subset B_{X}(x,1)$ and $\mu(e) \asymp \nu(B(v))$ by \eqref{measure to edge}. Hence it suffices to show that
\begin{equation}\label{target doubling}
\nu(B) \ls \nu(B(v)). 
\end{equation}
To prove this bound we note by Lemma \ref{hull approximation} that there is a constant $C = C(\alpha,\tau)$ such that $\hat{B} \subset H^{CB}$, so that in particular we have $v \in H^{CB}$. Thus $B(v) \cap CB \neq \emptyset$. Since $d_{\rho}(x) \asymp r$ we have by Lemma \ref{filling compute distance} that $\alpha^{-h(x)} \asymp r$, which implies that $\alpha^{-h(v)} \asymp r$. Thus $r(B(v)) \asymp r$. Since $r = r(B)$ and $B(v) \cap CB \neq \emptyset$, we conclude that $B \subset C'B(v)$ for a constant $C' = C'(\alpha,\tau)$. The desired inequality \eqref{target doubling} then follows from the doubling property for $\nu$.   

We thus obtain that $\mu_{\beta}$ is doubling on $\bar{X}_{\rho}$ for all $\beta > 0$ with constant $C_{\mu_{\beta}}$ depending only on $\alpha$, $\tau$, $C_{\nu}$, and $\beta$. The $1$-Poincar\'e inequality for the metric measure spaces $(X_{\rho},d_{\rho},\mu_{\beta})$ and $(\bar{X}_{\rho},d_{\rho},\mu_{\beta})$ then follows by combining the uniformly local $1$-Poincar\'e inequality with $R_{0} = 1$ from Proposition \ref{local hyp} with \cite[Theorem 1.3]{Bu22}. 
\end{proof}

We note the following corollary of Lemma \ref{hull measure} and Proposition \ref{all beta doubling}, which provides an estimate for the $\mu_{\beta}$-measure of balls centered at any point of $\bar{X}_{\rho}$. Closeness below is taken with respect to the metric $d_{\rho}$. 

\begin{cor}\label{interior ball measure}
Let $x \in \bar{X}_{\rho}$ and $r > 0$ be given. If $r \geq d_{\rho}(x)$ then we let $z \in Z$ be a point closest to $x$, while if $r \leq d_{\rho}(x)$ then we let $v \in V$ be a vertex of $X_{\rho}$ nearest to $x$. Then in the case $r \geq d_{\rho}(x)$ we have
\begin{equation}\label{big interior}
\mu_{\beta}(B_{\rho}(x,r)) \asymp r^{\beta}\nu(B_{Z}(z,r)), 
\end{equation}
while in the case  $r \leq d_{\rho}(x)$ we have
\begin{equation}\label{small interior}
\mu_{\beta}(B_{\rho}(x,r)) \asymp rd_{\rho}(x)^{\beta-1}\nu(B(v)).
\end{equation}
\end{cor}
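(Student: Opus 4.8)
The plan is to split into the two stated regimes and in each case reduce to Lemma \ref{hull measure} via Lemma \ref{hull approximation} and Lemma \ref{filling compute distance}. First consider the regime $r \geq d_{\rho}(x)$. Let $z \in Z$ be a closest boundary point, so $d_{\rho}(x,z) = d_{\rho}(x) \leq r$. Then $B_{\rho}(x,r) \subset B_{\rho}(z,2r) = \widehat{B_{Z}(z,2r)}$ and $B_{\rho}(z,r/2)\setminus\{$small correction$\} \subset B_\rho(x,r)$; more carefully $B_{\rho}(z, r - d_{\rho}(x)) \subset B_{\rho}(x,r)$, but since $d_{\rho}(x)$ could be close to $r$ this lower containment can be empty, so instead I would note $B_\rho(z,r) \subset B_\rho(x, 2r)$ and run the comparison symmetrically. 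Applying \eqref{ball compute} of Lemma \ref{hull measure} to the balls $B_{Z}(z,2r)$ and $B_Z(z,r)$ (or $B_Z(z, r/2)$) and using the doubling property of $\nu$ from Proposition \ref{all beta doubling} — or directly the doubling of $\nu$ on $Z$ — to absorb the constant factors $2^{\beta}$ and $C_\nu$, we get $\mu_{\beta}(B_\rho(x,r)) \asymp r^\beta \nu(B_Z(z,r))$, which is \eqref{big interior}. The only subtlety is handling the case when $x$ itself lies on $\p X_\rho$, i.e. $d_\rho(x) = 0$, in which case $x = z$ and the estimate is literally \eqref{ball compute}.

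Now consider the regime $r \leq d_{\rho}(x)$. Since $d_\rho$ is locally biLipschitz to the hyperbolic metric on $X$ at scales small compared to $d_\rho(x)$, the ball $B_\rho(x,r)$ is comparable (up to constants depending on $\alpha,\tau$) to a hyperbolic ball of radius $\asymp r/d_\rho(x)$ around $x$ — this is the standard conformal-deformation estimate, and it should follow from Lemma \ref{filling estimate both} together with Lemma \ref{filling compute distance}: for $y$ near $x$ one has $d_\rho(x,y) \asymp \alpha^{-h(x)} |x y|_X \asymp d_\rho(x)|xy|_X$ as long as $|xy|_X \lesssim 1$. Let $v \in V$ be a nearest vertex to $x$; then $h(v) = h(x) + O(1)$, so $\alpha^{-h(v)} \asymp d_\rho(x)$ by Lemma \ref{filling compute distance}. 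The ball $B_\rho(x,r)$ is then contained in (and contains) a half-edge-type neighbourhood of $x$ of hyperbolic radius $\asymp r/d_\rho(x) \leq C$, and since $\mu$ restricted to each edge adjacent to $v$ is $(\hat\mu(v)+\hat\mu(w))\,\mathcal L \asymp \nu(B(v))\,\mathcal L$ by \eqref{measure to edge} and Lemma \ref{doubling vertex comparison}, while $\mu_\beta = \alpha^{-\beta h}\,d\mu$ with $\alpha^{-\beta h} \asymp d_\rho(x)^\beta$ on this neighbourhood (as $h$ varies by $O(1)$ there), we obtain
\[
\mu_{\beta}(B_\rho(x,r)) \asymp d_\rho(x)^\beta \cdot \frac{r}{d_\rho(x)} \cdot \nu(B(v)) = r\, d_\rho(x)^{\beta - 1}\nu(B(v)),
\]
which is \eqref{small interior}. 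Here the length factor $r/d_\rho(x)$ comes from $\mathcal L$ of a hyperbolic ball of radius $\asymp r/d_\rho(x)$ inside the graph $X$, using that $X$ has bounded vertex degree $N = N(\alpha,\tau,C_\nu)$ so that this $\mathcal L$-measure is $\asymp r/d_\rho(x)$.

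The main obstacle I anticipate is the large-scale case within the second regime, namely when $r$ is comparable to $d_\rho(x)$ rather than much smaller: then the hyperbolic ball of radius $\asymp r/d_\rho(x) \asymp 1$ is not a single edge neighbourhood, and one must sum $\nu(B(w))$ over the boundedly many vertices $w$ within hyperbolic distance $O(1)$ of $v$, using Lemma \ref{doubling vertex comparison} to see all these are $\asymp \nu(B(v))$ and bounded degree to see there are boundedly many; this gives the right answer but requires care that the transition between \eqref{big interior} and \eqref{small interior} at $r \asymp d_\rho(x)$ is consistent, which follows since $\nu(B(v)) \asymp \nu(B_Z(z,r))$ when $r \asymp d_\rho(x) \asymp \alpha^{-h(v)}$ and $z$ is a nearby boundary point — this last comparison is exactly the kind of estimate established in the proof of Proposition \ref{all beta doubling} (see the derivation of \eqref{target doubling}), so I would reuse that argument. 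A secondary technical point is making precise the biLipschitz comparison between $d_\rho$-balls and hyperbolic balls at small scales; rather than invoking it as a black box I would derive the needed two-sided containment directly from Lemma \ref{filling estimate both} and Lemma \ref{filling compute distance}, tracking that the Gromov product $(x|y)_h$ equals $h(x) + O(1)$ when $|xy|_X$ is bounded.
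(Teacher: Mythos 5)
Your proposal is correct and follows essentially the same two-case strategy as the paper: reduce the case $r \geq d_\rho(x)$ to Lemma~\ref{hull measure} via ball containments $B_\rho(z,\cdot) \subset B_\rho(x,\cdot) \subset B_\rho(z,\cdot)$ and doubling, and reduce the case $r \leq d_\rho(x)$ to a bounded-degree subgraph estimate by comparing $d_\rho$-balls to bounded hyperbolic balls and using $\alpha^{-h} \asymp d_\rho(x)$ there. The only cosmetic difference is that the paper cites \cite[Lemma 2.4]{Bu22} for the containment $B_\rho(x,r/2) \subset B_X(x,C_*r/d_\rho(x))$ where you propose to re-derive it directly from Lemmas~\ref{filling estimate both} and \ref{filling compute distance}; either works.
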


\begin{proof}
We first consider the case $r \geq d_{\rho}(x)$. In this case we have that $d_{\rho}(x,z) \leq r$ and therefore
\[
B_{\rho}(z,r) \subset B_{\rho}(x,r) \subset B_{\rho}(z,3r).
\]
The desired estimate then follows from Lemma \ref{hull measure} and the fact that $\nu$ is doubling. 

We now consider the case $r \leq d_{\rho}(x)$. We can then use \cite[Lemma 2.4]{Bu22} and Lemma \ref{filling compute distance} to obtain for some constants $C_{*} = C_{*}(\alpha,\tau)$ and $C_{0} = C_{0}(\alpha,\tau)$ that
\[
B_{\rho}\left(x,\frac{r}{2}\right) \subset B_{X}\left(x,\frac{C_{*}r}{d_{\rho}(x)}\right) \subset B_{X}(x,C_{0}). 
\]
We remark that the hypotheses of \cite[Lemma 2.4]{Bu22} are satisfied with parameters depending only on $\alpha$ and $\tau$ by \cite[Theorem 1.12]{Bu20}. Let $G$ be a minimal subgraph of $X$ such that $B_{X}(x,C_{0}) \subset G$. Since $X$ has vertex degree bounded by $N = N(\alpha,\tau,C_{\nu})$, $G$ has a number of edges $M = M(\alpha,\tau,C_{\nu})$ uniformly bounded in terms of $\alpha$, $\tau$, and $C_{\nu}$. Furthermore by Lemma \ref{doubling vertex comparison} the measure $\mu_{\beta}$ restricted to $G$ is uniformly comparable to the measure $\alpha^{-\beta h(v)}\hat{\mu}(v)\mathcal{L}|_{G}$ with comparison constants depending only on $\alpha$, $\tau$, $C_{\nu}$, and $\beta$, recalling that $\mathcal{L}$ denotes the measure on $X$ that restricts to Lebesgue measure on each edge of $X$. The ball $B_{\rho}\left(x,\frac{r}{2}\right)$ can be written as a union of at most $M$ $d_{\rho}$-geodesics starting from $x$. These geodesic segments have length comparable to $r \alpha^{h(v)}$ in $X$ since $h(y) \doteq h(v)$ for $y \in G$. Applying $\mu_{\beta}$ to these segments gives the upper bound
\[
B_{\rho}\left(x,\frac{r}{2}\right) \ls r \alpha^{(1-\beta) h(v)}\hat{\mu}(v). 
\]
On the other hand the ball $B_{\rho}(x,\frac{r}{2})$ must contain at least one $d_{\rho}$-geodesic segment $\sigma$ of length $\frac{1}{2}r$ (with respect to $d_{\rho}$). Evaluating $\mu_{\beta}$ on $\sigma$ gives the lower bound, 
\[
B_{\rho}\left(x,\frac{r}{2}\right) \gs r \alpha^{(1-\beta)h(v)}\hat{\mu}(v).
\]
Applying Lemma \ref{filling compute distance} again, we obtain $\alpha^{(1-\beta)h(v)} \asymp \alpha^{(1-\beta)h(x)} \asymp d_{\rho}(x)^{\beta -1}$. Combining this with the equality $\hat{\mu}(v) = \nu(B(v))$ and using the doubling property of $\mu_{\beta}$ gives the second estimate \eqref{small interior}. 
\end{proof}

For any doubling measure $\nu$ on $Z$ there is always some $Q > 0$ such that we have for any $z \in Z$ and $0 < r' \leq r$, 
\begin{equation}\label{lower volume}
\frac{\nu(B(z,r'))}{\nu(B(z,r))} \geq C_{\mathrm{low}}^{-1} \left(\frac{r'}{r}\right)^{Q},
\end{equation}
for some constant $C_{\mathrm{low}} \geq 1$. See \cite[Lemma 8.1.13]{HKST}. One may always take $Q = \log_{2} C_{\nu}$, but it is possible that \eqref{lower volume} holds for smaller values of $Q$.  We say that $\nu$ has \emph{relative lower volume decay of order $Q$} if inequality \eqref{lower volume} holds for all $z \in Z$, all $0 < r' \leq r$, and some implied constant. The exponent $Q$ functions as a kind of dimension for $\nu$, especially for the purpose of embedding theorems \cite[Section 5]{HK00}. 

In the next lemma we obtain an estimate on the relative lower volume decay exponent for $\mu_{\beta}$ in terms of the corresponding exponent for $\nu$. Compare \cite[Lemma 10.6]{BBS21}. 

\begin{lem}\label{doubling dimension}
Suppose that $\nu$ has relative lower volume decay of order $Q > 0$. Then $\mu_{\beta}$ has relative lower volume decay of order $Q_{\beta} = \max\{1,Q+\beta\}$ on $\bar{X}_{\rho}$ with constant $C_{\mathrm{low}}'$ depending only on $\alpha$, $\tau$, $\beta$, $Q$, and the constant $C_{\mathrm{low}}$ in \eqref{lower volume}.
\end{lem}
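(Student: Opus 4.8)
The plan is to reduce the relative lower volume decay estimate for $\mu_\beta$ on $\bar X_\rho$ to the two regimes treated in Corollary \ref{interior ball measure}, and within each regime compare the ratio $\mu_\beta(B_\rho(x,r'))/\mu_\beta(B_\rho(x,r))$ to the corresponding ratio for $\nu$ or to an elementary power computation. Fix $x \in \bar X_\rho$ and $0 < r' \le r$. First I would dispose of the easy case $r \le d_\rho(x)$: here both balls lie in the ``interior'' regime, so by \eqref{small interior} we get $\mu_\beta(B_\rho(x,r')) \asymp r' d_\rho(x)^{\beta-1}\nu(B(v))$ and $\mu_\beta(B_\rho(x,r)) \asymp r d_\rho(x)^{\beta-1}\nu(B(v))$ with the same vertex $v$, so the ratio is $\asymp r'/r \ge (r'/r)^{Q_\beta}$ since $Q_\beta \ge 1$. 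The symmetric easy case is $r' \ge d_\rho(x)$, where both balls are in the ``boundary'' regime: by \eqref{big interior} the ratio is $\asymp \bigl(r'/r\bigr)^{\beta}\cdot \nu(B_Z(z,r'))/\nu(B_Z(z,r)) \gtrsim \bigl(r'/r\bigr)^{\beta}\bigl(r'/r\bigr)^{Q} = \bigl(r'/r\bigr)^{Q+\beta} \ge \bigl(r'/r\bigr)^{Q_\beta}$, using the relative lower volume decay of $\nu$ from \eqref{lower volume}.

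The remaining case is $r' \le d_\rho(x) \le r$, which I would handle by splitting at the scale $t := d_\rho(x)$ and composing the two previous estimates. Write
\[
\frac{\mu_\beta(B_\rho(x,r'))}{\mu_\beta(B_\rho(x,r))} = \frac{\mu_\beta(B_\rho(x,r'))}{\mu_\beta(B_\rho(x,t))}\cdot \frac{\mu_\beta(B_\rho(x,t))}{\mu_\beta(B_\rho(x,r))}.
\]
For the first factor, $r' \le t = d_\rho(x)$ places us (up to the harmless doubling comparison, applying \eqref{small interior} at radii $r'$ and, say, $t/2$, noting $t/2 \le d_\rho(x)$) in the interior regime, giving a lower bound $\gtrsim r'/t = (r'/t)^{1} \ge (r'/t)^{Q_\beta}$ again because $Q_\beta \ge 1$. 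For the second factor, both $t$ and $r$ satisfy $\,\ge d_\rho(x)$, so by \eqref{big interior} (with $z$ a closest boundary point to $x$) it is $\asymp (t/r)^{\beta}\,\nu(B_Z(z,t))/\nu(B_Z(z,r)) \gtrsim (t/r)^{\beta}(t/r)^{Q} = (t/r)^{Q+\beta} \ge (t/r)^{Q_\beta}$. Multiplying, the composed lower bound is $\gtrsim (r'/t)^{Q_\beta}(t/r)^{Q_\beta} = (r'/r)^{Q_\beta}$, as desired. One should also check the degenerate sub-case where $d_\rho(x) = 0$ (i.e.\ $x \in Z = \p X_\rho$): then every $r > 0$ falls in the $r \ge d_\rho(x)$ regime and the estimate is purely the boundary computation above.

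Throughout, all the implied multiplicative constants come only from Corollary \ref{interior ball measure} (hence $\alpha, \tau, C_\nu, \beta$), the doubling constant of $\mu_\beta$ from Proposition \ref{all beta doubling}, and the constant $C_{\mathrm{low}}$ from \eqref{lower volume}; collecting them gives the claimed constant $C_{\mathrm{low}}'$ depending only on $\alpha, \tau, \beta, Q$, and $C_{\mathrm{low}}$. I do not anticipate a serious obstacle here; the only mild subtlety is bookkeeping the regime boundaries (making sure, when I apply \eqref{small interior} or \eqref{big interior} at an auxiliary radius like $t/2$ or $3r$, that the hypothesis $r \lessgtr d_\rho(x)$ is genuinely met, which it is), and absorbing the constant-factor slack from \eqref{big interior} and \eqref{small interior} into a single application of doubling. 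This is the step that needs to be written carefully, but it is routine.
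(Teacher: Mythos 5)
Your proposal is correct and follows essentially the same route as the paper's proof: the same three-case split according to where $r'$ and $r$ sit relative to $d_\rho(x)$, the same appeal to Corollary \ref{interior ball measure} in each regime, and the same factorization at $t = d_\rho(x)$ in the mixed case. Two minor remarks: your uniformization of both factors to the single exponent $Q_\beta$ is a slight streamlining of the paper's sub-case split on whether $Q+\beta \geq 1$ or $\leq 1$, and the auxiliary radius $t/2$ plus doubling is unnecessary since \eqref{small interior} is stated to hold at $r = d_\rho(x)$ itself (the inequality in Corollary \ref{interior ball measure} is non-strict).
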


\begin{proof}
This is a direct consequence of the estimates of Corollary \ref{interior ball measure}. Throughout this proof implied constants are allowed to also depend on $Q$ and the constant $C_{\mathrm{low}}$ in \eqref{lower volume}. Let $x \in \bar{X}_{\rho}$ and $0 < r' \leq r$ be given. If $r \leq d_{\rho}(x)$ then $r' \leq d_{\rho}(x)$ as well and applying the estimate \eqref{small interior} for both of them gives
\[
\frac{\mu_{\beta}(B_{\rho}(x,r'))}{\mu_{\beta}(B_{\rho}(x,r))} \asymp \frac{r'}{r}.
\]
Thus in this case \eqref{lower volume} holds with exponent $1$.

Similarly if $r' \geq d_{\rho}(x)$ then $r \geq d_{\rho}(x)$ and we can apply the estimate \eqref{big interior} to both of them. This gives, for a nearest point $z \in Z$ to $x$ with respect to the metric $d_{\rho}$,
\[
\frac{\mu_{\beta}(B_{\rho}(x,r'))}{\mu_{\beta}(B_{\rho}(x,r))} \asymp \left(\frac{r'}{r}\right)^{\beta}\frac{\nu(B(z,r'))}{\nu(B(z,r))} \gs \left(\frac{r'}{r}\right)^{Q+\beta}.
\]
Thus \eqref{lower volume} holds with exponent $Q+\beta$. 

Finally we consider the case that $r' \leq d_{\rho}(x)$ and $r \geq d_{\rho}(x)$. We set $r'' = d_{\rho}(x)$ and write 
\[
\frac{\mu_{\beta}(B_{\rho}(x,r'))}{\mu_{\beta}(B_{\rho}(x,r))} = \frac{\mu_{\beta}(B_{\rho}(x,r''))}{\mu_{\beta}(B_{\rho}(x,r))}\frac{\mu_{\beta}(B_{\rho}(x,r))}{\mu_{\beta}(B_{\rho}(x,r''))}.
\]
The first case can be applied to the first ratio and the second case can be applied to the second ratio. This gives
\[
\frac{\mu_{\beta}(B_{\rho}(x,r'))}{\mu_{\beta}(B_{\rho}(x,r))} \gs \frac{r'}{r''}\left(\frac{r''}{r}\right)^{Q+\beta}.
\]
If $Q+\beta \geq 1$ then since $r' \leq r''$ this implies that 
\[
\frac{r'}{r''}\left(\frac{r''}{r}\right)^{Q+\beta} \geq \left(\frac{r'}{r}\right)^{Q+\beta}.
\]
Thus \eqref{lower volume} holds with exponent $Q+\beta$ in this subcase. If $Q+\beta\leq 1$ then we instead use the fact that $r'' \leq r$ to obtain
\[
\frac{r'}{r''}\left(\frac{r''}{r}\right)^{Q+\beta} \geq \frac{r'}{r}.
\]
Thus \eqref{lower volume} holds with exponent $1$ in this subcase. 
\end{proof}

We conclude this section with two interesting propositions on the properties of $\mu_{\beta}$ and $\mu$. The first proposition shows that we always have $\mu_{\beta}(X_{\rho}) = \infty$, even when $\nu(Z) < \infty$. For each $n \in \Z$ we let $E_{n}$ denote the set of all edges in $V$ which have at least one vertex in $V_{n}$. We note that our definition of a metric measure space forces $0 < \nu(Z) \leq \infty$, since any ball $B \subset Z$ must satisfy $0 < \nu(B) < \infty$.

\begin{prop}\label{infinite measure}
We have $\mu_{\beta}(E_{n}) \asymp \alpha^{-\beta n}\nu(Z)$ for each $n \in \Z$. Consequently $\mu_{\beta}(X_{\rho}) = \infty$. 
\end{prop}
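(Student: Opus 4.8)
The plan is to compute $\mu_{\beta}(E_{n})$ directly from the definition \eqref{lift definition} of $\mu$ and the reweighting \eqref{beta def}. First I would observe that every edge $e$ in $E_{n}$ either joins two vertices of height $n$, or joins a vertex of height $n$ to one of height $n-1$ or $n+1$; in all cases the height function $h$ ranges over an interval contained in $[n-1,n+1]$ on $e$, so the weight $\alpha^{-\beta h}$ is comparable (with constant $\alpha^{\beta}$) to $\alpha^{-\beta n}$ on all of $e$. Combining this with the comparison \eqref{transformed} (which gives $\mu_{\beta}(e) \asymp \alpha^{-\beta h(v)}\nu(B(v)) \asymp \alpha^{-\beta n}\nu(B(v))$ for a vertex $v \in e \cap V_{n}$, using also that adjacent vertices have comparable heights and, via Lemma \ref{doubling vertex comparison}, comparable $\hat\mu$-measure), summing over all edges in $E_{n}$ reduces the problem to showing
\[
\sum_{v \in V_{n}} \nu(B(v)) \asymp \nu(Z),
\]
up to a multiplicative constant absorbing the bounded vertex degree $N = N(\alpha,\tau,C_{\nu})$.

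The key estimate $\sum_{v \in V_{n}} \nu(B(v)) \asymp \nu(Z)$ follows from two facts already available in the excerpt. For the lower bound, the balls $B_{Z}(\pi(v),\alpha^{-n})$ for $v \in V_{n}$ cover $Z$ (by maximality of $S_{n}$), and since $\tau > 1$ we have $B_{Z}(\pi(v),\alpha^{-n}) \subset B(v)$, so $\bigcup_{v \in V_{n}} B(v) = Z$ and hence $\sum_{v \in V_{n}} \nu(B(v)) \geq \nu(Z)$. For the upper bound, the centers $\{\pi(v) : v \in V_{n}\}$ form an $\alpha^{-n}$-separated subset of $Z$, so by Lemma \ref{doubling overlap} (applied with $r = \alpha^{-n}$ and separation parameter $\tau$) each point of $Z$ lies in at most $M = M(\tau, C_{\nu})$ of the balls $B(v)$, $v \in V_{n}$; integrating the pointwise bound $\sum_{v \in V_{n}} \chi_{B(v)} \leq M\,\chi_{Z}$ against $\nu$ gives $\sum_{v \in V_{n}} \nu(B(v)) \leq M\,\nu(Z)$. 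This yields $\mu_{\beta}(E_{n}) \asymp \alpha^{-\beta n}\nu(Z)$ with all implied constants depending only on $\alpha$, $\tau$, $C_{\nu}$, and $\beta$, as claimed.

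For the final consequence, since $\nu(Z) > 0$ (any ball has positive finite measure, and $Z$ contains a ball), we have $\mu_{\beta}(E_{n}) \gtrsim \alpha^{-\beta n}\nu(Z)$ for every $n \in \Z$; letting $n \to -\infty$ and noting that the sets $E_{n}$ are pairwise disjoint (edges in $E_{n}$ and $E_{m}$ share at most endpoints, which have $\mathcal{L}$-measure zero, for $|n-m| \geq 2$, and one can restrict attention to such $n$ along a subsequence, say even or odd heights) forces $\mu_{\beta}(X_{\rho}) = \sum_{n} \mu_{\beta}(E_{n} \setminus \bigcup_{m<n} E_{m}) = \infty$. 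Alternatively, and more simply, since $X_{\rho} \supset E_{n}$ for each $n$, we have $\mu_{\beta}(X_{\rho}) \geq \mu_{\beta}(E_{n}) \gtrsim \alpha^{-\beta n}\nu(Z) \to \infty$ as $n \to -\infty$, which gives $\mu_{\beta}(X_{\rho}) = \infty$ immediately. I expect the main (minor) obstacle to be bookkeeping the overlaps: the edge sets $E_{n}$ for consecutive $n$ share the horizontal-to-vertical edges at level $n$, so one must be slightly careful that the comparison $\mu_{\beta}(E_{n}) \asymp \alpha^{-\beta n}\nu(Z)$ is not spoiled — but since each edge lies in at most two of the $E_{n}$, this only affects constants, and the second (monotonicity) argument for $\mu_{\beta}(X_{\rho}) = \infty$ sidesteps the issue entirely.
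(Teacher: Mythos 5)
Your proposal is correct and follows essentially the same route as the paper: estimate $\mu_{\beta}(e)$ via \eqref{transformed}, reduce to $\sum_{v\in V_n}\nu(B(v))\asymp\nu(Z)$ using the covering property and the bounded overlap from Lemma \ref{doubling overlap}, and then let $n\to-\infty$. The only point you glide past is that the lower bound on $\sum_{e\in E_n}\mu_{\beta}(e)$ also needs each $v\in V_n$ to lie on at least one edge (which the paper notes explicitly, and which holds by Lemma \ref{second height connection}); your more elaborate disjointness argument for the final claim is unnecessary, as you yourself observe.
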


\begin{proof}
By the estimate \eqref{transformed} we have for any edge $e \in E_{n}$ that $\mu_{\beta}(e) \asymp \alpha^{-\beta n}\nu(B(v))$ for a vertex $v \in e \cap V_{n}$. Since $X$ has vertex degree bounded by $N = N(\alpha,\tau,C_{\nu})$, since each vertex $v \in V_{n}$ is attached to at least one edge, and since the balls $B(v)$ for $v \in V_{n}$ cover $Z$ and have bounded overlap by Lemma \ref{doubling overlap}, we conclude that
\[
\sum_{e \in E_{n}}\mu_{\beta}(e) \asymp \sum_{v \in V_{n}}\alpha^{-\beta n}\nu(B(v)) \asymp \alpha^{-\beta n}\nu(Z). 
\]
This proves the main estimate. The conclusion $\mu_{\beta}(X_{\rho}) = \infty$ follows by letting $n \rightarrow -\infty$. 
\end{proof}

The second proposition shows that the conclusions of Proposition \ref{all beta doubling} do not extend to the case $\beta = 0$. In fact every ball in $\bar{X}_{\rho}$ centered at a point of $\p X_{\rho}$ has infinite measure with respect to $\mu$. We recall that for a ball $B = B_{Z}(z,r) \subset Z$ we write $\hat{B} = B_{\rho}(z,r) \subset \bar{X}_{\rho}$ for the corresponding ball with the same center and radius in $\bar{X}_{\rho}$. 

\begin{prop}\label{ball infinite measure}
For each ball $B \subset Z$ we have $\mu(H^{B}) = \infty$. Consequently for each ball $B \subset Z$ we have $\mu(\hat{B}) = \infty$. 
\end{prop}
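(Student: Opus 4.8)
The plan is to rerun the lower-bound half of the computation in the proof of Lemma \ref{hull measure}, but with $\beta = 0$: since $\mu = \mu_{0}$, the geometric weight $\alpha^{-\beta h(x)}$ is now identically $1$, so the convergent geometric series appearing there becomes a divergent sum of copies of $\nu(B)$.

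First I would fix a ball $B = B_{Z}(z,r)$ and let $m$ be the least integer with $\alpha^{-m} \leq r$. Recall that every point of $X$ lies in $\bar{B}_{X}(v,\tfrac{1}{2})$ for at most two vertices $v \in V$; since $H^{B}$ is the union of the balls $\bar{B}_{X}(v,\tfrac{1}{2})$ over $v \in H^{B}$, this bounded overlap together with subadditivity of $\mu$ gives
\[
\mu(H^{B}) \;\geq\; \frac{1}{2}\sum_{v \in H^{B}} \mu\!\left(\bar{B}_{X}\!\left(v,\tfrac{1}{2}\right)\right).
\]
By the comparison \eqref{measure to edge} and the vertex-degree bound $N = N(\alpha,\tau,C_{\nu})$ --- equivalently, by \eqref{vertex transformed} with $\beta = 0$ --- we have $\mu(\bar{B}_{X}(v,\tfrac{1}{2})) \asymp \nu(B(v))$ with implied constants depending only on $\alpha$, $\tau$, $C_{\nu}$. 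Grouping the vertices of $H^{B}$ by their height and using that, for each $n \geq m$, the balls $B(v)$ with $v \in H_{n}^{B}$ cover $B$ (so that $\sum_{v \in H_{n}^{B}} \nu(B(v)) \geq \nu(B)$ by subadditivity of $\nu$), I get
\[
\mu(H^{B}) \;\gs\; \sum_{v \in H^{B}} \nu(B(v)) \;=\; \sum_{n=m}^{\infty} \sum_{v \in H_{n}^{B}} \nu(B(v)) \;\geq\; \sum_{n=m}^{\infty} \nu(B) \;=\; \infty,
\]
since $\nu(B) > 0$ by the definition of a metric measure space. This proves $\mu(H^{B}) = \infty$.

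For the second assertion, Lemma \ref{hull approximation} gives a constant $C = C(\alpha,\tau)$ with $\overline{H^{C^{-1}B}} \subset \hat{B}$, so $H^{C^{-1}B} \subset \hat{B} \cap X$ and hence $\mu(\hat{B}) \geq \mu(\hat{B} \cap X) \geq \mu(H^{C^{-1}B}) = \infty$ by the first part applied to the ball $C^{-1}B$. I do not expect any genuine obstacle here: the argument is a short variant of the $\beta > 0$ estimate already established in Lemma \ref{hull measure}, the essential difference being the absence of the decaying factor $\alpha^{-\beta n}$. The only points needing minor care are the bounded-overlap bookkeeping for the family $\{\bar{B}_{X}(v,\tfrac{1}{2})\}_{v \in H^{B}}$ and the fact that $\mu$ is a priori defined only on $X$, which is why the final assertion is phrased through $\hat{B} \cap X$.
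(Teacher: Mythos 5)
Your proof is correct and follows essentially the same route as the paper's: both lower-bound $\mu(H^{B})$ by summing $\mu(\bar{B}_X(v,\tfrac12)) \asymp \nu(B(v))$ over the vertices of $H^B$, use the fact that at each height $n$ the balls $B(v)$ cover $B$ to extract a copy of $\nu(B)$, and then observe the resulting series diverges because the $\alpha^{-\beta n}$ weight is absent; the second assertion is handled identically via Lemma~\ref{hull approximation}. Your bookkeeping is in fact slightly cleaner than the paper's: you invoke only subadditivity of $\nu$ (not bounded overlap) to get $\sum_{v \in H_n^B} \nu(B(v)) \geq \nu(B)$, and you handle the bounded overlap of the balls $\bar{B}_X(v,\tfrac12)$ uniformly over all heights at once rather than level by level, and you explicitly note that $\mu(\hat{B})$ should be read as $\mu(\hat{B} \cap X)$ since $\mu$ lives on $X$.
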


\begin{proof}
Let $B \subset Z$ be a given ball of radius $r > 0$. The estimate \eqref{vertex transformed} also holds for $\beta = 0$ with $\mu$ in place of $\mu_{\beta}$ by Proposition \ref{doubling vertex comparison} and the definition of $\mu$, with implied constant depending only on $\alpha$, $\tau$, and $C_{\nu}$. Thus for each $n \in \Z$ with $\alpha^{-n} \leq r$ we have
\[
\mu\left(\bigcup_{v \in H_{n}^{B}} \bar{B}_{X}\left(v,\frac{1}{2}\right)\right) = \sum_{v \in H_{n}^{B}}\mu\left(\bar{B}_{X}\left(v,\frac{1}{2}\right)\right) \gs \sum_{v \in H_{n}^{B}} \nu(B(v)) \gs \nu(B),
\] 
since the balls $B(v)$ for $v \in H_{n}^{B}$ cover $B$ with bounded overlap. Summing this estimate over the infinitely many $n$ such that $\alpha^{-n} \leq r$ gives $\mu(H^{B}) = \infty$ since $\nu(B) > 0$. By Lemma \ref{ball to hull} we can find a constant $C \geq 1$ such that $H^{C^{-1}B} \subset \hat{B}$. Since $\mu(H^{C^{-1}B}) = \infty$ by the above calculation, we conclude that $\mu(\hat{B}) = \infty$ as well. 
\end{proof}

\section{Trace theorems}\label{sec:trace}

In this section we carry over the concepts and notation from Section \ref{sec:lift}; we refer the reader back to the start of Section \ref{sec:lift} for an overview of the setting and notation. Since we showed in Proposition \ref{all beta doubling} that the geodesic doubling metric measure spaces $(X_{\rho},d_{\rho},\mu_{\beta})$ and $(\bar{X}_{\rho},d_{\rho},\mu_{\beta})$ each support a $1$-Poincar\'e inequality, they also support a $p$-Poincar\'e inequality for each $p \geq 1$. Hence the results of Section \ref{sec:capacities} apply to the metric measure spaces $(X_{\rho},d_{\rho},\mu_{\beta})$ and $(\bar{X}_{\rho},d_{\rho},\mu_{\beta})$ for each fixed $p \geq 1$. 

We will generalize the trace results of \cite[Section 11]{BBS21} to the case of a potentially unbounded complete doubling metric space $Z$. Our treatment of the trace results in this context will be slightly different than that of \cite{BBS21}, falling closer in spirit to the work of Bonk-Saksman-Soto \cite{BSS18} in that we will express the trace as a  $\nu$-a.e.~ limit of Lipschitz functions on $Z$. Throughout this section our parameter $\beta$ corresponds to the ratio $\beta/\e$ in \cite[Section 11]{BBS21} with $\e = \log \alpha$. We will always assume that $p > \beta$ wherever $p$ and $\beta$ appear together.

\begin{rem}\label{graph capacity}
The statements from \cite[Remark 9.5]{BBS21} carry through without modification to our setting, as they only rely on the fact that the metric measure space $(X_{\rho},d_{\rho},\mu_{\beta})$ is a metric graph with $\mu_{\beta}$ being comparable to a multiple of Lebesgue measure on each edge. Given $p \geq 1$, the only family of nonconstant compact rectifiable curves in $X_{\rho}$ with zero $p$-modulus (with respect to $\mu_{\beta}$) is the empty family. Thus any $p$-weak upper gradient for $u$ on $X_{\rho}$ is an upper gradient for $u$.   Since functions in $N^{1,p}_{\loc}(X_{\rho})$ are absolutely continuous along $p$-a.e.~ curve \cite[Proposition 6.3.2]{HKST} this also implies that any function $u \in \t{N}^{1,p}_{\loc}(X_{\rho})$ is continuous on $X_{\rho}$ and absolutely continuous along each edge of $X_{\rho}$. Restricted to each edge the minimal upper gradient $g_{u}$ of $u$ is given by $g_{u} = \left|\frac{du}{ds_{\rho}}\right|$, with $ds_{\rho}$ denoting arclength with respect to the distance $d_{\rho}$ and $\frac{du}{ds_{\rho}}$ denoting the \emph{metric differential} of $u$ on this edge given by the absolute continuity of $u$ on this edge (see \cite[Theorem 4.4.8]{HKST}). All points in $X_{\rho}$ have positive $p$-capacity, and each equivalence class in $\t{N}^{1,p}(X_{\rho}) = N^{1,p}(X_{\rho})$ consists of a single function that is continuous. Lastly all of these statements remain true of any metric subgraph of $X_{\rho}$, equipped with the restriction of the measure $\mu_{\beta}$ to this subgraph. 
\end{rem}

The following invaluable inequality is an immediate consequence of the convexity of the function $t \rightarrow t^{p}$ for $p \geq 1$ on $[0,\infty)$. We will use it throughout this section largely without comment. 

\begin{lem}\label{convexity lemma}
Let $\{x_{i}\}_{i=1}^{k}$ be nonnegative real numbers. Then for any $p \geq 1$ we have
\[
\left(\sum_{i=1}^{k}x_{i}\right)^{p} \leq k^{p-1}\sum_{i=1}^{k}x_{i}^{p}. 
\]
\end{lem}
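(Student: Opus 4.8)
The plan is to derive this directly from the convexity of $\phi(t) = t^{p}$ on $[0,\infty)$ via Jensen's inequality. First I would note that for $p \geq 1$ the function $\phi(t) = t^{p}$ is convex on $[0,\infty)$: for $p > 1$ this follows from $\phi''(t) = p(p-1)t^{p-2} \geq 0$ on $(0,\infty)$, and the case $p = 1$ is trivially affine. Applying Jensen's inequality with the uniform probability measure assigning mass $1/k$ to each of the points $x_{1},\dots,x_{k}$ then gives
\[
\left(\frac{1}{k}\sum_{i=1}^{k}x_{i}\right)^{p} \leq \frac{1}{k}\sum_{i=1}^{k}x_{i}^{p}.
\]
Multiplying both sides by $k^{p}$ yields the claimed inequality, since $k^{p}/k = k^{p-1}$.

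An alternative route, which avoids invoking Jensen explicitly, is to apply H\"older's inequality to the product $\sum_{i=1}^{k}x_{i}\cdot 1$: taking the conjugate exponent $p' = p/(p-1)$ one gets $\sum_{i=1}^{k}x_{i} \leq \left(\sum_{i=1}^{k}x_{i}^{p}\right)^{1/p} k^{1/p'}$, and raising to the $p$-th power gives the result because $p/p' = p-1$. Either argument is essentially one line.

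I do not anticipate any real obstacle here; the statement is a standard convexity estimate recorded for repeated later use. The only points requiring (minimal) care are the degenerate case $p = 1$, where the inequality holds with equality, and checking that the argument is valid when some or all of the $x_{i}$ vanish, which it is since $k \geq 1$ and $\phi$ is defined and convex at $0$.
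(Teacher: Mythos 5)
Your proof is correct, and it coincides with the paper's (unstated, one-line) justification: the paper simply remarks that the inequality is an immediate consequence of the convexity of $t \mapsto t^{p}$ on $[0,\infty)$, which is precisely the Jensen argument you spell out. The H\"older variant you mention is a legitimate alternative but adds nothing new here.
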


In the construction of our trace operator we will need to make use of the following proposition. 

\begin{prop}\label{partition of unity}\cite[Lemma B.7.4]{S99}
Let $(Z,d)$ be a doubling metric space with doubling constant $D$. Let $r > 0$ be given. Let $\{z_{n}\}_{n \in J}$ be a maximal $r$-separated subset of $Z$ indexed by $J \subset \N$. Then there is a corresponding collection $\{\psi_{n}\}_{n \in J}$ of functions $\psi_{n}: Z \rightarrow [0,1]$ such that for each $n \in J$ we have
\begin{equation}\label{partition support}
\mathrm{supp}(\psi_{n}) \subset B(z_{n},2r),
\end{equation}
and for each $z \in Z$, 
\begin{equation}\label{partition sum}
\sum_{n \in J} \psi_{n}(z) = 1,
\end{equation}
and $\psi_{n}$ is $C r^{-1}$-Lipschitz for each $n \in S$ with $C = C(D)$ depending only on the doubling constant $D$. 
\end{prop}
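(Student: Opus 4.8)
The plan is the standard construction of a Lipschitz partition of unity on a doubling space: build bump functions from truncated distance functions and renormalize. The only input from the doubling hypothesis is its elementary consequence (recorded at the start of the proof of Proposition~\ref{doubling degree}) that a ball of radius comparable to $r$ carries at most boundedly many $r$-separated points, with the bound quantitative in $D$.

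First I would fix, for each $n \in J$, the cutoff
$$\varphi_{n}(z) = \min\{1,\ \max\{0,\ 3 - 2r^{-1}d(z,z_{n})\}\},$$
which is $2r^{-1}$-Lipschitz (being the composition of the $1$-Lipschitz map $z \mapsto d(z,z_{n})$ with a piecewise-linear function of slope $-2/r$), satisfies $\varphi_{n} \equiv 1$ on $\bar{B}_{Z}(z_{n},r)$, and has $\{\varphi_{n} \neq 0\} = B_{Z}(z_{n},\tfrac{3}{2}r)$, so that $\supp(\varphi_{n}) \subset \bar{B}_{Z}(z_{n},\tfrac{3}{2}r) \subset B_{Z}(z_{n},2r)$. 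Setting $\Phi = \sum_{n\in J}\varphi_{n}$ and $\psi_{n} = \varphi_{n}/\Phi$, property \eqref{partition support} is then automatic, and \eqref{partition sum} holds as soon as $0 < \Phi < \infty$ pointwise.

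The heart of the argument is the two-sided bound $1 \le \Phi \le D$ together with a Lipschitz estimate $|\Phi(z) - \Phi(z')| \ls r^{-1}d(z,z')$ with implied constant depending only on $D$. The lower bound is precisely the maximality of $\{z_{n}\}$: every $z$ lies in some $B_{Z}(z_{n},r)$, where $\varphi_{n} = 1$. The upper bound and the Lipschitz estimate both come from the fact that only the (boundedly many, by doubling) indices $n$ with $z_{n}$ within a fixed multiple of $r$ of $z$ (resp. of $z$ or $z'$) contribute to $\Phi(z)$ (resp. to $\Phi(z)-\Phi(z')$); for the Lipschitz estimate one should additionally dispose of the case $d(z,z') > r$ separately, using $|\Phi(z)-\Phi(z')| \le \Phi(z)+\Phi(z') \le 2D$, so that the bound is valid at all scales. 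Granting these, $\psi_{n} = \varphi_{n}/\Phi$ maps into $[0,1]$, has $\supp(\psi_{n}) = \supp(\varphi_{n}) \subset B_{Z}(z_{n},2r)$, sums to $1$, and the quotient-rule estimate
$$|\psi_{n}(z) - \psi_{n}(z')| \le \frac{|\varphi_{n}(z) - \varphi_{n}(z')|}{\Phi(z)} + \varphi_{n}(z')\,\frac{|\Phi(z)-\Phi(z')|}{\Phi(z)\Phi(z')} \le |\varphi_{n}(z)-\varphi_{n}(z')| + |\Phi(z)-\Phi(z')|$$
(valid since $\Phi \ge 1$ and $0 \le \varphi_{n} \le 1$) shows $\psi_{n}$ is $Cr^{-1}$-Lipschitz with $C = C(D)$.

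I do not expect a serious obstacle: everything reduces to iterating the doubling condition a bounded number of times. The two places that call for mild care are choosing the cutoff radius strictly below $2r$, so that the supports $\supp(\psi_{n})$ sit inside the \emph{open} balls $B_{Z}(z_{n},2r)$ exactly as \eqref{partition support} demands, and establishing the Lipschitz bound for $\Phi$ at every pair of points rather than only nearby ones (handled by the separate large-distance case above).
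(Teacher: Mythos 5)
Your construction is correct and is the standard Semmes-type argument; the paper does not reproduce a proof, instead citing \cite[Lemma B.7.4]{S99} together with a remark (drawn from \cite{BS18}) that the support condition \eqref{partition support} requires a slight modification of the reference's construction, which your choice of cutoff radius $\tfrac{3}{2}r$ implements exactly. All the pieces check out: maximality of the $r$-separated set gives $\Phi \geq 1$, the doubling bound on $r$-separated points in $B_Z(z,2r)$ gives $\Phi \leq D$, bounded overlap of the $\supp(\varphi_n)$ gives the short-range Lipschitz estimate for $\Phi$, the trivial bound $|\Phi(z)-\Phi(z')| \leq 2D$ handles long range, and the quotient-rule estimate with $\Phi \geq 1$, $0 \leq \varphi_n \leq 1$ closes the argument.
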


The collection of functions  $\{\psi_{n}\}_{n \in J}$ will be referred to as a \emph{Lipschitz partition of unity}. As noted after \cite[(10)]{BS18} the condition on $\mathrm{supp}(\psi_{n})$ can be obtained by a slight modification of the proof in the reference. We remark that by Lemma \ref{doubling overlap} the sum \eqref{partition sum} always has only finitely many nonzero terms for each fixed choice of $z \in Z$. 


Using Proposition \ref{partition of unity} we fix, for each $n \in \Z$, a Lipschitz partition of unity $\{\psi_{v}\}_{v \in V_{n}}$ associated to the $\alpha^{-n}$-separated subset $V_{n}$ of $Z$, considering these vertices of $X$ as points of $Z$.  Since we require $\tau > 3$, the condition \eqref{partition support} implies that $\supp(\psi_{v}) \subset B(v)$ for all $v \in V$. Since $0 \leq \psi_{v} \leq 1$ we have the bound 
\begin{equation}\label{partition bound}
\|\psi_{v}\|_{L^{1}(Z)} \leq \nu(B(v)).
\end{equation}

We make some additional definitions here for future reference. For $n \in \Z$ we write $X_{\geq n} = X \cap h^{-1}([n,\infty))$ for the set of all points in $X$ of height at least $n$ and write $X_{\leq n} = X \cap h^{-1}((-\infty,n])$ for the set of all points in $X$ of height at most $n$. We consider each of these subsets as being equipped with the metric $d_{\rho}$. For a ball $B \subset Z$, its hull $H^{B} \subset X_{\rho}$, and any integer $n \in \Z$ we then set $H^{B}_{\geq n} = H^{B} \cap X_{\geq n}$ and $H^{B}_{\leq n} = H^{B} \cap X_{\leq n}$. 

All implied constants throughout this section will depend only on $\alpha$, $\tau$, $C_{\nu}$, $\beta$, and the exponent $p > \beta$. By Remark \ref{graph capacity} we have $\t{N}^{1,p}_{\loc}(X_{\rho}) = N^{1,p}_{\loc}(X_{\rho})$ and consequently $\t{N}^{1,p}(X_{\rho}) = N^{1,p}(X_{\rho})$. 

Our first proposition constructs the trace of a Newtonian function defined on the hull $H^{B} \subset X_{\rho}$ of a ball $B \subset Z$. We recall from Lemma \ref{hull closure} that for a ball $B \subset Z$ we have $\p H^{B} = \bar{B}$. We will need the following lemma. 

\begin{lem}\label{arc length lemma}
Let $e \in E$ be any edge of $X$ and let $v \in e$ denote either vertex on $e$. Then restricted to $e$ we have the comparison
\begin{equation}\label{arc length measure}
ds_{\rho}|_{e} \asymp \frac{\alpha^{(\beta-1)h(v)}d\mu_{\beta}|_{e}}{\nu(B(v))},
\end{equation}
where $ds_{\rho}$ denotes arc length with respect to the metric $d_{\rho}$. 
\end{lem}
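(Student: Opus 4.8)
The plan is to compute arc length with respect to $d_\rho$ directly on a single edge $e$ and compare it to the restriction of $\mu_\beta$ to $e$. First I would recall the two relevant facts about the metric $d_\rho$ and the measure $\mu_\beta$ on an edge. On the one hand, $d_\rho$ is obtained by integrating the conformal density $\rho(x) = \alpha^{-h(x)}$ against the unit-length (Lebesgue) arc length on each edge, so along $e$ we have $ds_\rho|_e = \alpha^{-h(x)}\, d\mathcal{L}|_e$, where $\mathcal{L}$ is the Lebesgue measure on $e$ (the one-dimensional Hausdorff measure). On the other hand, from the definition \eqref{beta def} we have $d\mu_\beta(x) = \alpha^{-\beta h(x)}\, d\mu(x)$, and from the definition \eqref{lift definition} of $\mu$ on the edge $e = vw$ we have $d\mu|_e = (\hat\mu(v) + \hat\mu(w))\, d\mathcal{L}|_e$. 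Therefore $d\mu_\beta|_e = \alpha^{-\beta h(x)}(\hat\mu(v) + \hat\mu(w))\, d\mathcal{L}|_e$.

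Next I would eliminate the dependence on $x$ in these densities. Since the height function $h$ is $1$-Lipschitz and restricted to the edge $e$ of length $1$ it ranges over an interval of length at most $1$, for any $x \in e$ we have $|h(x) - h(v)| \leq 1$, hence $\alpha^{-h(x)} \asymp \alpha^{-h(v)}$ and $\alpha^{-\beta h(x)} \asymp \alpha^{-\beta h(v)}$ with comparison constants depending only on $\alpha$ and $\beta$. Moreover, since $v \sim w$, Lemma \ref{doubling vertex comparison} gives $\hat\mu(w) \asymp \hat\mu(v) = \nu(B(v))$, so $\hat\mu(v) + \hat\mu(w) \asymp \nu(B(v))$. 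Combining these, $d\mu_\beta|_e \asymp \alpha^{-\beta h(v)}\nu(B(v))\, d\mathcal{L}|_e$ and $ds_\rho|_e \asymp \alpha^{-h(v)}\, d\mathcal{L}|_e$, so
\[
ds_\rho|_e \asymp \frac{\alpha^{-h(v)}}{\alpha^{-\beta h(v)}\nu(B(v))}\, d\mu_\beta|_e = \frac{\alpha^{(\beta-1)h(v)}}{\nu(B(v))}\, d\mu_\beta|_e,
\]
which is exactly \eqref{arc length measure}.

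There is no substantial obstacle here; the statement is essentially a bookkeeping consolidation of the definitions of $d_\rho$, $\mu$, and $\mu_\beta$ together with the already-established comparisons $\alpha^{-h(x)} \asymp \alpha^{-h(v)}$ on an edge (from $1$-Lipschitzness of $h$) and $\hat\mu(w) \asymp \hat\mu(v)$ for adjacent vertices (Lemma \ref{doubling vertex comparison}). The only point requiring the slightest care is confirming that the comparison constants depend only on the allowed parameters $\alpha$, $\tau$, $C_\nu$, $\beta$ (and not on the particular edge $e$ or vertex $v$), which is immediate since the Lipschitz bound on $h$ is absolute and the bounded-degree constant $N = N(\alpha,\tau,C_\nu)$ controls the number of edges at any vertex; one also notes that \eqref{measure to edge} already records the comparison $\hat\mu(v) \asymp \mu(vw)$ used implicitly above. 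I would also remark that the comparison is stated for \emph{either} choice of vertex $v$ on $e$, which is consistent because the two choices differ only by the already-noted factor $\asymp 1$ in both $\alpha^{(\beta-1)h(v)}$ and $\nu(B(v))$.
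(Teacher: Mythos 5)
Your proof is correct and follows essentially the same route as the paper: express both $ds_\rho$ and $d\mu_\beta$ in terms of the common reference measure $d\mathcal{L}$ on the edge, replace $\alpha^{-h(x)}$ and $\alpha^{-\beta h(x)}$ by their values at the vertex $v$ using the $1$-Lipschitzness of $h$, and invoke Lemma \ref{doubling vertex comparison} to replace $\hat\mu(v)+\hat\mu(w)$ by $\nu(B(v))$. (One tiny bookkeeping point: the double sum in the definition \eqref{lift definition} counts each edge twice, so the equality $d\mu|_e = (\hat\mu(v)+\hat\mu(w))\,d\mathcal{L}|_e$ should be a comparison $\asymp$, or have an extra factor of $2$ --- harmless for the conclusion, which is only a two-sided comparison.)
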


\begin{proof}
Let $e \in E$ be a given edge and let $v \in e$ denote a vertex on $e$. By the definition of $X_{\rho}$ we have on $e$, 
\begin{equation}\label{1 arc length}
ds_{\rho}|_{e} \asymp \alpha^{-h(v)} d\mathcal{L}|_{e},
\end{equation}
where we recall that $\mathcal{L}$ is the measure on $X$ given by Lebesgue measure on each unit length edge of $X$. On $e$ we also have by the definition \eqref{lift definition} of the measure $\mu$ and by Lemma \ref{doubling vertex comparison},
\begin{equation}\label{2 arc length}
d\mathcal{L}|_{e} \asymp \frac{d\mu|_{e}}{\hat{\mu}(v)} = \frac{d\mu|_{e}}{\nu(B(v))} \asymp \frac{d\mu_{\beta}|_{e}}{\alpha^{-\beta h(v)}\nu(B(v))}.
\end{equation}
The comparison \eqref{arc length measure} follows by combining the comparisons \eqref{1 arc length} and \eqref{2 arc length}. 
\end{proof}

For defining $T_{n}u$ in \eqref{trace formula} below we recall that functions in Newtonian spaces are required to be defined pointwise everywhere; in this particular case the functions are actually continuous by Remark \ref{graph capacity}. We consider $H^{B}$ as being equipped with the restriction $\mu_{\beta}|_{H^{B}}$ of the measure $\mu_{\beta}$ to this subset. 

\begin{prop}\label{Lp ball trace}
Let $B \subset Z$ be any ball in $Z$ of radius $r > 0$ and let $u \in N^{1,p}(H^{B})$ be given. Then $u$ has a trace $Tu \in L^{p}(B)$ given as follows: for each $n \in \Z$ such that $\alpha^{-n} \leq r$  and each $z \in B$ we set 
\begin{equation}\label{trace formula}
T_{n}u(z) = \sum_{v \in V_{n}}u(v)\psi_{v}(z) = \sum_{v \in H_{n}^{B}}u(v)\psi_{v}(z),
\end{equation}
then we have $T_{n}u \rightarrow Tu$ in $L^{p}(B)$. Furthermore, letting $k$ be the minimal integer such that $\alpha^{-k} \leq r$, we have the following estimate for any $p$-integrable upper gradient $g$ of $u$ on $H^{B}$ and any integer $n \geq k$, 
\begin{equation}\label{ball p trace}
\|Tu-T_{n}u\|_{L^{p}(B)} \ls \alpha^{(\beta/p-1) n}\|g\|_{L^{p}\left(H_{\geq n}^{B}\right)}.
\end{equation}
\end{prop}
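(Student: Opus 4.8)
The plan is to show that the functions $T_n u$ defined by \eqref{trace formula} form a Cauchy sequence in $L^p(B)$ with geometrically decaying increments, to define $Tu$ as the limit, and then to read off \eqref{ball p trace} from the telescoping bound. First I would check that for $n\geq k$ the sum in \eqref{trace formula} is genuinely finite: the vertices of $H_n^B$ project to an $\alpha^{-n}$-separated subset of a ball of radius $\asymp r$ in $Z$, hence are finite in number by the doubling property, so $T_n u$ is a bounded function supported on $B$ and in particular $T_n u\in L^p(B)$. Here I also use Remark \ref{graph capacity}, which guarantees that $u\in N^{1,p}(H^B)=\t N^{1,p}(H^B)$ is finite-valued and continuous, so each $u(v)$ makes sense.

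The heart of the argument is estimating $\|T_{n+1}u-T_n u\|_{L^p(B)}$ for $n\geq k$. Since $\sum_{v\in V_{n+1}}\psi_v\equiv 1$ and $\sum_{w\in V_n}\psi_w\equiv 1$ on $Z$ by \eqref{partition sum}, for $z\in B$ I would write
\[
T_{n+1}u(z)-T_n u(z)=\sum_{v\in V_{n+1}}\sum_{w\in V_n}\bigl(u(v)-u(w)\bigr)\psi_v(z)\psi_w(z),
\]
and note that a pair $(v,w)$ contributes only when $z\in\supp\psi_v\cap\supp\psi_w\subset B(v)\cap B(w)$, which forces $v\in H_{n+1}^B$, $w\in H_n^B$, and, since $|h(v)-h(w)|=1$, $v\sim w$ in $X$. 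For such a pair the edge $vw$ is a compact geodesic segment lying in $H^B_{\geq n}$, so the upper gradient inequality gives $|u(v)-u(w)|\leq\int_{vw}g\,ds_\rho$; moreover by Lemma \ref{doubling overlap} only boundedly many pairs contribute at any fixed $z$. Raising the resulting pointwise bound to the $p$-th power via Lemma \ref{convexity lemma}, integrating over $B$ against $\nu$, and using $\nu(B(v)\cap B(w))\leq\nu(B(v))\asymp\nu(B(w))$ (doubling, $v\sim w$), I obtain
\[
\|T_{n+1}u-T_n u\|_{L^p(B)}^p\ls\sum_{\substack{v\in H_{n+1}^B,\ w\in H_n^B\\ v\sim w}}\nu(B(v))\Bigl(\int_{vw}g\,ds_\rho\Bigr)^p.
\]

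Next I would convert each edge integral. H\"older's inequality on the unit-length edge $vw$, whose $d_\rho$-length is $\asymp\alpha^{-n}$, gives $\bigl(\int_{vw}g\,ds_\rho\bigr)^p\ls\alpha^{-(p-1)n}\int_{vw}g^p\,ds_\rho$, and the arc-length comparison \eqref{arc length measure} rewrites $\int_{vw}g^p\,ds_\rho\asymp\alpha^{(\beta-1)n}\nu(B(v))^{-1}\int_{vw}g^p\,d\mu_\beta$; combining these, $\nu(B(v))\bigl(\int_{vw}g\,ds_\rho\bigr)^p\ls\alpha^{(\beta-p)n}\int_{vw}g^p\,d\mu_\beta$. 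Since distinct pairs $(v,w)$ give distinct edges, all contained in $H^B_{\geq n}$ and meeting only in vertices, summing over pairs yields $\|T_{n+1}u-T_n u\|_{L^p(B)}^p\ls\alpha^{(\beta-p)n}\|g\|_{L^p(H^B_{\geq n})}^p$, that is,
\[
\|T_{n+1}u-T_n u\|_{L^p(B)}\ls\alpha^{(\beta/p-1)n}\,\|g\|_{L^p(H^B_{\geq n})}.
\]
Because $p>\beta$ we have $\alpha^{\beta/p-1}<1$, so $\sum_{m\geq k}\alpha^{(\beta/p-1)m}<\infty$; since $\|g\|_{L^p(H^B_{\geq m})}\leq\|g\|_{L^p(H^B)}<\infty$ is nonincreasing in $m$, the sequence $\{T_n u\}_{n\geq k}$ is Cauchy in the complete space $L^p(B)$, and I define $Tu$ to be its limit, so $T_n u\to Tu$ in $L^p(B)$ as asserted. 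Finally, for $n\geq k$,
\[
\|Tu-T_n u\|_{L^p(B)}\leq\sum_{m\geq n}\|T_{m+1}u-T_m u\|_{L^p(B)}\ls\|g\|_{L^p(H^B_{\geq n})}\sum_{m\geq n}\alpha^{(\beta/p-1)m}\asymp\alpha^{(\beta/p-1)n}\|g\|_{L^p(H^B_{\geq n})},
\]
which is \eqref{ball p trace}.

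I expect the only real obstacle to be bookkeeping: keeping straight the three measures on $X$ (Lebesgue measure $\mathcal L$ on edges, $\mu_\beta$, and $d_\rho$-arclength $ds_\rho$) so that the power of $\alpha$ comes out exactly as $\alpha^{(\beta-p)n}$, and invoking the combinatorial bounds (bounded vertex degree from Proposition \ref{doubling degree}, bounded ball overlap from Lemma \ref{doubling overlap}) carefully enough that the implied constants depend only on $\alpha$, $\tau$, $C_\nu$, $\beta$, and $p$, and not on $B$ or $n$. There is no serious analytic difficulty beyond this.
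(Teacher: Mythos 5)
Your proof is correct but takes a genuinely different route from the paper's. The paper telescopes the differences $T_{n+1}u-T_nu$ \emph{pointwise} from $n$ to $m-1$, then applies H\"older's inequality for sequences with a free tuning parameter $\lambda>0$ to the resulting infinite sum, integrates over $B$, and finally sets $\lambda=(p-\beta)/p$ to make the exponents cancel (this produces the intermediate estimate \eqref{estimate I}). You instead prove the single-increment $L^p$ bound $\|T_{n+1}u-T_nu\|_{L^p(B)}\ls\alpha^{(\beta/p-1)n}\|g\|_{L^p(H^B_{\geq n})}$ directly via Jensen/H\"older on each edge and the arc-length comparison \eqref{arc length measure}, then telescope by the triangle inequality in $L^p(B)$, using monotonicity of $m\mapsto\|g\|_{L^p(H^B_{\geq m})}$ to factor out and sum the geometric series. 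Your route is more elementary and avoids the auxiliary $\lambda$-H\"older trick entirely. What the paper buys with its more elaborate approach is the \emph{pointwise} estimate \eqref{estimate I}, still carrying the free parameter $\lambda$; this is recycled later (as \eqref{estimate I limit}) in the proof of Proposition \ref{Dirichlet Besov finite}, where $\lambda$ is retuned to $\theta/2$. So your argument is preferable as a standalone proof of this proposition, but the paper's version does extra work up front that pays off downstream. The combinatorial justifications you invoke (bounded overlap from Lemma \ref{doubling overlap}, bounded degree, $\supp\psi_v\subset B(v)$ forcing $v\in H^B_n$ for $z\in B$, and edges between $H^B_n$ and $H^B_{n+1}$ lying in $H^B_{\geq n}$) are all sound and match the paper's remarks.
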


We remark that the second equality in \eqref{trace formula} follows from the fact that $\psi_{v}(z) \neq 0$ implies that $z \in B(v)$ and therefore $B(v) \cap B \neq \emptyset$ since $z \in B$, which implies that $v \in H^{B}$ provided that $\alpha^{-h(v)} \leq r$.

\begin{proof}
 
Let $u \in \t{N}^{1,p}(H^{B})$ be a given function, which is continuous by Remark \ref{graph capacity}. We let $g \in L^{p}(H^{B})$ be a $p$-integrable upper gradient for $u$. For $z \in B$ we write $H_{n}^{B}(z)$ for the set of $v \in H_{n}^{B}$ such that $z \in B(v)$. We note that by the discussion in the previous paragraph we have for all $z \in B$ and $n \in \Z$ such that $\alpha^{-n} \leq r$, 
\[
\sum_{v \in H_{n}^{B}(z)}\psi_{v}(z) = 1.
\] 
We can then estimate
\begin{equation}\label{successor estimate}
|T_{n+1}u(z)-T_{n}u(z)| \leq \sum_{v' \in H_{n+1}^{B}(z)} \sum_{v \in H_{n}^{B}(z)}|u(v')-u(v)|\psi_{v'}(z)\psi_{v}(z).
\end{equation}
Observe that we can only have $\psi_{v'}(z) \psi_{v}(z) \neq 0$ if $B(v) \cap B(v') \neq \emptyset$, which implies that there is a vertical edge $e_{vv'}$ from $v$ to $v'$. Since $g$ is an upper gradient for $u$ on $H^{B}$, we thus conclude in this case that
\begin{equation}\label{upper gradient estimate}
|u(v')-u(v)| \leq \int_{e_{vv'}} g \, ds_{\rho} \leq \sum_{e \in \mathcal{U}(v)} \int_{e} g \, ds_{\rho},
\end{equation}
with the sum being taken over the set $\mathcal{U}(v)$ of all upward directed vertical edges $e$ starting from $v$, and with $ds_{\rho}$ denoting arclength with respect to the metric $d_{\rho}$. By Lemma \ref{arc length lemma} this implies that
\begin{equation}\label{sharpened}
|u(v')-u(v)| \ls \frac{\alpha^{(\beta-1)n}}{\nu(B(v))}\sum_{e \in \mathcal{U}(v)} \int_{e} g \, d\mu_{\beta}.
\end{equation}

Applying the estimate \eqref{sharpened} to the inequality \eqref{successor estimate} gives
\begin{equation}\label{refined successor estimate}
|T_{n+1}u(z)-T_{n}u(z)| \ls \alpha^{(\beta-1)n}\sum_{v \in H_{n}^{B}(z)}\left(\sum_{e \in \mathcal{U}(v)} \int_{e} g \, d\mu_{\beta}\right)\frac{\psi_{v}(z)}{\nu(B(v))}.
\end{equation}
Let $m > n$ be a given integer. Summing inequality \eqref{refined successor estimate} above from $n$ to $m-1$, we obtain that
\begin{align}\label{iterated successor estimate}
|T_{m}u(z)-T_{n}u(z)| &\ls \sum_{j=n}^{m-1}\alpha^{(\beta-1)j}\sum_{v \in H_{j}^{B}(z)}\left(\sum_{e \in \mathcal{U}(v)} \int_{e} g \, d\mu_{\beta}\right)\frac{\psi_{v}(z)}{\nu(B(v))} \\
&\leq \sum_{j=n}^{\infty}\alpha^{(\beta-1)j}\sum_{v \in H_{j}^{B}(z)}\left(\sum_{e \in \mathcal{U}(v)} \int_{e} g \, d\mu_{\beta}\right)\frac{\psi_{v}(z)}{\nu(B(v))}.
\end{align}
Let's assume for the moment that $p > 1$ and let $\la > 0$ be a given parameter. By applying H\"older's inequality for sequences on the right side of the above inequality with the conjugate exponents $p$ and $q = \frac{p}{p-1}$, we obtain for $m > n$ and $z \in B$, 
\begin{align*}
|T_{m}u(z)-T_{n}u(z)| &\lesssim \sum_{j=n}^{\infty}\sum_{v \in H_{j}^{B}(z)}\left(\alpha^{(\beta+\la-1)j}\sum_{e \in \mathcal{U}(v)} \int_{e} g \, d\mu_{\beta}\right)\frac{\psi_{v}(z)}{\nu(B(v))}\alpha^{-j \la} \\
&\leq \left(\sum_{j=n}^{\infty}\alpha^{p(\beta+\la-1)j}\left(\sum_{v \in H_{j}^{B}(z)}\left(\sum_{e \in \mathcal{U}(v)} \int_{e} g \, d\mu_{\beta}\right)\frac{\psi_{v}(z)}{\nu(B(v))}\right)^{p}\right)^{1/p}\left(\sum_{j=n}^{\infty}\alpha^{-q  \la j}\right)^{1/q} \\
&\lesssim \alpha^{-n \la} \left(\sum_{j=n}^{\infty}\alpha^{p(\beta+\la-1)j}\left(\sum_{v \in H_{j}^{B}(z)}\left(\sum_{e \in \mathcal{U}(v)} \int_{e} g \, d\mu_{\beta}\right)\frac{\psi_{v}(z)}{\nu(B(v))}\right)^{p}\right)^{1/p},
\end{align*}
where now the implied constants also depend on $\la$.

By Lemma \ref{doubling overlap} the sets $H_{j}^{B}(z)$ have a number of elements uniformly bounded in terms of $\alpha$, $\tau$, and $C_{\nu}$ for each $j \geq n$ and $z \in B$. Using Lemma \ref{convexity lemma}, it follows from this, and the fact that $X$ has vertex degree uniformly bounded in terms of this same data, that we have 
\begin{align*}
\left(\sum_{v \in H_{j}^{B}(z)}\left(\sum_{e \in \mathcal{U}(v)} \int_{e} g \, d\mu_{\beta}\right)\frac{\psi_{v}(z)}{\nu(B(v))}\right)^{p} &\lesssim \sum_{v \in H_{j}^{B}(z)}\left(\sum_{e \in \mathcal{U}(v)} \int_{e} g \, d\mu_{\beta}\right)^{p}\frac{\psi_{v}(z)^{p}}{\nu(B(v))^{p}} \\
&\leq \sum_{v \in H_{j}^{B}(z)}\sum_{e \in \mathcal{U}(v)} \left(\int_{e} g \, d\mu_{\beta}\right)^{p}\frac{\psi_{v}(z)}{\nu(B(v))^{p}},
\end{align*}
where we have used $0 \leq \psi_{v}(z) \leq 1$ so that $\psi_{v}(z)^{p} \leq \psi_{v}(z)$. By Jensen's inequality for integrals of convex functions on probability spaces we have for any edge $e \in X$, 
\begin{align}\label{Jensen}
\left(\int_{e} g \, d\mu_{\beta}\right)^{p} &= \mu_{\beta}(e)^{p}\left(\dashint_{e} g \, d\mu_{\beta}\right)^{p} \\
&\leq \mu_{\beta}(e)^{p}\dashint_{e} g^{p} \, d\mu_{\beta} \\
&= \mu_{\beta}(e)^{p-1}\int_{e} g^{p} \, d\mu_{\beta}
\end{align}
Using this estimate together with the comparison \eqref{transformed}, we conclude that we have the inequality 
\begin{equation}\label{estimate I}
|T_{m}u(z)-T_{n}u(z)|^{p} \lesssim \alpha^{-\la p n  }\sum_{j=n}^{\infty} \alpha^{(\beta + p\la-p) j}\sum_{v \in H_{j}^{B}(z)} \left(\sum_{e \in \mathcal{U}(v)} \int_{e} g^{p} \, d\mu_{\beta}\right)\frac{\psi_{v}(z)}{\nu(B(v))},
\end{equation}
again with the implied constant additionally depending on $\la$. For future reference we note that inequality \eqref{estimate I} also holds for $p = 1$ as it is directly implied by the inequality \eqref{iterated successor estimate}. 

We now set $\la = (p-\beta)/p$. This simplifies \eqref{estimate I} to
\begin{equation}\label{substituted estimate I}
|T_{m}u(z)-T_{n}u(z)|^{p} \lesssim \alpha^{(\beta-p) n}\sum_{j=n}^{\infty}\sum_{v \in H_{j}^{B}(z)} \left(\sum_{e \in \mathcal{U}(v)} \int_{e} g^{p} \, d\mu_{\beta}\right)\frac{\psi_{v}(z)}{\nu(B(v))}.
\end{equation}
Integrating each side over $B$ and using \eqref{partition bound}, we conclude that for all $m > n \geq k$, 
\begin{align*}
\|T_{m}u-T_{n}u\|^{p}_{L^{p}(B)} &\ls \alpha^{(\beta-p) n}\sum_{j=n}^{\infty}\sum_{v \in H_{j}^{B}} \left(\sum_{e \in \mathcal{U}(v)} \int_{e} g^{p} \, d\mu_{\beta}\right) \\
&\leq \alpha^{(\beta-p)n}\|g\|_{L^{p}(H_{\geq n}^{B})}^{p}.
\end{align*}
By taking the $p$th root of each side, we obtain that
\begin{equation}\label{p norm estimate}
\|T_{m}u-T_{n}u\|_{L^{p}(B)} \ls \alpha^{(\beta/p-1) n}\|g\|_{L^{p}(H_{\geq n}^{B})}.
\end{equation}
In particular we have
\[
\|T_{m}u-T_{n}u\|_{L^{p}(B)} \ls \alpha^{(\beta/p-1)  n}\|g\|_{L^{p}(H^{B}_{\geq n})}.
\]
The right side converges to $0$ as $n \rightarrow \infty$ since $p > \beta$. We conclude that $\{T_{n}u\}$ defines a Cauchy sequence in $L^{p}(B)$ which therefore converges in $L^{p}(B)$ to a function $Tu \in L^{p}(B)$. Letting $m \rightarrow \infty$ in \eqref{p norm estimate} then gives \eqref{ball p trace}.
\end{proof}

The proof of Proposition \ref{Lp ball trace} enables us to deduce a Poincar\'e-type inequality relating the average of the trace $Tu$ on the ball $B$ to the $L^p$ norm of the upper gradient on the hull in $X_{\rho}$.  

\begin{prop}\label{Poincare type inequality}
Let $B \subset Z$ be a ball in $Z$ of radius $r > 0$ and let $u \in N^{1,p}(H^{B})$ be given. Then the trace $Tu \in L^{p}(B)$ defined by Proposition \ref{Lp ball trace} has the property that for any $p$-integrable upper gradient $g$ of $u$ on $H^{B}$, 
\begin{equation}\label{equation Poincare type}
\dashint_{B} |Tu-(Tu)_{B}|^{p} \,d\nu \ls r^{p}\dashint_{H^{B}}g^{p}\,d\mu_{\beta}.
\end{equation}
\end{prop}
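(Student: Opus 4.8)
The plan is to bound the left side of \eqref{equation Poincare type} directly, mimicking the telescoping estimate from the proof of Proposition \ref{Lp ball trace} but this time tracking the ``base level'' $n = k$ rather than letting $n \to \infty$. First I would write $Tu - (Tu)_B = (Tu - T_k u) + (T_k u - (T_k u)_B) + ((T_k u)_B - (Tu)_B)$, where $k$ is the minimal integer with $\alpha^{-k} \le r$, and estimate the $L^p(B)$-norm of each of the three pieces separately using the convexity lemma (Lemma \ref{convexity lemma}). The first piece is controlled immediately by \eqref{ball p trace} with $n = k$: $\|Tu - T_k u\|_{L^p(B)} \lesssim \alpha^{(\beta/p - 1)k}\|g\|_{L^p(H^B_{\ge k})} \asymp r^{1 - \beta/p}\|g\|_{L^p(H^B)}$, using $\alpha^{-k} \asymp r$. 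Since $\mu_\beta(H^B) \asymp r^\beta \nu(B)$ by Lemma \ref{hull measure}, this rearranges to the bound $r^p \,\dashint_{H^B} g^p \, d\mu_\beta$ after raising to the $p$-th power and dividing by $\nu(B)$; the same bound then handles the third piece since $\|(T_k u)_B - (Tu)_B\|_{L^p(B)} = |(T_k u)_B - (Tu)_B| \cdot \nu(B)^{1/p} \le \|T_k u - Tu\|_{L^p(B)}$ by Jensen.

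The substantive piece is the middle term, $\|T_k u - (T_k u)_B\|_{L^p(B)}$. Here $T_k u = \sum_{v \in H^B_k} u(v)\psi_v$ is a Lipschitz function on $B$ built from the values of $u$ at the finitely many (by Lemma \ref{doubling overlap}) top-level vertices whose balls $B(v)$ meet $B$. For $x, y \in B$ one has $|T_k u(x) - T_k u(y)| \le \sum_{v,w \in H^B_k} |u(v) - u(w)| \psi_v(x)\psi_w(y)$, and $\psi_v(x)\psi_w(y) \ne 0$ forces $x \in B(v)$, $y \in B(w)$, hence $d(x,y) \lesssim r$ and $B(v) \cap B(w)$ is nonempty or the two balls are joined through a common ball meeting $B$ — in any case $v$ and $w$ are connected in $H^B_k$ by a uniformly bounded chain of edges (the number of vertices of $H^B_k$ is uniformly bounded, and the subgraph is connected near the top), so $|u(v) - u(w)| \le \int_\gamma g \, ds_\rho$ over a curve $\gamma$ of uniformly bounded combinatorial length lying in $H^B_{\le k+O(1)}$, which by Lemma \ref{arc length lemma} is $\lesssim \alpha^{(\beta - 1)k}\nu(B(v))^{-1} \int_{H^B} g \, d\mu_\beta$. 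Then $\dashint_B |T_k u - (T_k u)_B|^p \, d\nu \le \dashint_B \dashint_B |T_k u(x) - T_k u(y)|^p \, d\nu(x) d\nu(y) \lesssim \alpha^{(\beta - 1)pk} \big(\int_{H^B} g \, d\mu_\beta\big)^p / \nu(B)^p$; applying Jensen on the probability space $(H^B, \mu_\beta/\mu_\beta(H^B))$ gives $\big(\int_{H^B} g \, d\mu_\beta\big)^p \le \mu_\beta(H^B)^{p-1}\int_{H^B} g^p \, d\mu_\beta \asymp (r^\beta \nu(B))^{p-1}\int_{H^B} g^p \, d\mu_\beta$, and combining with $\alpha^{-k} \asymp r$ and $\mu_\beta(H^B) \asymp r^\beta\nu(B)$ collapses everything to $\lesssim r^p \, \dashint_{H^B} g^p \, d\mu_\beta$, exactly as desired.

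The main obstacle I anticipate is the combinatorial bookkeeping in the middle term: justifying that any two vertices $v, w \in H^B_k$ whose partition functions jointly fire at nearby points of $B$ can be joined by a chain of edges of uniformly bounded length staying inside $H^B$ (or a bounded enlargement of it), so that the upper gradient inequality for $u$ on the hull can be applied. This requires combining the covering/separation properties of $V_k$ with $\alpha^{-k} \asymp r$ and the definition of the hull — it is the place where one genuinely uses that $B(v)$ has radius $\asymp r$ for $v \in H^B_k$ and that these balls all meet the fixed ball $B$. Everything else (the three-term split, the applications of Lemma \ref{hull measure}, Lemma \ref{arc length lemma}, Jensen, and \eqref{ball p trace}) is routine once that connectivity statement is in hand. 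An alternative that sidesteps some of this is to instead bound $\|Tu - (Tu)_B\|_{L^p(B)}$ by $\|Tu - u(v_0)\|_{L^p(B)}$ for a single fixed vertex $v_0 \in H^B_k$ and use the telescoping bound from Proposition \ref{Lp ball trace}'s proof together with a direct estimate $|u(v) - u(v_0)| \lesssim \alpha^{(\beta/p-1)k}\|g\|_{L^p(H^B)}$ for all $v \in H^B_k$; I would try this route first as it reuses \eqref{substituted estimate I} almost verbatim.
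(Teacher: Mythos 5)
Your parenthetical alternative at the end is essentially the paper's proof, and your main three-term proposal is arithmetically sound, but both leave open the gap you yourself flag as the main obstacle: the combinatorial connectivity at the top level. The device that closes it is Lemma~\ref{large inclusion}. Since $\alpha^{-k}\asymp r$, that lemma produces a single vertex $v_0\in V_k$ with $B\subset B(v_0)$. If $\psi_v(z)\neq 0$ for some $z\in B$ then (as $\supp\psi_v\subset B(v)$) that $z$ lies in both $B(v)$ and $B(v_0)$, so $B(v)\cap B(v_0)\neq\emptyset$ and hence $v\sim v_0$ by a single horizontal edge, which moreover lies in $H^B$ since both endpoints have height $k$ and balls meeting $B$. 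Thus every relevant top-level vertex is a graph-neighbor of the one distinguished vertex $v_0$; you never need to chain through $H^B_{\le k+O(1)}$, and the assertion that ``$H^B_k$ is connected near the top'' — which is not otherwise obvious — is replaced by the observation that $v_0$ is a common neighbor. In your alternative, ``a single fixed vertex $v_0\in H^B_k$'' must be \emph{this} vertex; for an arbitrary choice the bound $|u(v)-u(v_0)|\lesssim\cdots$ has no justification because $v$ need not be adjacent to $v_0$.

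With $v_0$ so chosen, the paper writes $\|Tu-u(v_0)\|_{L^p(B)}\le\|Tu-T_ku\|_{L^p(B)}+\|T_ku-u(v_0)\|_{L^p(B)}$, controls the first term by \eqref{ball p trace}, controls the second by the pointwise estimate
\[
|T_ku(z)-u(v_0)|^p \ls \sum_{v\sim v_0}\alpha^{(\beta-p)k}\Bigl(\int_{e_{vv_0}}g^p\,d\mu_\beta\Bigr)\frac{\psi_v(z)}{\nu(B(v))},
\]
obtained via Lemma~\ref{arc length lemma} and Jensen exactly as in \eqref{substituted estimate I}, and at the very end replaces $u(v_0)$ by $(Tu)_B$ at the cost of a factor $2^p$ — a two-term version of your three-term split, organized around $v_0$ rather than $(T_ku)_B$. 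One small caveat on your alternative as written: the bound $|u(v)-u(v_0)|\lesssim\alpha^{(\beta/p-1)k}\|g\|_{L^p(H^B)}$ drops the factor $\nu(B(v))^{-1/p}$ that the single-edge H\"older estimate actually carries. That factor is harmless because it cancels against $\psi_v/\nu(B(v))$ after integrating over $B$, but it cannot be discarded pointwise; your main three-term computation, which keeps $\nu(B(v))^{-1}$ explicit, does not have this issue. Once Lemma~\ref{large inclusion} is invoked, either route closes.
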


\begin{proof}
Let $r  = r(B)$ be the radius of $B$ and let $k \in \Z$ be the minimal integer such that $\alpha^{-k} \leq r$, so that we have $\alpha^{-k} \asymp r$. Then by Lemma \ref{large inclusion} we can find a vertex $v_{0} \in V_{k}$ such that $B \subset B(v_{0})$. By the triangle inequality in $L^{p}(B)$ we have 
\[
\|Tu-u(v_{0})\|_{L^{p}(B)} \leq \|Tu-T_{k}u\|_{L^{p}(B)} + \|T_{k}u-u(v_{0})\|_{L^{p}(B)}. 
\]
The first term we estimate by \eqref{ball p trace}, noting that $\alpha^{(\beta/p-1)k} \leq r^{1-\beta/p}$ since $\alpha^{-k} \leq r$, 
\[
\|Tu-T_{k}u\|_{L^{p}(B)} \ls r^{1-\beta/p}\|g\|_{L^{p}\left(H^{B}\right)}.
\]
For the second term we observe that any $v \in V_{k}$ with the property that $\psi_{v}(z) \neq 0$ for some $z \in B$ must satisfy $B(v) \cap B(v_{0}) \neq \emptyset$ since $B \subset B(v_{0})$ and $\supp(\psi_{v}) \subset B(v)$.  Thus either $v = v_{0}$ or $v$ is joined to $v_{0}$ by a horizontal edge $e_{vv_{0}}$ in $X$. Furthermore in the second case we have that $v,v_{0} \in H^{B}$ since $B(v)$ and $B(v_{0})$ both contain $z \in B$. Using the fact that $g$ is an upper gradient for $u$ on $H^{B}$, we obtain that for each $z \in B$ we have the inequality
\[
|T_{k}u(z)-u(v_{0})| \leq \sum_{v \in V_{k}:\;|vv_{0}| = 1} |u(v)-u(v_{0})|\psi_{v}(z) \leq \sum_{v \in V_{k}:\;|vv_{0}| = 1} \left(\int_{e_{vv_{0}}}g\,ds_{\rho}\right)\psi_{v}(z). 
\]
By calculating using Lemma \ref{arc length lemma} as we did in the proof of Proposition \ref{Lp ball trace}, we obtain from the above the inequality for each $p \geq 1$ and $z \in B$ via an analogous computation, 
\[
|T_{k}u(z)-u(v_{0})|^{p} \ls \sum_{v \in V_{k}:\;|vv_{0}| = 1} \alpha^{(\beta-p)k}\left(\int_{e_{vv_{0}}}g^{p}\,d\mu_{\beta}\right)\frac{\psi_{v}(z)}{\nu(B(v))},
\]
Integrating this inequality over $B$, taking the $p$th root, noting again that $\alpha^{-k} \leq r$, and recalling that each edge $e_{vv_{0}}$ belongs to $H^{B}$, we conclude that we have a matching inequality 
\[
\|T_{k}u-u(v_{0})\|_{L^{p}(B)}\ls r^{1-\beta/p}\|g\|_{L^{p}\left(H^{B}\right)}.
\]
Thus
\[
\|Tu-u(v_{0})\|_{L^{p}(B)} \ls r^{1-\beta/p}\|g\|_{L^{p}\left(H^{B}\right)}.
\]
Taking $p$th powers and passing to averages over $B$ and $H^{B}$ then gives
\[
\dashint_{B} |Tu-u(v_{0})|^{p} \,d\nu \ls r^{p-\beta}\frac{\mu_{\beta}(H^{B})}{\nu(B)}\dashint_{H^{B}}g^{p}\,d\mu_{\beta}.
\]
By Lemma \ref{hull measure} we have $\frac{\mu_{\beta}(H^{B})}{\nu(B)} \asymp r^{\beta}$. Thus we conclude that 
\[
\dashint_{B} |Tu-u(v_{0})|^{p} \,d\nu \ls r^{p}\dashint_{H^{B}}g^{p}\,d\mu_{\beta}.
\]
Lastly we can replace $u(v_{0})$ with the average $(Tu)_{B}$ on the left via a standard inequality at the cost of a factor of $2^{p}$, see \cite[Lemma 4.17]{BB11}. 
\end{proof}

We explore other Poincar\'e type inequalities of the form \eqref{equation Poincare type} later in the paper, see for instance Propositions \ref{extension hyperbolic Poincare} and \ref{hyperbolic Poincare}.

The proposition below is an immediate consequence of Proposition \ref{Lp ball trace}. We recall that $\t{N}^{1,p}_{\loc}(X_{\rho}) = \t{D}^{1,p}_{\loc}(X_{\rho})$, so we will be formulating our theorems in terms of $\t{N}^{1,p}_{\loc}(X_{\rho})$.

\begin{prop}\label{Lp local trace}
Let $u \in \t{N}^{1,p}_{\loc}(X_{\rho})$ be given. Then $u$ has a trace $Tu \in L^{p}_{\loc}(Z)$ given as follows: for each $n \in \Z$ and each $z \in Z$ we set
\begin{equation}\label{local trace formula}
T_{n}u(z) = \sum_{v \in V_{n}}u(v)\psi_{v}(z).
\end{equation}
Then for each ball $B \subset Z$ we have $T_{n}u \rightarrow Tu$ in $L^{p}(B)$. Furthermore, for a given ball $B$ of radius $r > 0$ and $k$ the minimal integer such that $\alpha^{-k} \leq r$, we have the following estimate for any $p$-integrable upper gradient $g$ of $u$ on $H^{B}$ and any integer $n \geq k$,
\begin{equation}\label{local p trace}
\|Tu-T_{n}u\|_{L^{p}(B)} \ls \alpha^{(\beta/p-1)n}\|g\|_{L^{p}(H^{B}_{\geq n})}.
\end{equation}
If furthermore $u$ has a $p$-integrable upper gradient $g$ on $X_{\rho}$ then we have for each $n \in \Z$, 
\begin{equation}\label{global p trace}
\|Tu-T_{n}u\|_{L^{p}(Z)} \ls \alpha^{(\beta/p-1)n}\|g\|_{L^{p}(X_{\geq n})}.
\end{equation}
\end{prop}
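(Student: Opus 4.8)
The plan is to deduce Proposition~\ref{Lp local trace} from the already-established Proposition~\ref{Lp ball trace} by a covering-and-exhaustion argument. First I would verify that the local trace $Tu$ is well-defined: for a fixed ball $B \subset Z$ of radius $r>0$, the function $u$ restricted to the hull $H^B$ lies in $N^{1,p}(H^B)$ because $u \in \t{N}^{1,p}_{\loc}(X_\rho)$ means $u|_{B'} \in \t{N}^{1,p}(B')$ for every ball $B' \subset X_\rho$, and by Lemma~\ref{hull approximation} the hull $H^B$ is contained in a ball $C\hat{B}$ of $\bar{X}_\rho$, hence in a ball of $X_\rho$ (intersecting a hull with $X_\rho$, which is harmless since $\mu_\beta(\p X_\rho)=0$ and $N^{1,p}$ only sees the measure). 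Then Proposition~\ref{Lp ball trace} applies on $H^B$ and produces $Tu|_B \in L^p(B)$ together with the estimates \eqref{trace formula}–\eqref{ball p trace}; in particular the second equality in \eqref{trace formula} shows that $T_n u(z) = \sum_{v \in V_n} u(v)\psi_v(z)$ restricted to $B$ agrees with the ball-version $\sum_{v \in H_n^B} u(v)\psi_v(z)$ for $z \in B$ once $\alpha^{-n}\le r$, so the definition \eqref{local trace formula} is consistent across different choices of $B$. Consistency of $Tu$ itself on overlapping balls follows from uniqueness of $L^p$-limits, so $Tu \in L^p_{\loc}(Z)$ is well-defined, and the estimate \eqref{local p trace} is then literally \eqref{ball p trace} applied to $u|_{H^B}$.

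The only genuinely new content is the global estimate \eqref{global p trace} under the hypothesis that $u$ has a $p$-integrable upper gradient $g$ on all of $X_\rho$. Here I would fix $n \in \Z$ and exhaust $Z$ by an increasing sequence of balls $B_k = B_Z(z_0, R_k)$ with $R_k \to \infty$, chosen so that $\alpha^{-n} \le R_k$ for all large $k$. Applying \eqref{local p trace} to each $B_k$ gives
\[
\|Tu - T_n u\|_{L^p(B_k)} \ls \alpha^{(\beta/p-1)n}\|g\|_{L^p(H^{B_k}_{\ge n})}
\le \alpha^{(\beta/p-1)n}\|g\|_{L^p(X_{\ge n})},
\]
where the last inequality uses $H^{B_k}_{\ge n} \subset X_{\ge n}$ and the monotonicity of the $L^p$ norm. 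The implied constant here depends only on $\alpha,\tau,C_\nu,\beta,p$ and in particular is independent of $k$, which is the crucial point — it comes from the fact that in the proof of Proposition~\ref{Lp ball trace} the constants depend only on the bounded-overlap constant of Lemma~\ref{doubling overlap} and the vertex-degree bound, both of which are global. Letting $k \to \infty$ and applying the monotone convergence theorem to $\int_{B_k} |Tu - T_n u|^p \, d\nu$ yields
\[
\|Tu - T_n u\|_{L^p(Z)} \ls \alpha^{(\beta/p-1)n}\|g\|_{L^p(X_{\ge n})},
\]
which is \eqref{global p trace}. A small technical point to address is that this simultaneously shows $Tu - T_n u \in L^p(Z)$ for each fixed $n$; since $T_n u$ itself need not be in $L^p(Z)$, the conclusion is really an estimate on the difference, exactly as stated.

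The main obstacle I anticipate is not any deep inequality but rather the bookkeeping around the uniformity of constants when passing from the ball estimate to the exhaustion: one must be sure that the constant in \eqref{ball p trace} does not degenerate as the radius $r$ (equivalently $R_k$) grows. Tracing through the proof of Proposition~\ref{Lp ball trace}, the constants enter only through Lemma~\ref{doubling overlap} (bounding $|H_j^B(z)|$), the vertex-degree bound $N = N(\alpha,\tau,C_\nu)$, Lemma~\ref{convexity lemma}, Jensen's inequality, and the comparison \eqref{transformed} — none of which involves $r(B)$ — so the constant is indeed uniform, and the argument goes through. A secondary minor point worth a sentence is to confirm that $T_n u$ as defined in \eqref{local trace formula} is a locally finite sum (hence measurable): this follows from Lemma~\ref{doubling overlap} applied to the $\alpha^{-n}$-separated set $V_n$, since each $z \in Z$ lies in $\supp(\psi_v) \subset B(v)$ for only boundedly many $v \in V_n$.
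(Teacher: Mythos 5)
Your proof of the local statements — the well-definedness of $Tu$, the inclusion $u|_{H^B}\in \t{N}^{1,p}(H^B)$ via Lemma~\ref{hull approximation}, and the identification of the two formulas for $T_nu$ — matches the paper's argument exactly, and is correct. (One small imprecision: it is not ``intersecting a hull with $X_\rho$'' that matters, but rather that a ball $B_\rho(x,2Cr)$ centered at a point $x\in H^B\subset X_\rho$ intersected with $X_\rho$ is a genuine ball of $X_\rho$, and the hull is contained in it; the substance of the argument is the same.)

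For the global estimate \eqref{global p trace}, you take a genuinely different route from the paper, and both work. The paper covers $Z$ by the balls $B(v)$, $v\in V_n$, applies \eqref{local p trace} to each, sums the $p$-th powers, and then uses the bounded overlap of the hulls $H^{B(v)}$ (Lemma~\ref{hull overlap}) to convert $\sum_{v\in V_n}\|g\|_{L^p(H^{B(v)}_{\ge n})}^p$ into $\|g\|_{L^p(X_{\ge n})}^p$. You instead exhaust $Z$ by a single increasing sequence of balls $B_k$, apply \eqref{local p trace} on each, crudely replace $\|g\|_{L^p(H^{B_k}_{\ge n})}$ by $\|g\|_{L^p(X_{\ge n})}$ via monotonicity, and then let $k\to\infty$ via monotone convergence. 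Your argument is a bit more elementary in that it avoids Lemma~\ref{hull overlap} entirely; the price is that it depends on the observation (which you correctly flag and justify) that the implied constant in \eqref{ball p trace} does not degenerate as $r(B)\to\infty$. The paper's covering at height $n$ exploits the hull-overlap lemma, which is a structural fact that is also used elsewhere, whereas your exhaustion is self-contained but is specific to this particular corollary. Both give the same constant dependence, so this is a matter of taste.
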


\begin{proof}
Let $u \in \t{N}^{1,p}_{\loc}(X_{\rho})$ be given. If $B \subset Z$ is any ball of radius $r > 0$ then by Lemma \ref{hull approximation} we have that $H^{B} \subset C \hat{B}$ for some $C = C(\alpha,\tau) \geq 1$, where $\hat{B} = B_{\rho}(z,r) \subset \bar{X}_{\rho}$, which implies for any $x \in H^{B}$ that $H^{B} \subset B_{\rho}(x,2Cr) \cap X_{\rho}$. It follows that if $u \in \t{N}^{1,p}_{\loc}(X_{\rho})$ then $u|_{H^{B}} \in \t{N}^{1,p}(H^{B})$ for any ball $B \subset Z$. All of the claims of the proposition except for inequality \eqref{global p trace} then follow immediately from the corresponding claims of Proposition \ref{Lp ball trace} upon observing that the formulas \eqref{trace formula} and \eqref{local trace formula} defining $T_{n}u$ in each proposition are the same. 

We now assume that $u$ has a $p$-integrable upper gradient $g$ on $X_{\rho}$ and let $n \in \Z$ be given. For each vertex $v \in V_{n}$ we let $B(v)$ be the associated ball of radius $r(B(v)) = \tau \alpha^{-n}$, so that in particular we have $\alpha^{-n} \leq r(B(v))$. Then $g$ is a $p$-integrable upper gradient of $u$ on $H^{B(v)}$, so inequality \eqref{local p trace} implies that for each $v \in V_{n}$ we have
\[
\|(Tu-T_{n}u)\chi_{B(v)}\|_{L^{p}(Z)} \ls \alpha^{(\beta/p-1) n}\|g\|_{L^{p}(H^{B(v)}_{\geq n})}
\]
Since the balls $B(v)$ for $v \in V_{n}$ cover $Z$ with bounded overlap by Lemma \ref{doubling overlap}, we deduce from the triangle inequality in $L^{p}(Z)$ that
\[
\|Tu-T_{n}u\|_{L^{p}(Z)} \ls \alpha^{(\beta/p-1) n}\sum_{v \in V_{n}}\|g\|_{L^{p}(H^{B(v)}_{\geq n})}.
\]
Since the hulls $H^{B(v)}$ for $v \in V_{n}$ have bounded overlap by Lemma \ref{hull overlap}, we conclude from the above inequality that the estimate \eqref{global p trace} holds. 
\end{proof}

By Proposition \ref{Lp local trace} we have a linear \emph{trace operator} 
\[
T: \t{N}^{1,p}_{\loc}(X_{\rho}) \rightarrow L^{p}_{\loc}(Z),
\]
defined by $u \rightarrow Tu$. The domain of $T$ depends on both $p$ and $\beta$, however we will suppress this dependence in the notation.

We next show that $T$ restricts to a bounded linear operator $T: N^{1,p}(X_{\rho}) \rightarrow L^{p}(Z)$; we recall from Remark \ref{graph capacity} that each equivalence class in $\t{N}^{1,p}(X_{\rho})$ consists of a single continuous function, so we can consider $N^{1,p}(X_{\rho})$ to be canonically identified with $\t{N}^{1,p}(X_{\rho})$.

\begin{prop}\label{p integrable trace}
Let $u \in N^{1,p}(X_{\rho})$. Then $Tu \in L^{p}(Z)$ with the estimate
\[
\|Tu\|_{L^{p}(Z)} \ls \|u\|_{N^{1,p}(X_{\rho})}.
\]
\end{prop}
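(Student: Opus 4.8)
The plan is to bound $\|Tu\|_{L^p(Z)}$ by the two terms $\|u\|_{L^p(X_\rho)}$ and $\|g_u\|_{L^p(X_\rho)}$ separately, using the global trace estimate \eqref{global p trace} from Proposition \ref{Lp local trace} together with a direct estimate on $T_0 u$ (the approximant at height $n=0$). Writing $Tu = (Tu - T_0 u) + T_0 u$ and applying the triangle inequality in $L^p(Z)$, the first piece is controlled by \eqref{global p trace} with $n=0$: $\|Tu - T_0 u\|_{L^p(Z)} \ls \|g\|_{L^p(X_{\geq 0})} \leq \|g\|_{L^p(X_\rho)}$, where $g$ is any $p$-integrable upper gradient of $u$ on $X_\rho$ (such $g$ exists since $u \in N^{1,p}(X_\rho)$; take $g = g_u$ to get the minimal one). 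So it remains to estimate $\|T_0 u\|_{L^p(Z)}$.

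For the second piece, I would expand $T_0 u(z) = \sum_{v \in V_0} u(v)\psi_v(z)$. Since $0 \le \psi_v \le 1$, $\sum_{v\in V_0}\psi_v(z) = 1$, and by Lemma \ref{doubling overlap} at most a bounded number $M = M(\alpha,\tau,C_\nu)$ of the $\psi_v$ are nonzero at any given $z$, Lemma \ref{convexity lemma} gives $|T_0 u(z)|^p \ls \sum_{v \in V_0} |u(v)|^p \psi_v(z)$. Integrating over $Z$ and using $\|\psi_v\|_{L^1(Z)} \le \nu(B(v))$ from \eqref{partition bound}, we get $\|T_0 u\|_{L^p(Z)}^p \ls \sum_{v \in V_0} |u(v)|^p \nu(B(v))$. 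Now I need to bound the pointwise value $|u(v)|$ for each vertex $v \in V_0$ by a local integral of $u$; this is where the continuity of Newtonian functions on the graph (Remark \ref{graph capacity}) combined with the $p$-Poincaré inequality on $X_\rho$ enters. Concretely, for $v \in V_0$ pick the half-edge $e_*$ from $v$ (which has $\mu_\beta(e_*) \asymp \nu(B(v))$ by \eqref{half-edge transformed} since $h(v) = 0$); by continuity and the fundamental-theorem-of-calculus estimate along $e_*$ one has $|u(v)| \le |u(x)| + \int_{e_*} g\, ds_\rho$ for the ``far'' endpoint $x$ of $e_*$, hence (averaging over $e_*$) $|u(v)|^p \ls \dashint_{e_*} |u|^p \, d\mu_\beta + \mu_\beta(e_*)^{p-1}\int_{e_*} g^p \, d\mu_\beta$, using Lemma \ref{arc length lemma} to convert $ds_\rho$ to $d\mu_\beta$ and Jensen's inequality as in \eqref{Jensen}. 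Since $\mu_\beta(e_*) \asymp \nu(B(v))$ and (for $h(v)=0$) the factor $\alpha^{(\beta-1)h(v)}$ is $1$, multiplying through by $\nu(B(v))$ yields $|u(v)|^p \nu(B(v)) \ls \int_{e_*} |u|^p \, d\mu_\beta + \int_{e_*} g^p \, d\mu_\beta$.

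Summing over $v \in V_0$: the half-edges $e_*$ (one per $v \in V_0$) are pairwise disjoint subsets of $X_\rho$, so $\sum_{v\in V_0}\bigl(\int_{e_*}|u|^p \, d\mu_\beta + \int_{e_*} g^p \, d\mu_\beta\bigr) \le \|u\|_{L^p(X_\rho)}^p + \|g\|_{L^p(X_\rho)}^p$. Combining, $\|T_0 u\|_{L^p(Z)} \ls \|u\|_{L^p(X_\rho)} + \|g\|_{L^p(X_\rho)} \le \|u\|_{N^{1,p}(X_\rho)}$, and together with the first piece we obtain $\|Tu\|_{L^p(Z)} \ls \|u\|_{N^{1,p}(X_\rho)}$, taking $g = g_u$ at the end. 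The main obstacle I anticipate is the pointwise bound on $|u(v)|$: one must be careful that the estimate $|u(v)| \le |u(x)| + \int_{e_*} g\, ds_\rho$ is legitimate (it follows from $g$ being an upper gradient of the continuous function $u$ along the rectifiable curve $e_*$, per Remark \ref{graph capacity}), and that the conversion factors between arclength $ds_\rho$, the measure $\mu_\beta$, and $\nu(B(v))$ on a height-$0$ edge all collapse correctly via Lemma \ref{arc length lemma} and \eqref{half-edge transformed}. An equally clean alternative for this step, which I might use instead, is to invoke the $1$-Poincaré inequality for $(X_\rho, d_\rho, \mu_\beta)$ on a fixed-radius ball around $v$ to bound $|u(v) - u_{B_\rho(v,1)}|$ and then bound the average $u_{B_\rho(v,1)}$ by $\nu(B(v))^{-1/p}\|u\|_{L^p(B_\rho(v,1))}$ using Corollary \ref{interior ball measure}; either route requires only the bounded-overlap facts (Lemmas \ref{doubling overlap}, \ref{hull overlap}) to sum cleanly over $V_0$.
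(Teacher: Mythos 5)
Your proposal is correct but proceeds by a genuinely different route from the paper's. Both proofs reduce matters to the global estimate \eqref{global p trace} with $n=0$, but they handle the remaining term differently. The paper never estimates $T_0 u$ at all: it replaces $u$ by $u_* = \xi u$, where $\xi$ is a Lipschitz cutoff vanishing on $X_{\leq 0}$ and equal to $1$ on $X_{\geq 1}$. This forces $T_0 u_* \equiv 0$ while $T u_* = T u$, and the product rule for upper gradients gives $\|u_*\|_{N^{1,p}(X_\rho)} \ls \|u\|_{N^{1,p}(X_\rho)}$, so the full Newton norm bound drops out of \eqref{global p trace} immediately. You instead decompose $Tu = (Tu - T_0 u) + T_0 u$ and bound $\|T_0 u\|_{L^p(Z)}$ directly: Jensen's inequality over the partition of unity reduces it to $\sum_{v\in V_0}|u(v)|^p\nu(B(v))$, and then the pointwise value $|u(v)|$ is controlled by averaging the upper gradient inequality over a half-edge from $v$, with Lemma~\ref{arc length lemma} and \eqref{half-edge transformed} collapsing the measure comparisons at height~$0$. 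The half-edges chosen (one per $v \in V_0$) have essentially disjoint interiors, so the sum telescopes to $\|u\|_{L^p(X_\rho)}^p + \|g\|_{L^p(X_\rho)}^p$. The paper's cutoff trick is slicker and avoids pointwise estimates entirely; your argument is more elementary and concrete, requiring neither the Lipschitz cutoff nor the product rule, at the cost of unwinding the measure conversions on height-$0$ edges by hand. Both are correct and comparable in length.
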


\begin{proof}
We define a function $\xi: X_{\rho} \rightarrow [0,1]$ by setting $\xi(x) = 1$ for $x \in X_{\geq 1}$, $\xi(x) = 0$ for $x \in X_{\leq 0}$, and linearly interpolating (with respect to the metric $d_{\rho}$) the values of $\xi$ on each vertical edge connecting $X_{\leq 0}$ to $X_{\geq 1}$. On a given vertical edge $e$ connecting $v$ to $w$ with $h(v) = 0$ and $h(w) = 1$ we have from Lemma \ref{filling estimate both} that $d_{\rho}(v,w) \asymp 1$. Thus there is a constant $L = L(\alpha,\tau)$ such that $\xi$ is $L$-Lipschitz on $X_{\rho}$. Since $\xi|_{X_{\leq 0}} = 0$, it follows that the scaled characteristic function $L\chi_{X_{\geq 0}}$ defines an upper gradient for $\xi$ on $X$. 

Now set $u_{*} = \xi u$.  Then $|u_{*}| \leq |u|$ and therefore $\|u_{*}\|_{L^{p}(X_{\rho})} \leq \|u\|_{L^{p}(X_{\rho})}$. Let $g_{u}$ be a minimal $p$-weak upper gradient for $u$ on $X_{\rho}$, which is an upper gradient for $u$ on $X_{\rho}$ by Remark \ref{graph capacity}. The product rule for upper gradients implies that
\[
g_{*}:= L|u| + g_{u} \geq L\chi_{X_{\geq 0}}|u| + \xi \cdot g_{u}, 
\]
is an upper gradient of $u_{*}$. It follows that
\[
\|g_{*}\|_{L^{p}(X_{\rho})} \ls \|u\|_{L^{p}(X_{\rho})} + \|g_{u}\|_{L^{p}(X_{\rho})} = \|u\|_{N^{1,p}(X_{\rho})}.  
\]
We conclude from the above that we have $\|u_{*}\|_{N^{1,p}(X_{\rho})} \ls \|u\|_{N^{1,p}(X_{\rho})}$. 

Since $u_{*}|_{X_{\geq 1}} = u|_{X_{\geq 1}}$, it follows immediately from the defining formula \eqref{local trace formula} for the trace that we have $Tu_{*} = Tu$. On the other hand we have by construction that $T_{0}u_{*} \equiv 0$ since $u_{*}(v) = 0$ for all $v \in V_{0}$. It then follows from \eqref{global p trace} applied in the case $n = 0$ that
\[
\|Tu\|_{L^{p}(Z)} \ls \|u_{*}\|_{N^{1,p}(X_{\rho})} \ls  \|u\|_{N^{1,p}(X_{\rho})}.
\]
\end{proof}

Lipschitz functions on $X_{\rho}$ belong to $\t{N}^{1,p}_{\loc}(X_{\rho})$ and have a canonical extension by continuity to $\p X_{\rho} = Z$. We show below that this extension agrees with the trace $T$. The equality $\hat{u}|_{Z} = Tu$ below should be understood as holding for the distinguished $L$-Lipschitz representative of $Tu$ in $L^{p}_{\loc}(Z)$. 

\begin{prop}\label{Lip trace}
Let $u: X_{\rho} \rightarrow \R$ be $L$-Lipschitz and let $\hat{u}: \bar{X}_{\rho} \rightarrow \R$ denote the canonical $L$-Lipschitz extension of $u$ to the completion $\bar{X}_{\rho}$ of $X_{\rho}$. Then $\hat{u}|_{Z} = Tu$. In particular $Tu$ is $L$-Lipschitz. 
\end{prop}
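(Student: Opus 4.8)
The plan is to show that the Lipschitz extension $\hat u$ of $u$ to $Z = \p X_\rho$ is precisely the $L^p_{\loc}$-limit of the functions $T_n u$ defined in \eqref{local trace formula}, and hence coincides with $Tu$ $\nu$-almost everywhere; since $\hat u|_Z$ is $L$-Lipschitz and continuous, it is the distinguished Lipschitz representative of the equivalence class $Tu \in L^p_{\loc}(Z)$. By Proposition \ref{Lp local trace} it suffices to prove that $T_n u \to \hat u|_Z$ locally uniformly (or even just pointwise on a dense set, combined with an equicontinuity estimate), since local uniform convergence on bounded sets implies $L^p(B)$-convergence on every ball $B$, and $L^p$-limits are unique up to $\nu$-null sets.

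The key estimate is a pointwise bound on $|T_n u(z) - \hat u(z)|$ for $z \in Z$. Fix $z \in Z$ and $n \in \Z$. By the partition of unity property, $\sum_{v \in V_n} \psi_v(z) = 1$, and only vertices $v \in V_n$ with $z \in \supp(\psi_v) \subset B(v)$ contribute; for such $v$ we have $z \in B(v) = B_Z(\pi(v), \tau\alpha^{-n})$. I would then estimate
\[
|T_n u(z) - \hat u(z)| = \Big| \sum_{v \in V_n,\, z \in B(v)} \big(u(v) - \hat u(z)\big)\psi_v(z) \Big| \leq \max_{v \in V_n,\, z \in B(v)} |u(v) - \hat u(z)| = \max_{v} |\hat u(v) - \hat u(z)|,
\]
and bound $|\hat u(v) - \hat u(z)| \leq L\, d_\rho(v, z)$ using that $\hat u$ is $L$-Lipschitz on $\bar X_\rho$. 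The remaining point is to show $d_\rho(v, z) \to 0$ uniformly (in the relevant $v$ and $z$ in a bounded region) as $n \to \infty$. For this I would invoke Lemma \ref{vertex to base}: since $z, \pi(v) \in B(v)$ with $r(B(v)) = \tau\alpha^{-n}$, one has $d(z, \pi(v)) < 2\tau\alpha^{-n}$, and combining the estimate $d_\rho(v, \pi(v)) \asymp \alpha^{-h(v)} = \alpha^{-n}$ from Lemma \ref{vertex to base} with a bound on $d_\rho(\pi(v), z)$ in terms of $d(\pi(v), z)$ — which follows from Lemma \ref{filling estimate both} applied to the boundary points $\pi(v), z \in \p_\omega X$, giving $d_\rho(\pi(v), z) \lesssim \min\{1, d(\pi(v), z)\} \lesssim \alpha^{-n}$ once $n$ is large — yields $d_\rho(v, z) \lesssim \alpha^{-n}$. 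Hence $|T_n u(z) - \hat u(z)| \lesssim L\alpha^{-n} \to 0$, uniformly over all $z \in Z$.

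Therefore $T_n u \to \hat u|_Z$ uniformly on $Z$, hence in $L^p(B)$ for every ball $B \subset Z$, so $Tu = \hat u|_Z$ in $L^p_{\loc}(Z)$, i.e.\ $\nu$-a.e. Since $\hat u|_Z$ is $L$-Lipschitz, $Tu$ has an $L$-Lipschitz representative, and the identification of $\hat u|_Z$ with that representative is exactly the asserted equality. The main obstacle is the bookkeeping in the pointwise estimate — specifically making sure that the distance bound $d_\rho(v,z) \lesssim \alpha^{-n}$ is genuinely uniform over the finitely many contributing vertices $v \in V_n$ (which is guaranteed by Lemma \ref{doubling overlap}, bounding their number) and checking that Lemma \ref{filling estimate both} applies cleanly when one of the arguments is a boundary point; both are routine given the lemmas already established, so no serious difficulty is expected. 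One could alternatively phrase the argument purely through the $L^p$ estimate \eqref{local p trace} by noting that a Lipschitz $u$ has the constant multiple of a characteristic function as an upper gradient and arguing directly, but the uniform pointwise approach above is cleaner.
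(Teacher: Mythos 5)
Your proof is correct and takes essentially the same route as the paper: both establish the uniform pointwise estimate $|T_nu(z)-\hat u(z)|\lesssim L\alpha^{-n}$ and then pass to the limit using the $L^p_{\loc}$-convergence $T_nu\to Tu$. The only cosmetic difference is that the paper routes through a fixed vertex $v_n$ on an ascending anchored geodesic through $z$ (first bounding $|u(v_n)-\hat u(z)|$, then $|u(v_n)-T_nu(z)|$ via horizontal edges between adjacent vertices), whereas you bound the convex combination $|T_nu(z)-\hat u(z)|\le\max_v|\hat u(v)-\hat u(z)|$ directly — a slight streamlining. One small imprecision: after the biLipschitz normalization in Section~\ref{sec:fillings}, $Z$ is \emph{isometrically} identified with $\partial X_\rho$, so $d_\rho(\pi(v),z)=d(\pi(v),z)<\tau\alpha^{-n}$ immediately; the appeal to Lemma~\ref{filling estimate both} for boundary-to-boundary distance (and the way you write the resulting bound) is an unnecessary detour, though it does not affect the correctness of the conclusion.
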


\begin{proof}
We define $T_{n}u$ for each $n \in \Z$ as in \eqref{trace formula}.  Let $z \in Z$ be a given point and let $\gamma_{z}: \R \rightarrow X$ be an ascending geodesic line anchored at $z$ as given by Lemma \ref{second height connection}. Let $\{v_{n}\}_{n \in \Z}$ be the sequence of vertices on $\gamma_{z}$ with $h(v_{n}) = n$. Then $d(\pi(v_{n}),z) < \alpha^{-n}$ for each $n \in \Z$. For each $n \in \Z$ we then have by Lemma \ref{vertex to base},
\[
d_{\rho}(v_{n},z) \leq d_{\rho}(v_{n},\pi(v_{n})) + d(\pi(v_{n}),z) \ls \alpha^{-n}.  
\]
Since $\hat{u}$ is $L$-Lipschitz on $\bar{X}_{\rho}$ it then follows that
\[
|u(v_{n})-\hat{u}(z)| \ls L\alpha^{-n}. 
\]
On the other hand we have
\[
|u(v_{n})-T_{n}u(z)| \leq \sum_{v \in V_{n}} |u(v_{n})-u(v)|\psi_{v}(z)
\]
The terms in the sum on the right are nonzero only when $z \in B(v)$. Since $z \in B(v_{n})$, this implies that $z \in B(v_{n}) \cap B(v)$. Thus there is a horizontal edge $e$ from $v_{n}$ to $v$. It then follows from Lemma \ref{filling estimate both} that $d_{\rho}(v_{n},v) \ls \alpha^{-n}$.  We thus conclude in this case that
\[
|u(v_{n})-u(v)| \leq Ld_{\rho}(v_{n},v) \ls L\alpha^{-n},
\]
which implies that
\[
|u(v_{n})-T_{n}u(z)| \ls \sum_{v \in V_{n}} L\alpha^{-n}\psi_{v}(z) = L\alpha^{-n}. 
\]
Thus for all $n \in \Z$ and $z \in Z$ we have
\[
|T_{n}u(z)-\hat{u}(z)| \ls L\alpha^{-n}. 
\]
Since $T_{n}u \rightarrow Tu$ in $L^{p}(B)$ for each ball $B \subset Z$, it follows in particular that $T_{n}u \rightarrow Tu$ pointwise a.e.~ on $Z$. We thus obtain that for any $z \in Z$ such that $\lim_{n \rightarrow \infty} T_{n}u(z) = Tu(z)$ we in fact have $Tu(z) = \hat{u}(z)$. Thus $Tu$ agrees $\nu$-a.e.~ on $Z$ with the $L$-Lipschitz function $\hat{u}|_{Z}$, as desired. 
\end{proof}

We now estimate the Besov norm of the trace $Tu$ for $u \in \t{D}^{1,p}(X_{\rho}) \subset \t{N}^{1,p}_{\loc}(X_{\rho})$. Throughout the rest of this section we set $\theta = 1-\beta/p$, observing that $0 < \theta < 1$ since $p > \beta > 0$.

\begin{prop}\label{Dirichlet Besov finite}
Let $u \in \t{D}^{1,p}(X_{\rho})$ be given. Then
\[
\|Tu\|_{B^{\theta}_{p}(Z)} \ls \|u\|_{D^{1,p}(X_{\rho})}.
\]
Consequently if $u \in N^{1,p}(X_{\rho})$ then $Tu \in \ch{B}^{\theta}_{p}(Z)$ with the estimate
\[
\|Tu\|_{\ch{B}^{\theta}_{p}(Z)} \ls \|u\|_{N^{1,p}(X_{\rho})}.
\]
\end{prop}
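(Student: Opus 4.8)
The plan is to estimate $\|Tu\|_{B^{\theta}_{p}(Z)}$ through the discretized characterization of Lemma~\ref{Besov estimate}, applied with base equal to the filling parameter $\alpha$; since $p\theta = p-\beta$ this reads
\[
\|Tu\|_{B^{\theta}_{p}(Z)}^{p} \asymp \sum_{n\in\Z}\alpha^{n(p-\beta)}\int_{Z}\dashint_{B_{Z}(x,\alpha^{-n})}|Tu(x)-Tu(y)|^{p}\,d\nu(y)\,d\nu(x).
\]
For a fixed $n$ and $x,y$ with $d(x,y)<\alpha^{-n}$ I insert the discrete approximant $T_{n}u$ of \eqref{local trace formula} and split
\[
|Tu(x)-Tu(y)|^{p} \ls |Tu(x)-T_{n}u(x)|^{p} + |T_{n}u(x)-T_{n}u(y)|^{p} + |Tu(y)-T_{n}u(y)|^{p},
\]
with $g$ denoting a $p$-integrable upper gradient of $u$ on $X_{\rho}$ (which exists since $u\in\t D^{1,p}(X_{\rho})$). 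The two ``vertical'' terms and the ``horizontal'' middle term are treated separately, and in each case the contribution to the sum above is shown to be $\ls\|g\|_{L^{p}(X_{\rho})}^{p}$.

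For the vertical terms I revisit the proof of Proposition~\ref{Lp ball trace}. There the pointwise telescoping bound was derived using the H\"older exponent $\lambda=(p-\beta)/p=\theta$, the borderline value; taking instead any $\lambda<\theta$ (when $p>1$; for $p=1$ the factor $\alpha^{(\beta-1)j}$ in \eqref{refined successor estimate} already supplies the decay) yields, for $\nu$-a.e.\ $z\in Z$, a bound of the shape
\[
\alpha^{n(p-\beta)}|Tu(z)-T_{n}u(z)|^{p} \ls \sum_{j\ge n}\alpha^{-\eta(j-n)}\sum_{v\in V_{j},\,z\in B(v)}\Bigl(\sum_{e\in\mathcal{U}(v)}\int_{e}g^{p}\,d\mu_{\beta}\Bigr)\frac{\psi_{v}(z)}{\nu(B(v))}
\]
for some $\eta>0$, where $\mathcal{U}(v)$ is the set of upward edges at $v$. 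Integrating in $z$ and using $\|\psi_{v}\|_{L^{1}(Z)}\le\nu(B(v))$ collapses the inner sum to $\sum_{v\in V_{j}}\sum_{e\in\mathcal{U}(v)}\int_{e}g^{p}\,d\mu_{\beta}$; summing over $n\in\Z$ then contributes a convergent factor $\sum_{k\ge0}\alpha^{-\eta k}$ and leaves $\sum_{v\in V}\sum_{e\in\mathcal{U}(v)}\int_{e}g^{p}\,d\mu_{\beta}$, in which each edge of $X_{\rho}$ is counted at most once, so this is $\le\|g\|_{L^{p}(X_{\rho})}^{p}$. The $y$-term is reduced to the same estimate by Tonelli, using $\int_{B_{Z}(y,\alpha^{-n})}\nu(B_{Z}(x,\alpha^{-n}))^{-1}\,d\nu(x)\ls 1$ (by doubling of $\nu$).

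For the horizontal term, fix $x$ and pick $v_{0}=v_{0}(x)\in V_{n}$ with $x\in B(v_{0})$. When $d(x,y)<\alpha^{-n}$, every $v\in V_{n}$ with $\psi_{v}(x)\neq0$ or $\psi_{v}(y)\neq0$ has $\pi(v)$ within a bounded multiple of $\alpha^{-n}$ of $\pi(v_{0})$, hence by Lemma~\ref{filling estimate both} is joined to $v_{0}$ by an edge path of length at most $N_{0}=N_{0}(\alpha,\tau)$; let $G_{v_{0}}$ be the union of these paths, a subgraph of uniformly bounded size. Using $\sum_{v}\psi_{v}\le1$, the upper gradient inequality along these paths, Lemma~\ref{arc length lemma}, and Jensen's inequality \eqref{Jensen} exactly as in the proof of Proposition~\ref{Poincare type inequality}, I obtain
\[
\alpha^{n(p-\beta)}|T_{n}u(x)-T_{n}u(y)|^{p} \ls \frac{1}{\nu(B(v_{0}(x)))}\int_{G_{v_{0}(x)}}g^{p}\,d\mu_{\beta},
\]
which is independent of $y$. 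Integrating over $x$ after partitioning $Z$ into measurable sets $A_{v}\subset B(v)$ on which $v_{0}(x)=v$, using $\nu(A_{v})\le\nu(B(v))$ and summing over $n$, bounds the horizontal contribution by $\sum_{v\in V}\int_{G_{v}}g^{p}\,d\mu_{\beta}$; since the vertices of each $G_{v}$ have uniformly controlled heights and projections, a fixed edge lies in $G_{v}$ for only boundedly many $v$, so this is again $\ls\|g\|_{L^{p}(X_{\rho})}^{p}$.

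Summing the three contributions gives $\|Tu\|_{B^{\theta}_{p}(Z)}\ls\|g\|_{L^{p}(X_{\rho})}$, and taking the infimum over $p$-integrable upper gradients $g$ of $u$ proves the first assertion. For the second assertion, $\t N^{1,p}(X_{\rho})\subset\t D^{1,p}(X_{\rho})$, so the first assertion gives $\|Tu\|_{B^{\theta}_{p}(Z)}\ls\|u\|_{D^{1,p}(X_{\rho})}\le\|u\|_{N^{1,p}(X_{\rho})}$, which together with Proposition~\ref{p integrable trace} shows $Tu\in\ch{B}^{\theta}_{p}(Z)$ with $\|Tu\|_{\ch{B}^{\theta}_{p}(Z)}\ls\|u\|_{N^{1,p}(X_{\rho})}$. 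I expect the horizontal term to be the main obstacle: the local bookkeeping (bounded paths, bounded subgraphs, bounded overlap of $\{G_{v}\}_{v\in V}$) must be arranged so that, after summing over all scales $n$, the $g^{p}$-mass of each edge is charged only $O(1)$ times. Relatedly, the crude trace estimate \eqref{global p trace} is too lossy to be summed over $n$, which forces the sub-critical H\"older exponent in the vertical estimate.
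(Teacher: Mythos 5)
Your proposal is correct and follows essentially the same route as the paper's proof: the same three-way splitting with $T_{n}u$, the same sub-critical H\"older exponent $\lambda<\theta$ (the paper takes $\lambda=\theta/2$) to make the vertical terms summable in $n$, and bounded-overlap bookkeeping for the horizontal term; your closing diagnosis that \eqref{global p trace} is too lossy to be summed over $n$, forcing the sub-critical exponent, is exactly right. The one place where the paper is a bit tighter is the horizontal term: because $\tau>3$, if $\psi_{v}(x)\psi_{v'}(y)\neq0$ for $v,v'\in V_{n}$ and $d(x,y)<\alpha^{-n}$, then already $B(v)\cap B(v')\neq\emptyset$, so $v$ and $v'$ are joined by a \emph{single} horizontal edge; this makes the bounded-path subgraph $G_{v_{0}(x)}$ and its overlap analysis unnecessary (and, as a small side note, Lemma \ref{filling estimate both} is a distance comparison and does not by itself produce the bounded-length edge paths you invoke --- that requires the bounded-degree property of $X$ or, more simply, the single-edge observation above).
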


\begin{proof}
We start with $u \in \t{D}^{1,p}(X_{\rho})$ as specified. Since $u \in \t{D}^{1,p}_{\loc}(X_{\rho}) = \t{N}^{1,p}_{\loc}(X_{\rho})$ it follows from Proposition \ref{Lp local trace} that the trace $Tu$ given by formula \eqref{local trace formula} exists and satisfies $Tu \in L^{p}_{\loc}(Z)$. We have the estimate for any $n \in \Z$ and $\nu$-a.e.~ $x,y \in Z$,
\[
|Tu(x)-Tu(y)|^{p} \ls |Tu(x)-T_{n}u(x)|^{p} + |T_{n}u(x)-T_{n}u(y)|^{p} + |Tu(y)-T_{n}u(y)|^{p}.
\]
For $n \in \Z$ and $x \in Z$ we define 
\[
A_{n}(x) = \{y \in Z: \alpha^{-n-1} \leq d(x,y) < \alpha^{-n}\}. 
\]
We then have the following estimate for the Besov norm \eqref{Besov norm} of $Tu$, using the doubling property of $\nu$, 
\begin{align*}
\|Tu\|_{B^{\theta}_{p}(Z)}^{p} &\ls \int_{Z} \sum_{n \in \Z} \int_{A_{n}(x)}\frac{|Tu(x)-T_{n}u(x)|^{p}}{\alpha^{-n \theta p}}\frac{d\nu(y)d\nu(x)}{\nu(B(x,\alpha^{-n}))} \\
&+  \int_{Z} \sum_{n\in \Z} \int_{A_{n}(x)}\frac{|T_{n}u(x)-T_{n}u(y)|^{p}}{\alpha^{-n \theta p}}\frac{d\nu(y)d\nu(x)}{\nu(B(x,\alpha^{-n}))}\\
&+  \int_{Z} \sum_{n \in \Z} \int_{A_{n}(y)}\frac{|Tu(y)-T_{n}u(y)|^{p}}{\alpha^{-n \theta p}}\frac{d\nu(x)d\nu(y)}{\nu(B(y,\alpha^{-n}))}.
\end{align*}
Similarly to the proof of \cite[Theorem 11.1]{BBS21}, we label the three summands on the right sequentially as $(I)$, $(II)$, and $(III)$, and estimate each one separately. Since $(I)$ and $(III)$ are related by switching the roles of $x$ and $y$, it suffices to estimate $(I)$ and $(II)$. 

We begin with $(I)$. Since none of the terms depend on $y$, we can integrate with respect to this variable and use the fact that $\nu(A_{n}(x)) \leq \nu(B(x,\alpha^{-n}))$ (since $A_{n}(x) \subset B(x,\alpha^{-n})$) to obtain 
\[
(I) \ls \int_{Z} \sum_{n\in \Z} \frac{|Tu(x)-T_{n}u(x)|^{p}}{\alpha^{-n \theta p}}d\nu(x) 
\]
We let $g$ be a minimal $p$-weak upper gradient for $u$ on $X_{\rho}$, which is an upper gradient for $u$ by Remark \ref{graph capacity}. By letting $m \rightarrow \infty$ in inequality \eqref{estimate I} we obtain the estimate for a.e.~ $x \in Z$ and any choice of $\la > 0$, 
\begin{equation}\label{estimate I limit}
|Tu(x)-T_{n}u(x)|^{p} \lesssim \alpha^{-\la p n  }\sum_{j=n}^{\infty} \alpha^{(\beta + p\la-p) j}\sum_{v \in V_{j}} \left(\sum_{e \in \mathcal{U}(v)} \int_{e} g^{p} \, d\mu_{\beta}\right)\frac{\psi_{v}(x)}{\nu(B(v))},
\end{equation}
since $T_{n}u \rightarrow Tu$ pointwise a.e. Here the implied constant depends additionally on $\la$. Substituting this estimate into $(I)$ for each $n \in \Z$ gives 
\begin{equation}\label{first (I)}
(I) \ls \int_{Z}\sum_{n \in \Z} \alpha^{(\theta-\la)p n  }\sum_{j=n}^{\infty} \alpha^{(\beta+p\la-p) j}\sum_{v \in V_{j}} \left(\sum_{e \in \mathcal{U}(v)} \int_{e} g^{p} \, d\mu_{\beta}\right)\frac{\psi_{v}(x)}{\nu(B(v))} \, d\nu(x).
\end{equation}
Using the bound \eqref{partition bound}, we conclude from inequality \eqref{first (I)} that
\[
(I) \ls \sum_{n \in \Z} \alpha^{(\theta-\la)p n  }\sum_{j=n}^{\infty} \alpha^{(\beta+p\la-p) j}\sum_{v \in V_{j}} \left(\sum_{e \in \mathcal{U}(v)} \int_{e} g^{p} \, d\mu_{\beta}\right).
\]
Using Tonelli's theorem we can switch the order of summation to obtain
\[
(I) \ls \sum_{j \in \Z} \alpha^{(\beta+p\la-p) j}\sum_{v \in V_{j}} \left(\sum_{e \in \mathcal{U}(v)} \int_{e} g^{p} \, d\mu_{\beta}\right) \sum_{n=-\infty}^{j}  \alpha^{(\theta-\la)p n  }.
\]
We set $\la = \theta/2 = (p-\beta)/2p > 0$. Summing the geometric series on the far right above then gives
\[
(I) \ls \sum_{j \in \Z} \alpha^{(\beta+p\theta-p) j}\sum_{v \in V_{j}} \left(\sum_{e \in \mathcal{U}(v)} \int_{e} g^{p} \, d\mu_{\beta}\right).
\]
Since we assumed that $\theta = (p-\beta)/p$, this simplifies to the desired estimate $(I) \ls \|g\|_{L^{p}(X_{\rho})}$.   

We now estimate $(II)$. For this we observe that 
\begin{align*}
|T_{n}u(x)-T_{n}u(y)|^{p} &\ls \sum_{v \in V_{n}}\sum_{v' \in V_{n}}|u(v)-u(v')|^{p}\psi_{v}^{p}(x)\psi_{v'}^{p}(y) \\
&\leq \sum_{v \in V_{n}}\sum_{v' \in V_{n}}|u(v)-u(v')|^{p}\psi_{v}(x)\psi_{v'}(y)
\end{align*}
using $0 \leq \psi_{v} \leq 1$, since this sum contains a number of terms uniformly bounded in terms of the number of $v \in V_{n}$ such that $\psi_{v}(x) \neq 0$ and $\psi_{v}(y) \neq 0$, which is uniformly bounded in terms of $\alpha$, $\tau$, and $C_{\nu}$ by Lemma \ref{doubling overlap}. Now suppose in addition that $y \in B_{Z}(x,\alpha^{-n})$, which follows from $y \in A_{n}(x)$. If $\psi_{v}(x)\psi_{v'}(y) \neq 0$ for some $v,v' \in V_{n}$ then we must have $y \in B_{Z}(v',2\alpha^{-n})$ by the construction of the Lipschitz partition of unity in Proposition \ref{partition of unity}. Since $d(x,y) < \alpha^{-n}$ it then follows that $x \in B_{Z}(v',3\alpha^{-n}) \subset B(v')$ (since $\tau > 3$). Since we also have $x \in B(v)$, it then follows that $B(v) \cap B(v') \neq \emptyset$. This implies either that $v = v'$ (in which case $|u(v)-u(v')| = 0$) or that there is a horizontal edge in $X$ from $v$ to $v'$. Then, writing $\mathcal{H}(v)$ for the set of all horizontal edges having $v$ as a vertex, using the comparison \eqref{arc length measure},  using the Jensen inequality estimate \eqref{Jensen}, and then using the comparison \eqref{transformed}, we conclude that
\begin{align*}
|T_{n}u(x)-T_{n}u(y)|^{p} &\ls \sum_{v \in V_{n}}\sum_{v' \in V_{n}}\sum_{e \in \mathcal{H}(v)}\left(\int_{e}g\,ds_{\rho}\right)^{p}\psi_{v}(x)\psi_{v'}(y) \\
&= \sum_{v \in V_{n}}\sum_{e \in \mathcal{H}(v)}\left(\int_{e}g\,ds_{\rho}\right)^{p}\psi_{v}(x) \\
&\asymp \alpha^{p(\beta-1)n}\sum_{v \in V_{n}}\sum_{e \in \mathcal{H}(v)}\left(\int_{e}g\,d\mu_{\beta}\right)^{p}\frac{\psi_{v}(x)}{\nu(B(v))^{p}} \\
&\ls \alpha^{p(\beta-1)n}\left(\sum_{v \in V_{n}}\sum_{e \in \mathcal{H}(v)}\mu_{\beta}(e)^{p-1}\int_{e}g^{p}\,d\mu_{\beta}\right)\frac{\psi_{v}(x)}{\nu(B(v))^{p}} \\
&\asymp \alpha^{(\beta-p)n}\left(\sum_{v \in V_{n}}\sum_{e \in \mathcal{H}(v)}\int_{e}g^{p}\,d\mu_{\beta}\right)\frac{\psi_{v}(x)}{\nu(B(v))}.
\end{align*} 
Using this estimate in $(II)$ and using the fact that $p - \beta = p\theta$, we conclude that
\[
(II) \ls \int_{Z}\sum_{n \in \Z}\int_{A_{n}(x)} \sum_{v \in V_{n}}\sum_{e \in \mathcal{H}(v)}\int_{e}g^{p}\,d\mu_{\beta} \frac{\psi_{v}(x)}{\nu(B(v))}\, \frac{d\nu(y)d\nu(x)}{\nu(B(x,\alpha^{-n}))},
\]
which, upon integrating with respect to $y$ followed by $x$ and using $\nu(A_{n}(x)) \leq \nu(B_{Z}(x,\alpha^{-n}))$ and the bound \eqref{partition bound}, gives
\[
(II) \ls \sum_{v \in V_{n}}\sum_{e \in \mathcal{H}(v)}\int_{e}g^{p}\,d\mu_{\beta} \leq \|g\|_{L^{p}(X_{\rho})}.
\]
Since $\|g\|_{L^{p}(X_{\rho})} = \|u\|_{D^{1,p}(X_{\rho})}$, we conclude the desired estimate for $\|Tu\|_{B^{\theta}_{p}(Z)}$. The corresponding estimate for $\|Tu\|_{\ch{B}^{\theta}_{p}(Z)}$ then follows from Proposition \ref{p integrable trace}.
\end{proof}

We conclude from Proposition \ref{Dirichlet Besov finite} that the trace $T: \t{N}^{1,p}_{\loc}(X_{\rho}) \rightarrow L^{p}_{\loc}(Z)$ restricts to bounded linear operators $T: N^{1,p}(X_{\rho}) \rightarrow \ch{B}^{\theta}_{p}(Z)$ and $T: D^{1,p}(X_{\rho}) \rightarrow B^{\theta}_{p}(Z)$ when we set $\theta = 1-\beta/p$. In the next section we will show that these linear operators are surjective. 


We can now generalize \cite[Proposition 11.2]{BBS21} and \cite[Theorem 11.3]{BBS21} to our setting. The compactness of $Z$ and the uniformization $\bar{X}_{\rho}$ are only used to show that the metric measure spaces $(X_{\rho},d_{\rho},\mu_{\beta})$ and $(\bar{X}_{\rho},d_{\rho},\mu_{\beta})$ are doubling and support a 1-Poincar\'e inequaliy, and to then derive the corresponding estimates of Proposition \ref{Dirichlet Besov finite} in \cite{BBS21}.  Once all of these claims have also been verified in for the case of noncompact $Z$ and $\bar{X}_{\rho}$, the proofs for \cite[Proposition 11.2]{BBS21} and \cite[Theorem 11.3]{BBS21} carry over verbatim to the noncompact setting. Since the proofs are short, we reproduce abbreviated versions of them here for the convenience of the reader.

The first proposition shows that the boundary measure $\nu$ on $Z = \p X_{\rho}$ is absolutely continuous with respect to the $C_{p}^{\bar{X}_{\rho}}$-capacity. We recall that our parameter $\beta$ corresponds to $\beta/\epsilon$ in \cite{BBS21}. By Remark \ref{graph capacity} any subset $G \subset \bar{X}_{\rho}$ with $C_{p}^{\bar{X}_{\rho}}(G) = 0$ must satisfy $G \subset Z$. 

\begin{prop}\label{absolutely continuous capacity}\cite[Proposition 11.2]{BBS21}
Let $G \subset Z$. If $C_{p}^{\bar{X}_{\rho}}(G) = 0$ then $\nu(G) = 0$. 
\end{prop}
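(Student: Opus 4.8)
The plan is to argue by contraposition: starting from a set $G \subset Z$ with $\nu(G) > 0$, I would produce a lower bound for $C_{p}^{\bar{X}_{\rho}}(G)$ showing it is strictly positive. The natural tool is the trace operator $T$ together with the estimate from Proposition \ref{p integrable trace}, which controls $\|Tu\|_{L^{p}(Z)}$ in terms of $\|u\|_{N^{1,p}(X_{\rho})}$. The subtlety is that the definition of $C_{p}^{\bar{X}_{\rho}}(G)$ uses admissible functions $u \in \t{N}^{1,p}(\bar{X}_{\rho})$ with $u \geq 1$ \emph{on $G$} in the pointwise sense (functions in Newtonian spaces are defined everywhere), while the trace $T$ is defined for functions on $X_{\rho}$, and $Tu$ is only an $L^{p}$-equivalence class on $Z$. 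So the main work is to reconcile these two notions of ``$u$ restricted to $Z$''.

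\textbf{Key steps.} First I would reduce to the case of a bounded set: since $C_{p}^{\bar{X}_{\rho}}$ is an outer measure (countably subadditive) and $\nu$ is a Borel measure with $\nu(B) < \infty$ for all balls, if $C_{p}^{\bar{X}_{\rho}}(G) = 0$ then $C_{p}^{\bar{X}_{\rho}}(G \cap B) = 0$ for each ball $B \subset Z$, and it suffices to show $\nu(G \cap B) = 0$ for each such $B$; so I may assume $G$ is a bounded subset of some ball $B_{0} \subset Z$. Second, given $\eta > 0$, choose $u \in \t{N}^{1,p}(\bar{X}_{\rho})$ with $u \geq 1$ on $G$ and $\|u\|_{N^{1,p}(\bar{X}_{\rho})}^{p} < \eta$. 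Restricting $u$ to $X_{\rho}$ gives a function $u|_{X_{\rho}} \in N^{1,p}(X_{\rho})$ (with $\|u|_{X_{\rho}}\|_{N^{1,p}(X_{\rho})} \leq \|u\|_{N^{1,p}(\bar{X}_{\rho})}$, since $\mu_{\beta}(\p X_{\rho}) = 0$ means the $L^p$ norms agree and an upper gradient on $\bar{X}_{\rho}$ restricts to one on $X_{\rho}$). Third — and this is the crucial identification — I claim that $T(u|_{X_{\rho}}) = u|_{Z}$ $\nu$-a.e.\ on $Z$, where $u|_{Z}$ is the pointwise restriction of the Newtonian function $u$ to $Z = \p X_{\rho}$. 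Granting this, since $u \geq 1$ on $G$ pointwise, we get $T(u|_{X_{\rho}}) \geq 1$ $\nu$-a.e.\ on $G$, hence by Proposition \ref{p integrable trace}
\[
\nu(G)^{1/p} \leq \|T(u|_{X_{\rho}})\|_{L^{p}(Z)} \ls \|u|_{X_{\rho}}\|_{N^{1,p}(X_{\rho})} \leq \|u\|_{N^{1,p}(\bar{X}_{\rho})} < \eta^{1/p}.
\]
Letting $\eta \to 0$ forces $\nu(G) = 0$.

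\textbf{The main obstacle.} The heart of the argument is the third step: showing the abstract trace $T(u|_{X_{\rho}})$ coincides $\nu$-a.e.\ with the pointwise boundary restriction of a Newtonian function $u \in \t{N}^{1,p}(\bar{X}_{\rho})$. The strategy here is to exploit Lebesgue-point / quasicontinuity properties of Newtonian functions on the doubling, Poincar\'e space $\bar{X}_{\rho}$ (Proposition \ref{all beta doubling} and Theorem \ref{outer capacity}): a $C_{p}^{\bar{X}_{\rho}}$-quasicontinuous representative of $u$ has the property that for $C_{p}^{\bar{X}_{\rho}}$-q.e.\ point $z \in Z$, the averages $u_{B_{\rho}(z,s)} \to u(z)$ as $s \to 0$, and these averages over balls in $\bar{X}_{\rho}$ can be compared — via Corollary \ref{interior ball measure} and the hull approximation Lemma \ref{hull approximation} — to the averages $u_{H^{B}}$ over hulls, which in turn relate to the values $u(v)$ entering the defining sum \eqref{local trace formula} for $T_{n}u$. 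One then shows $T_{n}u(z) \to u(z)$ for q.e., hence $\nu$-a.e., $z$, while simultaneously $T_{n}u \to Tu$ in $L^{p}_{\loc}$ and hence $\nu$-a.e.\ along a subsequence; comparing the two limits gives $Tu = u|_Z$ $\nu$-a.e. An alternative, possibly cleaner route: approximate $u$ in $N^{1,p}(\bar{X}_{\rho})$ by Lipschitz functions $u_{k}$ on $\bar{X}_{\rho}$ (density of Lipschitz functions, valid since $\bar{X}_{\rho}$ is complete doubling Poincar\'e and proper), apply Proposition \ref{Lip trace} to identify $T(u_{k}|_{X_{\rho}}) = u_{k}|_{Z}$, and pass to the limit using boundedness of $T$ on one side and the trace theorem on the boundary values on the other — though this requires knowing $u_{k}|_{Z} \to u|_{Z}$ in an appropriate sense, which again circles back to a capacitary/quasicontinuity argument. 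I expect the bookkeeping in either approach to be the only real difficulty; everything else is a direct application of Propositions \ref{p integrable trace} and the subadditivity of capacity.
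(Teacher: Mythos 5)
Your overall shape is right — use the trace bound of Proposition \ref{p integrable trace} to transfer a small Newtonian norm to a small $L^p$ norm on $Z$ — but your ``key step'' is circular, and you acknowledge as much in your last paragraph without resolving it. You want to show $T(u|_{X_\rho}) = u|_Z$ $\nu$-a.e.\ for an arbitrary admissible $u \in \t{N}^{1,p}(\bar{X}_\rho)$. Both of your proposed routes prove this only $C_p^{\bar{X}_\rho}$-q.e., and the passage from ``q.e.'' to ``$\nu$-a.e.'' is \emph{exactly} the implication being proved (zero $C_p^{\bar{X}_\rho}$-capacity $\Rightarrow$ zero $\nu$-measure). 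In the paper, this identification is Proposition \ref{trace characterization}, whose proof explicitly invokes Proposition \ref{absolutely continuous capacity} — so you cannot use it (or reprove it along the lines you sketch) without a vicious circle.

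The paper avoids the problem by never touching an arbitrary Newtonian admissible. It first uses Theorem \ref{outer capacity} to shrink $G$ to a Borel superset $G'$ of zero capacity, then for each compact $K \subset G'$ invokes the sharp capacitary extremal result \cite[Theorem 6.7(xi)]{BB11} (valid because $(\bar{X}_\rho,d_\rho,\mu_\beta)$ is complete, doubling, and supports a $p$-Poincar\'e inequality): there are \emph{Lipschitz} functions $u_k$ with $u_k \equiv 1$ on $K$ pointwise and $\|u_k\|_{N^{1,p}(\bar{X}_\rho)} < 1/k$. For Lipschitz functions, $Tu_k = u_k|_Z$ holds pointwise by Proposition \ref{Lip trace} — no capacitary argument required — so $Tu_k = 1$ on $K$ and Proposition \ref{p integrable trace} gives $\nu(K)^{1/p} \ls \|u_k\|_{N^{1,p}(\bar{X}_\rho)} < 1/k \to 0$. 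Inner regularity of the Borel regular measure $\nu$ on the Borel set $G'$ then finishes. The two tools you are missing are precisely the reduction via outer capacity and the Lipschitz extremal functions of \cite[Theorem 6.7(xi)]{BB11}; with them, the whole issue of matching $T$ with pointwise boundary restriction for a general Newtonian function never arises.
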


\begin{proof}
By Theorem \ref{outer capacity} we can find open sets $U_{n} \subset \bar{X}_{\rho}$ for each $n \in \N$ with $G \subset U_{n}$ and $C_{p}^{\bar{X}_{p}}(U_{n}) < 1/n$. The intersection $G' = \bigcap_{n = 1}^{\infty}U_{n}$ then defines a Borel subset of $\bar{X}_{\rho}$ with $C_{p}^{\bar{X}_{\rho}}(G') = 0$, hence $G' \subset Z$. Let $K \subset G'$ be compact. Since $(\bar{X}_{\rho},d_{\rho},\mu_{\beta})$ supports a $p$-Poincar\'e inequality, by \cite[Theorem 6.7(xi)]{BB11} we can find Lipschitz functions $u_{k}$ on $\bar{X}_{\rho}$ such that $u_{k} = 1$ on $K$ and $\|u_{k}\|_{N^{1,p}(\bar{X}_{\rho})} < 1/k$. By Proposition \ref{Lip trace} we have $Tu_{k} = u_{k}|_{Z}$, hence in particular $Tu_{k} = 1$ on $K$ as well. Thus by Proposition \ref{p integrable trace} we have for each $k$,
\[
\nu(K)^{1/p} \leq \|Tu_{k}\|_{L^{p}(Z)} \ls \|u_{k}\|_{N^{1,p}(\bar{X}_{\rho})} < \frac{1}{k}.
\]
By letting $k \rightarrow \infty$ we conclude that $\nu(K) = 0$. Since $G'$ is a Borel set and $\nu$ is a Borel regular measure on $Z$, we conclude that
\[
\nu(G) \leq \nu(G') = \sup_{K \subset G'} \nu(K) = 0,
\] 
where the supremum is taken over all compact subsets $K$ of $G'$. 
\end{proof}

Since $(X_{\rho},d_{\rho},\mu_{\beta})$ is a uniform geodesic metric measure space that is doubling and satisfies a $p$-Poincar\'e inequality, by work of J. Bj\"orn and Shanmugalingam \cite[Proposition 5.9]{BJS07} there is a bounded linear operator assigning to any $u \in 
N^{1,p}(X_{\rho})$ an extension $\hat{u} \in \t{N}^{1,p}(\bar{X}_{\rho})$. We can then consider the restriction $\hat{u}|_{Z}$ of this extension to $Z$. The next theorem gives a characterization of the trace $Tu \in \ch{B}^{\theta}_{p}(Z)$ for $u \in N^{1,p}(X_{\rho})$ and shows that $\hat{u}|_{Z}$ agrees $\nu$-a.e.~ with $Tu$. 

\begin{prop}\label{trace characterization}\cite[Theorem 11.3]{BBS21}
Let $u \in N^{1,p}(X_{\rho})$. Then $u$ has an extension $\hat{u} \in \t{N}^{1,p}(\bar{X}_{\rho})$. This extension satisfies $\hat{u}|_{Z} = Tu$ $\nu$-a.e.~. Consequently we have the estimates
\[
\|\hat{u}|_{Z}\|_{B^{\theta}_{p}(Z)} \ls \|u\|_{D^{1,p}(X_{\rho})},
\]
and 
\[
\|\hat{u}|_{Z}\|_{L^{p}(Z)} \ls \|u\|_{N^{1,p}(X_{\rho})}.
\]
Moreover, for $C_{p}^{\bar{X}_{\rho}}$-q.e.~ (and thus $\nu$-a.e.) $z \in Z$ we have
\begin{equation}\label{N characterization limit}
\lim_{r \rightarrow 0^{+}} \dashint_{X_{\rho} \cap B_{\rho}(z,r)}|u-\hat{u}(z)|^{p}\,d\mu_{\beta} = 0. 
\end{equation}
\end{prop}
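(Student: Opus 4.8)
The plan is to prove Proposition \ref{trace characterization} in three stages: first produce the extension $\hat u$ and identify $\hat u|_Z$ with $Tu$, then record the norm estimates as immediate consequences of earlier results, and finally establish the Lebesgue-type limit \eqref{N characterization limit}.

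\emph{Existence of the extension and identification of the boundary trace.} Since $(X_\rho,d_\rho,\mu_\beta)$ is a uniform geodesic metric measure space that is doubling and supports a $p$-Poincar\'e inequality by Proposition \ref{all beta doubling}, I would invoke \cite[Proposition 5.9]{BJS07} to obtain a bounded linear extension operator $N^{1,p}(X_\rho) \to \t N^{1,p}(\bar X_\rho)$, $u \mapsto \hat u$. To see that $\hat u|_Z = Tu$ $\nu$-a.e., I would use density: Lipschitz functions are dense in $N^{1,p}(X_\rho)$ (since $(X_\rho,d_\rho,\mu_\beta)$ is doubling and supports a Poincar\'e inequality, by \cite[Theorem 5.1]{BB11} or the analogous statement in \cite{HKST}), and for Lipschitz $u$ the claim $\hat u|_Z = Tu$ is exactly Proposition \ref{Lip trace}, where $\hat u$ is the canonical Lipschitz extension, which in this case agrees with the output of the \cite{BJS07} operator (both being the continuous extension). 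For a general $u$, pick Lipschitz $u_k \to u$ in $N^{1,p}(X_\rho)$; then $\hat u_k \to \hat u$ in $\t N^{1,p}(\bar X_\rho)$ by boundedness of the extension operator, so $\hat u_k|_Z \to \hat u|_Z$ in $L^p(Z)$ after passing to a subsequence (actually $\hat u_k|_Z \to \hat u|_Z$ q.e.\ along a subsequence, using that $N^{1,p}(\bar X_\rho)$-convergence implies q.e.\ convergence of a subsequence by \cite[Proposition 7.1.31]{HKST} type arguments), while $Tu_k \to Tu$ in $L^p(Z)$ by boundedness of $T: N^{1,p}(X_\rho)\to L^p(Z)$ from Proposition \ref{p integrable trace}; since $\hat u_k|_Z = Tu_k$ for each $k$ by Proposition \ref{Lip trace}, we conclude $\hat u|_Z = Tu$ $\nu$-a.e.

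\emph{Norm estimates.} These are now immediate: the bound $\|\hat u|_Z\|_{B^\theta_p(Z)} \ls \|u\|_{D^{1,p}(X_\rho)}$ follows from $\hat u|_Z = Tu$ together with Proposition \ref{Dirichlet Besov finite}, and likewise $\|\hat u|_Z\|_{L^p(Z)} \ls \|u\|_{N^{1,p}(X_\rho)}$ follows from $\hat u|_Z = Tu$ and Proposition \ref{p integrable trace}.

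\emph{The Lebesgue-point limit.} This is the main obstacle. The statement \eqref{N characterization limit} asserts that $C_p^{\bar X_\rho}$-q.e.\ point $z \in Z$ is a Lebesgue point of the extended function $\hat u$ viewed inside $\bar X_\rho$, with the averages taken over $X_\rho \cap B_\rho(z,r)$. Since $\hat u \in \t N^{1,p}(\bar X_\rho)$ and $(\bar X_\rho,d_\rho,\mu_\beta)$ is complete, doubling, and supports a $p$-Poincar\'e inequality by Proposition \ref{all beta doubling}, $\hat u$ is $C_p^{\bar X_\rho}$-quasicontinuous by Theorem \ref{outer capacity}, and it is a standard fact in this setting (see \cite[Theorem 9.2.8]{HKST} or \cite[Theorem 4.1, Remark 4.2]{BB11}) that quasicontinuous Newtonian functions have Lebesgue points q.e.\ with respect to $C_p^{\bar X_\rho}$; that is, $\lim_{r\to 0^+}\dashint_{B_\rho(z,r)}|\hat u - \hat u(z)|^p\,d\mu_\beta = 0$ for $C_p^{\bar X_\rho}$-q.e.\ $z \in \bar X_\rho$, in particular for q.e.\ $z \in Z$. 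To pass from averaging over $B_\rho(z,r)$ to averaging over $X_\rho \cap B_\rho(z,r)$, I would note $\mu_\beta(\p X_\rho) = 0$, so $\mu_\beta(B_\rho(z,r)) = \mu_\beta(X_\rho \cap B_\rho(z,r))$ and the two averages coincide. The one point requiring care is that the cited q.e.-Lebesgue-point result for Newtonian functions is usually stated for the Newtonian representative itself; here $\hat u$ \emph{is} the Newtonian (pointwise-defined, quasicontinuous) representative on $\bar X_\rho$, and since for $C_p^{\bar X_\rho}$-q.e.\ $z$ the value $\hat u(z)$ is unambiguous (two q.e.-equal Newtonian functions agree q.e.\ by \cite[Proposition 7.1.31]{HKST}), the conclusion \eqref{N characterization limit} holds with $\hat u(z)$ as written. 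Finally, by Proposition \ref{absolutely continuous capacity}, $C_p^{\bar X_\rho}(G) = 0$ implies $\nu(G) = 0$, so the q.e.\ statement upgrades to a $\nu$-a.e.\ statement on $Z$, completing the proof.
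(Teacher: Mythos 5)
Your proof follows essentially the same route as the paper — approximate by Lipschitz functions, use Propositions \ref{Lip trace} and \ref{p integrable trace} on the approximants, pass to a $\nu$-a.e.\ subsequential limit, then deduce the norm bounds and appeal to the Lebesgue-point theory for Newtonian functions on $\bar X_\rho$. Two points, however, need attention.

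First, a genuine gap in the Lebesgue-point step for $p=1$. The result you cite, \cite[Theorem 9.2.8]{HKST}, applies only for $p>1$. The paper treats $p=1$ separately: it invokes \cite[Theorem 4.1 and Remark 4.7]{KKST08} and, crucially, notes that $\mu_\beta(X_\rho)=\infty$ (Proposition \ref{infinite measure}), since the $p=1$ result in that reference requires a condition of this type. As written, your argument establishes \eqref{N characterization limit} only for $p>1$.

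Second, a smaller issue with where you approximate. You take Lipschitz $u_k \to u$ in $N^{1,p}(X_\rho)$ and then extend. But the standard density-of-Lipschitz results such as \cite[Theorem 4.1]{S04} are proved for \emph{complete} doubling Poincar\'e spaces, and $X_\rho$ is incomplete. The paper avoids this by approximating the extension $\hat u$ in $N^{1,p}(\bar X_\rho)$ directly, where $\bar X_\rho$ is complete, and then restricting to $X_\rho$; the restricted approximants are then Lipschitz on $X_\rho$ and one also gets the q.e.\ pointwise convergence along the full sequence from \cite[Corollary 3.9]{S04}, rather than along a subsequence as in your version. Your density claim can be repaired by routing it through the extension/restriction isomorphism with $N^{1,p}(\bar X_\rho)$, which makes your argument collapse back into the paper's. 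Your appeal to the uniqueness of the extension up to sets of zero capacity to identify the \cite{BJS07} extension of a Lipschitz function with its continuous extension (needed to apply Proposition \ref{Lip trace}) is correct but should be stated explicitly rather than glossed as ``both being the continuous extension.''
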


The notation $r \rightarrow 0^{+}$ indicates taking the limit through positive values of $r$. The extension $\hat{u}$ is unique up to sets of zero $p$-capacity in $\bar{X}_{\rho}$: if $\hat{u}_{1}$ and $\hat{u}_{2}$ are two such extensions then $\hat{u}_{1} = \hat{u}_{2}$ $\mu_{\beta}$-a.e.~ on $\bar{X}_{\rho}$ since $\mu_{\beta}(\p X_{\rho}) = 0$, which implies that they are equal q.e.~ on $\bar{X}_{\rho}$ (see the discussion after \eqref{define capacity}). This implies by Proposition \ref{absolutely continuous capacity} that the restriction $\hat{u}|_{Z}$ is unique up to sets of $\nu$-measure zero. In particular the restriction $\hat{u}|_{Z}$ is well-defined as an element of $L^{p}(Z)$. 

For a metric measure space $(Y,d,\mu)$ and an exponent $p \geq 1$ we say that a point $x \in Y$ is an \emph{$L^{p}(Y)$-Lebesgue point} of a measurable function $u: Y \rightarrow [-\infty,\infty]$ if 
\begin{equation}\label{Lebesgue point}
\lim_{r \rightarrow 0^{+}} \dashint_{B_{Y}(x,r)} |u-u(x)|^{p}\,d\mu = 0. 
\end{equation}
If $x$ is an $L^{p}(Y)$-Lebesgue point of $u$ then by H\"older's inequality it is also an $L^{q}(Y)$-Lebesgue point of $u$ for each $1 \leq q \leq p$. Since $\mu_{\beta}(\p X_{\rho}) =  0$,  \eqref{N characterization limit} can be rephrased as saying that $C_{p}^{\bar{X}_{\rho}}$-q.e.~ point of $Z$ is an $L^{p}(\bar{X}_{\rho})$-Lebesgue point for $\hat{u}$. 

\begin{proof}
By \cite[Theorem 4.1 and Corollary 3.9]{S04} we can find a sequence of Lipschitz functions $u_{k} \in \t{N}^{1,p}(\bar{X}_{\rho})$ such that $\|u_{k}-\hat{u}\|_{N^{1,p}(\bar{X}_{\rho})} \rightarrow 0$ and $u_{k}(x) \rightarrow \hat{u}(x)$ for q.e.~ $x \in \bar{X}_{\rho}$ as $k \rightarrow \infty$. Let $\ch{u}_{k} = u_{k}|_{Z}$ and $\ch{u} = \hat{u}|_{Z}$. Then by Proposition \ref{absolutely continuous capacity} we have $\ch{u}_{k} \rightarrow \ch{u}$ $\nu$-a.e.~ on $Z$. 

By Proposition \ref{Lip trace} we have $T(u_{j}-u_{k}) = \ch{u}_{j}-\ch{u}_{k}$ for each $j,k \in \N$. It follows from Proposition \ref{Dirichlet Besov finite} that 
\[
\|\ch{u}_{j}-\ch{u}_{k}\|_{\ch{B}^{\theta}_{p}(Z)} \ls \|u_{j}-u_{k}\|_{N^{1,p}(X_{\rho})} \leq \|u_{j}-u_{k}\|_{N^{1,p}(\bar{X}_{\rho})}
\]
Thus $\{\ch{u}_{k}\}$ defines a Cauchy sequence in $\ch{B}^{\theta}_{p}(Z)$. Since  $\ch{B}^{\theta}_{p}(Z)$ is a Banach space by \cite[Remark 9.8]{BBS21}, we conclude that this sequence converges in $\ch{B}^{\theta}_{p}(Z)$ to a function $u'$. In particular we have $\ch{u}_{k} \rightarrow u'$ in $L^{p}(Z)$, which implies that $\ch{u}_{k} \rightarrow u'$ $\nu$-a.e.~ on $Z$. Since we also have $\ch{u}_{k} \rightarrow \ch{u}$ $\nu$-a.e.~ on $Z$, we conclude that $u' = \ch{u} = \hat{u}|_{Z}$ $\nu$-a.e. Since the trace $T$ defines a bounded linear operator $T: N^{1,p}(X_{\rho}) \rightarrow \ch{B}^{\theta}_{p}(Z)$ it follows that $Tu = \hat{u}|_{Z}$ in $\ch{B}^{\theta}_{p}(Z)$, i.e., $Tu = \hat{u}|_{Z}$ $\nu$-a.e. The estimates for $\|\hat{u}|_{Z}\|_{B^{\theta}_{p}(Z)}$ and  $\|\hat{u}|_{Z}\|_{L^{p}(Z)}$ then follow from the corresponding estimates for $Tu$ since $Tu = \hat{u}|_{Z}$ $\nu$-a.e.

For $p > 1$ the equality \eqref{N characterization limit} follows from the fact that $C_{p}^{Y}$-q.e.~ point of $\bar{X}_{\rho}$ is an $L^{p}(\bar{X}_{\rho})$-Lebesgue point of $\hat{u}$ \cite[Theorem 9.2.8]{HKST}. The same claim also holds for $p = 1$ \cite[Theorem 4.1 and Remark 4.7]{KKST08}; note in our case that $\mu_{\beta}(X_{\rho}) = \infty$ by Proposition \ref{infinite measure}. This proves \eqref{N characterization limit}.
\end{proof}

Proposition \ref{trace characterization} has a natural generalization to functions $u \in \t{D}^{1,p}(X_{\rho})$. 

\begin{prop}\label{Dirichlet characterization}
Let $u \in \t{D}^{1,p}(X_{\rho})$. Then $u$ has an extension $\hat{u} \in \t{D}^{1,p}(\bar{X}_{\rho})$. This extension satisfies $\hat{u}|_{Z} = Tu$ $\nu$-a.e.~. Consequently we have the estimate
\begin{equation}\label{D bound}
\|\hat{u}|_{Z}\|_{B^{\theta}_{p}(Z)} \ls \|u\|_{D^{1,p}(X_{\rho})},
\end{equation}
Moreover, for $C_{p}^{\bar{X}_{\rho}}$-q.e.~ (and thus $\nu$-a.e.) $z \in Z$ we have
\begin{equation}\label{D characterization limit}
\lim_{r \rightarrow 0^{+}} \dashint_{X_{\rho} \cap B_{\rho}(z,r)}|u-\hat{u}(z)|^{p}\,d\mu_{\beta} = 0. 
\end{equation}
\end{prop}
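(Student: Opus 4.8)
The plan is to reduce Proposition \ref{Dirichlet characterization} to Proposition \ref{trace characterization} by a truncation-and-exhaustion argument, exploiting the fact that the extension operator for Newtonian functions produced in \cite{BJS07} is linear and local enough to be applied to truncations. First I would recall from Proposition \ref{integrable over balls} that $\t{D}^{1,p}(X_{\rho}) \subset \t{N}^{1,p}_{\loc}(X_{\rho})$, so that the trace $Tu$ is already defined by Proposition \ref{Lp local trace} and lies in $L^{p}_{\loc}(Z)$, and from Proposition \ref{Dirichlet Besov finite} that $\|Tu\|_{B^{\theta}_{p}(Z)} \ls \|u\|_{D^{1,p}(X_{\rho})}$; thus the content of the proposition is the \emph{existence of an extension} $\hat{u} \in \t{D}^{1,p}(\bar{X}_{\rho})$ with $\hat{u}|_{Z} = Tu$ $\nu$-a.e., together with the Lebesgue-point statement \eqref{D characterization limit}. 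The estimate \eqref{D bound} will then follow immediately since $\hat{u}|_{Z} = Tu$ $\nu$-a.e. and $\mu_{\beta}(\p X_{\rho}) = 0$.

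The main step is the construction of $\hat{u}$. Given $u \in \t{D}^{1,p}(X_{\rho})$, for each $n \in \N$ set $u_{n} = \max\{-n,\min\{u,n\}\}$. Then $|u_{n}| \leq n$ and $u_{n}$ has the same minimal $p$-weak upper gradient $g_{u}$ truncated appropriately (in particular $g_{u}$ remains an upper gradient for $u_{n}$, by \cite[Proposition 6.3.23]{HKST}), so $u_{n} \in \t{D}^{1,p}(X_{\rho})$. However $u_{n}$ need not be in $L^{p}(X_{\rho})$ because $\mu_{\beta}(X_{\rho}) = \infty$ by Proposition \ref{infinite measure}, so I cannot directly feed $u_{n}$ into Proposition \ref{trace characterization}. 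To fix this I would additionally cut off in the height direction: let $\xi_{k}: X_{\rho} \to [0,1]$ be a Lipschitz function equal to $1$ on $X_{\geq -k}$, equal to $0$ on $X_{\leq -k-1}$, linearly interpolated on the connecting vertical edges (its Lipschitz constant is bounded by $L = L(\alpha,\tau)$ as in the proof of Proposition \ref{p integrable trace}, using Lemma \ref{filling estimate both} to see that the relevant edges have $d_{\rho}$-length $\asymp \alpha^{k}$, so in fact a scaled cutoff argument gives uniform control), and set $u_{n,k} = \xi_{k} u_{n}$. Then $u_{n,k}$ is bounded and supported in $X_{\geq -k-1}$. Using \eqref{transformed} and \eqref{vertex transformed}, the $\mu_{\beta}$-measure of $X_{\geq -k-1}$ restricted to where $u_{n,k}$ can be nonzero is finite whenever the relevant geometric series converges — which holds since $\beta > 0$ forces $\sum_{j \geq -k-1}\alpha^{-\beta j}$-type tails, wait, that diverges; instead I note $u_{n,k}$ is supported on the bounded-in-$\bar X_\rho$ set $\overline{H^{B}}$ for a large enough ball $B$ together with the finitely many height levels $\geq -k-1$... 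Actually the cleanest route: $u_{n,k}$ has bounded support in $\bar X_\rho$ once we also localize in the $Z$-direction, so I would instead fix an exhaustion of $Z$ by balls $B_{m} = B_{Z}(z_{0},m)$, let $\eta_{m}$ be a Lipschitz cutoff supported on $\overline{H^{B_{m+1}}}$ equal to $1$ on $\overline{H^{B_{m}}}$, and set $u_{n,k,m} = \eta_{m}\xi_{k}u_{n}$; this is bounded with bounded support, hence in $\t N^{1,p}(X_\rho)$.

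For each such $u_{n,k,m} \in \t{N}^{1,p}(X_{\rho})$, Proposition \ref{trace characterization} produces $\widehat{u_{n,k,m}} \in \t{N}^{1,p}(\bar{X}_{\rho})$ with $\widehat{u_{n,k,m}}|_{Z} = T(u_{n,k,m})$ $\nu$-a.e.\ and the Lebesgue-point property \eqref{N characterization limit}. Since the extension operator of \cite{BJS07} is linear and bounded $N^{1,p}(X_\rho) \to \t N^{1,p}(\bar X_\rho)$, and since truncation and multiplication by Lipschitz cutoffs is compatible with it up to controlled error (the key point being locality: the extension of a function vanishing near $Z$ on a region vanishes there too, and the extension of a bounded function is bounded by the same bound, by \cite[Proposition 5.9]{BJS07} or a maximum-principle argument), I would argue that as $k,m \to \infty$ the functions $\widehat{u_{n,k,m}}$ stabilize on compact subsets of $\bar X_\rho$ to a well-defined $\hat u_n \in \t D^{1,p}(\bar X_\rho)$ which extends $u_n$ and whose minimal $p$-weak upper gradient agrees with that of $u_n$ off $Z$; the uniform $D^{1,p}$-bound $\|\hat u_n\|_{D^{1,p}(\bar X_\rho)} = \|g_{u_n}\|_{L^p(X_\rho)} \leq \|g_u\|_{L^p(X_\rho)}$ comes from $\mu_\beta(\p X_\rho)=0$. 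Finally I would let $n \to \infty$: since $g_{u_n} \nearrow g_u$ pointwise a.e.\ with $\|g_{u_n}\|_{L^p} \leq \|g_u\|_{L^p}$ uniformly, the sequence $\hat u_n$ (after subtracting a suitable constant on a fixed ball, as in the proof of Proposition \ref{Dirichlet Banach}) is Cauchy in $D^{1,p}(\bar X_\rho)$ by the $(p,p)$-Poincar\'e inequality \eqref{stronger Poincare}, hence converges to $\hat u \in \t D^{1,p}(\bar X_\rho)$; passing to a further a.e.-convergent subsequence identifies $\hat u|_Z$ with $\lim Tu_n = Tu$ $\nu$-a.e.\ (using $T u_n \to Tu$ in $L^p_{\loc}(Z)$, which follows from the $L^p$-local continuity of $T$ together with $u_n \to u$ in $\t D^{1,p}_{\loc}$, and monotone convergence in the trace formula \eqref{local trace formula}). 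For the Lebesgue-point claim \eqref{D characterization limit}: since $\hat u \in \t D^{1,p}(\bar X_\rho) \subset \t N^{1,p}_{\loc}(\bar X_\rho)$ by Proposition \ref{integrable over balls}, and $(\bar X_\rho,d_\rho,\mu_\beta)$ is complete, doubling, and supports a $p$-Poincar\'e inequality, the general fact that $C_p^{\bar X_\rho}$-q.e.\ point is an $L^p(\bar X_\rho)$-Lebesgue point of any $\t N^{1,p}_{\loc}$ function (\cite[Theorem 9.2.8]{HKST} for $p > 1$, \cite[Theorem 4.1 and Remark 4.7]{KKST08} for $p = 1$, noting $\mu_\beta(X_\rho) = \infty$ by Proposition \ref{infinite measure}) applies directly, and combined with Proposition \ref{absolutely continuous capacity} this gives the statement $\nu$-a.e.\ as well, where $\hat u(z)$ is read off as the value of the q.e.-defined Lebesgue representative.

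\textbf{Main obstacle.} The delicate point is making rigorous the claim that the Bj\"orn--Shanmugalingam extension operator, which a priori is only defined on $N^{1,p}(X_\rho)$ (requiring global $L^p$-integrability), can be bootstrapped to $\t D^{1,p}(X_\rho)$ via exhaustion in a way that the limit extension $\hat u$ has $\hat u|_Z = Tu$ rather than some other boundary value, and that its upper gradient does not acquire a singular part on $Z$. This requires knowing that the extension is \emph{local} in an appropriate sense — that modifying $u$ far from a point $x \in \bar X_\rho$ does not change $\hat u$ near $x$ — which is implicit in the construction of \cite{BJS07} (where the extension of a Newtonian function on a uniform domain is built via boundary-value averaging along uniform curves, a local construction) but should be cited carefully. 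An alternative that sidesteps some of this, and which I would pursue if the locality is not readily available, is to mimic the proof of Proposition \ref{trace characterization} directly: approximate $u$ by Lipschitz functions $u_k$ on $X_\rho$ with $\|u_k - u\|_{D^{1,p}(X_\rho)} \to 0$ (possible since $N^{1,p}(X_\rho)$-Lipschitz functions are dense and one can subtract constants), extend each $u_k$ by its canonical $L$-Lipschitz extension $\hat u_k$ to $\bar X_\rho$ (Proposition \ref{Lip trace} gives $\hat u_k|_Z = T u_k$), show $\{\hat u_k\}$ is Cauchy in $D^{1,p}(\bar X_\rho)$ using \eqref{stronger Poincare} exactly as in Proposition \ref{Dirichlet Banach}, take the limit $\hat u$, and identify $\hat u|_Z$ with $Tu$ via the $L^p_{\loc}(Z)$-continuity of $T$ and Proposition \ref{absolutely continuous capacity}; the Lebesgue-point statement then follows as above. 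This second route avoids \cite{BJS07} entirely for the existence part and is probably the one I would write up.
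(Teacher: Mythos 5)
Your general strategy --- reduce to Proposition \ref{trace characterization} via exhaustion --- matches the paper's, but both of your proposed routes have gaps that the paper's argument is specifically designed to avoid, and the paper's cutoffs are considerably simpler than yours.

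On the main (triple-cutoff) route: the cutoff machinery is overbuilt. The paper uses a \emph{single} cutoff $\kappa_n(x)=\zeta_n(d_\rho(x,z_0))$ for a fixed $z_0 \in Z$, where $\zeta_n$ is $1$ on $[0,n]$ and $0$ on $[2n,\infty)$. Because $z_0$ lies on the boundary, the ball $B_\rho(z_0,2n)$ is simultaneously bounded in the $Z$-direction and bounded below in height (via Lemma \ref{filling compute distance}), and $u$ is $p$-integrable on any such ball by Proposition \ref{integrable over balls}. So $f_n=\kappa_n u$ lands in $\t N^{1,p}(X_\rho)$ with no need for either the value truncation $u_n$ or the separate height cutoff $\xi_k$. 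Your observation that the height cutoff alone leaves an infinite-measure region is correct in spirit (when $\nu(Z)=\infty$), but the fix is not to stack a second spatial cutoff on top --- it is to center the one cutoff at a boundary point.

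More seriously, the ``stabilization'' step --- arguing that the extensions $\widehat{u_{n,k,m}}$ from \cite{BJS07} converge as $k,m\to\infty$ because the extension operator is local --- is exactly the kind of delicate claim the paper's proof is constructed to avoid. The paper does not take a limit of extension operators at all: it \emph{defines} $\hat u(z)=\limsup_{r\to 0^+}\dashint_{X_\rho\cap B_\rho(z,r)}u\,d\mu_\beta$ directly (well-defined by Proposition \ref{integrable over balls}), and then uses the cutoffs $f_n$ only to \emph{verify} properties of this already-defined $\hat u$: on the common good set $G$ where \eqref{N characterization limit} holds for all $\hat f_n$, one has $\hat f_n(z)=\hat u(z)$ for $n$ large because $f_n=u$ on $B_\rho(z_0,n)$. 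The Dirichlet membership $\hat u\in\t D^{1,p}(\bar X_\rho)$ is then obtained by explicitly gluing a $p$-weak upper gradient $\bar g$ from the $\bar g_n$ of $\hat f_n$ on the annuli $B_\rho(z_0,n/2)\setminus B_\rho(z_0,(n-1)/2)$, and identifying $\bar g=g_u$ a.e.\ on $X_\rho$ via \cite[Proposition 6.3.22]{HKST}. Your proposal does not supply this upper-gradient construction, which is the technical heart of the proposition.

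On the alternative (Lipschitz density) route, which you say you would actually write up: the opening step --- approximating $u\in\t D^{1,p}(X_\rho)$ by Lipschitz functions $u_k$ with $\|u_k-u\|_{D^{1,p}(X_\rho)}\to 0$ --- is a genuine gap. The density result cited from \cite{S04} is for $N^{1,p}$, and ``subtract constants'' does not convert an $N^{1,p}$ approximation to a $D^{1,p}$ one: a cutoff $\kappa_n u$ has an extra upper gradient term $\sim n^{-1}\chi_{B_\rho(z_0,2n)\setminus B_\rho(z_0,n)}|u|$, and there is no reason this vanishes in $L^p(X_\rho)$ as $n\to\infty$ without further work. One would essentially have to redo the paper's exhaustion argument to prove such density, so this route is not a shortcut. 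Your treatment of the Lebesgue-point conclusion \eqref{D characterization limit}, on the other hand, is fine once $\hat u\in\t D^{1,p}(\bar X_\rho)$ is in hand, and is in fact marginally cleaner than the paper's patching of the Lebesgue property across the $\hat f_n$: applying \cite[Theorem 9.2.8]{HKST} (resp.\ \cite{KKST08} for $p=1$) directly to $\hat u\in\t N^{1,p}_{\loc}(\bar X_\rho)$ and passing from $C_p^{\bar X_\rho}$-q.e.\ to $\nu$-a.e.\ via Proposition \ref{absolutely continuous capacity} is exactly right.
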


\begin{proof}
Let $u \in \t{D}^{1,p}(X_{\rho})$ be given. We extend $u$ to $\hat{u}: \bar{X}_{\rho} \rightarrow [-\infty,\infty]$ by setting for $z \in Z$,
\[
\hat{u}(z) = \limsup_{r \rightarrow 0^{+}}\dashint_{X_{\rho} \cap B_{\rho}(z,r)}u\,d\mu_{\beta}.
\]
The integrability of $u$ over $X_{\rho} \cap B_{\rho}(z,r)$ follows from the fact that this is a bounded subset of $X_{\rho}$ and that functions $u \in \t{D}^{1,p}(X_{\rho})$ are integrable on balls by Proposition \ref{integrable over balls}. 

We fix $z_{0} \in Z$ and for each $n \in \N$ let $\zeta_{n}:[0,\infty) \rightarrow [0,1]$ be a piecewise linear function such that $\zeta_{n}(t) = 1$ for $0 \leq t \leq n$, $\zeta_{n}(t) = 2-n^{-1}t$ for $n \leq t \leq 2n$,  and $\zeta_{n}(t) = 0$ for $t \geq 2n$. We then define $\kappa_{n}: \bar{X}_{\rho} \rightarrow [0,1]$ by $\kappa_{n}(x) = \zeta_{n}(d_{\rho}(x,z_{0}))$. We note that $\kappa_{n}$ is $n^{-1}$-Lipschitz for each $n$. The rescaled characteristic function $n^{-1}\chi_{B_{\rho}(z_{0},2n)}$ defines an upper gradient for $\kappa_{n}$ on $\bar{X}_{\rho}$. 

For each $n \in \N$ we set $f_{n} = \kappa_{n}u$. The product rule for upper gradients implies that if $g$ is any upper gradient for $u$ on $X_{\rho}$ then for each $n \in \N$,
\begin{equation}\label{cutoff gradient}
g_{n} := n^{-1}\chi_{B_{\rho}(z_{0},2n)} u + g,
\end{equation}
defines an upper gradient of $f_{n}$ on $X_{\rho}$. We claim that if $g$ is $p$-integrable then $g_{n}$ is $p$-integrable as well. For this it suffices to show that the function $n^{-1}\chi_{B_{\rho}(z_{0},2n)} u$ is $p$-integrable on $X_{\rho}$ for each $n \in \Z$. But since for any point $x \in B_{\rho}(z_{0},2n) \cap X_{\rho}$ we have $B_{\rho}(z_{0},2n)  \subset B_{\rho}(x,4n)$, this claim follows from the fact that functions in $\t{D}^{1,p}(X_{\rho})$ are $p$-integrable on balls by Proposition \ref{integrable over balls}. Thus $f_{n} \in \t{N}^{1,p}(X_{\rho})$ for each $n \in \N$. 

We can therefore apply Proposition \ref{trace characterization} to $f_{n}$ for each $n \in \N$. We thus obtain an extension $\hat{f}_{n} \in \t{N}^{1,p}(\bar{X}_{\rho})$ of $f_{n}$ for each $n$ that satisfies for  $C_{p}^{\bar{X}_{\rho}}$-q.e.~ (in particular $\nu$-a.e.) $z \in Z$,
\begin{equation}\label{characterization limit}
\lim_{r \rightarrow 0^{+}} \dashint_{X_{\rho} \cap B_{\rho}(z,r)}|f_{n}-\hat{f}_{n}(z)|^{p}\,d\mu_{\beta} = 0.
\end{equation}
We let $G \subset Z$ be the set of points such that \eqref{characterization limit} holds for each $n \in \N$, which satisfies $C_{p}^{\bar{X}_{\rho}}(Z \backslash G) = 0$ by the countable subadditivity of the capacity. If $z \in G$ then for sufficiently large $n$ we will have from \eqref{characterization limit} and the fact that $f_{n} = u$ on $B_{\rho}(z_{0},n)$, 
\[
\lim_{r \rightarrow 0^{+}} \dashint_{X_{\rho} \cap B_{\rho}(z,r)}|u-\hat{f}_{n}(z)|^{p}\,d\mu_{\beta}  = 0.
\] 
Applying H\"older's inequality then gives
\[
\lim_{r \rightarrow 0^{+}} \dashint_{X_{\rho} \cap B_{\rho}(z,r)}|u-\hat{f}_{n}(z)|\,d\mu_{\beta} = 0,
\]
from which we conclude that $\hat{f}_{n}(z) = \hat{u}(z)$. We conclude in particular that the equality \eqref{D characterization limit} holds. We also conclude that $\lim_{n \rightarrow \infty} \hat{f}_{n}(z) = \hat{u}(z)$ for $C_{p}^{\bar{X}_{\rho}}$-q.e.~ $z \in Z$, hence also for $\nu$-a.e.~ $z \in Z$.

It is clear from the defining formula \eqref{local p trace} for the trace $Tu$ that we have $Tf_{n} = Tu$ $\nu$-a.e.~ on $B_{Z}(z_{0},n/2)$ since $f_{n} = u$ on $B_{\rho}(z_{0},n)$. Thus we also have $\lim_{n \rightarrow \infty} Tf_{n}(z) = Tu(z)$ for $\nu$-a.e.~ $z \in Z$. Since $\hat{f}_{n}|_{Z} = Tf_{n}$ $\nu$-a.e.~ by Proposition \ref{trace characterization}, we conclude that $\hat{u}|_{Z} = Tu$ $\nu$-a.e. The estimate \eqref{D bound} immediately follows from this equality. 

Lastly we need to show that $\hat{u} \in \t{D}^{1,p}(\bar{X}_{\rho})$. Let $g_{u}$ be a minimal $p$-weak upper gradient of $u$ on $X_{\rho}$. For each $n$ we let $\bar{g}_{n}$ denote a minimal $p$-weak upper gradient of $\hat{f}_{n}$ on $\bar{X}_{\rho}$.  We then define a Borel function $\bar{g}: \bar{X}_{\rho} \rightarrow [0,\infty]$ by setting $\bar{g}(x) = \sup_{k \geq n} \bar{g}_{n}(x)$ for $x \in B_{\rho}(z_{0},n/2) \backslash B_{\rho}(z_{0},(n-1)/2) $ and $n \in \N$; here by definition $B_{\rho}(z_{0},0) = \emptyset$.

We claim that $\bar{g}$ is a $p$-integrable $p$-weak upper gradient for $\hat{u}$ on $\bar{X}_{\rho}$. Let $G \subset Z = \p X_{\rho}$ be the set we constructed earlier on which \eqref{characterization limit} holds for each $n \in \N$. We then let $\hat{G} = G \cup X_{\rho}$. Since $C^{\bar{X}_{\rho}}_{p}(\bar{X}_{\rho} \backslash \hat{G}) = C^{\bar{X}_{\rho}}_{p}(Z \backslash G)  = 0$ we have that $\bar{X}_{\rho} \backslash \hat{G}$ is $p$-exceptional, i.e., we have that $p$-a.e.~ curve in $\bar{X}_{\rho}$ belongs entirely to $\hat{G}$.  Now let $x,y \in \hat{G}$ be given and let $\gamma$ be a curve joining them in $\hat{G}$. We can then choose $n$ large enough that $\gamma \subset B_{\rho}(z_{0},n/2)$. Restricted to $B_{\rho}(z_{0},n/2) \cap \hat{G}$ we have $\hat{f}_{n} = \hat{u}$ by the construction of $\hat{f}_{n}$ and $\hat{u}$. Thus since $\gamma \subset  B_{\rho}(z_{0},n/2) \cap \hat{G}$ and $\bar{g}_{n} \leq \bar{g}$ on $B_{\rho}(z_{0},n/2)$, 
\[
|\hat{u}(x)-\hat{u}(y)| =  |\hat{f}_{n}(x)-\hat{f}_{n}(y)| \leq \int_{\gamma} \bar{g}_{n}\,ds  \leq \int_{\gamma} \bar{g} \,ds.
\]
It follows that $\bar{g}$ is a $p$-weak upper gradient for $\hat{u}$ on $\bar{X}_{\rho}$. 

It remains to show that $\bar{g}$ is $p$-integrable on $\bar{X}_{\rho}$. Since $\mu_{\beta}(\p X_{\rho}) = 0$ it suffices to show that $\bar{g}$ is $p$-integrable on $X_{\rho}$. Since $X_{\rho}$ is open in $\bar{X}_{\rho}$, it follows from \cite[Proposition 6.3.22]{HKST} that for each $n \in \N$  the minimal $p$-weak upper gradient $\bar{g}_{n}$ for $\hat{f}_{n}$ on $\bar{X}_{\rho}$ coincides $\mu_{\beta}$-a.e.~ on $X_{\rho}$ with the minimal $p$-weak upper gradient $\ch{g}_{n}$ of $f_{n}$ on $X_{\rho}$. On the open set $B_{\rho}(z_{0},n/2) \cap X_{\rho}$ we have that $f_{n} = u$ by construction. By using \cite[Proposition 6.3.22]{HKST} again it follows that the minimal $p$-weak upper gradient $\ch{g}_{n}$ for $f_{n}$ on $X_{\rho}$  coincides $\mu_{\beta}$-a.e.~ with the minimal $p$-weak upper gradient $g_{u}$ for $u$ on $X_{\rho}$ when each of these upper gradients are restricted to $B_{\rho}(z_{0},n/2) \cap X_{\rho}$. It follows that on $B_{\rho}(z_{0},n/2) \cap X_{\rho}$ we have $\bar{g}_{n} = g_{u}$ $\mu_{\beta}$-a.e. By the defining formula for $\bar{g}$ we conclude that we have $\bar{g} = g_{u}$ $\mu_{\beta}$-a.e.~ on $X_{\rho}$. It follows that $\bar{g}$ is $p$-integrable on $X_{\rho}$ and therefore on $\bar{X}_{\rho}$. 
\end{proof}




\section{Extension theorems}\label{sec:extend}
In this section we establish analogues of the extension theorems in \cite[Section 11]{BBS21} in the case of noncompact $Z$.  We carry over all conventions, notation, and hypotheses from the previous section. Throughout this section we set $\theta = 1-\beta/p$, so that $0 < \theta < 1$ since $p > \beta > 0$ by assumption.

Following Bonk-Saksman \cite{BS18}, for any function $f \in L^{1}_{\loc}(Z)$ we define the \emph{Poisson extension} $Pf: X_{\rho} \rightarrow \R$ by setting for any vertex $v \in V$, 
\[
Pf(v) = \dashint_{B(v)}f\,d\nu,
\]
and then extending $Pf$ to the edges of $X_{\rho}$ by linearly interpolating (with respect to arclength for the metric $d_{\rho}$) the values of $Pf$ on the vertices of each edge. Then $Pf$ defines a continuous function on $X_{\rho}$. We extend $Pf$ to $\bar{X}_{\rho}$ by defining for $z \in Z$, 
\begin{equation}\label{extend extension}
Pf(z) = \limsup_{r \rightarrow 0^{+}}\dashint_{X_{\rho} \cap B_{\rho}(z,r)}Pf\,d\mu_{\beta}. 
\end{equation}
Then $Pf: \bar{X}_{\rho} \rightarrow [-\infty,\infty]$.  Thus $Pf$ defines a linear operator on $L^{1}_{\loc}(Z)$ which takes values in measurable functions on $\bar{X}_{\rho}$. This operator is similar to the one used in \cite[Section 11]{BBS21}, however we use a different notation to avoid conflict with the notation $E$ for the set of edges in $X$. 

A straightforward computation similar to the one done in Proposition \ref{infinite measure} shows for a nonnegative function $f \geq 0$ that we can only have $Pf \in L^{p}(\bar{X}_{\rho})$ for some $p \geq 1$ if $f \equiv 0$ $\nu$-a.e.~ on $Z$. We rectify this issue by defining \emph{truncations} $P_{n}f$ of $Pf$ for each $n \in \Z$ as follows: we let $\xi_{n}: \bar{X}_{\rho} \rightarrow [0,1]$ be the function defined by setting $\xi(x) = 1$ for $x \in X_{\geq n+1}$ and $x \in Z$, $\xi(x) = 0$ for $x \in X_{\leq n}$, and linearly interpolating (with respect to the metric $d_{\rho}$) the values of $\xi_{n}$ on each vertical edge connecting $X_{\leq n}$ to $X_{\geq n+1}$. We then define $P_{n}f = \xi_{n}Pf$. The operators $f \rightarrow P_{n}f$ on $L^{1}_{\loc}(Z)$ are also linear for each $n \in \Z$. We have $P_{n}f|_{X_{\leq n}} = 0$ and $P_{n}f|_{X_{\geq n+1}} = Pf|_{X_{\geq n+1}}$ as well as $P_{n}f|_{Z} = Pf|_{Z}$.

In our first proposition of this section we analyze the $L^p$ norm of $Pf$ on the subsets $X_{\geq n}$ for $n \in \Z$. This estimate does not require $p > \beta$. 

\begin{prop}\label{extension p norm} 
Let $f \in L^{p}(Z)$ be given. Then for each $n \in \Z$, 
\[
\|Pf\|_{L^{p}(X_{\geq n})} \ls \alpha^{-(\beta/p) n}\|f\|_{L^{p}(Z)}. 
\]
Consequently,
\[
\|P_{n}f\|_{L^{p}(X_{\rho})} \ls \alpha^{-(\beta/p) n}\|f\|_{L^{p}(Z)}
\]
\end{prop}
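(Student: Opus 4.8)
The plan is to estimate the $L^p$-mass of $Pf$ edge by edge, pass from the value of $Pf$ at each vertex to an $L^p$-average of $f$ over the associated ball via Jensen's inequality, and then use the bounded overlap of the balls $B(v)$ over each fixed generation $V_m$ together with the geometric decay in $m$ coming from the weight $\alpha^{-\beta h}$.

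First I would fix $n \in \Z$ and observe that, up to a $\mu_\beta$-null set, $X_{\geq n}$ is a union of whole edges of $X$, each having both of its vertices at height $\geq n-1$ (the height function is $1$-Lipschitz and affine along edges, so any edge meeting $\{h \geq n\}$ has a vertex at height $\geq n$ and hence both vertices at height $\geq n-1$); call this set of edges $\mathcal{E}$. On each $e = vw \in \mathcal{E}$ the function $Pf$ is affine in $d_\rho$-arclength, so $|Pf|^p \leq |Pf(v)|^p + |Pf(w)|^p$ pointwise on $e$, whence $\int_e |Pf|^p\,d\mu_\beta \leq \mu_\beta(e)\big(|Pf(v)|^p + |Pf(w)|^p\big)$. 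By the comparison \eqref{transformed} together with Lemma \ref{doubling vertex comparison} (which, since $|h(v)-h(w)| \leq 1$, also gives $\alpha^{-\beta h(v)} \asymp \alpha^{-\beta h(w)}$) we have $\mu_\beta(e) \asymp \alpha^{-\beta h(v)}\nu(B(v)) \asymp \alpha^{-\beta h(w)}\nu(B(w))$, while Jensen's inequality applied to $Pf(v) = \dashint_{B(v)}f\,d\nu$ gives $\nu(B(v))|Pf(v)|^p \leq \|f\|_{L^p(B(v))}^p$, and likewise for $w$. Hence $\int_e |Pf|^p\,d\mu_\beta \lesssim \alpha^{-\beta h(v)}\|f\|_{L^p(B(v))}^p + \alpha^{-\beta h(w)}\|f\|_{L^p(B(w))}^p$.

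Next I would sum over $e \in \mathcal{E}$. Since $X$ has vertex degree bounded by $N = N(\alpha,\tau,C_\nu)$, each vertex is counted at most $N$ times, and only vertices at height $\geq n-1$ occur, so the total is $\lesssim \sum_{m \geq n-1}\alpha^{-\beta m}\sum_{v \in V_m}\|f\|_{L^p(B(v))}^p$. For fixed $m$, Lemma \ref{doubling overlap} shows the balls $B(v)$, $v \in V_m$, have overlap bounded by a constant $M$ depending only on $\tau$ and the doubling constant of $Z$, so $\sum_{v \in V_m}\|f\|_{L^p(B(v))}^p = \int_Z |f|^p\sum_{v \in V_m}\chi_{B(v)}\,d\nu \leq M\|f\|_{L^p(Z)}^p$. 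As $\beta > 0$ we have $\alpha^{-\beta} < 1$, so $\sum_{m \geq n-1}\alpha^{-\beta m} \asymp \alpha^{-\beta n}$ with constant depending only on $\alpha,\beta$, giving $\|Pf\|_{L^p(X_{\geq n})}^p \lesssim \alpha^{-\beta n}\|f\|_{L^p(Z)}^p$; taking $p$-th roots yields the first estimate. The second is immediate from the first: since $|P_nf| = \xi_n|Pf| \leq |Pf|$ everywhere and $\xi_n \equiv 0$ on $X_{\leq n}$, we have $\|P_nf\|_{L^p(X_\rho)} = \|P_nf\|_{L^p(X_{\geq n})} \leq \|Pf\|_{L^p(X_{\geq n})} \lesssim \alpha^{-(\beta/p)n}\|f\|_{L^p(Z)}$.

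There is no essential difficulty in this argument; the only points needing care are the reduction of $X_{\geq n}$ to a union of whole edges modulo a null set (so that edgewise integration is legitimate), the off-by-one in heights introduced by edges crossing between levels $n-1$ and $n$ (harmless, affecting only the constant), and checking that all implied constants depend only on $\alpha$, $\tau$, $C_\nu$, $\beta$, and $p$ — which holds because the degree bound, the overlap bound, and the geometric-series constant each have exactly this dependence. In particular the argument nowhere requires $p > \beta$, consistent with the remark preceding the statement.
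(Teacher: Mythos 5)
Your proof is correct and follows essentially the same route as the paper's: convexity/Jensen on each edge to pass from $Pf$ to the averages $|Pf(v)|^p \le \dashint_{B(v)}|f|^p\,d\nu$, the comparison \eqref{transformed} for $\mu_\beta(e)$, bounded overlap of the balls $B(v)$ at each fixed height via Lemma \ref{doubling overlap}, and a geometric series in the height using $\beta>0$. The only cosmetic difference is that the paper absorbs $B(w)\subset 4\alpha B(v)$ so as to deal with a single enlarged ball per edge before invoking overlap, while you keep the two vertex contributions separate and sum over vertices directly via the degree bound — both are equally valid and give the same constant dependencies.
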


\begin{proof}
If $v$ and $w$ are two distinct vertices of the same edge $e$ then, using Jensen's inequality twice, 
\begin{align*}
\int_{e}|Pf|^{p}\,d\mu_{\beta} &\leq \mu_{\beta}(e)(|Pf(v)|^{p} + |Pf(w)|^{p}) \\
&\leq \mu_{\beta}(e)\left(\dashint_{B(v)}|f|^{p}\,d\nu+\dashint_{B(w)}|f|^{p}\,d\nu\right). 
\end{align*}
Since $v \sim w$, we have $B(v) \cap B(w) \neq \emptyset$ and $r(B(v)) \asymp_{\alpha} r(B(w))$. Thus $B(w) \subset 4\alpha B(v)$. It then follows from the doubling property of $\nu$ and the comparison \eqref{transformed} that 
\[
\int_{e}|Pf|^{p}\,d\mu_{\beta} \ls \mu_{\beta}(e)\dashint_{4\alpha B(v)}|f|^{p}\,d\nu \ls \alpha^{-\beta h(v)}\int_{4\alpha B(v)}|f|^{p}\,d\nu.
\]
As in Proposition \ref{infinite measure} we let $E_{j}$ be the set of all edges in $X$ with at least one vertex in $V_{j}$. The balls $4\alpha B(v)$ for $v$ a vertex of some $e \in E_{j}$ then cover $Z$ with bounded overlap by Lemma \ref{doubling overlap}. Using the doubling of $\nu$ again, it follows from summing over all such vertices $v$ that for any $j \in \Z$, 
\[
\int_{E_{j}}|Pf|^{p}\,d\mu_{\beta} \ls \alpha^{-\beta j}\int_{Z}|f|^{p}\,d\nu.
\]
Summing the geometric series over all integers $j \geq n$ and observing that each edge $e \in X$ belongs to at most two of the sets $E_{n}$ for $n \in \Z$ then gives the first desired estimate. The estimate for $P_{n}f$ follows by observing that $|P_{n}f| \leq |Pf|$ and $P_{n}f|_{X \leq n} = 0$. 
\end{proof}

As a consequence of Proposition \ref{extension p norm} we obtain that $P$ defines a linear operator $P:L^{p}_{\loc}(Z) \rightarrow L^{p}_{\loc}(\bar{X}_{\rho})$ for each $p \geq 1$ since $\mu_{\beta}(\p X_{\rho}) = 0$. We next show that $Pf$ belongs to $\t{D}^{1,p}(\bar{X}_{\rho})$ when $f \in \t{B}^{\theta}_{p}(Z)$, where $\theta = 1-\beta/p$.  We deduce from this that for $f \in \ch{B}^{\theta}_{p}(Z)$  the truncations $P_{n}f$ belong to $\t{N}^{1,p}(\bar{X}_{\rho})$ for each $n \in \Z$.  

\begin{prop}\label{extension Besov}
If $f \in \t{B}^{\theta}_{p}(Z)$  then 
\begin{equation}\label{extension Besov bound}
\|Pf\|_{D^{1,p}(\bar{X}_{\rho})} \ls \|f\|_{B^{\theta}_{p}(Z)}. 
\end{equation}
If $f \in \ch{B}^{\theta}_{p}(Z)$ then we further have for each $n \in \Z$,
\begin{equation}\label{extension truncate Besov}
\|P_{n}f\|_{D^{1,p}(\bar{X}_{\rho})} \ls \alpha^{\theta n}\|f\|_{L^{p}(Z)} + \|f\|_{B^{\theta}_{p}(Z)},
\end{equation}
and 
\begin{equation}\label{extension truncate bound}
\|P_{n}f\|_{N^{1,p}(\bar{X}_{\rho})} \ls (\alpha^{(\theta-1)n} + \alpha^{\theta n})\|f\|_{L^{p}(Z)} + \|f\|_{B^{\theta}_{p}(Z)}.
\end{equation}
\end{prop}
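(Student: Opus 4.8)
The plan is to estimate the minimal $p$-weak upper gradient of $Pf$ edge by edge and then sum across heights, in the spirit of the computation for the trace in Proposition \ref{Dirichlet Besov finite}. First, since $Pf$ is affine (with respect to $d_\rho$-arclength) on each edge $e = vw$, its minimal $p$-weak upper gradient on $e$ is the constant $g_{Pf}|_e = |Pf(v)-Pf(w)|/\ell_\rho(e) \asymp |Pf(v)-Pf(w)|\,\alpha^{h(v)}$, using Lemma \ref{filling estimate both} to get $\ell_\rho(e) \asymp \alpha^{-h(v)}$ on edges joining adjacent heights. Then
\[
\int_e g_{Pf}^p\,d\mu_\beta \asymp |Pf(v)-Pf(w)|^p \alpha^{ph(v)}\mu_\beta(e) \asymp |Pf(v)-Pf(w)|^p \alpha^{(p-\beta)h(v)}\nu(B(v))
\]
by \eqref{transformed}. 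Now $|Pf(v)-Pf(w)| = \left|\dashint_{B(v)} f\,d\nu - \dashint_{B(w)} f\,d\nu\right|$, and since $v\sim w$ forces $B(v)\cap B(w)\neq\emptyset$ with comparable radii, both averages are comparable to averages over a common ball $B^* \supset B(v)\cup B(w)$ of radius $\asymp \alpha^{-h(v)}$; a standard manipulation bounds this difference by $\dashint_{B^*}\dashint_{B^*} |f(x)-f(y)|\,d\nu(x)d\nu(y)$, and then by Jensen by $\left(\dashint_{B^*}\dashint_{B^*} |f(x)-f(y)|^p\,d\nu(x)d\nu(y)\right)^{1/p}$. Raising to the $p$-th power and using $p-\beta = p\theta$, $\alpha^{-h(v)}\asymp r(B^*)$, and the doubling of $\nu$, the contribution of the edge $e$ is controlled by
\[
\alpha^{p\theta h(v)}\nu(B^*)\dashint_{B^*}\dashint_{B^*} |f(x)-f(y)|^p\,d\nu(x)d\nu(y) \asymp r(B^*)^{-p\theta}\int_{B^*}\dashint_{B^*}|f(x)-f(y)|^p\,\frac{d\nu(y)d\nu(x)}{\nu(B^*)}.
\]
Summing over all edges at a fixed height $n$ — there are boundedly many overlapping balls $B^*$ by Lemma \ref{doubling overlap}, and each $B^*$ is comparable to a ball $B_Z(x,\alpha^{-n})$ — and then over $n\in\Z$, we recognize the result as (a constant multiple of) the expression in Lemma \ref{Besov estimate} with $\alpha$ as the base, hence $\asymp \|f\|_{B^\theta_p(Z)}^p$. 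This is the key inequality; it shows $g_{Pf}\in L^p(X_\rho)$, and since $\mu_\beta(\p X_\rho)=0$ it gives $g_{Pf}\in L^p(\bar X_\rho)$ as well. To conclude $Pf\in\t D^{1,p}(\bar X_\rho)$ we must also check that $Pf$ is genuinely a $p$-weak upper gradient candidate's companion on $\bar X_\rho$, i.e. that the edgewise upper gradient glues to a $p$-weak upper gradient across $Z$; here we invoke Proposition \ref{absolutely continuous capacity} (so $Z$ is $p$-exceptional after removing a $\nu$-null set) together with the Lebesgue-point definition \eqref{extend extension} of $Pf$ on $Z$, exactly as in the proof of Proposition \ref{Dirichlet characterization}, to see that $p$-a.e.\ curve in $\bar X_\rho$ stays in $X_\rho\cup G$ for a suitable full-measure $G$, on which the upper gradient inequality holds by continuity. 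This proves \eqref{extension Besov bound}.

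For \eqref{extension truncate Besov}, write $P_n f = \xi_n Pf$ with $\xi_n$ the $\alpha^{(1-\theta)\cdot(\text{something})}$... more precisely $\xi_n$ is $C(\alpha,\tau)\alpha^{n}$-Lipschitz with respect to $d_\rho$ (since the interpolating vertical edges between $X_{\le n}$ and $X_{\ge n+1}$ have $d_\rho$-length $\asymp\alpha^{-n}$ by Lemma \ref{filling estimate both}) and its support gradient is supported on $X_{\ge n}\setminus X_{\ge n+1}\subset E_n$. By the product rule, $g_{P_n f} \le \xi_n g_{Pf} + C\alpha^{n}\chi_{E_n}|Pf| \le g_{Pf} + C\alpha^n \chi_{E_n}|Pf|$. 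The first term contributes $\ls\|f\|_{B^\theta_p(Z)}$ by \eqref{extension Besov bound}. For the second, Proposition \ref{extension p norm} (applied on $X_{\ge n}$, or directly the edge estimate $\int_{E_n}|Pf|^p\,d\mu_\beta \ls \alpha^{-\beta n}\|f\|_{L^p(Z)}^p$ established in its proof) gives $\alpha^n\|Pf\|_{L^p(E_n)} \ls \alpha^{n}\alpha^{-(\beta/p)n}\|f\|_{L^p(Z)} = \alpha^{(1-\beta/p)n}\|f\|_{L^p(Z)} = \alpha^{\theta n}\|f\|_{L^p(Z)}$. Combining yields \eqref{extension truncate Besov}. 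Finally, \eqref{extension truncate bound} follows by adding the $L^p$ bound $\|P_n f\|_{L^p(X_\rho)} \ls \alpha^{-(\beta/p)n}\|f\|_{L^p(Z)} = \alpha^{(\theta-1)n}\|f\|_{L^p(Z)}$ from Proposition \ref{extension p norm} to the Dirichlet bound \eqref{extension truncate Besov}, and noting $\mu_\beta(\p X_\rho)=0$ so the $\bar X_\rho$ and $X_\rho$ norms agree; since $P_n f \in L^p(X_\rho)$ with a $p$-integrable upper gradient, $P_n f\in\t N^{1,p}(\bar X_\rho)$.

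The main obstacle I anticipate is not the arithmetic of summing the geometric series — that is bookkeeping with the identity $p-\beta = p\theta$ — but rather two more delicate points. First, correctly identifying the averages $Pf(v) = \dashint_{B(v)} f$ over neighboring balls with a single-ball average so that Lemma \ref{Besov estimate} applies cleanly: one needs the chain of comparisons $\dashint_{B(v)}f \approx \dashint_{B^*}f \approx \dashint_{B(w)}f$ to hold up to additive errors controlled by the $B^*$-oscillation of $f$, and then to dominate all these oscillations uniformly by the double integral appearing in Lemma \ref{Besov estimate}; this requires care with which enlargement of $B(v)$ one uses and with the bounded-overlap count from Lemma \ref{doubling overlap}. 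Second, the gluing argument showing the edgewise upper gradient is a bona fide $p$-weak upper gradient on all of $\bar X_\rho$ (not merely on $X_\rho$): this is where the boundary $Z$ genuinely enters, and one must lean on Proposition \ref{absolutely continuous capacity} and the Lebesgue-point definition of $Pf|_Z$ exactly as in Proposition \ref{Dirichlet characterization} — I would either reproduce that argument or, more economically, first prove $Pf\in\t D^{1,p}(X_\rho)$ and then apply Proposition \ref{Dirichlet characterization} to extend it, checking that the resulting extension agrees $\nu$-a.e.\ with the Lebesgue-point definition \eqref{extend extension}.
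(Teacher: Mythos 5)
Your edge-by-edge upper-gradient estimate, the Jensen manipulation over a common enlarged ball, the bounded-overlap summation against Lemma \ref{Besov estimate}, and the product-rule treatment of the truncations $P_nf$ all match the paper's computation, including the exponent arithmetic $p-\beta = p\theta$. The one place where your argument has a genuine gap is exactly the gluing step you flagged as delicate, and the way you propose to handle it does not quite work as stated.

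You invoke Proposition \ref{absolutely continuous capacity} to claim that ``$Z$ is $p$-exceptional after removing a $\nu$-null set.'' This is false, and the proposition in fact points the wrong way: it asserts $C_p^{\bar X_\rho}(G)=0 \Rightarrow \nu(G)=0$, not the converse. Since $\nu(Z)>0$, any full-$\nu$-measure subset of $Z$ still has positive capacity and so is certainly not $p$-exceptional; removing a $\nu$-null set buys you nothing in terms of curve families. Similarly, in your ``more economical'' route, checking that the extension $\hat u$ from Proposition \ref{Dirichlet characterization} ``agrees $\nu$-a.e.'' with $Pf$ on $Z$ is too weak: a $\nu$-null subset of $Z$ need not be $p$-exceptional, so $\nu$-a.e.\ agreement does not let you transfer a $p$-weak upper gradient from $\hat u$ to $Pf$ on $\bar X_\rho$. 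What is actually needed is \emph{quasi-everywhere} agreement, and this is exactly what \eqref{D characterization limit} provides: for $C_p^{\bar X_\rho}$-q.e.\ $z\in Z$ the averages of $\hat u$ over small boundary balls converge to $\hat u(z)$, while $Pf(z)$ is defined in \eqref{extend extension} as the $\limsup$ of those same averages (the two functions agree on $X_\rho$), so $\hat u = Pf$ q.e.\ on $Z$ and hence q.e.\ on $\bar X_\rho$. The set $\{\hat u \ne Pf\}$ therefore has zero $C_p^{\bar X_\rho}$-capacity, is $p$-exceptional, and is missed by $p$-a.e.\ curve, which is what lets you transfer the minimal $p$-weak upper gradient of $\hat u$ to $Pf$ and conclude $Pf \in \t D^{1,p}(\bar X_\rho)$. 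Once you also note (via the localization property of minimal upper gradients, \cite[Proposition 6.3.22]{HKST}) that the minimal $p$-weak upper gradient of $Pf$ on $\bar X_\rho$ agrees a.e.\ on $X_\rho$ with the minimal one on $X_\rho$, and hence is dominated a.e.\ by your edgewise $g$, the bound \eqref{extension Besov bound} follows and the rest of your argument for \eqref{extension truncate Besov}--\eqref{extension truncate bound} goes through as you wrote it.
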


\begin{proof}
For each edge $e$ of $X$ with vertices $v$ and $w$ we set 
\begin{equation}\label{edge gradient}
g_{e} = \frac{|Pf(v)-Pf(w)|}{d_{\rho}(v,w)}.
\end{equation}
The function $g(x) = g_{e}$ for $x \in e$ belonging to the interior of $e$ defines an upper gradient for $Pf$ on the edge $e$ by the construction of $Pf$. We define $g: X_{\rho} \rightarrow \R$ by setting $g(v) = 0$ for each $v \in V$ and setting $g(x) = g_{e}$ for each point $x \in X$ belonging to the interior of an edge $e$. Since the vertices $V$ of $X$ have measure zero with respect to the 1-dimensional Hausdorff measure on $X_{\rho}$, it follows that $g$ defines an upper gradient for $Pf$ on $X_{\rho}$. We note that $g$ is constant on the interior of each edge of $X$. 

As noted in the proof of Proposition \ref{extension p norm}, if $e$ is an edge of $X$ with vertices $v$ and $w$ then $B(w) \subset 4\alpha B(v)$.  Since $|vw| = 1$, we also have from Lemma \ref{filling estimate both}, 
\[
d_{\rho}(v,w) \asymp \alpha^{-(v|w)_{h}} \asymp \alpha^{-h(v)}.
\]
Consequently by the doubling of $\nu$,
\[
g_{e} \ls \alpha^{h(v)} \dashint_{4\alpha B(v)} \dashint_{4\alpha B(v)} |f(x)-f(y)|\,d\nu(y)d\nu(x). 
\]
By Jensen's inequality we then have
\[
g_{e}^{p} \ls \alpha^{p h(v)} \dashint_{4\alpha B(v)} \dashint_{4\alpha B(v)} |f(x)-f(y)|^{p}\,d\nu(y)d\nu(x)
\]
Multiplying and dividing by $\alpha^{-p \theta h(v)}$ and noting that $p - p\theta = \beta$, we conclude that
\[
g_{e}^{p} \ls \alpha^{\beta h(v)} \dashint_{4\alpha B(v)} \dashint_{4\alpha B(v)} \frac{|f(x)-f(y)|^{p}}{\alpha^{-p \theta h(v)}}\,d\nu(y)d\nu(x).
\]
Since $g_{e}$ is constant on the interior of the edge $e$, we thus have from \eqref{transformed} that for any edge $e$ in $X$ and either vertex $v$ on $e$, 
\[
\int_{e} g_{e}^{p}\,d\mu_{\beta} \ls  \int_{4\alpha B(v)} \dashint_{4\alpha B(v)} \frac{|f(x)-f(y)|^{p}}{\alpha^{-p \theta h(v)}}\,d\nu(y)d\nu(x).
\]
Now suppose that $v \in V_{n}$. If $x,y \in 4\alpha B(v)$ then $y \in B_{Z}(x,8\tau \alpha^{-n+1})$. Thus 
\[
\int_{e} g_{e}^{p}\,d\mu_{\beta} \ls  \int_{4\alpha B(v)} \dashint_{B_{Z}(x,8\tau \alpha^{-n+1})} \frac{|f(x)-f(y)|^{p}}{\alpha^{-p \theta n}}\,d\nu(y)d\nu(x).
\]
Summing this inequality over the set $E_{n}$ of edges in $X$ having at least one vertex in $V_{n}$ and using the fact that the balls $4\alpha B(v)$ for $v$ a vertex of some $e \in E_{n}$  cover $Z$ with bounded overlap by Lemma \ref{doubling overlap}, we obtain in a similar manner to what was done in Proposition \ref{extension p norm}, 
\begin{equation}\label{layer bound}
\int_{E_{n}} g_{e}^{p}\,d\mu_{\beta} \ls  \int_{Z} \dashint_{B_{Z}(x,8\tau \alpha^{-n+1})} \frac{|f(x)-f(y)|^{p}}{\alpha^{-p \theta n}}\,d\nu(y)d\nu(x).
\end{equation}
By summing this inequality over $n \in \Z$ and using the fact that each edge $e$ belongs to at most two sets $E_{n}$, we conclude  that
\[
\|g\|_{L^{p}(X_{\rho})}^{p} \ls  \sum_{n \in \Z} \int_{Z} \dashint_{B_{Z}(x,8\tau \alpha^{-n+1})} \frac{|f(x)-f(y)|^{p}}{\alpha^{-p \theta n}}\,d\nu(y)d\nu(x).
\]
Let $m = m(\alpha,\tau) \in \N$ be the minimal integer such that $\alpha^{m} \geq 8\tau \alpha$. Then by the doubling property for $\nu$ we have for each $n \in \Z$,  
\[
\dashint_{B_{Z}(x,8\tau \alpha^{-n+1})} \frac{|f(x)-f(y)|^{p}}{\alpha^{-p \theta n}}\,d\nu(x) \ls \dashint_{B_{Z}(x,\alpha^{-n+m})} \frac{|f(x)-f(y)|^{p}}{a^{p \theta (-n+m)}}\,d\nu(x).
\] 
It then follows by Lemma \ref{Besov estimate} that
\begin{equation}\label{pre extension bound}
\|g\|_{L^{p}(X_{\rho})}^{p} \ls  \sum_{n \in \Z} \int_{Z} \dashint_{B_{Z}(x,\alpha^{-n})} \frac{|f(x)-f(y)|^{p}}{\alpha^{-p \theta n}}\,d\nu(x)d\nu(y) \asymp \|f\|_{B^{\theta}_{p}(Z)}^{p}. 
\end{equation}
We conclude in particular that $Pf \in \t{D}^{1,p}(X_{\rho})$ with $\|Pf\|_{D^{1,p}(X_{\rho})} \ls \|f\|_{B^{\theta}_{p}(Z)}$. 

Let $u \in \t{D}^{1,p}(\bar{X}_{\rho})$ be an extension of $Pf$ to $\bar{X}_{\rho}$ given by Proposition \ref{Dirichlet characterization}. Then, by \eqref{D characterization limit} and the definition of $Pf$ on $Z$, we have that $u = Pf$ $C_{p}^{\bar{X}_{\rho}}$-q.e.~ on $Z$, from which it follows that $u = Pf$ $C_{p}^{\bar{X}_{\rho}}$-q.e.~ on $\bar{X}_{\rho}$. Let $g_{u}$ be a minimal $p$-weak upper gradient for $u$ on $\bar{X}_{\rho}$. Then $g_{u}$ must also be a $p$-weak upper gradient for $Pf$ on $\bar{X}_{\rho}$ since $p$-a.e.~ curve in $\bar{X}_{\rho}$ does not meet the $p$-exceptional set  $\{u(x) \neq Pf(x)\} \subset \bar{X}_{\rho}$. This shows that $Pf \in \t{D}^{1,p}(\bar{X}_{\rho})$. 

Now let $g_{Pf}$ be a minimal $p$-weak upper gradient for $Pf$ on $\bar{X}_{\rho}$. Then $g_{Pf}$ is also a $p$-integrable $p$-weak upper gradient of $Pf$ on $X_{\rho}$. Since $X_{\rho}$ is open in $\bar{X}_{\rho}$ we have by \cite[Proposition 6.3.22]{HKST} that $g_{Pf}$ agrees a.e.~ with a minimal $p$-weak upper gradient for $Pf$ on $X_{\rho}$. Thus $g_{Pf} \leq g$ a.e.~ on $X_{\rho}$, where $g$ is the upper gradient for $Pf$ in the bound \eqref{pre extension bound}. We thus conclude that \eqref{pre extension bound} holds with $g_{Pf}$ in place of $g$ and therefore, since $\mu_{\beta}(\p X_{\rho}) = 0$, 
\[
\|g_{Pf}\|_{L^{p}(\bar{X}_{\rho})} = \|g_{Pf}\|_{L^{p}(X_{\rho})} \ls \|f\|_{B^{\theta}_{p}(Z)}.
\]
This proves the first estimate \eqref{extension Besov bound}.

Let $n \in \Z$ be given and consider the truncated extension $P_{n}f = \xi_{n}Pf$. By Lemma \ref{filling estimate both} if we have vertices $v \in X_{\leq n}$ and $w \in X_{\geq n+1}$ connected by a vertical edge then $d_{\rho}(v,w) \asymp \alpha^{-n}$. Thus $\xi_{n}$ is locally $L\alpha^{n}$-Lipschitz on $\bar{X}_{\rho}$ for some constant $L = L(\alpha,\tau) \geq 1$, which implies that it is $L\alpha^{n}$-Lipschitz on $\bar{X}_{\rho}$ since $\bar{X}_{\rho}$ is geodesic. Since $\xi_{n}|_{X_{\leq n}} = 0$ and $\xi|_{X_{\geq n+1} \cup Z} = 1$, we conclude that $L\alpha^{n}\chi_{X\geq n \cup Z}$ defines an upper gradient for $\xi_{n}$ on $\bar{X}_{\rho}$. Let $g_{Pf}$ be a minimal $p$-weak upper gradient for $Pf$ on $\bar{X}_{\rho}$. By the product rule for upper gradients we then conclude from  $|\xi_{n}| \leq 1$ that we have that
\[
g_{n}:= L\alpha^{n}\chi_{X_{\geq n} \cup Z} |Pf| + g_{Pf},
\]
is an upper gradient for $P_{n}f$ on $\bar{X}_{\rho}$. Since $\mu_{\beta}(Z) = 0$, we conclude from Proposition \ref{extension p norm} and the bound \eqref{extension Besov bound} that
\begin{align*}
\|g_{n}\|_{L^{p}(\bar{X}_{\rho})} &\ls \alpha^{n}\|Pf\|_{L^{p}(X_{\geq n})} + \|g_{Pf}\|_{L^{p}(\bar{X}_{\rho})} \\
&\ls \alpha^{n}\|Pf\|_{L^{p}(X_{\geq n})} + \|Pf\|_{D^{1,p}(\bar{X}_{\rho})} \\
&\ls \alpha^{\theta n}\|f\|_{L^{p}(Z)} + \|f\|_{B^{\theta}_{p}(Z)},
\end{align*}
since $1-\beta/p= \theta$. The bound \eqref{extension truncate Besov} follows. The bound \eqref{extension truncate bound} is then a consequence of Proposition \ref{extension p norm}. 
\end{proof}

We have thus defined a bounded linear operator $P:\t{B}^{\theta}_{p}(Z) \rightarrow D^{1,p}(\bar{X}_{\rho})$ as well as bounded linear operators $P_{n}: \ch{B}^{\theta}_{p}(Z) \rightarrow \t{N}^{1,p}(\bar{X}_{\rho})$ for each $n \in \Z$. We will relate these operators to the trace operator $T$ of the previous section using Proposition \ref{q Lebesgue points} below. This  proposition shows, for each $q \geq 1$, that $L^{q}(Z)$-Lebesgue points for $f \in L^{1}_{\loc}(Z)$ are $L^{q}(\bar{X}_{\rho})$-Lebesgue points for $Pf$. 

\begin{prop}\label{q Lebesgue points}
Let $f \in L^{1}_{\loc}(Z)$. Let $z \in Z$ be an $L^{q}(Z)$-Lebesgue point for $f$ for a given $q \geq 1$. Then $z$ is an $L^{q}(\bar{X}_{\rho})$-Lebesgue point for $Pf$ and $Pf(z) = f(z)$. Consequently the same is true with $P_{n}f$ replacing $Pf$ for each $n \in \Z$. 
\end{prop}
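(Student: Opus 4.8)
The plan is to reduce the claim to a direct estimate comparing the average of $Pf$ over $X_{\rho} \cap B_{\rho}(z,r)$ with averages of $f$ over balls in $Z$ of comparable radius, and then to invoke the Lebesgue-point hypothesis for $f$. First I would fix an $L^q(Z)$-Lebesgue point $z$ of $f$ and recall from Corollary \ref{interior ball measure} (case $r \geq d_\rho(z) = 0$) that $\mu_\beta(B_\rho(z,r)) \asymp r^\beta \nu(B_Z(z,r))$, so that averages over $B_\rho(z,r) \cap X_\rho$ are governed by the corresponding balls in $Z$. The key geometric input is that every point $x \in B_\rho(z,r) \cap X_\rho$ lies on (or within $\tfrac12$ of) a vertex $v$ with $\alpha^{-h(v)} \ls r$ and $\pi(v) \in C B_Z(z,r)$ for a constant $C = C(\alpha,\tau)$; this follows from the hull-approximation Lemma \ref{hull approximation} together with Lemma \ref{filling estimate both} and Lemma \ref{vertex to base}, exactly as in the proof of Lemma \ref{hull approximation}. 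For such a vertex, $Pf(v) = \dashint_{B(v)} f\,d\nu$ with $B(v) \subset C' B_Z(z,r)$, and on each edge $Pf$ is a linear interpolation of its vertex values, so $|Pf(x) - c|$ is controlled pointwise by a finite average (bounded overlap via Lemma \ref{doubling overlap}) of the quantities $|Pf(v) - c|$ over the relevant vertices.

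The main estimate I would carry out is: for $c \in \R$ (eventually $c = f(z)$),
\[
\dashint_{B_\rho(z,r) \cap X_\rho} |Pf - c|^q \, d\mu_\beta \ls \sum_{n \geq n_0(r)} \alpha^{(\beta - \text{something}) \cdots} \dashint_{C' B_Z(z,\alpha^{-n})} |f - c|^q \, d\nu,
\]
where the sum over heights $n$ with $\alpha^{-n} \ls r$ is geometric in $n$ once one uses the measure comparison \eqref{transformed}, the bounded overlap of the balls $B(v)$ at each height, and Jensen's inequality to pass from $|Pf(v)-c|^q = |\dashint_{B(v)}(f-c)|^q \leq \dashint_{B(v)}|f-c|^q$. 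The point is that the weights $\alpha^{-\beta n} \nu(B(v))$ appearing from $\mu_\beta$ are summable against the normalization $r^\beta \nu(B_Z(z,r))$ from Corollary \ref{interior ball measure}, producing a bounded weighted average of the quantities $\dashint_{C'B_Z(z,\alpha^{-n})}|f-c|^q\,d\nu$ over $n$ with $\alpha^{-n} \ls r$. Taking $c = f(z)$: since $z$ is an $L^q(Z)$-Lebesgue point of $f$, each such inner average tends to $0$ as $r \to 0^+$ (uniformly in the tail), so a standard $\varepsilon$-splitting of the geometric sum — finitely many leading terms handled by their smallness, the geometric tail handled by $\sup_s \dashint_{C'B_Z(z,s)}|f-f(z)|^q \, d\nu < \infty$ for $s$ bounded — gives
\[
\lim_{r \to 0^+} \dashint_{B_\rho(z,r) \cap X_\rho} |Pf - f(z)|^q \, d\mu_\beta = 0.
\]
Since $\mu_\beta(\p X_\rho) = 0$, this is exactly the statement that $z$ is an $L^q(\bar X_\rho)$-Lebesgue point of $Pf$; applying H\"older's inequality with $q \geq 1$ also shows $z$ is an $L^1$-Lebesgue point, whence the defining formula \eqref{extend extension} gives $Pf(z) = \lim_{r\to 0^+}\dashint_{B_\rho(z,r)\cap X_\rho} Pf\,d\mu_\beta = f(z)$.

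For the final sentence about $P_n f$, I would note that $P_n f = \xi_n \cdot Pf$ with $\xi_n$ continuous on $\bar X_\rho$, $0 \leq \xi_n \leq 1$, and $\xi_n \equiv 1$ on $Z$ and on $X_{\geq n+1}$; hence $\xi_n(x) \to 1$ as $x \to z \in Z$. Writing $|P_n f - f(z)| \leq |\xi_n|\,|Pf - f(z)| + |1 - \xi_n|\,|f(z)| \leq |Pf - f(z)| + |1-\xi_n|\,|f(z)|$ and observing that for $r < d_\rho(z,X_{\leq n+1})$-type scales (more simply: for $r$ small enough that $B_\rho(z,r)$ misses $X_{\leq n}$, using $h \to \infty$ near $Z$) one has $\xi_n \equiv 1$ on $B_\rho(z,r) \cap X_\rho$, so for small $r$ the average of $|P_n f - f(z)|^q$ over $B_\rho(z,r)\cap X_\rho$ coincides with that of $|Pf - f(z)|^q$; the conclusion for $P_n f$ is then immediate, including $P_n f(z) = Pf(z) = f(z)$.

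\textbf{Main obstacle.} I expect the technical heart to be the geometric bookkeeping that controls $|Pf(x) - c|$ for $x \in B_\rho(z,r) \cap X_\rho$ by a summable family of ball-averages of $f$: one must simultaneously track the height $h(v)$ of the relevant vertices (to get the $\alpha^{-\beta h(v)}$ weights right against the normalization from Corollary \ref{interior ball measure}), the inclusion $B(v) \subset C'B_Z(z,r)$ with uniform $C'$, and the bounded-overlap facts (Lemma \ref{doubling overlap}, and the interpolation over the at most two vertices of each edge) so that no multiplicity blows up. Everything else — Jensen, H\"older, the geometric series, and the $\varepsilon$-splitting using the Lebesgue-point hypothesis — is routine once this comparison is in place.
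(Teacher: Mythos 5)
Your overall strategy --- decompose $B_\rho(z,r)\cap X_\rho$ by vertex heights, use Lemma \ref{hull approximation} and Lemma \ref{vertex to base} to locate the relevant vertices near $z$, apply Jensen's inequality to pass from $|Pf(v)-c|^q$ to $\dashint_{B(v)}|f-c|^q$, invoke bounded overlap (Lemma \ref{doubling overlap}) and the measure comparison \eqref{transformed}, and then use Corollary \ref{interior ball measure} or Lemma \ref{hull measure} to normalize --- is precisely the paper's strategy, and the treatment of $P_n f$ at the end is correct and essentially identical to the paper's one-line argument that $P_nf = Pf$ on $X_{\geq n+1}\cup Z$.

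However, there is a genuine error in your displayed main estimate. You correctly observe earlier that for a vertex $v$ of height $n$ contributing to the ball $B_\rho(z,r)$ one has $\pi(v) \in CB_Z(z,r)$ (not $\pi(v) \in CB_Z(z,\alpha^{-n})$), so that $B(v) \subset C'B_Z(z,r)$ --- the relevant vertices at height $n$ are spread over a ball of radius $\asymp r$ centered at $z$, even though each $B(v)$ itself has the small radius $\tau\alpha^{-n}$. When you sum $\sum_{v\in V_n}\int_{B(v)}|f-c|^q\,d\nu$ using bounded overlap, the union of the $B(v)$ is contained in $C'B_Z(z,r)$, not in $C'B_Z(z,\alpha^{-n})$; there is no way to confine these balls to a shrinking neighborhood of $z$. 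So the right-hand side of your displayed inequality should feature the single fixed ball $C'B_Z(z,r)$ in place of $C'B_Z(z,\alpha^{-n})$: the inequality as you wrote it is actually false (take $f\equiv c$ near $z$ but oscillating on the annulus $B_Z(z,r)\setminus B_Z(z,s)$; then your right-hand side vanishes for large $n$ while vertices with $\pi(v)$ at distance $\sim r$ still contribute to the left). The correct form is exactly the paper's inequality \eqref{ball integral estimate},
\[
\dashint_{B_\rho(z,r)}|Pf-c|^q\,d\mu_\beta \;\ls\; \dashint_{B_Z(z,C'r)}|f-c|^q\,d\nu,
\]
obtained by summing the geometric series $\sum_{n\geq N-c_0}\alpha^{-\beta n}\asymp\alpha^{-\beta N}$ against the fixed ball integral and then dividing by $\mu_\beta(B_\rho(z,r))\asymp\alpha^{-\beta N}\nu(B_Z(z,C'r))$. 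With this in hand your $\varepsilon$-splitting is unnecessary: setting $c=f(z)$ and letting $r\to 0^+$, the right-hand side tends to $0$ directly by the $L^q(Z)$-Lebesgue point hypothesis. The rest of your argument --- H\"older to downgrade to $q=1$, the identification $Pf(z)=f(z)$ via \eqref{extend extension}, and the passage to $P_nf$ --- then goes through as you describe.
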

\begin{proof}
Our proof is modeled on the final part of the proof of \cite[Theorem 12.1]{BBS21}. We will establish a more general inequality \eqref{ball integral estimate} in the process of proving the proposition that will be needed for the proof of Proposition \ref{extension hyperbolic Poincare} later on. Throughout the proof all implied constants will be allowed to depend additionally on the exponent $q$.

We let $z \in Z$ be an arbitrary point, fix a radius $r > 0$, let $N = N(r)$ denote the maximal integer such that $\alpha^{-N} \geq r$, and consider all $x \in X$ such that $d_{\rho}(x,z) < \alpha^{-N}$. If $x$ belongs to an edge $e$ with vertices $v$ and $w$ then either $v$ or $w$ must belong to $B_{\rho}(z,\alpha^{-N})$ since the metric on $\bar{X}_{\rho}$ is geodesic. Let $v$ be the vertex belonging to $B_{\rho}(z,\alpha^{-N})$. Then
\begin{equation}\label{height bound}
\alpha^{-h(v)} \asymp d_{\rho}(v) \leq d_{\rho}(v,z) < \alpha^{-N},
\end{equation}
so that we have $\alpha^{-h(v)} \ls \alpha^{-N}$. Then by Lemma \ref{vertex to base} we have
\[
d_{\rho}(\pi(v),z) \leq d_{\rho}(\pi(v),v) + d_{\rho}(v,z) \ls \alpha^{-h(v)} + \alpha^{-N} \ls \alpha^{-N}. 
\]
A similar estimate shows that the other vertex $w$ on $e$ also satisfies $d(\pi(w),z) \ls a^{N}$. 

Let $a \in \R$ be arbitrary. Since $Pf(x)$ is a convex combination of $Pf(v)$ and $Pf(w)$, we have by Jensen's inequality, 
\[
\int_{e}|Pf-a|^{q}\,d\mu_{\beta} \leq (|Pf(v)-a|^{q} + |Pf(w)-a|^{q})\mu_{\beta}(e).
\]
Using Jensen again gives 
\[
|Pf(v)-a|^{q} \leq \dashint_{B(v)}|f-a|^{q}\,d\nu,
\]
and the same with $w$ replacing $v$. Combining this with the comparison \eqref{transformed} and using the fact that $B(w) \subset 4\alpha B(v)$ gives
\[
\int_{e}|Pf-a|^{q}\,d\mu_{\beta} \ls \alpha^{-\beta h(v)}\int_{4\alpha B(v)}|f-a|^{q}\,d\nu. 
\]
Summing this over all edges $e$ such that at least one vertex of $e$ belongs to $B_{\rho}(z,\alpha^{-N})$ and then using the fact that $X$ has bounded degree and that $h(v) \geq N-c$ for some constant $c = c(\alpha,\tau)$ (by \eqref{height bound}),  we conclude that
\[
\int_{B_{\rho}(z,\alpha^{-N})}|Pf-a|^{q}\,d\mu_{\beta} \ls \sum_{n \geq N-c}\; \sum_{v \in V_{n} \cap B_{\rho}(z,\alpha^{-N})} \alpha^{-\beta n}\int_{4\alpha B(v)}|f-a|^{q}\,d\nu.
\]
Let $B = B_{Z}(z,\alpha^{-N})$ and let $C = C(\alpha,\tau) \geq 1$ be chosen large enough that Lemma \ref{hull approximation} implies that $B_{\rho}(z,\alpha^{-N}) \subset H^{CB}$. Then for a given $n \geq N-c$, the balls $4\alpha B(v)$ for $v \in H^{CB}_{n}$ have bounded overlap by Lemma \ref{doubling overlap} and will be contained in $C' B$ for a constant $C' = C'(\alpha,\tau)$. Thus
\begin{align*}
\int_{B_{\rho}(z,\alpha^{-N})}|Pf-a|^{q}\,d\mu_{\beta} &\ls \sum_{n \geq N-c} \alpha^{-\beta n}\int_{C'B}|f-a|^{q}\,d\nu \\
&\asymp \alpha^{-\beta N}\int_{C'B}|f-a|^{q}\,d\nu.
\end{align*}
By combining Lemma \ref{hull measure} and the doubling property of $\nu$ we conclude that
\[
\mu_{\beta}(B_{\rho}(z,\alpha^{-N})) \asymp \alpha^{-\beta N} \nu(B_{Z}(z,\alpha^{-N})) \asymp \alpha^{-\beta N} \nu(B_{Z}(z,C'\alpha^{-N})).
\]
Thus, using the fact that $r \asymp \alpha^{-N}$ and the doubling property of $\nu$ and $\mu_{\beta}$, we conclude for each $r > 0$ and $a \in \R$ that 
\begin{equation}\label{ball integral estimate}
\dashint_{B_{\rho}(z,r)}|Pf-a|^{q}\,d\mu_{\beta} \ls \dashint_{B_{Z}(z,C'r)}|f-a|^{q}\,d\nu,
\end{equation}
with the implied constant being independent of $r$ and $a$. 

Now assume that $z$ is an $L^{q}(Z)$-Lebesgue point for $f$ and set $a = f(z)$ in \eqref{ball integral estimate}. Then the right side of \eqref{ball integral estimate} converges to $0$ as $r \rightarrow 0$. We can then conclude from \eqref{ball integral estimate} that $z$ is also an $L^{q}(\bar{X}_{\rho})$-Lebesgue point for $Pf$ and that $Pf(z) = f(z)$ (by the definition \eqref{extend extension} of $Pf(z)$). The conclusions for $P_{n}f$ for each $n \in \Z$ then follow from the fact that $P_{n}f = Pf$ on $X_{\geq n+1} \cup Z$, which implies that the $L^{q}(\bar{X}_{\rho})$-Lebesgue points for $P_{n}f$ and $Pf$ on $Z$ are the same. 
\end{proof}

When specialized to Lipschitz functions, Propositions \ref{extension Besov} and \ref{q Lebesgue points} show that the extension $Pf$ is also Lipschitz and restricts to $f$ on $Z$. 

\begin{prop}\label{Lip extend}
Let $f: Z \rightarrow \R$ be $L$-Lipschitz. Then there is a constant $C = C(\alpha,\tau) \geq 1$ such that $Pf: \bar{X}_{\rho} \rightarrow \R$ is $CL$-Lipschitz. Furthermore we have $Pf|_{Z} = f$. 
\end{prop}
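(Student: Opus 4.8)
The plan is to prove the two assertions separately, starting with the Lipschitz bound for $Pf$ and then deducing the boundary identity $Pf|_Z = f$ from Proposition \ref{q Lebesgue points}. For the Lipschitz estimate, the natural strategy is to bound $|Pf(x) - Pf(y)|$ for $x, y \in X_\rho$ and then pass to the completion $\bar X_\rho$ by density and continuity. Since $\bar X_\rho$ is geodesic (as established in Section \ref{sec:fillings}), it suffices to control the variation of $Pf$ along edges and between neighboring vertices; more precisely, I would show that $Pf$ restricted to each edge has metric derivative bounded by $CL$, and that this local control propagates to a global Lipschitz bound because $\bar X_\rho$ is a length space.

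The key step is the estimate $|Pf(v) - Pf(w)| \le CL\, d_\rho(v,w)$ for adjacent vertices $v \sim w$. Here I would use that $Pf(v) = \dashint_{B(v)} f\, d\nu$ and $Pf(w) = \dashint_{B(w)} f\, d\nu$, and that $B(w) \subset 4\alpha B(v)$ as noted in the proof of Proposition \ref{extension p norm}. Since $f$ is $L$-Lipschitz and both $B(v)$ and $B(w)$ have radius comparable to $\alpha^{-h(v)}$, we get
\[
|Pf(v) - Pf(w)| \le \dashint_{B(v)}\dashint_{4\alpha B(v)} |f(x) - f(y)|\, d\nu(y)\, d\nu(x) \ls L \alpha^{-h(v)}.
\]
On the other hand, Lemma \ref{filling estimate both} gives $d_\rho(v,w) \asymp \alpha^{-(v|w)_h} \asymp \alpha^{-h(v)}$ since $|vw| = 1$. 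Combining these yields $|Pf(v) - Pf(w)| \ls L\, d_\rho(v,w)$, and on the interior of the edge $vw$ the function $Pf$ is the linear interpolation (with respect to $d_\rho$-arclength) of $Pf(v)$ and $Pf(w)$, so its metric derivative there is $\ls L$. Since every point of $X_\rho$ lies on some edge and every edge has both endpoints in $X_\rho$, a standard argument using that $\bar X_\rho$ is geodesic upgrades the local estimate to $Pf$ being $CL$-Lipschitz on $X_\rho$ with $C = C(\alpha,\tau)$; the extension to $\bar X_\rho$ follows since a Lipschitz function on a metric space extends uniquely and with the same constant to the completion, and this extension agrees with the definition \eqref{extend extension} of $Pf$ on $Z$ because averages of a continuous function over shrinking balls converge to its value.

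For the identity $Pf|_Z = f$: every point $z \in Z$ is an $L^1(Z)$-Lebesgue point of the continuous (indeed Lipschitz) function $f$ — in fact an $L^q(Z)$-Lebesgue point for every $q \ge 1$ — so Proposition \ref{q Lebesgue points} applies directly to give $Pf(z) = f(z)$ for every $z \in Z$. This handles the boundary identity pointwise everywhere on $Z$, not just $\nu$-a.e. The main obstacle, if any, is purely bookkeeping: one must make sure the linear-interpolation structure of $Pf$ on edges is compatible with $d_\rho$-arclength parametrization (which it is, by construction of $Pf$ in this section), and that the passage from per-edge Lipschitz control to a global Lipschitz constant does not lose anything — this is where geodesicity of $\bar X_\rho$ is used, exactly as in the proofs of Propositions \ref{p integrable trace} and \ref{extension Besov} where similar cutoff functions were shown to be globally Lipschitz. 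No genuinely new difficulty arises beyond assembling these pieces.
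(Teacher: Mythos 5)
Your proposal is correct and takes essentially the same approach as the paper. The vertex-to-vertex estimate $|Pf(v)-Pf(w)| \ls L\alpha^{-h(v)}$, the comparison $d_\rho(v,w) \asymp \alpha^{-h(v)}$ from Lemma \ref{filling estimate both}, and the passage from per-edge control to a global Lipschitz bound via geodesicity all match; the paper proves the vertex estimate via a triangle inequality through $f(\pi(v))$ and $f(\pi(w))$ rather than a double average, which is an immaterial variation. One small flag: the intermediate display $|Pf(v)-Pf(w)| \le \dashint_{B(v)}\dashint_{4\alpha B(v)}|f(x)-f(y)|\,d\nu(y)\,d\nu(x)$ is not a valid step as written, since enlarging the inner averaging domain from $B(w)$ to $4\alpha B(v)$ does not preserve the inequality; the correct move is to write $|Pf(v)-Pf(w)| \le \dashint_{B(v)}\dashint_{B(w)}|f(x)-f(y)|\,d\nu(y)\,d\nu(x)$ and then use the pointwise bound $|f(x)-f(y)| \ls L\alpha^{-h(v)}$ for $x\in B(v)$, $y\in B(w)\subset 4\alpha B(v)$, which is clearly what you intend. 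For the extension step, your observation that the $\limsup$ of $\mu_\beta$-averages of the $CL$-Lipschitz function $Pf|_{X_\rho}$ over $B_\rho(z,r)\cap X_\rho$ equals the Lipschitz boundary value $\hat u(z)$ is correct and slightly more direct than the paper's route, which verifies $\lim_{r\to 0^+}\dashint_{B_\rho(z,r)}|Pf-f(z)|\,d\mu_\beta = 0$ by an explicit computation along an anchored ascending vertical geodesic ray and then identifies $Pf$ with $\hat u$ via an agreement-a.e.\ plus everywhere-Lebesgue-point argument. Both deliver the same conclusion, and both use Proposition \ref{q Lebesgue points} for $Pf|_Z = f$ (the paper's additional direct computation there is strictly speaking redundant).
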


\begin{proof}
For any vertex $v \in V$ we have
\[
|Pf(v)-f(\pi(v))| \leq \dashint_{B(v)}|f-f(\pi(v))|\,d\nu \leq L \tau \alpha^{-h(v)}. 
\]
Let $e$ be an edge of $X$ with vertices $v$ and $w$. Then $B(v) \cap B(w) \neq \emptyset$, so we must have $d(\pi(v),\pi(w)) \leq 2\tau \alpha^{-h(v)+1}$ since $|h(v)-h(w)| \leq 1$.  We then conclude that
\begin{align*}
|Pf(v)-Pf(w)| &\leq |Pf(v)-f(\pi(v))| + |f(\pi(v))-f(\pi(w))| + |f(\pi(w))-Pf(w)|  \\
&\leq 4\tau L \alpha^{-h(v)+1}. 
\end{align*}
We have $d_{\rho}(v,w) \asymp \alpha^{-h(v)}$ by Lemma \ref{filling estimate both}, with comparison constant depending only on $\alpha$ and $\tau$. Thus if we define $g_{e}$ as in \eqref{edge gradient} then we conclude that $g_{e} \leq CL$ with $C = C(\alpha,\tau) \geq 1$. We conclude that $Pf$ is $CL$-Lipschitz in the metric $d_{\rho}$ on each edge of $X_{\rho}$. Since $X_{\rho}$ is geodesic it follows that $Pf$ is $CL$-Lipschitz on $X_{\rho}$. 

Since $f$ is Lipschitz we have that every point of $Z$ is an $L^{1}(Z)$-Lebesgue point for $f$. Proposition \ref{q Lebesgue points} then implies that every point of $Z$ is an $L^{1}(\bar{X}_{\rho})$-Lebesgue point for $Pf$. Fix a point $z \in Z$ and let $\gamma: [0,\infty) \rightarrow X$ be an ascending vertical geodesic ray anchored at $z$ as constructed in Lemma \ref{second height connection} with vertices $v_{n} = \gamma(n)$ on $\gamma$ for each $n \geq 0$ satisfying $h(v_{n}) = n$. Let $r > 0$ be given and choose $n$ large enough that $v_{n} \in B_{\rho}(z,r)$ and $\alpha^{-n} \leq r$. Since $Pf$ is $CL$-Lipschitz on $X_{\rho}$ and since $z \in B(v_{n})$,  we have
\begin{align*}
\dashint_{B_{\rho}(z,r)}|Pf-f(z)| \, d\mu_{\beta} &\leq 2CLr + |Pf(v_{n})-f(z)| \\
&\leq 2CLr + \dashint_{B(v_{n})}|f-f(z)|\, d\nu \\
&\leq 2CLr + CL\tau \alpha^{-n} \\
&\leq (2+\tau)CLr.
\end{align*}
We conclude that 
\[
\lim_{r \rightarrow 0^{+}} \dashint_{B_{\rho}(z,r)}|Pf-f(z)| \, d\mu_{\beta} = 0. 
\]
Since $z$ is an $L^{1}(\bar{X}_{\rho})$-Lebesgue point for $Pf$, it follows that $Pf(z) = f(z)$. Since this holds for any $z \in Z$ we conclude that $Pf|_{Z} = f$ and that every point of $\bar{X}_{\rho}$ is an $L^{1}(\bar{X}_{\rho})$-Lebesgue point for $Pf$. 

Let $\hat{u}$ denote the unique $CL$-Lipschitz extension of $Pf|_{X_{\rho}}$ to $\bar{X}_{\rho}$. Then every point of $\bar{X}_{\rho}$ is also an $L^{1}(\bar{X}_{\rho})$-Lebesgue point for $\hat{u}$. Since $\mu_{\beta}(Z) = 0$, we have $\hat{u} = Pf$ $\mu_{\beta}$-a.e.~ on $\bar{X}_{\rho}$. Since every point of $\bar{X}_{\rho}$ is an $L^{1}(\bar{X}_{\rho})$-Lebesgue point for both $\hat{u}$ and $Pf$, we conclude that $\hat{u} = Pf$. This implies in particular that $Pf$ is $CL$-Lipschitz on $\bar{X}_{\rho}$, as desired. 
\end{proof}

\begin{rem}\label{remark Lip extend}
For Lipschitz functions $f: Z \rightarrow \R$ one can define a simpler Lipschitz extension $\t{f}: \bar{X}_{\rho} \rightarrow \R$ that does not make use of the measure $\nu$ by setting $\t{f}(v) = f(\pi(v))$ for each vertex $v \in V$ and linearly interpolating the values of $\t{f}$ on the edges between vertices (with respect to the metric $d_{\rho}$). This extension will have the same properties as the extension $Pf$ of $f$ used in Proposition \ref{Lip extend}.
\end{rem}

We can now relate the trace and extension operators. 

\begin{prop}\label{extend to trace}
Let $f \in \t{B}_{p}^{\theta}(Z)$. Then $T(Pf) = f$ $\nu$-a.e. Consequently the induced trace operators $T:D^{1,p}(X_{\rho}) \rightarrow B^{\theta}_{p}(Z)$ and $T: N^{1,p}(X_{\rho}) \rightarrow \ch{B}^{\theta}_{p}(Z)$ are surjective. 
\end{prop}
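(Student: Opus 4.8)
The plan is to prove $T(Pf) = f$ $\nu$-a.e.\ by a density argument, reducing to the case of Lipschitz functions where both operators behave transparently, and then transferring the conclusion to general $f \in \t{B}^\theta_p(Z)$ using the continuity of $T$ and $P$ together with the Lebesgue-point machinery of Proposition \ref{q Lebesgue points}. First I would record the two ingredients that make the Lipschitz case immediate: by Proposition \ref{Lip extend}, if $f$ is $L$-Lipschitz then $Pf: \bar X_\rho \to \R$ is $CL$-Lipschitz with $Pf|_Z = f$; and by Proposition \ref{Lip trace}, the trace of a Lipschitz function $u$ on $X_\rho$ agrees $\nu$-a.e.\ with the canonical Lipschitz extension $\hat u|_Z$. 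Applying the latter to $u = Pf|_{X_\rho}$, whose canonical Lipschitz extension to $\bar X_\rho$ is $Pf$ itself (since $Pf$ is already continuous on $\bar X_\rho$ and restricts to a Lipschitz function on the dense subset $X_\rho$), we get $T(Pf) = Pf|_Z = f$ pointwise everywhere on $Z$. This disposes of the Lipschitz case.

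Next I would handle general $f \in \t{B}^\theta_p(Z)$. The subtlety is that $T$ is only defined on $\t{N}^{1,p}_{\loc}(X_\rho)$ and is continuous as a map $D^{1,p}(X_\rho) \to B^\theta_p(Z)$ (Proposition \ref{Dirichlet Besov finite}), while $P: \t{B}^\theta_p(Z) \to D^{1,p}(\bar X_\rho)$, and one must pass through the restriction $X_\rho$ and use that $Pf|_{X_\rho} \in \t{D}^{1,p}(X_\rho) \subset \t{N}^{1,p}_{\loc}(X_\rho)$ (this inclusion comes from Proposition \ref{integrable over balls}). Rather than approximate $f$ by Lipschitz functions globally, I would instead argue directly via Lebesgue points: by Corollary \ref{cor:quasicontinuous} — or more elementarily, since we only need the $L^p$-Lebesgue point statement which follows already from the combination of Proposition \ref{extension Besov}, Proposition \ref{Dirichlet characterization}, and Proposition \ref{q Lebesgue points} — $\nu$-a.e.\ $z \in Z$ is an $L^p(Z)$-Lebesgue point of $f$. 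Fix such a $z$. By Proposition \ref{q Lebesgue points} applied with $q = p$, $z$ is an $L^p(\bar X_\rho)$-Lebesgue point of $Pf$ with $Pf(z) = f(z)$. On the other hand, writing $u = Pf|_{X_\rho} \in \t{D}^{1,p}(X_\rho)$ and letting $\hat u = Pf \in \t{D}^{1,p}(\bar X_\rho)$ be its extension (Proposition \ref{Dirichlet characterization} — uniqueness of the extension up to $\mu_\beta$-null sets forces $\hat u = Pf$), Proposition \ref{Dirichlet characterization} gives $\hat u|_Z = Tu$ $\nu$-a.e., i.e.\ $T(Pf) = Pf|_Z$ $\nu$-a.e. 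Combining $Pf(z) = f(z)$ for $\nu$-a.e.\ $z$ with $T(Pf)(z) = Pf(z)$ for $\nu$-a.e.\ $z$ yields $T(Pf) = f$ $\nu$-a.e.

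For the surjectivity conclusions, I would argue as follows. Given $f \in \t{B}^\theta_p(Z)$, set $u = Pf|_{X_\rho}$; by Proposition \ref{extension Besov} we have $\|Pf\|_{D^{1,p}(\bar X_\rho)} \ls \|f\|_{B^\theta_p(Z)}$, and since $\mu_\beta(\p X_\rho) = 0$ the minimal $p$-weak upper gradient of $Pf$ on $\bar X_\rho$ agrees a.e.\ on $X_\rho$ with that of $u$ (using \cite[Proposition 6.3.22]{HKST} and that $X_\rho$ is open in $\bar X_\rho$), so $u \in \t{D}^{1,p}(X_\rho) = \t{D}^{1,p}(X_\rho)$ with $\|u\|_{D^{1,p}(X_\rho)} \ls \|f\|_{B^\theta_p(Z)}$; then $Tu = f$ in $B^\theta_p(Z)$ by the first part, establishing that $T: D^{1,p}(X_\rho) \to B^\theta_p(Z)$ is surjective (indeed onto with a bounded right inverse). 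If moreover $f \in \ch{B}^\theta_p(Z)$, I would use the truncated extension $P_0 f = \xi_0 Pf$: by Proposition \ref{extension Besov}, $P_0 f \in \t{N}^{1,p}(\bar X_\rho)$ with $\|P_0 f\|_{N^{1,p}(\bar X_\rho)} \ls \|f\|_{\ch{B}^\theta_p(Z)}$, so $P_0 f|_{X_\rho} \in N^{1,p}(X_\rho)$, and since $P_0 f = Pf$ on $Z = \p X_\rho$ we have $T(P_0 f) = T(Pf) = f$ $\nu$-a.e.\ (the trace only sees the high-height behavior, and $P_0 f$ agrees with $Pf$ on $X_{\geq 1}$, so the formulas \eqref{local trace formula} give the same limit). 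Hence $T: N^{1,p}(X_\rho) \to \ch{B}^\theta_p(Z)$ is surjective as well. The main obstacle I anticipate is bookkeeping the distinction between $Pf$ as a function on $\bar X_\rho$ versus its restriction to $X_\rho$, and ensuring that the extension operator of Proposition \ref{Dirichlet characterization} applied to $Pf|_{X_\rho}$ genuinely recovers $Pf$ — this requires invoking the uniqueness clause (up to $C_p^{\bar X_\rho}$-null, hence $\mu_\beta$-null, sets) of that extension together with the fact that $Pf$ is defined on $Z$ precisely by the Lebesgue-point average \eqref{extend extension}, matching \eqref{D characterization limit}.
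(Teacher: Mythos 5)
Your core argument for $T(Pf) = f$ $\nu$-a.e.\ is the same as the paper's: combine Proposition \ref{q Lebesgue points} (Lebesgue points of $f$ are Lebesgue points of $Pf$ with $Pf(z) = f(z)$) with Proposition \ref{Dirichlet characterization} (the extension $\hat{u}$ of $Pf|_{X_{\rho}}$ satisfies $\hat{u}|_{Z} = T(Pf)$ $\nu$-a.e.\ and is characterized by the Lebesgue-point averages \eqref{D characterization limit}), and then observe that the defining formula \eqref{extend extension} for $Pf$ on $Z$ coincides with the defining formula for $\hat{u}$ on $Z$. The surjectivity argument, including the use of the truncation $P_{0}f$ for the $\ch{B}^{\theta}_{p}$ case, also matches. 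The Lipschitz warm-up is harmless but unused: your general argument does not reduce to it.

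There is, however, a logical slip in the step where you assert that $\nu$-a.e.\ $z \in Z$ is an $L^{p}(Z)$-Lebesgue point of $f$. Invoking Corollary \ref{cor:quasicontinuous} here is circular — that corollary is a consequence of Theorem \ref{thm:extendtrace}, which is completed by this very proposition. Your hedge (``follows already from the combination of Proposition \ref{extension Besov}, Proposition \ref{Dirichlet characterization}, and Proposition \ref{q Lebesgue points}'') is also not right: those results give Lebesgue points of $Pf$ in $\bar{X}_{\rho}$, not Lebesgue points of $f$ in $Z$, and Proposition \ref{q Lebesgue points} in fact requires the Lebesgue-point property of $f$ as a \emph{hypothesis}. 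The correct and immediate reference is the Lebesgue differentiation theorem (\cite[Theorem 1.8]{Hein01}), which for $f \in L^{1}_{\loc}(Z)$ gives $L^{1}$-Lebesgue points $\nu$-a.e.; the paper then applies Proposition \ref{q Lebesgue points} with $q = 1$ and downgrades \eqref{D characterization limit} by H\"older, so that $q = p$ is never needed at all. Your choice $q = p$ works too (apply Lebesgue differentiation to $|f - c|^{p}$ over a countable dense set of $c$), but it is a detour, and the cited justifications for it do not hold up.
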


\begin{proof}
Let $f \in \t{B}_{p}^{\theta}(Z)$ be given. Since $f \in L^{1}_{\loc}(Z)$ we have that $\nu$-a.e.~ point of $Z$ is an $L^{1}(Z)$-Lebesgue point of $f$ by the Lebesgue differentiation theorem \cite[Theorem 1.8]{Hein01}. By Proposition \ref{extension Besov} we have $Pf \in \t{D}^{1,p}(\bar{X}_{\rho})$, and by Proposition \ref{q Lebesgue points} we have that each $L^{1}(Z)$-Lebesgue point $z\in Z$ for $f$ is an $L^{1}(\bar{X}_{\rho})$-Lebesgue point for $Pf$ that satisfies $Pf(z) = f(z)$. We write $u = Pf|_{X_{\rho}}$ and let $\hat{u}$ denote the extension of $u$ to $\bar{X}_{\rho}$ given by Proposition \ref{Dirichlet characterization}. Then $\hat{u}|_{Z} = Tu$ $\nu$-a.e. On the other hand we have by \eqref{D characterization limit} and H\"older's inequality that each $L^{1}(\bar{X}_{\rho})$-Lebesgue point of $Pf$ on $Z$ (and therefore each $L^{1}(Z)$-Lebesgue point of $Z$) satisfies $Pf(z) = \hat{u}(z)$. We conclude that $Pf|_{Z} = \hat{u}|_{Z} = T(Pf)$ $\nu$-a.e. Since $Pf|_{Z} = f$ $\nu$-a.e., we conclude that $T(Pf) = f$ $\nu$-a.e., as desired. 

Let $f \in \t{B}_{p}^{\theta}(Z)$ be arbitrary. Then $u = Pf|_{X_{\rho}}$ defines an element of $\t{D}^{1,p}(X_{\rho})$ such that $Tu = f$ $\nu$-a.e. on $Z$ and therefore $Tu = f$ in $B_{p}^{\theta}(Z)$. We conclude that the trace operator $T:D^{1,p}(X_{\rho}) \rightarrow B^{\theta}_{p}(Z)$ is surjective. 

Now let $f \in \ch{B}^{\theta}_{p}(Z)$ be arbitrary. We set $u = P_{0}f|_{X_{\rho}}$. Then $u$ defines an element of $N^{1,p}(X_{\rho})$ by Proposition \ref{extension Besov}. Since $u|_{X \geq 1} = Pf|_{X\geq 1}$ we conclude that $Tu = f$ $\nu$-a.e.~ on $Z$, which implies that $Tu = f$ in $\ch{B}^{\theta}_{p}(Z)$. Thus the trace operator $T: N^{1,p}(X_{\rho}) \rightarrow \ch{B}^{\theta}_{p}(Z)$ is also surjective.
\end{proof}

Proposition \ref{extend to trace} completes the proof of Theorem \ref{thm:extendtrace}. The representative $Pf|_{Z}$ of $f$ in $\t{B}^{\theta}_{p}(Z)$ constructed in the proof of Proposition \ref{extend to trace} is better behaved than the original function $f$ in many respects. We elaborate on this in Section \ref{sec:properties}. Proposition \ref{extend to trace} shows that $B^{\theta}_{p}(Z)$ is the continuous image under the trace operator $T$ of the Banach space $D^{1,p}(X_{\rho})$; this allows us to immediately deduce as a corollary that the homogeneous Besov space $B^{\theta}_{p}(Z) = \t{B}^{\theta}_{p}(Z)/\sim$ is a Banach space. We recall that we showed that $D^{1,p}(X_{\rho})$ is a Banach space in Proposition \ref{Dirichlet Banach}.

\begin{cor}\label{Besov Banach}
$B^{\theta}_{p}(Z)$ is a Banach space for $p \geq 1$ and $0 < \theta < 1$. 
\end{cor}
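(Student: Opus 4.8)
The plan is to show that $B^{\theta}_{p}(Z)$ inherits completeness from $D^{1,p}(X_{\rho})$ via the trace operator $T$. The key structural facts in place are: $D^{1,p}(X_{\rho})$ is a Banach space (Proposition \ref{Dirichlet Banach}); $T: D^{1,p}(X_{\rho}) \rightarrow B^{\theta}_{p}(Z)$ is a bounded linear operator (Proposition \ref{Dirichlet Besov finite}) and is surjective (Proposition \ref{extend to trace}); and the extension $P: \t{B}^{\theta}_{p}(Z) \rightarrow D^{1,p}(\bar{X}_{\rho})$ restricts to a bounded map into $D^{1,p}(X_{\rho})$ with $T \circ P = \mathrm{id}$ (Propositions \ref{extension Besov} and \ref{extend to trace}). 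So $B^{\theta}_{p}(Z)$ is a quotient of a Banach space that admits a bounded linear right inverse, hence is itself complete.

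First I would recall the general fact: if $E$ is a Banach space, $F$ a normed space, and $T: E \rightarrow F$ a bounded linear surjection that admits a bounded linear right inverse $S: F \rightarrow E$ (i.e. $TS = \mathrm{id}_F$), then $F$ is a Banach space. The proof is a two-line argument: given a Cauchy sequence $\{f_n\}$ in $F$, the sequence $\{S f_n\}$ is Cauchy in $E$ by boundedness of $S$, hence converges to some $e \in E$; then $T e = \lim T S f_n = \lim f_n$ by boundedness (hence continuity) of $T$, so $\{f_n\}$ converges in $F$. Applying this with $E = D^{1,p}(X_{\rho})$, $F = B^{\theta}_{p}(Z)$, $T$ the induced trace operator, and $S$ the map induced by $f \mapsto Pf|_{X_{\rho}}$ — which is bounded by \eqref{extension Besov bound} (noting $\|Pf|_{X_\rho}\|_{D^{1,p}(X_\rho)} \leq \|Pf\|_{D^{1,p}(\bar X_\rho)} \ls \|f\|_{B^\theta_p(Z)}$) and satisfies $TS = \mathrm{id}$ by Proposition \ref{extend to trace} — gives the result immediately. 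One must be slightly careful that $S$ is well-defined \emph{on the quotient} $B^{\theta}_{p}(Z)$: two representatives of the same class differ by a function that is constant $\nu$-a.e., whose Poisson extension is the corresponding constant, which has zero $D^{1,p}$-seminorm, so $S$ descends to $B^{\theta}_{p}(Z)$; equivalently one just observes that $\|Pf|_{X_\rho}\|_{D^{1,p}(X_\rho)}$ depends only on the class of $f$ in $B^{\theta}_p(Z)$ by the bound just quoted.

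I do not expect any real obstacle here — the substantive work (boundedness of $T$ and $P$, surjectivity, $T\circ P = \mathrm{id}$, and completeness of $D^{1,p}$) has all been done already, and this corollary is purely a soft functional-analysis packaging of those inputs. The only point requiring a word of care is the quotient business just mentioned and the observation that $B^{\theta}_{p}(Z)$ is genuinely normed (not merely seminormed) — but this is already recorded in Section \ref{subsec:Besov}, where it is noted that $\|u\|_{B^{\theta}_{p}(Z)} = 0$ iff $u$ is constant $\nu$-a.e., which is exactly the equivalence relation defining the quotient. Thus I would simply state the abstract lemma, verify its three hypotheses by citing Propositions \ref{Dirichlet Banach}, \ref{Dirichlet Besov finite}, \ref{extension Besov}, and \ref{extend to trace}, and conclude.

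\begin{proof}
Recall the following elementary fact from functional analysis. Let $E$ be a Banach space, let $F$ be a normed space, and suppose $T: E \rightarrow F$ and $S: F \rightarrow E$ are bounded linear operators with $T \circ S = \mathrm{id}_{F}$. Then $F$ is a Banach space: if $\{f_{n}\}$ is a Cauchy sequence in $F$, then $\{Sf_{n}\}$ is Cauchy in $E$ (since $\|Sf_{n}-Sf_{m}\| \leq \|S\|\,\|f_{n}-f_{m}\|$), hence converges to some $e \in E$ by completeness of $E$; by continuity of $T$ we then have $f_{n} = T(Sf_{n}) \rightarrow Te$ in $F$, so $\{f_{n}\}$ converges.

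We apply this with $E = D^{1,p}(X_{\rho})$ and $F = B^{\theta}_{p}(Z)$, where $\theta = 1-\beta/p$ with $\beta = p(1-\theta) > 0$, which is a genuine normed space by the discussion in Section \ref{subsec:Besov}. By Proposition \ref{Dirichlet Banach}, $E$ is a Banach space. By Proposition \ref{Dirichlet Besov finite}, the trace operator induces a bounded linear operator $T: D^{1,p}(X_{\rho}) \rightarrow B^{\theta}_{p}(Z)$. We define $S: B^{\theta}_{p}(Z) \rightarrow D^{1,p}(X_{\rho})$ by $Sf = Pf|_{X_{\rho}}$; by the bound \eqref{extension Besov bound} of Proposition \ref{extension Besov} we have
\[
\|Sf\|_{D^{1,p}(X_{\rho})} = \|Pf|_{X_{\rho}}\|_{D^{1,p}(X_{\rho})} \leq \|Pf\|_{D^{1,p}(\bar{X}_{\rho})} \ls \|f\|_{B^{\theta}_{p}(Z)},
\]
so in particular $Sf$ depends only on the class of $f$ in $B^{\theta}_{p}(Z)$ and $S$ is a bounded linear operator. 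Finally, by Proposition \ref{extend to trace} we have $T(Pf) = f$ $\nu$-a.e.~for every $f \in \t{B}^{\theta}_{p}(Z)$, which means precisely that $T \circ S$ is the identity on $B^{\theta}_{p}(Z)$. The elementary fact above now shows that $B^{\theta}_{p}(Z)$ is a Banach space.
\end{proof}
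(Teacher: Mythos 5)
Your proposal is correct and takes essentially the same approach as the paper: the paper deduces completeness of $B^{\theta}_{p}(Z)$ from the completeness of $D^{1,p}(X_{\rho})$ (Proposition \ref{Dirichlet Banach}) via the trace operator and the extension $P$, citing Proposition \ref{extend to trace}. Your write-up has the virtue of making explicit the abstract lemma being used — namely that one needs not merely surjectivity of $T$ (a continuous image of a Banach space need not be complete) but a bounded linear right inverse, which is exactly what $f \mapsto Pf|_{X_{\rho}}$ provides via \eqref{extension Besov bound}; the paper's phrasing leaves this point implicit.
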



By combining Proposition \ref{extend to trace} with Proposition \ref{Poincare type inequality} and inequality \eqref{ball integral estimate}, we obtain the following useful Poincar\'e type inequality for functions defined on balls in $Z$ such that  This inequality will later be used in the proof of Corollary \ref{cor:quasicontinuous} in Section \ref{sec:properties} to show that Besov functions have $L^{p}(Z)$-Lebesgue points quasieverywhere with respect to the Besov capacity.

\begin{prop}\label{extension hyperbolic Poincare}
There is a constant $C = C(\alpha,\tau) \geq 1$ such that if $B \subset Z$ is any ball in $Z$ of radius $r > 0$ and $f \in \t{B}^{\theta}_{p}(Z)$ is given then for any $p$-integrable upper gradient $g$ of the extension $Pf$ on $C\hat{B}$ we have 
\begin{equation}\label{extension equation type}
\dashint_{B} |f-Pf_{\hat{B}}|^{p} \,d\nu \ls r^{p}\dashint_{C\hat{B}}g^{p}\,d\mu_{\beta}.
\end{equation}
\end{prop}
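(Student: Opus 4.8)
The plan is to reduce inequality \eqref{extension equation type} to the Poincaré-type inequality of Proposition \ref{Poincare type inequality} applied to the function $u = Pf|_{H^{B}}$ on the hull $H^{B}$, together with the refined ball estimate \eqref{ball integral estimate} from the proof of Proposition \ref{q Lebesgue points}. First I would fix the ball $B = B_{Z}(z,r)$ and take the constant $C = C(\alpha,\tau) \geq 1$ large enough that Lemma \ref{hull approximation} gives both $\overline{H^{B}} \subset C\hat{B}$ and $\hat{B} \subset \overline{H^{CB}}$; by possibly enlarging $C$ we may also arrange that $H^{B} \subset C\hat{B}$ so that any $p$-integrable upper gradient $g$ of $Pf$ on $C\hat{B}$ restricts to a $p$-integrable upper gradient of $u = Pf|_{H^{B}}$ on $H^{B}$. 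By Proposition \ref{extension Besov} we have $Pf \in \t{D}^{1,p}(\bar{X}_{\rho}) \subset \t{N}^{1,p}_{\loc}(X_{\rho})$, so $u \in N^{1,p}(H^{B})$ and Proposition \ref{Poincare type inequality} applies, yielding
\[
\dashint_{B} |Tu - (Tu)_{B}|^{p}\,d\nu \ls r^{p}\dashint_{H^{B}} g^{p}\,d\mu_{\beta} \ls r^{p}\dashint_{C\hat{B}} g^{p}\,d\mu_{\beta},
\]
where in the last step I use $\mu_{\beta}(H^{B}) \asymp \mu_{\beta}(C\hat{B}) \asymp r^{\beta}\nu(B)$ from Lemma \ref{hull measure} (and the doubling property of $\nu$) to pass from the average over $H^{B}$ to the average over $C\hat{B}$.

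Next I would identify $Tu$ with $f$ up to a set of $\nu$-measure zero on $B$. By Proposition \ref{extend to trace} we have $T(Pf) = f$ $\nu$-a.e.\ on $Z$, and the trace $Tu$ of $u = Pf|_{H^{B}}$ on the ball $B$ is computed by exactly the same formula \eqref{trace formula}/\eqref{local trace formula} as the trace of $Pf$ (this is the content of the second equality in \eqref{trace formula} and the remark following Proposition \ref{Lp ball trace}), so $Tu = T(Pf)|_{B} = f$ $\nu$-a.e.\ on $B$. Substituting this into the displayed inequality converts it to
\[
\dashint_{B} |f - f_{B}|^{p}\,d\nu \ls r^{p}\dashint_{C\hat{B}} g^{p}\,d\mu_{\beta},
\]
where $f_{B} = (Tu)_{B}$ is the $\nu$-average of $f$ over $B$.

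Finally I would replace the centering constant $f_{B}$ by $Pf_{\hat{B}}$, the $\mu_{\beta}$-average of $Pf$ over $\hat{B}$. This is where inequality \eqref{ball integral estimate} from the proof of Proposition \ref{q Lebesgue points} enters: applied with $q = 1$ and $a = f_{B}$, together with the fact (used in that proof) that $\hat{B} = B_{\rho}(z,r)$ and the doubling of $\mu_{\beta}$, it gives
\[
|Pf_{\hat{B}} - f_{B}| \leq \dashint_{\hat{B}} |Pf - f_{B}|\,d\mu_{\beta} \ls \dashint_{B_{Z}(z,C'r)} |f - f_{B}|\,d\nu
\]
for a constant $C' = C'(\alpha,\tau)$; enlarging $B$ to $C'B$ and iterating the inequality just obtained (applied to $C'B$ in place of $B$) — or more simply applying \eqref{ball integral estimate} directly with the ball $B$ and then controlling $|Pf_{\hat{B}} - f_{B}|^{p}$ by $\dashint_{B_{Z}(z,C'r)}|f - f_{B}|^{p}\,d\nu$ via Jensen and the doubling of $\nu$ — lets me absorb the change of centering into the right-hand side at the cost of enlarging the ball on the left; one then runs the same Poincaré argument on $C'B$ rather than $B$. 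By the triangle inequality in $L^{p}(B,\nu)$,
\[
\left(\dashint_{B}|f - Pf_{\hat{B}}|^{p}\,d\nu\right)^{1/p} \leq \left(\dashint_{B}|f - f_{B}|^{p}\,d\nu\right)^{1/p} + |f_{B} - Pf_{\hat{B}}|,
\]
and both terms are bounded by $r\left(\dashint_{C\hat{B}}g^{p}\,d\mu_{\beta}\right)^{1/p}$ up to the implied constant (after, if necessary, replacing $C$ by the larger constant coming from working with $C'B$ and then invoking Lemma \ref{hull approximation} once more to compare $\overline{H^{C'B}}$ with $C\hat{B}$). Raising to the $p$-th power gives \eqref{extension equation type}.

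\textbf{Main obstacle.} The only delicate point is the bookkeeping of constants in the last step: one must choose $C$ so that a single ball $C\hat{B}$ simultaneously contains the hull $H^{B}$ (needed for Proposition \ref{Poincare type inequality}), the enlarged ball $C'\hat{B}$ appearing when one corrects the centering constant via \eqref{ball integral estimate}, and is comparable in $\mu_{\beta}$-measure to all of these (needed to pass between averages). All of these comparisons are quantitative in $\alpha$ and $\tau$ by Lemmas \ref{hull approximation}, \ref{hull measure}, and \ref{doubling overlap}, so the argument closes, but the claimed constant $C = C(\alpha,\tau)$ must be taken to be the maximum of the finitely many constants produced along the way. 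Everything else is a direct citation of Propositions \ref{Poincare type inequality}, \ref{extension Besov}, \ref{q Lebesgue points}, and \ref{extend to trace}.
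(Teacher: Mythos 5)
Your proof is correct and relies on the same three ingredients as the paper's argument --- Proposition \ref{Poincare type inequality}, the ball estimate \eqref{ball integral estimate} from Proposition \ref{q Lebesgue points}, and the identification $T(Pf) = f$ $\nu$-a.e.\ from Proposition \ref{extend to trace} --- but orders them differently. The paper handles the awkward centering constant $Pf_{\hat{B}}$ at the very start by a single application of Jensen's inequality,
\[
\dashint_{B} |f-Pf_{\hat{B}}|^{p} \,d\nu \leq \dashint_{B}\dashint_{\hat{B}}|f(z)-Pf(x)|^{p} \,d\mu_{\beta}(x) \, d\nu(z),
\]
then applies \eqref{ball integral estimate} with the \emph{varying} centering $a=f(z)$ inside the inner integral, which collapses the right side to $\dashint_{C'B}|f-f_{C'B}|^p\,d\nu$; one application of Proposition \ref{Poincare type inequality} on $C'B$ and one invocation of Lemma \ref{hull approximation} then close the argument. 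Your version applies Proposition \ref{Poincare type inequality} first with the natural centering $f_B$ and then corrects $f_B$ to $Pf_{\hat{B}}$ at the end. That works, but since the correction term $|f_B-Pf_{\hat{B}}|^p$ is controlled (via \eqref{ball integral estimate} with the \emph{fixed} $a=f_B$) by $\dashint_{C'B}|f-f_B|^p\,d\nu$, you end up needing a second run of the Poincar\'e argument on the enlarged ball $C'B$, together with the standard recentering estimate $|f_{C'B}-f_B|^p \ls \dashint_{C'B}|f-f_{C'B}|^p\,d\nu$. The paper's Jensen-first arrangement is leaner precisely because it never produces the intermediate centering $f_B$, so there is only one Poincar\'e application, one ball, and no recentering. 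Your bookkeeping remarks about taking $C$ to dominate the finitely many constants from Lemmas \ref{hull approximation}, \ref{hull measure}, and \ref{doubling overlap} are accurate, and the argument does close.
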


\begin{proof}
By Jensen's inequality we have
\[
\dashint_{B} |f-Pf_{\hat{B}}|^{p} \,d\nu \leq \dashint_{B}\dashint_{\hat{B}}|f(z)-Pf(x)|^{p} \,d\mu_{\beta}(x) \, d\nu(z). 
\]
We can then apply inequality \eqref{ball integral estimate} with $a = f(z)$ to obtain for a constant $C' = C'(\alpha,\tau)$, 
\begin{align*}
\dashint_{B}\dashint_{\hat{B}}|f(z)-Pf(x)|^{p} \,d\mu_{\beta}(x) \, d\nu(z) &\ls \dashint_{B}\dashint_{C'B}|f(z)-f(y)|^{p} \,d\nu(y) \, d\nu(z) \\
&\ls \dashint_{C'B}\dashint_{C'B}|f(z)-f(y)|^{p} \,d\nu(y) \, d\nu(z),
\end{align*}
with the second inequality following from the doubling property for $\nu$. By using the inequality
\[
|f(z)-f(y)|^{p} \ls |f(z)-f_{C'B}|^{p} + |f(y)-f_{C'B}|^{p},
\]
we can then conclude that
\[
\dashint_{B} |f-Pf_{\hat{B}}|^{p} \,d\nu \ls \dashint_{C'B} |f-f_{C'B}|^{p}\,d\nu.
\]
Since $T(Pf) = f$ a.e.~ by Proposition \ref{extend to trace}, we can apply Proposition \ref{Poincare type inequality} to conclude that for any upper gradient $g$ of $Pf$ on $H^{C'B}$, 
\begin{equation}\label{proto extension type}
\dashint_{B} |f-Pf_{\hat{B}}|^{p} \ls r^{p} \dashint_{H^{C'B}} g^{p}\, d\mu_{\beta}. 
\end{equation}
By using Lemma \ref{hull approximation} we can find a constant $C = C(\alpha,\tau)$ such that $C^{-1}\hat{B} \subset H^{C'B} \subset C\hat{B}$. The desired inequality \eqref{extension equation type} then follows from \eqref{proto extension type} and the doubling property of $\nu$. 
\end{proof}

When $\nu$ satisfies a reverse-doubling condition (see \eqref{reverse doubling}) the inequality \eqref{extension equation type} can be improved. See Proposition \ref{hyperbolic Poincare} at the end of the paper.

We can also use Proposition \ref{extend to trace} to obtain finer information about the sequence of functions $\{T_{n}(Pf)\}_{n \in \Z}$ built from the Poisson extension $Pf$ of a given Besov function $f$ that were used to construct the trace operator in Section \ref{sec:trace}. We will show that these functions actually converge to the trace $T(Pf) = f$ in the Besov norm. The following proposition is inspired by \cite[Theorem 3.3]{BSS18} and \cite[Theorem 3.2]{S16}, and in an appropriate sense is contained within the statement of \cite[Theorem 3.2]{S16} as the functions $T_{n}(Pf)$ we consider are essentially identical to those considered in the referenced results. We note however that our proof below of the convergence of $T_{n}(Pf)$ to $f$ in the Besov norm is quite different than the one given in \cite[Theorem 3.2]{S16}; in particular we do not make use of the Fefferman-Stein maximal theorem. 

\begin{prop}\label{converge Besov}
Let $f \in B^{\theta}_{p}(Z)$ be given. Then there is an $n_{0} = n_{0}(\alpha,\tau)$ such that for each $n \in \Z$ the upper gradient $g$ of $Pf$ on $X_{\rho}$ defined by \eqref{edge gradient} satisfies 
\begin{equation}\label{cutoff converge Besov}
\|T_{n}(Pf)-f\|_{B^{\theta}_{p}(Z)} \ls \|g\|_{L^{p}(X_{\geq n-n_{0}})}. 
\end{equation}
Consequently we have the norm estimate
\begin{equation}\label{estimate partial trace}
\|T_{n}(Pf)\|_{B^{\theta}_{p}(Z)} \ls \|f\|_{B^{\theta}_{p}(Z)},
\end{equation}
and we have $T_{n}(Pf) \rightarrow f$ in $B^{\theta}_{p}(Z)$ as $n \rightarrow \infty$. Furthermore if $f \in \ch{B}^{\theta}_{p}(Z)$ then $T_{n}(Pf) \rightarrow f$ in $\ch{B}^{\theta}_{p}(Z)$ as well. 
\end{prop}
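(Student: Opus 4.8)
The core estimate is \eqref{cutoff converge Besov}; once it is established the remaining claims follow routinely, so I will organize the argument around it. The basic point is that $T_n(Pf)$ is itself a Lipschitz function on $Z$ — it is a finite linear combination of the Lipschitz partition-of-unity functions $\psi_v$, $v \in V_n$, with coefficients $Pf(v)$ — and hence lies in $\t B^\theta_p(Z)$, so the Besov seminorm $\|T_n(Pf) - f\|_{B^\theta_p(Z)}$ makes sense. The function $h_n := T_n(Pf)$ should be thought of as the trace, at level $n$, of the truncated extension; more precisely the natural object to compare with $h_n$ is not $Pf$ itself but the function $u_n$ on $X_\rho$ obtained from $Pf$ by freezing it at height $n$ below level $n$ (i.e. composing $Pf$ with a retraction of $X_{\le n}$ onto $V_n$, or equivalently replacing $Pf$ by $\sum_{v\in V_n} Pf(v)\psi_v$ on the "skirt" $X_{\le n}$ and leaving it unchanged on $X_{\ge n}$). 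Then $h_n = T u_n$, $u_n$ agrees with $Pf$ on $X_{\ge n}$, and a minimal $p$-weak upper gradient $g_n$ of $u_n$ is supported on $X_{\ge n-n_0}$ for a fixed $n_0 = n_0(\alpha,\tau)$ (the extra $n_0$ levels come from the horizontal edges at level $n$ where the interpolation of the $\psi_v$'s takes place, plus the vertical edges adjacent to $V_n$) and satisfies $g_n \ls g$ there by the same edge-gradient computations used in the proof of Proposition \ref{extension Besov}. Applying Proposition \ref{Dirichlet Besov finite} to $u_n$ and using $T(u_n - Pf|_{X_\rho}) = h_n - f$ $\nu$-a.e.\ (by linearity of $T$ and Proposition \ref{extend to trace}) gives
\[
\|h_n - f\|_{B^\theta_p(Z)} = \|T(u_n - Pf)\|_{B^\theta_p(Z)} \ls \|u_n - Pf\|_{D^{1,p}(X_\rho)} \ls \|g_n - g_{Pf}\|_{L^p(X_\rho)} \ls \|g\|_{L^p(X_{\ge n - n_0})},
\]
using that $u_n - Pf$ vanishes on $X_{\ge n}$ so that any upper gradient of it is supported on $X_{\le n}$, combined with the control $g_n \ls g$ on $X_{\le n} \cap X_{\ge n-n_0}$ and $g_n = 0$ below $X_{\le n-n_0}$. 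This is \eqref{cutoff converge Besov}.

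\textbf{Deducing the corollaries.} Given \eqref{cutoff converge Besov}, the norm bound \eqref{estimate partial trace} follows from the triangle inequality $\|h_n\|_{B^\theta_p(Z)} \le \|h_n - f\|_{B^\theta_p(Z)} + \|f\|_{B^\theta_p(Z)} \ls \|g\|_{L^p(X_\rho)} + \|f\|_{B^\theta_p(Z)} \ls \|f\|_{B^\theta_p(Z)}$, where the last step uses $\|g\|_{L^p(X_\rho)} \ls \|f\|_{B^\theta_p(Z)}$ from \eqref{pre extension bound} in the proof of Proposition \ref{extension Besov}. Convergence $T_n(Pf) \to f$ in $B^\theta_p(Z)$ follows from \eqref{cutoff converge Besov} together with the fact that $\|g\|_{L^p(X_{\ge n-n_0})} \to 0$ as $n \to \infty$, since $g \in L^p(X_\rho)$ and $\bigcap_n X_{\ge n-n_0} = \emptyset$ (dominated/monotone convergence for the tails of an integrable function). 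For the final assertion, suppose $f \in \ch B^\theta_p(Z)$; then by the definition \eqref{check Besov norm} it suffices to show $\|T_n(Pf) - f\|_{L^p(Z)} \to 0$ in addition. But $T_n(Pf) = T(P_n f)$ where $P_n f$ is the truncation of $Pf$, and by \eqref{global p trace} in Proposition \ref{Lp local trace} applied to $u = P_n f$ (which lies in $\t N^{1,p}(X_\rho)$ by Proposition \ref{extension Besov}) together with $T_0(P_n f) = 0$ — wait, this needs care: more directly, one uses \eqref{global p trace} applied to $Pf$ itself with the level-$n$ partial trace,
\[
\|T(Pf) - T_n(Pf)\|_{L^p(Z)} \ls \alpha^{(\beta/p - 1)n}\|g\|_{L^p(X_{\ge n})},
\]
whose right-hand side tends to $0$ as $n \to \infty$ since $\beta/p - 1 = -\theta < 0$ and $\|g\|_{L^p(X_{\ge n})}$ is bounded (indeed $\to 0$); and $T(Pf) = f$ $\nu$-a.e.\ by Proposition \ref{extend to trace}. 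This gives $T_n(Pf) \to f$ in $L^p(Z)$, completing the argument.

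\textbf{Main obstacle.} The delicate point is the bookkeeping in the first paragraph: carefully defining the comparison function $u_n$ on $X_\rho$ so that (i) $T u_n = T_n(Pf)$ exactly (not just $\nu$-a.e.), which requires matching the partition-of-unity interpolation in the definition \eqref{trace formula} of $T_n$ against the piecewise-linear structure of $u_n$ on the edges at and just above level $n$; (ii) $u_n$ agrees with $Pf$ on $X_{\ge n}$ so that $u_n - Pf$ has an upper gradient supported in a bounded band of levels near $n$; and (iii) the upper gradient of $u_n$ on that band is controlled by $g$ via the doubling geometry, exactly as in the computations of Proposition \ref{extension Besov} and Proposition \ref{Lp ball trace} (the horizontal-edge estimates using \eqref{arc length measure}, \eqref{transformed}, and Lemma \ref{doubling overlap}). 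Pinning down the value of $n_0$ amounts to tracking how many levels below $n$ the horizontal edges at level $n$ and the vertical edges incident to $V_n$ can reach under the conformal metric; a crude but sufficient choice will be $n_0 = 2$ or a similarly small constant depending only on $\alpha$ and $\tau$ through Lemma \ref{filling estimate both}. None of this is conceptually hard, but it requires assembling several of the edge-level estimates already proved in Sections \ref{sec:trace} and \ref{sec:extend} in the right combination.
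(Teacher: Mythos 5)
There is a genuine gap in the central construction of your first paragraph. You propose to build a function $u_n$ on $X_\rho$ that simultaneously (i) satisfies $T u_n = T_n(Pf)$ and (ii) agrees with $Pf$ on $X_{\geq n}$. These two requirements are incompatible: the trace operator $T$ is defined as the limit of the partial traces $T_m$ as $m \to \infty$, and $T_m u$ depends only on the values of $u$ on $V_m$. If $u_n = Pf$ on $X_{\geq n}$, then $T_m u_n = T_m(Pf)$ for every $m \geq n$, hence $T u_n = T(Pf) = f$ $\nu$-a.e., not $T_n(Pf)$. With your $u_n$ the displayed chain of inequalities collapses to $0 = \|T(u_n - Pf)\|_{B^\theta_p(Z)} \ls \|g\|_{L^p(X_{\geq n-n_0})}$, which is true but vacuous. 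Interestingly you notice this exact phenomenon later, when you catch yourself about to claim $T_n(Pf) = T(P_n f)$ and realize that $T(P_n f)= f$ because $P_n f$ agrees with $Pf$ at large heights — but the same objection applies to your $u_n$. A secondary point: with $u_n$ frozen below level $n$, the difference $u_n - Pf$ still varies at all heights $< n$ (because $Pf$ does), so a minimal upper gradient of $u_n - Pf$ would not be supported in a band near level $n$, and the claimed control $\ls g$ there does not come for free.

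The paper resolves this by setting $u_n = P(T_n(Pf))$: extend $T_n(Pf)$ back to $X_\rho$ via the Poisson operator. Then $T u_n = T_n(Pf)$ $\nu$-a.e.\ (by Proposition \ref{extend to trace}), but the price is that $u_n$ does \emph{not} agree with $Pf$ on $X_{\geq n}$, so one must estimate the upper gradient $g_n$ of $Pf - u_n$ on \emph{both} $X_{\leq n}$ and $X_{\geq n}$. The $X_{\leq n}$ estimate uses Proposition \ref{Lp ball trace} and $T(Pf)=f$ to compare vertex averages; the $X_{\geq n}$ estimate is the delicate one — it splits off the $|Pf(v)-Pf(w)|$ term (giving $g$) and then controls $|u_n(v)-u_n(w)|$ via the Lipschitz bound on $\psi_{v'}$ and vertical geodesics down to a common ancestor $x_v \in V_{n-n_0}$, which is where $n_0$ (chosen so $\alpha^{n_0} \geq 30\tau\alpha$) actually enters, not at the low-height band you envisage. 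Your deductions of \eqref{estimate partial trace}, of the convergence $T_n(Pf)\to f$ via vanishing tails of $\|g\|_{L^p(X_{\geq n-n_0})}$, and of the $\ch B^\theta_p(Z)$ convergence via \eqref{global p trace}, are all fine once \eqref{cutoff converge Besov} is in hand, but the core estimate itself requires the paper's construction of $u_n$, and with it the nontrivial high-height computation you hoped to avoid.
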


\begin{proof}
Let $f \in B^{\theta}_{p}(Z)$ and $n \in \Z$ be given. We let $g$ be the upper gradient of $Pf$ on $X_{\rho}$ defined by the formula \eqref{edge gradient}. We define $u_{n}: X_{\rho} \rightarrow \R$ by $u_{n} = P(T_{n}(Pf))$. The values of $Pf-u_{n}$ on each edge of $X$ are given by linearly interpolating (with respect to the metric $d_{\rho}$) the values of $Pf-u_{n}$ on the corresponding vertices of that edge, since the values of $u_{n}$ and $Pf$ on this edge are each determined by this same linear interpolation procedure. Thus an upper gradient of $Pf-u_{n}$ on an edge $e$ with vertices $v$ and $w$ is given by 
\[
g_{e,n} = \frac{|u_{n}(v)-Pf(v)-u_{n}(w)+Pf(w)|}{d_{\rho}(v,w)}.
\]
As in the proof of Proposition \ref{extension Besov}, we can then define an upper gradient $g_{n}$ for $Pf-u_{n}$ on $X$ by setting $g_{n}(x) = g_{e,n}$ whenever $x$ belongs to the interior of an edge $e$ in $X$ and setting $g_{n}(v) = 0$ for each vertex $v \in V$. Letting $v \in V$ denoting either vertex on an edge $e$ (and letting $w$ denote the other vertex), we have $d_{\rho}(v,w) \asymp \alpha^{-h(v)}$ by Lemma \ref{filling estimate both} and we have $\mu_{\beta}(e) \asymp \alpha^{-\beta h(v)}\nu(B(v))$ by \eqref{transformed}. Thus
\begin{equation}\label{converging upper gradient}
\int_{e}g_{n}^{p}\,d\mu_{\beta} \asymp \frac{\nu(B(v))}{\alpha^{(\beta-p)h(v)}}|u_{n}(v)-Pf(v)-u_{n}(w)+Pf(w)|^{p}.
\end{equation}

Let's first consider the case that $\max\{h(v),h(w)\} \leq n$, i.e., that both of the vertices on $e$ have height at most $n$. Then 
\[
\alpha^{-n} \leq \min\{r(B(v)),r(B(w))\} = \tau \min\{\alpha^{-h(v)},\alpha^{-h(w)}\}.
\]
Hence we can apply Proposition \ref{Lp ball trace} to obtain that 
\[
\|T(Pf)-T_{n}(Pf)\|_{L^{p}(B(v))}^{p} \ls \alpha^{(\beta-p)n}\|g\|_{L^{p}(H^{B(v)}_{\geq n})}^{p}.
\]
Since $T(Pf) = f$ $\nu$-a.e.~ by Proposition \ref{extend to trace}, this simplifies to
\[
\|f-T_{n}(Pf)\|_{L^{p}(B(v))}^{p} \ls \alpha^{(\beta-p)n}\|g\|_{L^{p}(H^{B(v)}_{\geq n})}^{p}.
\]
Thus by Jensen's inequality,
\begin{align*}
|Pf(v)-P(T_{n}(Pf))(v)|^{p} &\leq \dashint_{B(v)}|f-T_{n}(Pf)|^{p}d\nu \\
&\ls  \frac{\alpha^{(\beta-p)n}}{\nu(B(v))}\|g\|_{L^{p}(H^{B(v)}_{\geq n})}^{p}.
\end{align*}
Furthermore the same inequality holds with $w$ replacing $v$. Using the estimate 
\[
|u_{n}(v)-Pf(v)-u_{n}(w)+Pf(w)|^{p} \ls |u_{n}(v)-Pf(v)|^{p} + |u_{n}(w)-Pf(w)|^{p},
\]
we conclude from \eqref{converging upper gradient} that
\[
\int_{e}g_{n}^{p}\,d\mu_{\beta} \ls \alpha^{(\beta-p)(n-h(v))}\left(\|g\|_{L^{p}(H^{B(v)}_{\geq n})}^{p} + \|g\|_{L^{p}(H^{B(w)}_{\geq n})}^{p}\right). 
\]
Summing this estimate over all edges $e$ in $X_{\leq n}$ and using the bounded overlap property of the hulls $H^{B(v)}$ of vertices $v \in V_{k}$ of a given height from Lemma \ref{hull overlap}, we obtain 
\begin{align*}
\|g_{n}\|_{L^{p}(X_{\leq n})}^{p} &\ls \sum_{k=-\infty}^{n}\alpha^{(\beta-p)(n-k)} \sum_{v \in V_{k}}\|g\|_{L^{p}(H^{B(v)}_{\geq n})}^{p} \\
&\ls \sum_{k=-\infty}^{n}\alpha^{(\beta-p)(n-k)}\|g\|_{L^{p}(X_{\geq n})}^{p} \\
&\ls \|g\|_{L^{p}(X_{\geq n})}^{p},
\end{align*}
recalling that $p > \beta$ so that we can sum the geometric series. 

We now consider the complementary case that there is at least one vertex $v$ on the edge $e$ with $h(v) > n$. We can then assume that the vertices $v$ and $w$ on $e$ satisfy $\min\{h(v),h(w)\} \geq n$. We insert the estimate
\[
|u_{n}(v)-Pf(v)-u_{n}(w)+Pf(w)|^{p} \ls |u_{n}(v)-u_{n}(w)|^{p} + |Pf(v)-Pf(w)|^{p},
\]
into  \eqref{converging upper gradient} to obtain the bound
\begin{equation}\label{close besov}
\int_{e}g_{n}^{p}\,d\mu_{\beta} \ls \frac{\nu(B(v))}{\alpha^{(\beta-p)h(v)}}|u_{n}(v)-u_{n}(w)|^{p} + \int_{e}g^{p}\,d\mu_{\beta},
\end{equation}
using \eqref{transformed} together with the formula \eqref{edge gradient} for $g$ on the edge $e$ to rewrite the second term on the right. To estimate the first term on the right, we recall that if $v$ and $w$ belong to the same edge $e$ then $B(w) \subset 4\alpha B(v)$. Thus by Jensen's inequality and the doubling property of $\nu$, 
\begin{equation}\label{first close}
|u_{n}(v)-u_{n}(w)|^{p} \leq \dashint_{4\alpha B(v)}\dashint_{4\alpha B(v)}|T_{n}(Pf)(x)-T_{n}(Pf)(y)|^{p}\,d\nu(y)d\nu(x). 
\end{equation}

For each $v \in V_{k}$ with $k \geq n$ we let $S_{n}(v)$ denote the set of all vertices $v' \in V_{n}$ such that $B(v') \cap 4\alpha B(v) \neq \emptyset$. Then, since $r(B(v)) \leq r(B(v'))$, we must have $4\alpha B(v) \subset 10\alpha B(v')$. From Lemma \ref{doubling overlap} it follows that there is a constant $M = M(\tau,C_{\nu})$ such that $S_{n}(v)$ has at most $M$ members. For any pair of vertices $v',w' \in S_{n}(v)$ we then have $10\alpha B(v') \cap 10\alpha B(w') \neq \emptyset$ and therefore $B(w') \subset 30\alpha B(v')$. We choose $n_{0} = n_{0}(\alpha,\tau) \geq 1$ such that $30\tau \alpha \leq \alpha^{n_{0}}$ and then use Lemma \ref{large inclusion} to choose a vertex $x_{v} \in V_{n-n_{0}}$ such that $30\alpha B(v') \subset B(x_{v})$. By construction it then follows that $B(w') \subset B(x_{v})$ for each $w' \in S_{n}(v)$. We conclude by Lemma \ref{height connection} that for each $v' \in S_{n}(v)$ there is a vertical geodesic $\gamma_{x_{v}v'}$ joining $v'$ to $x_{v}$ in $X$. 

 We recall that $\psi_{v'}$ is $L\alpha^{n}$-Lipschitz for a constant $L = L(C_{\nu})$ depending only on the doubling constant for $Z$ by Proposition \ref{partition of unity}. We then estimate  for $x,y \in 4\alpha B(v)$, 
\begin{align*}
|T_{n}(Pf)(x)-T_{n}(Pf)(y)| &\leq \sum_{v' \in S_{n}(v)}|Pf(v')-Pf(x_{v})||\psi_{v'}(x)-\psi_{v'}(y)| \\
&\ls \alpha^{n-h(v)}\sum_{v' \in S_{n}(v)}\int_{\gamma_{x_{v}v'}}g\,ds_{\rho} \\
&\ls \frac{\alpha^{\beta n-h(v)}}{\nu(B(x_{v}))}\sum_{v' \in S_{n}(v)}\int_{\gamma_{x_{v}v'}}g\,d\mu_{\beta},
\end{align*}
with the last line being a direct consequence of Lemma \ref{arc length lemma} and the doubling property for $\nu$. Thus by Lemma \ref{convexity lemma} and a Jensen inequality calculation similar to \eqref{Jensen},
\begin{align*}
|T_{n}(Pf)(x)-T_{n}(Pf)(y)|^{p} &\ls \frac{\alpha^{p\beta n-ph(v)}}{\nu(B(x_{v}))^{p}}\sum_{v' \in S_{n}(v)}\left(\int_{\gamma_{x_{v}v'}}g\,d\mu_{\beta}\right)^{p} \\
&\ls \frac{\alpha^{\beta n- ph(v)}}{\nu(B(x_{v}))}\sum_{v' \in S_{n}(v)}\int_{\gamma_{x_{v}v'}}g^{p}\,d\mu_{\beta} \\
&\ls  \frac{\alpha^{\beta n- ph(v)}}{\nu(B(x_{v}))}\int_{\mathcal{U}_{n_{0}}(x_{v})}g^{p}\,d\mu_{\beta},
\end{align*}
where $\mathcal{U}_{n_{0}}(x_{v})$ denotes the union of all upward directed vertical geodesics of length $n_{0}$ in $X$ starting at $x_{v}$. By combining this with \eqref{close besov} and \eqref{first close} we conclude that
\begin{equation}\label{second close}
\int_{e}g_{n}^{p}\,d\mu_{\beta} \ls  \frac{\nu(B(v))}{\nu(B(x_{v}))}\alpha^{\beta(n-h(v))}\int_{\mathcal{U}_{n_{0}}(x_{v})}g^{p}\,d\mu_{\beta} + \int_{e}g^{p}\,d\mu_{\beta}.
\end{equation}
Here $v$ denotes one of the vertices on $e$ and $x_{v} \in V_{n-n_{0}}$ denotes a vertex such that $B(v') \subset B(x_{v})$ for each $v' \in S_{n}(v)$, as we constructed above. By summing this estimate over all edges $e \subset X_{\geq n}$, we conclude that
\begin{equation}\label{third close}
\|g_{n}\|_{L^{p}(X_{\geq n})}^{p} \ls \sum_{k=n}^{\infty}\alpha^{\beta(n-k)}\sum_{v \in V_{k}}\frac{\nu(B(v))}{\nu(B(x_{v}))}\int_{\mathcal{U}_{n_{0}}(x_{v})}g^{p}\,d\mu_{\beta} + \|g\|_{L^{p}(X_{\geq n})}^{p}.
\end{equation}
Since $B(v) \subset B(x_{v})$ by construction, the inner sum on the left can be bounded by
\begin{align*}
\sum_{v \in V_{k}} \frac{\nu(B(v))}{\nu(B(x_{v}))}\int_{\mathcal{U}_{n_{0}}(x_{v})}g^{p}\,d\mu_{\beta} &\leq \sum_{v_{0} \in V_{n-n_{0}}} \sum_{\substack{v \in V_{k} \\ B(v) \subset B(v_{0})}}  \frac{\nu(B(v))}{\nu(B(v_{0}))}\int_{\mathcal{U}_{n_{0}}(v_{0})}g^{p}\,d\mu_{\beta} \\
&\ls \sum_{v_{0} \in V_{n-n_{0}}} \int_{\mathcal{U}_{n_{0}}(v_{0})}g^{p}\,d\mu_{\beta} \\
&\ls \|g\|_{L^{p}(X_{\geq n-n_{0}})}^{p}. 
\end{align*}
Here the second inequality follows from the fact that the balls $B(v)$ are contained inside $B(v_{0})$ with bounded overlap by Lemma \ref{doubling overlap}, and the third inequality follows from the fact that the graph $X$ has uniformly bounded degree by Proposition \ref{doubling degree}, which implies for a given $v_{0} \in V_{n-n_{0}}$ that the intersection $\mathcal{U}_{n_{0}}(v_{0}) \cap \mathcal{U}_{n_{0}}(w_{0})$ can only be nontrivial for a uniformly bounded number of $w_{0} \in V_{n-n_{0}}$. Plugging this into \eqref{third close} and summing the geometric series gives
\[
\|g_{n}\|_{L^{p}(X_{\geq n})}^{p} \ls \|g\|_{L^{p}(X_{\geq n-n_{0}})}^{p}.
\]
By combining our estimates for $\|g_{n}\|_{L^{p}(X_{\leq n})}^{p}$ and $\|g_{n}\|_{L^{p}(X_{\geq n})}^{p}$ we conclude that
\begin{equation}\label{cutoff extension bound}
\|g_{n}\|_{L^{p}(X_{\rho})}^{p} \ls \|g\|_{L^{p}(X_{\geq n-n_{0}})}^{p}.
\end{equation}
Since 
\[
T(P(T_{n}(Pf))-Pf) = T_{n}(Pf)-f,
\]
$\nu$-a.e.~ by Proposition \ref{extend to trace}, we conclude from \eqref{cutoff extension bound} and Proposition \ref{Dirichlet Besov finite} that
\[
\|T_{n}(Pf)-f\|_{B^{\theta}_{p}(Z)}\ls \|g\|_{L^{p}(X_{\geq n-n_{0}})}.
\]
This gives \eqref{cutoff converge Besov}. The bound \eqref{estimate partial trace} follows from the triangle inequality in $B^{\theta}_{p}(Z)$ and the observation that $\|g\|_{L^{p}(X_{\rho})} \ls \|f\|_{B^{\theta}_{p}(Z)}$ by Proposition \ref{extension Besov}. 

Lastly we need to show that $T_{n}(Pf) \rightarrow f$ in $B^{\theta}_{p}(Z)$ as $n \rightarrow \infty$. By \eqref{cutoff converge Besov} it suffices to show that $\|g\|_{L^{p}(X_{\geq n})} \rightarrow 0$ as $n \rightarrow \infty$. As in the proof of Proposition \ref{extension Besov}, we choose $m = m(\alpha,\tau)$ to be the minimal integer such that $\alpha^{m} \geq 8\tau \alpha$ and then sum the estimate \eqref{layer bound} only over $k \geq n$ to obtain
\[
\|g\|_{L^{p}(X_{\geq n})}^{p} \ls \int_{Z} \sum_{k=n}^{\infty} \dashint_{B_{Z}(x,\alpha^{-n+m})} \frac{|f(x)-f(y)|^{p}}{\alpha^{p \theta(-n+m)}}\,d\nu(y)d\nu(x).
\]
We recall from Lemma \ref{Besov estimate} that we have the estimate
\[
\|f\|_{B^{\theta}_{p}(Z)}^{p} \asymp \int_{Z} \sum_{n\in \Z} \dashint_{B_{Z}(x,\alpha^{-n})} \frac{|f(x)-f(y)|^{p}}{\alpha^{-p \theta n}}\,d\nu(y)d\nu(x).
\]
The finiteness of $\|f\|_{B^{\theta}_{p}(Z)}$ implies that the sum on the right converges for $\nu$-a.e.~ $x \in Z$. Thus for $\nu$-a.e.~ $x \in Z$ we must have
\[
\lim_{n \rightarrow \infty} \sum_{k=n}^{\infty}\dashint_{B_{Z}(x,\alpha^{-n})} \frac{|f(x)-f(y)|^{p}}{\alpha^{-p \theta n}}\,d\nu(y) = 0. 
\]
By the dominated convergence theorem it follows that
\[
\lim_{n \rightarrow \infty} \int_{Z}\sum_{k=n}^{\infty}\dashint_{B_{Z}(x,\alpha^{-n})} \frac{|f(x)-f(y)|^{p}}{\alpha^{-p \theta n}}\,d\nu(y)\,d\nu(x) = 0,
\]
from which we conclude that $\|g\|_{L^{p}(X_{\geq n})} \rightarrow 0$ as $n \rightarrow \infty$, as desired. The final assertion concerning $\ch{B}^{\theta}_{p}(Z)$ follows immediately from the bound \eqref{global p trace} on $\|T_{n}(Pf)-f\|_{L^{p}(Z)}$ from Proposition \ref{Lp local trace}, noting again that $T(Pf) = f$ $\nu$-a.e. 
\end{proof}




In a different direction, Proposition \ref{extend to trace} shows for any $f \in \t{B}^{\theta}_{p}(Z)$ that we can always construct a $p$-integrable Borel function $g: \bar{X}_{\rho} \rightarrow [0,\infty]$ on $\bar{X}_{\rho}$ and a measurable subset $G \subset Z$ with $\nu(Z \backslash G) = 0$ such that for every curve $\gamma$ in $\bar{X}_{\rho}$ joining two points $x,y \in G$ we have
\begin{equation}\label{hyp upper gradient}
|f(x)-f(y)| \leq \int_{\gamma}g \,ds. 
\end{equation}
Namely we can take $G \subset Z$ to be the full measure subset of $Z$ on which we have $Pf|_{Z} = f$ and we can take  $g$ to be a $p$-integrable upper gradient of $Pf$ on $\bar{X}_{\rho}$. This leads to the following definition. 

\begin{defn}\label{define hyp upper gradient}
Let $f: Z \rightarrow [-\infty,\infty]$ be a measurable function. We say that a Borel function $g: \bar{X}_{\rho} \rightarrow [0,\infty]$ is a \emph{hyperbolic upper gradient} for $f$ if there is a full measure subset $G \subset Z$ such that for every $x,y \in G$ and every curve $\gamma$ in $\bar{X}_{\rho}$ joining $x$ to $y$ the upper gradient inequality \eqref{hyp upper gradient} holds. 
\end{defn}

It is natural to ask whether the existence of a $p$-integrable hyperbolic upper gradient alone is sufficient to characterize membership in the Besov space $B^{\theta}_{p}(Z)$. We will show that this question has a positive answer through the characterization of Besov spaces by \emph{fractional Haj\l asz gradients} introduced by Koskela-Yang-Zhou \cite{KYZ11}. We remark that the connection between fractional Haj\l asz gradients and hyperbolic upper gradients demonstrated in the proof of Proposition \ref{characterize hyp upper} below may be of independent interest.

\begin{prop}\label{characterize hyp upper}
Suppose that $f$ admits a $p$-integrable hyperbolic upper gradient $g$ on $\bar{X}_{\rho}$. Then $f \in \t{B}^{\theta}_{p}(Z)$ with 
\begin{equation}\label{hyp upper norm bound}
\|f\|_{B^{\theta}_{p}(Z)} \ls \|g\|_{L^{p}(X_{\rho})}. 
\end{equation}
\end{prop}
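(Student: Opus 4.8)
The plan is to pass through the pointwise characterization of Besov spaces by fractional Haj\l asz gradients of Koskela--Yang--Zhou \cite{KYZ11}. Recall that a nonnegative sequence $\{g_k\}_{k \in \Z}$ of measurable functions on $Z$ is a \emph{fractional $\theta$-Haj\l asz gradient} for $f$ if there is a null set $N \subset Z$ such that for all $x,y \in Z \setminus N$ with $\alpha^{-k-1} \le d(x,y) < \alpha^{-k}$ one has
\[
|f(x)-f(y)| \le d(x,y)^{\theta}\big(g_k(x) + g_k(y)\big),
\]
and the result of \cite{KYZ11} (in the dyadic/$\alpha$-adic formulation, which is equivalent up to constants by standard comparisons as in Lemma \ref{Besov estimate}) states that $f \in \t B^{\theta}_p(Z)$ with $\|f\|_{B^{\theta}_p(Z)} \asymp \inf \|\,\|\{g_k\}\|_{\ell^p}\,\|_{L^p(Z)}$, the infimum over all such gradient sequences. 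So it suffices to build, from a $p$-integrable hyperbolic upper gradient $g$ of $f$ on $\bar X_{\rho}$, a fractional $\theta$-Haj\l asz gradient sequence $\{g_k\}$ for $f$ whose $\ell^p(L^p)$-norm is controlled by $\|g\|_{L^p(X_\rho)}$.

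The construction of $\{g_k\}$ is the heart of the matter, and it mirrors the estimate of term $(II)$ in the proof of Proposition \ref{Dirichlet Besov finite}. Fix $k \in \Z$. Given $x \in Z$, for each vertex $v \in V_k$ with $x \in B(v)$ there is (by Lemma \ref{second height connection}, extended by continuity) a curve in $\bar X_\rho$ from $v$ to $x$, and more importantly for two vertices $v, v' \in V_k$ with $B(v) \cap B(v') \ne \emptyset$ there is a horizontal edge $e_{vv'}$. I would define
\[
g_k(x) = \alpha^{-k(1-\theta)}\sum_{v \in V_k : x \in B(v)} \frac{1}{\nu(B(v))}\left(\int_{\mathcal{U}(v) \cup \mathcal{H}(v)} g \, d\mu_\beta\right),
\]
where $\mathcal{U}(v)$ (resp.\ $\mathcal{H}(v)$) is the union of upward vertical edges (resp.\ horizontal edges) at $v$; the sum has boundedly many terms by Lemma \ref{doubling overlap}. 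For $x,y$ with $\alpha^{-k-1} \le d(x,y) < \alpha^{-k}$, pick vertices $v \ni x$, $v' \ni y$ in $V_k$; as in the $(II)$ estimate, $B(v) \cap B(v') \ne \emptyset$ (using $\tau > 3$), so either $v = v'$ or there is a horizontal edge joining them, and one can run the hyperbolic upper gradient inequality \eqref{hyp upper gradient} along a concatenation: vertical path from $x$ up to $v$, the horizontal edge $v v'$, vertical path back down to $y$. Estimating each piece via Lemma \ref{arc length lemma} and the comparison \eqref{transformed}, exactly as in \eqref{sharpened} and the $(II)$ computation, yields $|f(x)-f(y)| \le d(x,y)^\theta(g_k(x)+g_k(y))$ up to the usual uniform constants, after absorbing the vertical legs (which only contribute geometrically summable tails, summed into adjacent $g_j$'s — this is a minor technical point, handled by the same geometric-series trick with a parameter $\lambda$ as in \eqref{estimate I}, or more simply by enlarging each $g_k$ to include a controlled tail $\sum_{j \ge k}\alpha^{-(j-k)\epsilon}(\cdots)$).

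For the norm bound, I would compute $\|\{g_k\}\|_{\ell^p}^p = \sum_k g_k(x)^p$ pointwise, use Lemma \ref{convexity lemma} and Jensen's inequality (the estimate \eqref{Jensen}) to move the $p$-th power inside the integrals over edges, then integrate over $Z$ against $d\nu(x)$; the bound $\|\psi_v\|_{L^1} \le \nu(B(v))$ analogue (here $\sum_{v\in V_k: x\in B(v)}$ playing the role of the partition of unity, with $\nu(\{x : x \in B(v)\}) \le \nu(B(v))$) converts $\int_Z \frac{\psi_v(x)}{\nu(B(v))}\,d\nu(x) \le 1$, and the factor $\alpha^{-kp(1-\theta)} = \alpha^{-\beta k}$ cancels against the $\alpha^{\beta h(v)}$ coming from rewriting $\mu_\beta$-integrals over height-$k$ edges via \eqref{transformed}. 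Summing over $k$ and using that each edge of $X$ lies in boundedly many layers $E_n$, this telescopes to $\|g\|_{L^p(X_\rho)}^p$, giving \eqref{hyp upper norm bound}. Finally, \cite{KYZ11} turns the existence of such a sequence into membership $f \in \t B^\theta_p(Z)$ with the stated norm estimate.

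The main obstacle is bookkeeping the \emph{vertical legs} of the connecting curves: unlike the $(II)$ estimate in Proposition \ref{Dirichlet Besov finite}, where $T_n(Pf)$ is built from boundary-layer data at a single height, here $x$ and $y$ are genuine boundary points and any curve in $\bar X_\rho$ between them must descend toward $Z$, so the hyperbolic upper gradient inequality picks up contributions of $g$ at all heights $\ge k$ along two anchored geodesic rays. Controlling these tails — showing they can be distributed among $\{g_j\}_{j \ge k}$ with geometrically decaying weights so that the final $\ell^p(L^p)$ sum still collapses to $\|g\|_{L^p(X_\rho)}$ without a divergent constant — is exactly the $\lambda$-parameter Hölder argument already deployed in \eqref{estimate I}–\eqref{substituted estimate I}, so it is routine in spirit but requires care to set up cleanly. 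A secondary point is to verify the $\alpha$-adic version of the \cite{KYZ11} characterization is what one needs; this is standard and follows from Lemma \ref{Besov estimate}-type comparisons.
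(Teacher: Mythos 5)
Your plan matches the paper's: both proofs reduce the claim to the fractional Haj\l asz gradient characterization of $B^\theta_p(Z)$ (the paper cites the version \cite[Theorem 1.2]{GKZ13}), and both build the gradient sequence $\{g_k\}$ from the hyperbolic upper gradient via anchored vertical geodesics and horizontal edges, then control $\sum_k\|g_k\|_{L^p(Z)}^p$ by the same Jensen $+$ Lemma~\ref{arc length lemma} $+$ H\"older-with-parameter-$\la$ machinery. So the route is the right one.

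However, the point you flag as a ``minor technical point'' is actually where the plan as stated breaks. The Haj\l asz inequality $|f(x)-f(y)|\le d(x,y)^\theta(g_k(x)+g_k(y))$ must hold pointwise \emph{for the fixed index} $k$ with $\alpha^{-k-1}\le d(x,y)<\alpha^{-k}$, so the contribution of the vertical legs cannot be ``summed into adjacent $g_j$'s'' --- that reallocation changes the inequality into something the \cite{KYZ11}/\cite{GKZ13} framework does not accept. What the paper does, and what you need here, is to put the \emph{entire} ray integral into $g_k$ from the start:
$g_k(z)=\alpha^{\theta(k+1)}\bigl(\int_{\gamma_{k,z}}g\,ds_\rho+\int_{\mathcal H(v_{k,z})}g\,ds_\rho\bigr)$,
where $\gamma_{k,z}$ is the full ascending ray anchored at $z$ starting from $v_{k,z}\in V_k$. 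No artificial discount $\alpha^{-(j-k)\e}$ is inserted (the decay you need is already built into $ds_\rho$ on edges of increasing height); with this $g_k$ the Haj\l asz inequality holds immediately from the definition of a hyperbolic upper gradient, and the $\la$-H\"older trick is then deployed only in the $L^p$ estimate $\sum_k\|g_k\|_{L^p(Z)}^p\ls\|g\|_{L^p(X_\rho)}^p$, by first bounding $g_k(z)\le\alpha^{\theta(k+1)}\sum_{j\ge k}u_j(z)$ and then distributing the $j$-sum. A secondary issue: your prefactor $\alpha^{-k(1-\theta)}$ applied to $\tfrac{1}{\nu(B(v))}\int g\,d\mu_\beta$ does not reproduce $\alpha^{\theta k}\int g\,ds_\rho$. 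Via Lemma~\ref{arc length lemma}, $\tfrac{1}{\nu(B(v))}\int_e g\,d\mu_\beta\asymp\alpha^{(1-\beta)k}\int_e g\,ds_\rho$ on a height-$k$ edge, so matching $\alpha^{\theta k}ds_\rho$ forces the prefactor $\alpha^{(\theta+\beta-1)k}=\alpha^{(p-1)(1-\theta)k}$, not $\alpha^{-(1-\theta)k}$; with your exponent the Haj\l asz inequality fails for $k>0$. This is not a constant-factor issue, it scales wrongly with $k$.
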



\begin{proof}
For each $z \in Z$ and $k \in \Z$ we choose a vertex $v_{k,z} \in V_{k}$ such that $d(\pi(v_{k,z}),z) < \alpha^{-k}$. We then fix an ascending vertical geodesic ray $\gamma_{k,z}$ in $X$ starting from $v_{k,z}$ that is anchored at $z$ as in Lemma \ref{second height connection}. 

For a vertex $v \in V$ we recall that we write $\mathcal{H}(v)$ for the set of all horizontal edges in $X$ having $v$ as a vertex. For each $k \in \Z$ we then define a measurable function $g_{k}:Z \rightarrow [0,\infty]$ by setting 
\[
g_{k}(z) = \alpha^{\theta (k+1)}\left(\int_{\gamma_{k,z}}g\,ds_{\rho} + \int_{\mathcal{H}(v_{k,z})} g\,ds_{\rho}\right).
\]
Let $G \subset Z$ be the full measure subset given as part of Definition \ref{define hyp upper gradient} on which the upper gradient inequality \eqref{hyp upper gradient} holds. If $x,y \in Z$ satisfy $d(x,y) <  \alpha^{-k}$ for a given $k \in \Z$ then we will have $y \in B(v_{k,x})$  and consequently $v_{k,x}$ and $v_{k,y}$ will be joined by a horizontal edge contained inside $\mathcal{H}(v_{k,x})$. Thus if $d(x,y) \geq  \alpha^{-k-1}$ then we will have
\[
|f(x)-f(y)| \leq d(x,y)^{\theta}(g_{k}(x)+g_{k}(y)),
\]
by the definition of the hyperbolic upper gradient. We thus conclude that the sequence $\{g_{k}\}_{k \in \Z}$ of measurable functions on $Z$ defines a fractional $\theta$-Haj\l asz gradient of $f$, as defined in \cite[Definition 1.3]{GKZ13}; the definition given there takes $\alpha = 2$ but an equivalent definition for the purpose of defining Besov spaces is given by replacing $2$ with any $ \alpha > 1$, with the equivalence coming from the doubling property of $\nu$. By the characterization of $B^{\theta}_{p}(Z)$ using fractional $\theta$-Haj\l asz gradients \cite[Theorem 1.2]{GKZ13} we then have
\begin{equation}\label{GKZ comparison}
\|f\|_{B^{\theta}_{p}(Z)}^{p} \ls \sum_{k \in \Z} \|g_{k}\|^{p}_{L^{p}(Z)}, 
\end{equation}
with the current caveat that both sides may potentially take the value infinity. Thus to prove the proposition it suffices to produce an estimate
\[
\sum_{k \in \Z} \|g_{k}\|^{p}_{L^{p}(Z)} \ls \|g\|_{L^{p}(X_{\rho})}^{p} < \infty.
\]
with the last inequality coming from our assumption that $g$ is $p$-integrable on $\bar{X}_{\rho}$ (hence also on $X_{\rho}$) with respect to the measure $\mu_{\beta}$. 

For each vertex $v \in V$ we will write $E(v)$ for the set of all edges in $X$ that have $v$ as a vertex. For $j \in \Z$ and $z \in Z$ we define
\[
u_{j}(z) =  \sum_{v\in V_{j}:\,d(\pi(v),z) <  \alpha^{-j}}\int_{E(v)}g\,ds_{\rho}.
\]
Then by the construction of $g_{k}$ we have for all $z \in Z$, 
\begin{equation}\label{hyp upper sequence}
g_{k}(z) \leq  \alpha^{\theta(k+1)}\sum_{j=k}^{\infty} u_{j}(z).
\end{equation}
For each vertex $v \in V$ we define a function $u_{v}$ on $Z$ by 
\[
u_{v} = \left(\int_{E(v)}g\,ds_{\rho}\right)\chi_{B_{Z}(\pi(v), \alpha^{-h(v)})}. 
\]
Then for each $z \in Z$ we have
\[
u_{j}(z) \leq \sum_{v \in V_{j}} u_{v}(z),
\]
and by Lemma \ref{doubling overlap} the sum on the right has a number of nonzero terms uniformly bounded in terms of $ \alpha$, $\tau$, and $C_{\nu}$ for each $z \in Z$. Thus by Lemma \ref{convexity lemma} we have 
\[
u_{j}^{p}(z) \ls \sum_{v \in V_{j}} u_{v}^{p}(z).
\]
As in the proofs of Propositions \ref{Lp ball trace} and \ref{Dirichlet Besov finite}, when $p > 1$ we let $\la > 0$ be a given parameter and apply H\"older's inequality for sequences to the right side of \eqref{hyp upper sequence} with conjugate exponents $p$ and $q = \frac{p}{p-1}$ to obtain
\begin{align*}
g_{k}(z) &\ls  \alpha^{\theta k} \left(\sum_{j = k}^{\infty} \alpha^{p\la j}\sum_{v \in V_{j}} u_{v}^{p}(z)\right)^{1/p}\left(\sum_{j=k}^{\infty} \alpha^{-q \la j}\right)^{1/q} \\
&\ls  \alpha^{(\theta-\la)k}\left(\sum_{j = k}^{\infty} \alpha^{p\la j}\sum_{v \in V_{j}} u_{v}^{p}(z)\right)^{1/p},
\end{align*}
where the implied constants now depend additionally on $\la$. Thus
\[
g_{k}^{p}(z) \ls  \alpha^{p(\theta-\la)k}\sum_{j = k}^{\infty} \alpha^{p\la j}\sum_{v \in V_{j}} u_{v}^{p}(z).
\]
We remark that this inequality trivially follows from \eqref{hyp upper sequence} when $p = 1$, so we can assume that it holds for $p = 1$ as well. By integrating each side over $Z$ and noting that $B_{Z}(\pi(v), \alpha^{-h(v)}) \subset B(v)$ we obtain
\[
\|g_{k}\|_{L^{p}(Z)}^{p} \ls  \alpha^{p(\theta-\la)k}\sum_{j = k}^{\infty} \alpha^{p\la j}\sum_{v \in V_{j}} \left(\int_{E(v)}g\,ds_{\rho}\right)^{p}\nu(B(v)).
\]
Since the total arclength with respect to the distance $d_{\rho}$ of the collection of edges $E(v)$ is comparable to $ \alpha^{-j}$ for $v \in V_{j}$ (here we are using that $X$ has uniformly bounded vertex degree by Proposition \ref{doubling degree}), we can use Jensen's inequality with respect to the arclength measure $ds_{\rho}$ to conclude that
\[
\left(\int_{E(v)}g\,ds_{\rho}\right)^{p} \leq  \alpha^{(1-p)j}\int_{E(v)}g^{p}\,ds_{\rho}.
\]
via an analogous computation to the one done in inequality \eqref{Jensen}. Thus we conclude that
\[
\|g_{k}\|_{L^{p}(Z)}^{p} \ls  \alpha^{p(\theta-\la)k}\sum_{j = k}^{\infty} \alpha^{p\la j}\sum_{v \in V_{j}}\left(\int_{E(v)}g^{p}\,ds_{\rho}\right) \alpha^{(1-p)j}\nu(B(v)).
\]
By applying Lemma \ref{arc length lemma} to the integral on the right we conclude that 
\[
\|g_{k}\|_{L^{p}(Z)}^{p} \ls  \alpha^{p(\theta-\la)k}\sum_{j = k}^{\infty} \alpha^{p(\la-\theta) j}\sum_{v \in V_{j}}\int_{E(v)}g^{p}\,d\mu_{\beta},
\]
recalling that $\theta = 1-\beta/p$. By setting $\la = \theta/2 > 0$ we conclude that
\[
\|g_{k}\|_{L^{p}(Z)}^{p} \ls \sum_{j = k}^{\infty} \alpha^{(p\theta/2)(k-j)}\sum_{v \in V_{j}}\int_{E(v)}g^{p}\,d\mu_{\beta},
\]
Summing this over all $k \in \Z$ gives 
\[
\sum_{k \in \Z} \|g_{k}\|_{L^{p}(Z)}^{p} \ls \sum_{k \in \Z} \sum_{j = k}^{\infty} \alpha^{(p\theta/2)(k-j)}\sum_{v \in V_{j}}\int_{E(v)}g^{p}\,d\mu_{\beta}.
\]
Using Tonelli's theorem we can then rewrite the sum on the right to obtain the desired bound
\begin{align*}
\sum_{k \in \Z} \|g_{k}\|_{L^{p}(Z)}^{p} &\ls \sum_{j \in \Z} \sum_{v \in V_{j}}\int_{E(v)}g^{p}\,d\mu_{\beta} \left(\sum_{k=-\infty}^{j} \alpha^{(p\theta/2)(k-j)}\right) \\
&\ls \sum_{j \in \Z} \sum_{v \in V_{j}}\int_{E(v)}g^{p}\,d\mu_{\beta} \\
&\ls \|g\|_{L^{p}(X_{\rho})}^{p}. 
\end{align*}
\end{proof}

\section{Properties of Besov spaces}\label{sec:properties}

In this final section we apply the results of Sections \ref{sec:trace} and \ref{sec:extend} to establish a number of properties of the Besov spaces $\t{B}^{\theta}_{p}(Z)$ on a complete doubling metric measure space $(Z,d,\nu)$ for $p \geq 1$ and $0 < \theta < 1$. In particular we prove Corollaries \ref{cor:quasicontinuous}, \ref{cor:holder embed}, and \ref{cor:Lebesgue points}. 

For this section we consider a complete doubling metric measure space $(Z,d,\nu)$. We let $X$ be a hyperbolic filling of $Z$ with parameters $\alpha, \tau > 1$ satisfying \eqref{tau requirement}. We let $\mu$ be the lift of the measure $\nu$ to $X$ defined in \eqref{lift definition}. We then let $X_{\rho}$ be the uniformized hyperbolic filling corresponding to these parameters as defined in Section \ref{sec:fillings}. For a given $p \geq 1$ and $0 < \theta < 1$ we set $\beta = p(1-\theta)$, noting that we then have $p > \beta$ and $\theta = (p-\beta)/p$. Note that, in contrast to Sections \ref{sec:trace} and \ref{sec:extend}, we are considering $\beta$ here as depending on $p$ and $\theta$ instead of considering $\theta$ as depending on $p$ and $\beta$. We then let $\mu_{\beta}$ be the measure on $\bar{X}_{\rho}$ defined by \eqref{beta def}.

Throughout the rest of this section all implied constants will depend only on the parameters $\alpha$ and $\tau$ of the hyperbolic filling $X$, the doubling constant $C_{\nu}$ for $\nu$, and the exponents $p \geq 1$ and $0 < \theta < 1$. The uniformized filling $X_{\rho}$ will always be considered to be equipped with the measure $\mu_{\beta}$ for $\beta = p(1-\theta)$, where $p$ and $\theta$ are given as in the hypotheses of each proposition. If one wants to remove the dependencies on the parameters of the hyperbolic filling then there is no harm in fixing $\alpha = 2$ and $\tau = 4$ in everything that follows.

We first show as a consequence of Proposition \ref{extend to trace} that the Besov capacity can be computed in terms of the $C_{p}^{\bar{X}_{\rho}}$-capacity and vice versa. 

\begin{prop}\label{compute single capacity} 
For any set $G \subset Z$ we have $C_{p}^{\bar{X}_{\rho}}(G) \asymp C_{\ch{B}^{\theta}_{p}}^{Z}(G)$. 
\end{prop}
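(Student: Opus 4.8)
The plan is to prove the two inequalities $C_{p}^{\bar{X}_{\rho}}(G) \lesssim C_{\ch{B}^{\theta}_{p}}^{Z}(G)$ and $C_{\ch{B}^{\theta}_{p}}^{Z}(G) \lesssim C_{p}^{\bar{X}_{\rho}}(G)$ separately, using the extension operator $P_{0}$ in one direction and the trace operator $T$ (together with its agreement with boundary restriction) in the other.

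\textbf{Extension direction ($C_{p}^{\bar{X}_{\rho}}(G) \lesssim C_{\ch{B}^{\theta}_{p}}^{Z}(G)$).} First I would take any competitor $f \in \ch{B}^{\theta}_{p}(Z)$ with $f \geq 1$ $\nu$-a.e.\ on a neighborhood $U$ of $G$. Replacing $f$ by $\hat{f} = \max\{0,\min\{1,f\}\}$ only decreases $\|f\|_{\ch{B}^{\theta}_{p}(Z)}$ by Lemma \ref{truncate Besov}, so we may assume $0 \leq f \leq 1$ and $f = 1$ $\nu$-a.e.\ on $U$. Now apply the truncated extension $P_{0}f \in \t{N}^{1,p}(\bar{X}_{\rho})$ from Proposition \ref{extension Besov}, which satisfies $\|P_{0}f\|_{N^{1,p}(\bar{X}_{\rho})} \lesssim \|f\|_{\ch{B}^{\theta}_{p}(Z)}$ (using $\alpha^{(\theta-1)\cdot 0} = \alpha^{\theta \cdot 0} = 1$ in \eqref{extension truncate bound}). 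On $Z$ we have $P_{0}f|_{Z} = Pf|_{Z} = T(Pf) = f$ $\nu$-a.e.\ by Propositions \ref{extend to trace} and the construction of $P_{0}$, so $P_{0}f = 1$ $\nu$-a.e.\ on $U$. The one genuinely delicate point is passing from ``$=1$ $\nu$-a.e.\ on $U$'' to ``$\geq 1$ on an open set of comparable capacity'': for this I would use that $P_{0}f$ is $C_{p}^{\bar{X}_{\rho}}$-quasicontinuous (Theorem \ref{outer capacity}, applicable since $\bar{X}_{\rho}$ is complete doubling with a Poincar\'e inequality by Proposition \ref{all beta doubling}) together with the fact that, by Proposition \ref{absolutely continuous capacity}, $\nu$-negligible subsets of $Z$ need not be capacity-negligible, so a more careful argument is needed. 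Specifically, on the open set $\hat U = $ (a neighborhood of $G$ in $\bar X_\rho$ whose trace is $U$) one shows $P_0 f \geq 1$ holds $C_p^{\bar X_\rho}$-q.e., hence after adjusting on a capacity-zero set and slightly shrinking, $P_{0}f$ serves as an admissible competitor for $C_{p}^{\bar{X}_{\rho}}(G)$ up to a bounded multiplicative constant; taking the infimum over $f$ gives the bound. An alternative cleaner route that avoids this subtlety: use $2 P_0 f$ which is $\geq 1$ wherever $P_0 f \geq 1/2$, and exploit that $\{P_0 f > 1/2\}$ contains an open neighborhood of $G$ up to a capacity-null set by quasicontinuity applied to the open set $\{P_0 f > 1/2\}$, combined with outer regularity of $C_p^{\bar X_\rho}$.

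\textbf{Trace direction ($C_{\ch{B}^{\theta}_{p}}^{Z}(G) \lesssim C_{p}^{\bar{X}_{\rho}}(G)$).} Here I would start with any $u \in \t{N}^{1,p}(\bar{X}_{\rho})$ with $u \geq 1$ on $G$; by Theorem \ref{outer capacity} (outer capacity) it suffices to take such $u$ with $u \geq 1$ on an open set $W \subset \bar X_\rho$ containing $G$, and again truncate so $0 \leq u \leq 1$. Restrict $u$ to $X_\rho$ to get an element of $N^{1,p}(X_{\rho})$ with the same or smaller norm, and form the trace $Tu = u|_{Z}$ $\nu$-a.e.\ (Propositions \ref{trace characterization}, \ref{Lip trace} give $\hat u|_Z = Tu$; since $u$ is already defined on $\bar X_\rho$ one uses uniqueness of the extension). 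By Proposition \ref{Dirichlet Besov finite} (the $\ch{B}$-estimate), $\|Tu\|_{\ch{B}^{\theta}_{p}(Z)} \lesssim \|u\|_{N^{1,p}(X_{\rho})} \leq \|u\|_{N^{1,p}(\bar{X}_{\rho})}$. Moreover $Tu = u|_Z \geq 1$ $\nu$-a.e.\ on the open set $W \cap Z$, which is a neighborhood of $G$ in $Z$. Hence $Tu$ is an admissible competitor for $C_{\ch{B}^{\theta}_{p}}^{Z}(G)$, giving $C_{\ch{B}^{\theta}_{p}}^{Z}(G) \leq \|Tu\|^{p}_{\ch{B}^{\theta}_{p}(Z)} \lesssim \|u\|^{p}_{N^{1,p}(\bar{X}_{\rho})}$; taking the infimum over admissible $u$ yields the claim.

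\textbf{Main obstacle.} The routine part is the norm bookkeeping through $T$ and $P_{0}$; the real subtlety is the interface between the two notions of ``on a neighborhood of $G$'': the Besov capacity is defined via functions that are $\geq 1$ \emph{$\nu$-a.e.\ on a neighborhood}, whereas the Newtonian capacity uses functions $\geq 1$ \emph{pointwise on the set} (with outer regularity converting this to neighborhoods). Bridging these requires quasicontinuity of the relevant Newtonian/Besov representatives and the outer-regularity statement of Theorem \ref{outer capacity}, plus care that $\nu$-null $\not\Rightarrow$ capacity-null on $Z$. I expect that handling this carefully — likely by working with the open sets $\{P_0 f > 1/2\}$ and $\{u > 1/2\}$ and invoking quasicontinuity — is where the bulk of the actual argument lies, while everything else follows by assembling previously established propositions.
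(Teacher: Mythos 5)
Your overall plan (extension one way, trace the other) matches the paper, and the trace direction is essentially correct: the paper also takes an open $U \supset G$ by outer regularity, picks $u \in \tilde N^{1,p}(\bar X_\rho)$ with $u \geq 1$ on $U$ and near-optimal norm, and applies Proposition \ref{trace characterization} together with Proposition \ref{Dirichlet Besov finite} to see that $u|_Z$ is an admissible Besov competitor with controlled norm.

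For the extension direction, however, the delicacy you flag — passing from ``$=1$ $\nu$-a.e.\ on $U$'' to an actual capacity competitor — is real, but your proposed fix does not work. Quasicontinuity of $P_0 f$ on $\bar X_\rho$ controls its behavior up to sets of small $C_p^{\bar X_\rho}$-capacity, but the information ``$P_0 f|_Z = 1$ $\nu$-a.e.\ on $U$'' only constrains $P_0 f$ on a $\mu_\beta$-null set; since $\nu$-null sets need not be $C_p^{\bar X_\rho}$-null (Proposition \ref{absolutely continuous capacity} goes the other direction only), you cannot upgrade ``$\nu$-a.e.\ on $U$'' to ``q.e.\ on $U$'' by quasicontinuity alone. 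Your level-set argument with $\{P_0 f > 1/2\}$ has the same problem: that set is only quasi-open, and there is no reason it should contain $U$ or a neighborhood of $G$ without first knowing what $P_0 f$ does pointwise on $U$.

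The paper's resolution is cleaner and avoids quasicontinuity entirely: after applying Lemma \ref{truncate Besov} one also \emph{redefines $f$ on a $\nu$-null set} so that $f \equiv 1$ exactly (not merely a.e.) on $U$; this leaves $\|f\|_{\check B^\theta_p(Z)}$ unchanged and does not affect $Pf$ on $X_\rho$, since the Poisson extension sees only integral averages. With $f$ exactly $1$ on the open set $U$, every point of $U$ is an $L^1(Z)$-Lebesgue point of $f$, and Proposition \ref{q Lebesgue points} then yields $P_0 f(z) = f(z) = 1$ \emph{pointwise} for all $z \in U$. Thus $P_0 f \geq 1$ pointwise on $G$, making it directly admissible for $C_p^{\bar X_\rho}(G)$, and the bound $C_p^{\bar X_\rho}(G) \ls \|f\|_{\check B^\theta_p(Z)}^p$ follows from \eqref{extension truncate bound}. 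So the Lebesgue-point characterization of the extension, not quasicontinuity, is the bridge between the two notions of ``admissible.''
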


\begin{proof}
Let $G \subset Z$ be a given subset. Let $f \in \ch{B}_{p}^{\theta}(Z)$ be given such that $f \geq 1$ $\nu$-a.e.~ on a neighborhood $U$ of $G$. By truncating $f$ using Lemma \ref{truncate Besov} and then redefining $f$ on a $\nu$-null set, we can assume that we in fact have $f \equiv 1$ on $U$. Let $P_{0}f \in \t{N}^{1,p}(\bar{X}_{\rho})$ be the extension of $f$ given by Proposition \ref{extension Besov}. Since every point of $U$ is an $L^{1}(Z)$-Lebesgue point for $f$, we conclude by Proposition \ref{q Lebesgue points} that $P_{0}f|_{U} = f$. Thus $P_{0}f = 1$ on $U$. It follows that $P_{0}f$ is admissible for the $C_{p}^{\bar{X}_{\rho}}$-capacity of $G$ and therefore by \eqref{extension truncate bound},
\[
C_{p}^{\bar{X}_{\rho}}(G) \leq \|P_{0}f\|_{N^{1,p}(\bar{X}_{\rho})}^{p} \ls \|f\|_{\ch{B}^{\theta}_{p}(Z)}^{p}.
\]
Minimizing over all admissible $f$ for the $C_{\ch{B}^{\theta}_{p}}^{Z}$-capacity of $G$ then gives $C_{p}^{\bar{X}_{\rho}}(G) \ls C_{\ch{B}^{\theta}_{p}}^{Z}(G)$. 

For the other direction, let $\e > 0$ be given and let $U \subset \bar{X}_{\rho}$ be an open set containing $G$ such that $C_{p}^{\bar{X}_{\rho}}(U \backslash G) < \e$, which we can find by Theorem \ref{outer capacity}. We let $u \in \t{N}^{1,p}(\bar{X}_{\rho})$ be a function such that $u \geq 1$ on $U$ and $\|u\|_{N^{1,p}(\bar{X}_{\rho})} < C_{p}^{\bar{X}_{\rho}}(U) + \e$. Then $u|_{Z} \in \ch{B}^{\theta}_{p}(Z)$ by Proposition \ref{trace characterization} and $u \geq 1$ on an open subset $U \cap Z \subset Z$ of $Z$ that contains $G$. Thus
\begin{align*}
C_{\ch{B}^{\theta}_{p}}^{Z}(G) &\leq \|u|_{Z}\|_{\ch{B}^{\theta}_{p}(Z)} \\
&\ls \|u\|_{N^{1,p}(\bar{X}_{\rho})}\\
&< C_{p}^{\bar{X}_{\rho}}(U) + \e \\
&< C_{p}^{\bar{X}_{\rho}}(G) + 2\e. 
\end{align*}
Letting $\e \rightarrow 0$ gives the desired result.
\end{proof}

As we remarked after the proof of Proposition \ref{extend to trace}, given $f \in \t{B}^{\theta}_{p}(Z)$ the representative $Pf|_{Z}$ of $f$ in $\t{B}^{\theta}_{p}(Z)$ usually has better regularity properties. We can use this observation together with Proposition \ref{extension hyperbolic Poincare} to prove Corollary \ref{cor:quasicontinuous}.

\begin{proof}[Proof of Corollary \ref{cor:quasicontinuous}]
We let $\ch{f} = Pf|_{Z}$ be the restriction of $Pf$ to $Z$ that was considered in the proof of Proposition \ref{extend to trace}; as shown in that proof we have $\ch{f} = f$ $\nu$-a.e.~ on $Z$. Since $Pf \in \t{D}^{1,p}(\bar{X}_{\rho})$, we have by Theorem \ref{outer capacity} and the remarks afterward that $Pf$ is $C_{p}^{\bar{X}_{\rho}}$-quasicontinuous. Thus for each $\eta > 0$ we can find an open subset $U \subset \bar{X}_{\rho}$ such that $C_{p}^{\bar{X}_{\rho}}(U) < \eta$ and $Pf|_{\bar{X}_{\rho} \backslash U}$ is continuous. Setting $W = U \cap Z$, we conclude that $W$ is open in $Z$ and that $\ch{f}|_{Z\backslash W}$ is continuous. By Proposition \ref{compute single capacity} we have
\[
C_{\ch{B}^{\theta}_{p}}^{Z}(W) \ls C_{p}^{\bar{X}_{\rho}}(W) \leq C_{p}^{\bar{X}_{\rho}}(U) < \eta. 
\] 
Since $\eta > 0$ was arbitrary, we conclude that $\ch{f}$ is $C_{\ch{B}^{\theta}_{p}}^{Z}$-quasicontinuous.

It remains to show that $\ch{f}$ has $L^{p}(Z)$-Lebesgue points quasieverywhere with respect to the Besov capacity. By Proposition \ref{Dirichlet characterization} we have for $C_{p}^{\bar{X}_{\rho}}$-q.e.~ $z \in Z$, 
\begin{equation}\label{first Lebesgue use}
\lim_{r \rightarrow 0^{+}} \dashint_{B_{\rho}(z,r)}|Pf-\ch{f}(z)|^{p}\,d\mu_{\beta} = 0,
\end{equation}
where we have used that $\mu_{\beta}$ is extended to $Z = \p X_{\rho}$ by $\mu_{\beta}(\p X_{\rho}) = 0$. By Proposition \ref{absolutely continuous capacity} we then have that $\nu$-a.e.~ point of $Z$ is an $L^{1}(\bar{X}_{\rho})$-Lebesgue point for $u$. Let $g$ be any $p$-integrable $p$-weak upper gradient for $u$ on $\bar{X}_{\rho}$. Then by \cite[Lemma 9.2.4]{HKST} we have for $C_{p}^{\bar{X}_{\rho}}$-q.e.~ $z \in Z$, 
\begin{equation}\label{second Lebesgue use}
\lim_{r \rightarrow 0^{+}} r^{p}\dashint_{B_{\rho}(z,r)}g^{p}\,d\mu_{\beta} = 0.
\end{equation}
By Proposition \ref{compute single capacity} it follows that each of these assertions also hold for $C_{\ch{B}^{\theta}_{p}}^{Z}$-q.e.~ point of $Z$. We can thus complete the proof by showing that any point $z \in Z$ for which the two limits \eqref{first Lebesgue use} and \eqref{second Lebesgue use} hold is an $L^{p}(Z)$-Lebesgue point of $\ch{f}$. 

Let $z \in Z$ be a point for which \eqref{first Lebesgue use} and \eqref{second Lebesgue use} hold. By Proposition \ref{extension hyperbolic Poincare} there is a constant $C = C(\alpha,\tau)$ such that for each ball $B = B_{Z}(z,r) \subset Z$, 
\begin{equation}\label{pre bound}
\dashint_{B}|\ch{f}-Pf_{\hat{B}}|^{p}\,d\nu \ls r^{p} \dashint_{C\hat{B}}g^{p}\,d\mu_{\beta}.
\end{equation}
Thus, using Jensen's inequality and \eqref{pre bound},
\begin{align*}
\dashint_{B}|\ch{f}-\ch{f}(z)|^{p}\,d\nu &\ls \dashint_{B}|\ch{f}-Pf_{\hat{B}}|^{p}\,d\nu + |Pf_{\hat{B}}-\ch{f}(z)|^{p} \\
&\ls r^{p} \dashint_{C\hat{B}}g^{p}\,d\mu_{\beta} + \dashint_{\hat{B}}|Pf-\ch{f}(z)|^{p}\,d\mu_{\beta}.
\end{align*}
The second term converges to $0$ by \eqref{first Lebesgue use} and the first term converges to $0$ by \eqref{second Lebesgue use} and the doubling property of $\nu$. We conclude that $z$ is an $L^{p}(Z)$-Lebesgue point of $\ch{f}$, as desired. 
\end{proof}

We next consider embeddings of Besov spaces into H\"older spaces.  For the proof of this next proposition we recall the notion of \emph{relative lower volume decay} defined in \eqref{lower volume}, and recall that every doubling measure $\nu$ satisfies this condition for $Q = \log_{2} C_{\nu}$.

\begin{prop}\label{prop:holder embed}
Let $f \in \t{B}^{\theta}_{p}(Z)$ be given. Suppose that $\nu$ has relative lower volume decay of order $Q > 0$, set $Q_{\beta} = \max\{1,Q+p(1-\theta)\}$, and assume that $p > Q_{\beta}$. Then $f$ has a representative $\ch{f}$ in $\t{B}^{\theta}_{p}(Z)$ such that for each ball $B \subset Z$ of radius $r > 0$ we have for any $x,y \in B$ and any upper gradient $g$ of $Pf$ on $4\hat{B}$, 
\begin{equation}\label{holder embed inequality}
|\ch{f}(x)-\ch{f}(y)| \ls r^{Q_{\beta}/p}d(x,y)^{1-Q_{\beta}/p}\left(\dashint_{4\hat{B}}g^{p}\,d\mu_{\beta}\right)^{1/p}. 
\end{equation}
Consequently $\ch{f}$ is $(1-Q_{\beta})/p$-H\"older continuous on any ball $B \subset Z$. 
\end{prop}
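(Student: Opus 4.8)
The plan is to derive the H\"older estimate \eqref{holder embed inequality} from the fact that $Pf \in \t{D}^{1,p}(\bar{X}_{\rho})$, the doubling and Poincar\'e properties of $(\bar{X}_{\rho}, d_{\rho}, \mu_{\beta})$ established in Proposition \ref{all beta doubling}, the relative lower volume decay exponent $Q_{\beta}$ for $\mu_{\beta}$ from Lemma \ref{doubling dimension}, and the Poincar\'e-type inequality \eqref{extension equation type} of Proposition \ref{extension hyperbolic Poincare} relating oscillation of $f$ on a ball $B \subset Z$ to the energy of $Pf$ on a dilate $C\hat{B}$. The representative $\ch{f}$ will again be $Pf|_Z$, which equals $f$ $\nu$-a.e.\ by Proposition \ref{extend to trace}.

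First I would recall the standard fact that in a doubling metric measure space supporting a $p$-Poincar\'e inequality whose measure has relative lower volume decay of order $Q_\beta$ with $p > Q_\beta$, every function with a $p$-integrable upper gradient has a representative satisfying a Morrey--Campanato-type H\"older estimate: for a ball $\hat{B} = B_\rho(z,r)$ in $\bar{X}_\rho$ and points $x', y' \in \hat{B}$,
\[
|\hat{u}(x') - \hat{u}(y')| \ls d_\rho(x',y')^{1 - Q_\beta/p}\, r^{Q_\beta/p}\left(\dashint_{\lambda \hat{B}} g^p \, d\mu_\beta\right)^{1/p},
\]
for a fixed dilation constant $\lambda$ depending only on the structural data; this is \cite[Theorem 9.2.14]{HKST} (or the DeGiorgi--Morrey iteration of the Sobolev--Poincar\'e inequality), and since the relevant exponent for $\mu_\beta$ is exactly $Q_\beta = \max\{1, Q + p(1-\theta)\}$ by Lemma \ref{doubling dimension}, the hypothesis $p > Q_\beta$ is precisely what makes this estimate available. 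Applying this with $\hat{u} = Pf$ and keeping track of the dilation constants (absorbing $\lambda$ and the constant $C$ of Proposition \ref{extension hyperbolic Poincare} into a single constant, which one may take to be $4$ after replacing $\alpha, \tau$ by admissible values if necessary, or simply writing $4\hat{B}$ as in the statement) yields the estimate for $x, y \in B \subset Z = \p X_\rho$ since $B \subset \hat{B}$ and $d_\rho|_Z = d$; one then uses $r(\hat{B}) = r(B) = r$ and the comparability of $\mu_\beta$-averages over nested balls to reach \eqref{holder embed inequality}. The passage from the estimate on balls in $\bar{X}_\rho$ to balls in $Z$ is clean because $Z$ is isometrically embedded as $\p X_\rho$ and $Pf$ is genuinely defined (not just a.e.) on $Z$ via \eqref{extend extension}, with $Pf(z) = f(z)$ at every $L^1(Z)$-Lebesgue point by Proposition \ref{q Lebesgue points}.

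The local H\"older continuity of $\ch{f}$ is then immediate: for any ball $B$ of radius $r$, \eqref{holder embed inequality} with $g$ a minimal $p$-weak upper gradient of $Pf$ on $4\hat{B}$ shows $\ch{f}$ is $(1 - Q_\beta/p)$-H\"older on $B$ with constant $\ls r^{Q_\beta/p}\|g\|_{L^p(4\hat{B})}/\mu_\beta(4\hat{B})^{1/p}$, which is finite since $Pf \in \t{D}^{1,p}(\bar{X}_\rho)$. I expect the main obstacle to be bookkeeping rather than conceptual: one must be careful that the Morrey estimate is applied inside $\bar{X}_\rho$ (which is complete, doubling, and supports a $p$-Poincar\'e inequality, so Theorem \ref{outer capacity} and the quasicontinuity machinery apply), that the relevant upper gradient on $4\hat{B}$ dominates the restriction of the global minimal upper gradient (by locality of minimal upper gradients, \cite[Proposition 6.3.22]{HKST}), and that the volume-decay exponent genuinely improves to $Q_\beta$ rather than merely $\log_2 C_{\mu_\beta}$ — this last point is exactly the content of Lemma \ref{doubling dimension} and is what makes the sharp H\"older exponent $1 - Q_\beta/p$ (with $Q_\beta$ reflecting $\beta = p(1-\theta)$) come out correctly. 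A secondary subtlety is confirming that the chosen representative $\ch{f} = Pf|_Z$ lies in the same $\t{B}^\theta_p(Z)$ equivalence class as $f$, which is Proposition \ref{extend to trace}.
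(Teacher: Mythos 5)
Your proposal is correct and takes essentially the same route as the paper: choose $\ch{f} = Pf|_Z$, apply the Morrey embedding \cite[Theorem 9.2.14]{HKST} to $Pf$ on balls $\hat{B}$ in $\bar{X}_\rho$ using the relative lower volume decay exponent $Q_\beta$ from Lemma \ref{doubling dimension}, and specialize to points of $Z = \p X_\rho$. The one thing you could trim is the invocation of Proposition \ref{extension hyperbolic Poincare} in your opening paragraph, since the Morrey estimate already carries the full content; it plays no role in your actual derivation and the paper does not use it here either.
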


Corollary \ref{cor:holder embed} follows immediately from Proposition \ref{prop:holder embed}. The constant in inequality \eqref{holder embed inequality} will depend additionally on $Q$ and the constant $C_{\mathrm{low}}$ in \eqref{lower volume}. 

\begin{proof}
Let $f \in \t{B}^{\theta}_{p}(Z)$ be given with $p$ satisfying $p > Q_{\beta}$. As in the proof of the previous corollary we let $\ch{f} = Pf|_{Z}$ denote the restriction of the extension $Pf$ to $Z$, which satisfies $Pf|_{Z} = f$ $\nu$-a.e.~ by Proposition \ref{extend to trace}. By Lemma \ref{doubling dimension} the metric measure space $(\bar{X}_{\rho},d_{\rho},\mu_{\beta})$ has relative lower volume decay of order $Q_{\beta}$. Since $p > Q_{\beta}$ and since the metric measure space $(\bar{X}_{\rho},d_{\rho},\mu_{\beta})$ is geodesic and supports a $1$-Poincar\'e inequality, we conclude from the Morrey embedding theorem \cite[Theorem 9.2.14]{HKST} that for any ball $B' \subset \bar{X}_{\rho}$, any measurable function $u: 4B' \rightarrow [-\infty,\infty]$, and any $p$-integrable upper gradient $g$ of $u$ on $4B'$, we have for $x,y \in B'$, 
\begin{equation}\label{proto holder}
|u(x)-u(y)| \ls (\diam \, B')^{Q_{\beta}/p}d_{\rho}(x,y)^{1-Q_{\beta}/p}\left(\dashint_{4B'}g^{p}\,d\mu_{\beta}\right)^{1/p},
\end{equation}
with implied constant depending additionally on $Q$ and the constant $C_{\mathrm{low}}$ in \eqref{lower volume}. This implies in particular that $u$ is actually finite everywhere on $4B'$, i.e., $u: 4B' \rightarrow \R$. To deduce inequality \eqref{holder embed inequality} from inequality \eqref{proto holder}, we fix a ball $B \subset Z$ of radius $r > 0$ and set $B' = \hat{B}$ and $u = Pf$ in \eqref{proto holder}. Note then that $\diam \, \hat{B} \leq 2r$. The inequality \eqref{holder embed inequality} then follows by specializing \eqref{prop:holder embed} to the case of $x,y \in Z = \p X_{\rho}$. The final claim follows by noting that $Pf$ has a $p$-integrable upper gradient $g$ on $\bar{X}_{\rho}$ by Proposition \ref{extension Besov} which restricts to a $p$-integrable gradient of $Pf$ on $4\hat{B}$ for each ball $B \subset Z$. 
\end{proof}

The calculations after \cite[Proposition 13.7]{BBS21} give more precise details about the settings in which Corollary \ref{cor:holder embed} applies. We note in particular that if $Q \geq 1$ then the requirement $p > Q_{\beta}$ is equivalent to $p > Q/\theta$ and we have $1-Q_{\beta}/p = \theta-Q/p$. 

Lastly we prove Corollary \ref{cor:Lebesgue points}, which strengthens Corollary \ref{cor:quasicontinuous} under the additional hypothesis that $\nu$ satisfies a \emph{reverse-doubling condition} for an exponent $\eta > 0$ and any $0 < r' \leq r$,
\begin{equation}\label{reverse doubling}
\frac{\nu(B_{Z}(z,r'))}{\nu(B_{Z}(z,r))} \leq C_{\mathrm{rev}} \left(\frac{r'}{r}\right)^{\eta},
\end{equation}
for some constant $C_{\mathrm{rev}} \geq 1$. Under the hypothesis that $\nu$ is doubling, the reverse-doubling condition \eqref{reverse doubling} on $\nu$ is equivalent to $Z$ being \emph{uniformly perfect} \cite[Lemma 7]{MO19}, which in turn implies that for each ball $B \subset Z$ we have
\begin{equation}\label{radius to diameter}
r(B) \asymp \diam(B),
\end{equation}
with implied constants depending only on the doubling constant for $\nu$ and the constant $C_{\mathrm{rev}}$ and exponent $\eta$ in \eqref{reverse doubling}.

We then have the following two-weighted Poincar\'e inequality \cite[Proposition 13.5]{BBS21}, which is a special case of an inequality of Bj\"orn-Ka\l amajska \cite[Theorem 3.1]{BK21}; we note that this inequality still applies in our setting since we assume that the reverse-doubling condition \eqref{reverse doubling} holds at \emph{all} scales and since we have that the metric measure spaces $(X_{\rho},d_{\rho},\mu_{\beta})$ and $(\bar{X}_{\rho},d_{\rho},\mu_{\beta})$ support a $p$-Poincar\'e inequality. Below we set $\beta = p(1-\theta) > 0$ and $B = B_{Z}(z,r)$ for $z \in Z$ and $r > 0$. We recall that $\hat{B} = B_{\rho}(z,r)$ then denotes the ball centered at $z$ of the same radius in $\bar{X}_{\rho}$. 

\begin{prop}\label{two-weighted}
Let $1 \leq p < q < \infty$ and $u \in \t{D}^{1,p}(\bar{X}_{\rho})$ be such that $\nu$-a.e.~ point of $Z$ is an $L^{1}(\bar{X}_{\rho})$-Lebesgue point for $u$. Let $g$ be a $p$-integrable $p$-weak upper gradient for $u$ on $\bar{X}_{\rho}$. Suppose further that $\nu$ satisfies the reverse-doubling condition \eqref{reverse doubling}. Then for all balls $B = B_{Z}(z,r) \subset Z$ with $z \in Z$ and $r > 0$, 
\begin{equation}\label{two-weighted inequality}
\left(\int_{B}|u-u_{\hat{B}}|^{q}\,d\nu\right)^{1/q} \ls \Theta_{q}(r)\left(\int_{2\hat{B}}g^{p}\,d\mu_{\beta}\right)^{1/p}.
\end{equation}
with implied constant depending additionally on the constant $C_{\mathrm{rev}}$ and exponent $\eta > 0$ in \eqref{reverse doubling}, where
\[
\Theta_{q}(r) = \sup_{0 < s \leq r} \sup_{z \in B} \; s  \frac{\nu(B_{Z}(z,s))^{1/q}}{\mu_{\beta}(B_{\rho}(z,s))^{1/p}}.
\]
\end{prop}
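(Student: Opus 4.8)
The plan is to deduce Proposition \ref{two-weighted} from the cited inequality of Bj\"orn--Ka\l amajska \cite[Theorem 3.1]{BK21} in essentially the same way as in \cite[Proposition 13.5]{BBS21}, but taking care that every ingredient survives the passage to an unbounded space. First I would recall that by Proposition \ref{all beta doubling} both $(X_{\rho},d_{\rho},\mu_{\beta})$ and $(\bar{X}_{\rho},d_{\rho},\mu_{\beta})$ are geodesic, doubling, and support a $1$-Poincar\'e inequality, hence also a $p$-Poincar\'e inequality for every $p \geq 1$; and that by \cite[Lemma 7]{MO19} the reverse-doubling hypothesis \eqref{reverse doubling} on $\nu$ forces $Z = \p X_{\rho}$ to be uniformly perfect, so that \eqref{radius to diameter} holds at all scales with constants controlled by $C_{\nu}$, $C_{\mathrm{rev}}$, and $\eta$. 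This is the point where boundedness of $Z$ entered in \cite{BBS21}: there one needs only the \emph{single} scale $r = \diam Z$, whereas here I must check that the trace inequality of \cite{BK21} can be applied on every ball $\hat{B} = B_{\rho}(z,r)$, $z \in Z$, $r > 0$, with the boundary measure $\nu$ on the portion $\hat{B} \cap Z$ of $\p X_{\rho}$. The hypotheses of \cite[Theorem 3.1]{BK21} are exactly a $p$-Poincar\'e inequality for the ambient doubling space together with a codimension-type control of the boundary measure $\nu$ relative to $\mu_{\beta}$ on balls centered at $\p X_{\rho}$; the latter is precisely what is quantified by the function $\Theta_{q}(r)$ appearing in the statement, built from the ratio $\nu(B_{Z}(z,s))^{1/q} / \mu_{\beta}(B_{\rho}(z,s))^{1/p}$.

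The key steps, in order, would be: (1) fix a ball $B = B_{Z}(z_{0},r)$ and let $u \in \t{D}^{1,p}(\bar{X}_{\rho})$ with $p$-integrable $p$-weak upper gradient $g$, where $\nu$-a.e.\ point of $Z$ is an $L^{1}(\bar{X}_{\rho})$-Lebesgue point of $u$; this last hypothesis guarantees that the boundary trace of $u$ (in the sense needed by \cite{BK21}) agrees $\nu$-a.e.\ with the Lebesgue-point values $u|_{Z}$, which is how the left-hand side of \eqref{two-weighted inequality} acquires meaning. (2) Apply \cite[Theorem 3.1]{BK21} on the ball $\hat{B} \subset \bar{X}_{\rho}$, using the $p$-Poincar\'e inequality from Proposition \ref{all beta doubling} and the doubling of $\mu_{\beta}$; the resulting two-weighted estimate bounds $\left(\int_{\hat B \cap Z}|u - u_{\hat B}|^{q}\,d\nu\right)^{1/q}$ by a constant times a quantity of the form $\Theta \cdot \left(\int_{\lambda\hat{B}} g^{p}\,d\mu_{\beta}\right)^{1/p}$ for some fixed dilation factor $\lambda$ depending only on the Poincar\'e data (and hence on $\alpha,\tau$); since the spaces in question are geodesic one may take $\lambda = 2$ after a standard chaining argument, matching the $2\hat{B}$ in \eqref{two-weighted inequality}. (3) Identify the constant $\Theta$ with $\Theta_{q}(r)$ as defined: the reverse-doubling condition \eqref{reverse doubling}, valid at all scales, together with Lemma \ref{hull measure} and Corollary \ref{interior ball measure}, shows that the supremum defining $\Theta_{q}(r)$ is finite and controls the weight ratio uniformly over $z \in B$ and $0 < s \leq r$; indeed Lemma \ref{hull measure} gives $\mu_{\beta}(B_{\rho}(z,s)) \asymp s^{\beta}\nu(B_{Z}(z,s))$ for $z \in Z$, so $s\,\nu(B_{Z}(z,s))^{1/q}/\mu_{\beta}(B_{\rho}(z,s))^{1/p} \asymp s^{1-\beta/p}\nu(B_{Z}(z,s))^{1/q-1/p}$, and the reverse-doubling bound makes this a manageable power of $s$ times volume factors. (4) Replace $\int_{\hat B \cap Z}$ by $\int_{B}$ on the left using that $B \subset \hat{B} \cap Z$ (as $Z$ is isometrically embedded as $\p X_{\rho}$ with $B = \hat B \cap Z$), and collect constants; \eqref{radius to diameter} is used only to pass freely between radius and diameter when invoking the Poincar\'e/Morrey-type machinery.

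The main obstacle I anticipate is step (2)--(3): verifying that \cite[Theorem 3.1]{BK21} genuinely applies on \emph{each} ball $\hat{B}$ centered at a boundary point of an \emph{unbounded} uniformized filling, with the boundary measure $\nu$ restricted to $\hat B \cap Z$. In \cite{BBS21} this was only needed for the whole (compact) space, so one must confirm that the local codimension hypothesis of \cite{BK21} — that $\nu$ is comparable to a ``$\mu_{\beta}$-codimensional'' measure on boundary balls up to the scale $r$ — is implied by the uniform-in-scale estimates we have: Lemma \ref{hull measure}, Corollary \ref{interior ball measure}, the doubling of $\mu_{\beta}$ from Proposition \ref{all beta doubling}, and the all-scales reverse doubling \eqref{reverse doubling}. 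Once one checks that these give the required hypothesis with constants independent of $z$ and $r$ (depending only on $\alpha,\tau,C_{\nu},\beta,C_{\mathrm{rev}},\eta$), the rest is a direct citation plus bookkeeping. A secondary, minor point is ensuring the Lebesgue-point hypothesis on $u$ is used correctly so that $u|_{Z}$, $u_{\hat B}$, and the boundary trace in \cite{BK21} all refer to the same object $\nu$-a.e.; this follows from Proposition \ref{absolutely continuous capacity} together with the standard fact that $C_{p}^{\bar X_{\rho}}$-q.e.\ point is an $L^{1}(\bar X_{\rho})$-Lebesgue point of a Newtonian/Dirichlet function.
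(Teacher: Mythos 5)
Your proposal takes essentially the same route as the paper: the paper itself gives no independent argument for Proposition \ref{two-weighted} but simply cites \cite[Proposition 13.5]{BBS21} as a special case of Bj\"orn--Ka\l amajska \cite[Theorem 3.1]{BK21}, noting exactly as you do that the all-scales reverse-doubling hypothesis together with the $p$-Poincar\'e inequality and doubling of $\mu_{\beta}$ from Proposition \ref{all beta doubling} make that theorem applicable without any compactness assumption on $Z$. Your additional unpacking of the weight ratio via Lemma \ref{hull measure} matches the paper's remark leading to \eqref{simplify Theta}.
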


We remark that our expression for $\Theta_{q}(r)$ is equivalent to the one given in \cite[Proposition 13.5]{BBS21}, since they consider $\nu$ as a measure on $\bar{X}_{\rho}$ by setting $\nu(X_{\rho}) = 0$. In \eqref{two-weighted inequality} $u_{\hat{B}}$ denotes the mean value of $u$ over the ball $\hat{B}$ when $\hat{B}$ is equipped with the measure $\mu_{\beta}$. By Lemma \ref{hull measure} we have
\begin{equation}\label{simplify Theta}
\Theta_{q}(r) \asymp \sup_{0 < s \leq r} \sup_{z \in B}\; s^{1-\beta/p}\nu(B_{Z}(z,s))^{1/q-1/p}.
\end{equation}
For the proof of the final Corollary \ref{cor:Lebesgue points} we assume that $\nu$ has relative lower volume decay of order $Q > p\theta$ and set $Q_{*} = Qp/(Q-p\theta)$. We note that we always have $Q_{*} > p$.

\begin{proof}[Proof of Corollary \ref{cor:Lebesgue points}]
Throughout this proof all implied constants will be allowed to additionally depend on the constant and exponent in the reverse-doubling estimate \eqref{reverse doubling} as well as the constant and exponent in the relative lower volume decay estimate \eqref{lower volume}.  

Let $f \in \t{B}^{\theta}_{p}(Z)$ be given as in the hypotheses. We let $u \in \t{D}^{1,p}(\bar{X}_{\rho})$ be any function such that $u|_{Z} = f$ $\nu$-a.e.~ on $Z$; by Proposition \ref{extend to trace} we can take $u = Pf$, but we will allow for a more general choice of $u$ in the proof. We then set $\ch{f} = u|_{Z}$. We note that this function $u$ agrees $\mu_{\beta}$-a.e.~ with the extension $\hat{u}$ of $u|_{X_{\rho}}$ to $\bar{X}_{\rho}$ given by Proposition \ref{Dirichlet characterization} since $\mu_{\beta}(\p X_{\rho}) = 0$, which implies that $\hat{u} = u$ $C_{p}^{\bar{X}_{\rho}}$-q.e.~ since both of these functions belong to $\t{D}^{1,p}(\bar{X}_{\rho})$. By Proposition \ref{Dirichlet characterization} and H\"older's inequality we then have for $C_{p}^{\bar{X}_{\rho}}$-q.e.~ $z \in Z$, 
\begin{equation}\label{first mod Lebesgue use}
\lim_{r \rightarrow 0^{+}} \dashint_{B_{\rho}(z,r)}|u-\ch{f}(z)|\,d\mu_{\beta} = 0,
\end{equation}
where we have used that $\mu_{\beta}$ is extended to $Z = \p X_{\rho}$ by $\mu_{\beta}(\p X_{\rho}) = 0$. By Proposition \ref{absolutely continuous capacity} we then have that $\nu$-a.e.~ point of $Z$ is an $L^{1}(\bar{X}_{\rho})$-Lebesgue point for $u$. Let $g$ be any $p$-integrable $p$-weak upper gradient for $u$ on $\bar{X}_{\rho}$. Then by \cite[Lemma 9.2.4]{HKST} we have for $C_{p}^{\bar{X}_{\rho}}$-q.e.~ $z \in Z$, 
\begin{equation}\label{second mod Lebesgue use}
\lim_{r \rightarrow 0^{+}} r^{p}\dashint_{B_{\rho}(z,r)}g^{p}\,d\mu_{\beta} = 0.
\end{equation}
By Proposition \ref{compute single capacity} it follows that each of these assertions also hold for $C_{\ch{B}^{\theta}_{p}}^{Z}$-q.e.~ point of $Z$. Similarly to the proof of Corollary \ref{cor:quasicontinuous}, we will show that any point $z \in Z$ for which \eqref{first mod Lebesgue use} and \eqref{second mod Lebesgue use} hold is an $L^{Q_{*}}(Z)$-Lebesgue point of $\ch{f}$. This suffices to deduce the conclusions of Corollary \ref{cor:Lebesgue points}.

Let $z \in Z$ be a point for which \eqref{first mod Lebesgue use} and \eqref{second mod Lebesgue use} hold. By Proposition \ref{two-weighted} applied with $q = Q_{*} > p$ we have for each ball $B = B_{Z}(z,r) \subset Z$, 
\begin{equation}\label{Theta pre bound}
\left(\int_{B}|\ch{f}-u_{\hat{B}}|^{Q_{*}}\,d\nu\right)^{1/Q_{*}} \ls \Theta_{Q_{*}}(r)\left(\int_{2\hat{B}}g^{p}\,d\mu_{\beta}\right)^{1/p}.
\end{equation}
By combining this with the triangle inequality in $L^{Q_{*}}(Z)$ we conclude that
\begin{align*}
\left(\int_{B}|\ch{f}-\ch{f}(z)|^{Q_{*}}\,d\nu\right)^{1/Q_{*}} &\leq \left(\int_{B}|\ch{f}-u_{\hat{B}}|^{Q_{*}}\,d\nu\right)^{1/Q_{*}} + \nu(B)^{1/Q_{*}}|u_{\hat{B}}-\ch{f}(z)|\\
&\ls \Theta_{Q_{*}}(r)\left(\int_{2\hat{B}}g^{p}\,d\mu_{\beta}\right)^{1/p} + \nu(B)^{1/Q_{*}}\int_{\hat{B}}|u-\ch{f}(z)|\,d\mu_{\beta}. 
\end{align*}
By dividing through by  $\nu(B)^{1/Q_{*}}$, we conclude that
\begin{equation}\label{Q star estimate}
\left(\dashint_{B}|\ch{f}-\ch{f}(z)|^{Q_{*}}\,d\nu\right)^{1/Q_{*}} \ls \frac{\Theta_{Q_{*}}(r)\mu_{\beta}(2\hat{B})^{1/p}}{\nu(B)^{1/Q_{*}}}\left(\dashint_{2\hat{B}}g^{p}\,d\mu_{\beta}\right)^{1/p} + \dashint_{\hat{B}}|u-\ch{f}(z)|\,d\mu_{\beta}. 
\end{equation}
The second term on the right converges to $0$ as $r \rightarrow 0$ by \eqref{first mod Lebesgue use}. To show that the first term on the right converges to $0$ it suffices by \eqref{second mod Lebesgue use} to show that we have
\begin{equation}\label{Theta bound}
\frac{\Theta_{Q_{*}}(r)\mu_{\beta}(2\hat{B})^{1/p}}{\nu(B)^{1/Q_{*}}} \ls r. 
\end{equation}
By \eqref{simplify Theta}, Lemma \ref{hull measure}, and the lower volume decay bound \eqref{lower volume} (here we are using an equivalent uncentered version of this bound applied to the containment of balls $B_{Z}(z,s) \subset 2B$ for $z \in B$, $0 < s \leq r$, see \cite[(9.1.14)]{HKST}) we have
\begin{align*}
\frac{\Theta_{Q_{*}}(r)\mu_{\beta}(2\hat{B})^{1/p}}{\nu(B)^{1/Q_{*}}}  &\ls  r^{Q/Q_{*}-Q/p+\beta/p}\sup_{0 < s \leq r} \sup_{z \in B}s^{1-\beta/p-Q/Q_{*}+Q/p} \\
&= r^{Q/Q_{*}-Q/p+\beta/p+2\theta} \\
&= r
\end{align*}
since, recalling that $\beta = p(1-\theta)$, the exponent of $s$ simplifies to 
\[
1-\frac{\beta}{p}-\frac{Q}{Q_{*}}+\frac{Q}{p} = 1-\frac{p(1-\theta)}{p}- \frac{Q-p\theta}{p}+\frac{Q}{p} = 2\theta > 0,
\]
and the exponent of $r$ in the first line simplifies to
\[
\frac{Q}{Q_{*}}-\frac{Q}{p}+\frac{\beta}{p} = \frac{Q-p\theta}{p} -\frac{Q}{p}+\frac{p(1-\theta)}{p} = 1-2\theta.
\]
We conclude that $C_{\ch{B}^{\theta}_{p}}^{Z}$-q.e.~ point of $Z$ is an $L^{Q_{*}}(Z)$-Lebesgue point of $\ch{f}$, as desired. Lastly, for use in Proposition \ref{hyperbolic Poincare} below we note that combining \eqref{Theta pre bound}, \eqref{Theta bound}, and the radius estimate \eqref{radius to diameter}, 
\begin{equation}\label{key hyperbolic upper gradient}
\left(\dashint_{B}|f-u_{\hat{B}}|^{Q_{*}}\,d\nu\right)^{1/Q_{*}} \ls \diam(B)\left(\dashint_{2\hat{B}}g^{p}\,d\mu_{\beta}\right)^{1/p},
\end{equation}
where we have used that $f = \ch{f}$ $\nu$-a.e.~ on $Z$. Here $g$ denotes any $p$-integrable $p$-weak upper gradient for the extension $u \in \t{D}^{1,p}(\bar{X}_{\rho})$, and the implied constant depends on the constants and exponents in \eqref{lower volume} and \eqref{reverse doubling} as well as the parameters $\alpha$ and $\tau$ associated to the hyperbolic filling $X$ and the exponents $p$ and $\theta$. 
\end{proof}


The inequality \eqref{key hyperbolic upper gradient} implies the following strengthening of Proposition \ref{extension hyperbolic Poincare}.

\begin{prop}\label{hyperbolic Poincare}
Suppose that $\nu$ has relative lower volume decay of order $Q > 0$ and that $\nu$ satisfies the reverse-doubling condition \eqref{reverse doubling}. Let $p \geq 1$ and $0 < \theta < 1$ be given such that $p \theta < Q$ and set $Q_{*} = Qp/(Q-p\theta)$. Let $f \in \t{B}^{\theta}_{p}(Z)$ be given and let $g: \bar{X}_{\rho} \rightarrow [0,\infty]$ be a Borel function that is a $p$-integrable $p$-weak upper gradient of some function $u \in \t{D}^{1,p}(\bar{X}_{\rho})$ such that $u|_{Z} = f$ $\nu$-a.e. Then for any ball $B \subset Z$ we have
\[
\left(\dashint_{B}|f-u_{\hat{B}}|^{Q_{*}}\,d\nu\right)^{1/Q_{*}} \ls \diam(B)\left(\dashint_{2\hat{B}}g^{p}\,d\mu_{\beta}\right)^{1/p},
\]
with implied constant depending only on the constants and exponents in \eqref{lower volume} and \eqref{reverse doubling}, the parameters $\alpha$ and $\tau$ associated to the hyperbolic filling $X$, and the exponents $p$ and $\theta$. 
\end{prop}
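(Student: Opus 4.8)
The plan is to observe that the asserted inequality is exactly inequality \eqref{key hyperbolic upper gradient}, which was already derived in the course of proving Corollary \ref{cor:Lebesgue points}, and that the derivation there applies verbatim to an arbitrary extension $u \in \tilde{D}^{1,p}(\bar{X}_{\rho})$ of $f$ rather than only to $u = Pf$. So the proof amounts to recording that argument as a standalone statement. First I would check that Proposition \ref{two-weighted} is applicable to the given $u$: since $\mu_{\beta}(\partial X_{\rho}) = 0$, the function $u$ agrees $\mu_{\beta}$-a.e., hence $C_{p}^{\bar{X}_{\rho}}$-q.e., with the canonical extension $\hat{u}$ of $u|_{X_{\rho}}$ furnished by Proposition \ref{Dirichlet characterization}; the Lebesgue point conclusion \eqref{D characterization limit} for $\hat{u}$, H\"older's inequality, and Proposition \ref{absolutely continuous capacity} then show that $\nu$-a.e.\ point of $Z$ is an $L^{1}(\bar{X}_{\rho})$-Lebesgue point of $u$. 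Note $Q_{*} = Qp/(Q-p\theta) > p$ because $p\theta < Q$, so Proposition \ref{two-weighted} applies with $q = Q_{*}$ to give, for $B = B_{Z}(z,r)$ and using $u|_{Z} = f$ $\nu$-a.e.,
\[
\left(\int_{B} |f - u_{\hat{B}}|^{Q_{*}}\,d\nu\right)^{1/Q_{*}} \lesssim \Theta_{Q_{*}}(r)\left(\int_{2\hat{B}} g^{p}\,d\mu_{\beta}\right)^{1/p}.
\]

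The second step is the weight estimate. After dividing by $\nu(B)^{1/Q_{*}}$ and $\mu_{\beta}(2\hat{B})^{1/p}$ the claim reduces to $\Theta_{Q_{*}}(r)\,\mu_{\beta}(2\hat{B})^{1/p}\nu(B)^{-1/Q_{*}} \lesssim r$. I would plug in the simplified formula \eqref{simplify Theta} for $\Theta_{Q_{*}}$ and the comparison $\mu_{\beta}(2\hat{B}) \asymp r^{\beta}\nu(B)$ from Lemma \ref{hull measure}, together with the doubling of $\nu$, and then invoke the uncentered form of the relative lower volume decay \eqref{lower volume} of order $Q$ applied to the inclusions $B_{Z}(z,s) \subset 2B$ for $z \in B$, $0 < s \le r$. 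The bookkeeping, recalling $\beta = p(1-\theta)$, gives exponent $1 - \beta/p - Q/Q_{*} + Q/p = 2\theta > 0$ on $s$ and exponent $1 - 2\theta$ on $r$, so the product is $\asymp r$; this is precisely the computation carried out at the end of the proof of Corollary \ref{cor:Lebesgue points}. Finally, the reverse-doubling hypothesis \eqref{reverse doubling} makes $Z$ uniformly perfect, hence $r \asymp \diam(B)$ by \eqref{radius to diameter}, and combining this with the previous display yields
\[
\left(\dashint_{B} |f - u_{\hat{B}}|^{Q_{*}}\,d\nu\right)^{1/Q_{*}} \lesssim \diam(B)\left(\dashint_{2\hat{B}} g^{p}\,d\mu_{\beta}\right)^{1/p},
\]
which is the assertion.

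There is no genuine obstacle here: all of the analytic input (the two-weighted Poincar\'e inequality of Proposition \ref{two-weighted}, the hull measure estimate of Lemma \ref{hull measure}, the volume decay bound, and the uniform perfectness consequence \eqref{radius to diameter}) is already in place. The only points demanding a little attention are the observation that Proposition \ref{two-weighted} requires only the $L^{1}(\bar{X}_{\rho})$-Lebesgue point property of $u$ --- which holds for \emph{any} extension of $f$ lying in $\tilde{D}^{1,p}(\bar{X}_{\rho})$, not merely for $Pf$ --- and the exponent arithmetic in the weight estimate, which must come out so as to produce exactly a factor of $r$, equivalently of $\diam(B)$.
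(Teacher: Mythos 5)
Your proposal is correct and follows essentially the same route as the paper: the paper proves Corollary \ref{cor:Lebesgue points} by allowing an arbitrary extension $u \in \t{D}^{1,p}(\bar{X}_{\rho})$ of $f$ (not just $Pf$), derives \eqref{key hyperbolic upper gradient} inline, and then simply cites that display as the proof of Proposition \ref{hyperbolic Poincare}. Your unpacking of the steps — verifying the $L^{1}(\bar{X}_{\rho})$-Lebesgue point hypothesis of Proposition \ref{two-weighted} via Proposition \ref{Dirichlet characterization} and Proposition \ref{absolutely continuous capacity}, the reduction to the weight bound $\Theta_{Q_{*}}(r)\mu_{\beta}(2\hat{B})^{1/p}\nu(B)^{-1/Q_{*}} \lesssim r$, the exponent arithmetic yielding $s^{2\theta}$ and $r^{1-2\theta}$, and the replacement $r \asymp \diam(B)$ via uniform perfectness — matches the paper's computation exactly.
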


\bibliographystyle{plain}
\bibliography{ExtensionTrace}

\end{document}